\title[A Weiss--Williams theorem for spaces of embeddings and the homotopy type of spaces of long knots]{A Weiss--Williams theorem for spaces of embeddings\\ and the homotopy type of spaces of long knots}
\author{Samuel Mu\~noz-Ech\'aniz}
\email{\url{sm2600@cam.ac.uk}}
\address{Centre for Mathematical Sciences,
Wilberforce Road,
Cambridge CB3 0WB, UK}
\subjclass[2020]{57R40, 57S05, 58D10, 18F50, 19D10, 57R80}
\newtheorem*{rep@theorem}{\rep@title}
\newcommand{\newreptheorem}[2]{%
\newenvironment{rep#1}[1]{%
 \def\rep@title{#2 \ref{##1}}%
 \begin{rep@theorem}}%
 {\end{rep@theorem}}}
\newtheorem{theorem}{Theorem}[section]
\newtheorem{proposition}[theorem]{Proposition}
\DeclareFontFamily{OT1}{ptm}{}
\DeclareFontShape{OT1}{ptm}{m}{n} { <-> ptmr}{}
\DeclareFontShape{OT1}{ptm}{m}{it}{ <-> ptmri}{}
\DeclareFontShape{OT1}{ptm}{m}{sl}{ <->ptmro}{}
\DeclareFontShape{OT1}{ptm}{m}{sc}{ <-> ptmrc}{}
\DeclareFontShape{OT1}{ptm}{b}{n} { <-> ptmb}{}
\DeclareFontShape{OT1}{ptm}{b}{it}{ <-> ptmbi}{}     
\DeclareFontShape{OT1}{ptm}{bx}{n} {<->ssub * ptm/b/n}{}
\DeclareFontShape{OT1}{ptm}{bx}{it}{<->ssub * ptm/b/it}{}
\DeclareSymbolFont{bold}{OT1}{ptm}{b}{n}
\DeclareMathAlphabet{\mathbf}{OT1}{ptm}{b}{n}  
\DeclareMathAlphabet{\mathrm}{OT1}{ptm}{m}{n}
\DeclareFontFamily{OT1}{psy}{}      
\DeclareFontShape{OT1}{psy}{m}{n}{ <-> s * [0.9] psyr}{}
\DeclareFontFamily{OMS}{ptm}{}     
\DeclareFontShape{OMS}{ptm}{m}{n}{ <8> <9> <10> gen * cmsy }{}
\DeclareFontFamily{OMS}{cmtt}{}     
\DeclareFontShape{OMS}{cmtt}{m}{n}{ <8> <9> <10> gen * cmsy }{}
\DeclareSymbolFont{emsy}{OT1}{ptm}{m}{it}
\DeclareSymbolFont{emsr}{OT1}{ptm}{m}{n}
\DeclareSymbolFont{emcmr}{OT1}{cmr}{m}{n}   
\DeclareSymbolFont{emsymb}{OT1}{psy}{m}{n}  
\DeclareMathSymbol a{\mathalpha}{emsy}{"61}
\DeclareMathSymbol b{\mathalpha}{emsy}{"62}
\DeclareMathSymbol c{\mathalpha}{emsy}{"63}
\DeclareMathSymbol d{\mathalpha}{emsy}{"64}
\DeclareMathSymbol e{\mathalpha}{emsy}{"65}
\DeclareMathSymbol f{\mathalpha}{emsy}{"66}
\DeclareMathSymbol g{\mathalpha}{emsy}{"67}
\DeclareMathSymbol h{\mathalpha}{emsy}{"68}
\DeclareMathSymbol i{\mathalpha}{emsy}{"69}
\DeclareMathSymbol j{\mathalpha}{emsy}{"6A}
\DeclareMathSymbol k{\mathalpha}{emsy}{"6B}
\DeclareMathSymbol l{\mathalpha}{emsy}{"6C}
\DeclareMathSymbol m{\mathalpha}{emsy}{"6D}
\DeclareMathSymbol n{\mathalpha}{emsy}{"6E}
\DeclareMathSymbol o{\mathalpha}{emsy}{"6F}
\DeclareMathSymbol p{\mathalpha}{emsy}{"70}
\DeclareMathSymbol q{\mathalpha}{emsy}{"71}
\DeclareMathSymbol r{\mathalpha}{emsy}{"72}
\DeclareMathSymbol s{\mathalpha}{emsy}{"73}
\DeclareMathSymbol t{\mathalpha}{emsy}{"74}
\DeclareMathSymbol u{\mathalpha}{emsy}{"75}
\DeclareMathSymbol v{\mathalpha}{emsy}{"76}
\DeclareMathSymbol w{\mathalpha}{emsy}{"77}
\DeclareMathSymbol x{\mathalpha}{emsy}{"78}
\DeclareMathSymbol y{\mathalpha}{emsy}{"79}
\DeclareMathSymbol z{\mathalpha}{emsy}{"7A}
\DeclareMathSymbol A{\mathalpha}{emsy}{"41}
\DeclareMathSymbol B{\mathalpha}{emsy}{"42}
\DeclareMathSymbol C{\mathalpha}{emsy}{"43}
\DeclareMathSymbol D{\mathalpha}{emsy}{"44}
\DeclareMathSymbol E{\mathalpha}{emsy}{"45}
\DeclareMathSymbol F{\mathalpha}{emsy}{"46}
\DeclareMathSymbol G{\mathalpha}{emsy}{"47}
\DeclareMathSymbol H{\mathalpha}{emsy}{"48}
\DeclareMathSymbol I{\mathalpha}{emsy}{"49}
\DeclareMathSymbol J{\mathalpha}{emsy}{"4A}
\DeclareMathSymbol K{\mathalpha}{emsy}{"4B}
\DeclareMathSymbol L{\mathalpha}{emsy}{"4C}
\DeclareMathSymbol M{\mathalpha}{emsy}{"4D}
\DeclareMathSymbol N{\mathalpha}{emsy}{"4E}
\DeclareMathSymbol O{\mathalpha}{emsy}{"4F}
\DeclareMathSymbol P{\mathalpha}{emsy}{"50}
\DeclareMathSymbol Q{\mathalpha}{emsy}{"51}
\DeclareMathSymbol R{\mathalpha}{emsy}{"52}
\DeclareMathSymbol S{\mathalpha}{emsy}{"53}
\DeclareMathSymbol T{\mathalpha}{emsy}{"54}
\DeclareMathSymbol U{\mathalpha}{emsy}{"55}
\DeclareMathSymbol V{\mathalpha}{emsy}{"56}
\DeclareMathSymbol W{\mathalpha}{emsy}{"57}
\DeclareMathSymbol X{\mathalpha}{emsy}{"58}
\DeclareMathSymbol Y{\mathalpha}{emsy}{"59}
\DeclareMathSymbol Z{\mathalpha}{emsy}{"5A}
\DeclareMathSymbol{\bullet}{\mathalpha}{emsymb}{"B7}
\def\Bullet{\leavevmode\unkern{$\m@th\bullet$}\kern.32em\ignorespaces}
\DeclareMathSymbol +{\mathbin}{emcmr}{`+}
\DeclareMathSymbol ={\mathrel}{emcmr}{`=}  
\DeclareMathSymbol{\Gamma}{\mathalpha}{emcmr}{"00}
\DeclareMathSymbol{\Delta}{\mathalpha}{emcmr}{"01}
\DeclareMathSymbol{\Lambda}{\mathalpha}{emcmr}{"03}
\DeclareMathSymbol{\Xi}{\mathalpha}{emcmr}{"04}
\DeclareMathSymbol{\Pi}{\mathalpha}{emcmr}{"05}
\DeclareMathSymbol{\Sigma}{\mathalpha}{emcmr}{"06}
\DeclareMathSymbol{\Upsilon}{\mathalpha}{emcmr}{"07}
\DeclareMathSymbol{\Phi}{\mathalpha}{emcmr}{"08}
\DeclareMathSymbol{\Psi}{\mathalpha}{emcmr}{"09}
\DeclareMathSymbol{\Omega}{\mathalpha}{emcmr}{"0A}
\DeclareMathAccent{\dot}{\mathalpha}{operators}{"C7} 
\DeclareMathOperator{\hocolim}{\operatorname{hocolim}}
\DeclareSymbolFontAlphabet{\mathbb}{AMSb}
\DeclareSymbolFontAlphabet{\mathbbl}{bbold}
\newcommand{\diff}{\mathrm{Diff}}
\newcommand{\bdiff}{\widetilde{\mathrm{Diff}}}
\newcommand{\hcob}{\hspace{2pt}\ensuremath \raisebox{-1pt}{$\overset{h}{\leadsto}$}\hspace{2pt}}
\newcommand{\emb}{\operatorname{Emb}}
\newcommand{\bemb}{\widetilde{\operatorname{Emb}}}
\newcommand{\hofib}{\operatorname{hofib}}
\newcommand{\cemb}{C\mathrm{Emb}}
\newcommand{\vsimeq}{\rotatebox[origin=c]{-90}{$\simeq$}}
\newcommand{\vsim}{\rotatebox[origin=c]{-90}{$\sim$}}
\newcommand{\vapprox}{\rotatebox[origin=c]{-90}{$\approx$}}
\newcommand{\Id}{\mathrm{Id}}
\newcommand{\map}{\operatorname{Map}}
\newcommand{\Sing}{\operatorname{Sing}}
\newcommand{\sSet}{\mathsf{sSet}}
\newcommand{\Top}{\mathsf{Top}}
\newcommand{\rmTop}{\mathrm{Top}}
\newtheorem{thm}{Theorem}[section]
\newtheorem*{thm*}{Theorem}
\newtheorem*{lem*}{Lemma}
\newtheorem{cor}[thm]{Corollary}
\newtheorem{lem}[thm]{Lemma}
\newtheorem*{cl}{Claim}
\newtheorem{claim}{Claim}
\newtheorem*{question*}{\textbf{Question}}
\newtheorem{thmx}{Theorem}
\newtheorem{corx}[thmx]{Corollary}
\newtheorem{notn}[thm]{Notation}
\newtheorem{conv}[thm]{Convention}
\newcommand*\circled[1]{\tikz[baseline=(char.base)]{%
            \node[shape=circle,draw,inner sep=1pt, scale = 0.95] (char) {#1};}}
\newtheorem{prop}[thm]{Proposition}
\newtheorem{defn}[thm]{Definition}
\theoremstyle{remark}
\newtheorem{exm}[thm]{Example}
\newtheorem{rem}[thm]{Remark}
\newtheorem{warn}[thm]{Warning}
\newtheorem{step}{Step}
\theoremstyle{definition}
\newcommand{\norm}[1]{\left\lVert #1 \right\rVert }
\newcommand{\R}{{\scaleobj{0.97}{\mathbb{R}}}}
\newcommand{\C}{\mathbb{C}}
\newcommand{\Z}{{\scaleobj{0.97}{\mathbb{Z}}}}
\newcommand{\Q}{\mathbb{Q}}
\newcommand{\geqs}{\scaleobj{0.85}{\geq}}
\newcommand{\leqs}{\scaleobj{0.85}{\leq}}
\newcommand{\s}{\mathbb{S}}
\newcommand{\im}{\mathrm{Im}\ }
\newcommand{\Wh}{\mathrm{Wh}}
\newcommand{\bdiffb}{\widetilde{\mathrm{Diff}}{\vphantom{\mathrm{Diff}}}^{\hspace{1pt}b}}
\newcommand{\Asp}{\mathbf{A}}
\newcommand{\bWhsp}{\widetilde{\mathbf{Wh}}{\vphantom{\mathrm{Wh}}}^{\diff}}
\newcommand{\pemb}{\operatorname{Emb}^{(\sim)}}
\newcommand{\Hsp}{\mathbf{H}}
\newcommand{\CEsp}{\mathbf{CE}}
\newcommand{\Whsp}{\mathbf{Wh}^{\diff}}
\newcommand{\Whsptop}{\mathbf{Wh}^{\mathrm{Top}}}
\numberwithin{equation}{section}
\newsavebox{\pullback}
\sbox\pullback{%
\begin{tikzpicture}%
\draw (0,0) -- (1ex,0ex);%
\draw (1ex,0ex) -- (1ex,1ex);%
\end{tikzpicture}}
\newsavebox{\pushout}
\sbox\pushout{%
\begin{tikzpicture}%
\draw (0,0) -- (0ex,1ex);%
\draw (0ex,1ex) -- (1ex,1ex);%
\end{tikzpicture}}
\newcommand{\newabstract}[1]{%
  \par\bigskip
  \csname otherlanguage*\endcsname{#1}%
  \csname captions#1\endcsname
  \item[\hskip\labelsep\scshape\abstractname.]
}
\begin{document}
\begin{abstract}
We establish a pseudoisotopy result for embedding spaces in the line of that of Weiss and Williams for diffeomorphism groups. In other words, for $P\subset M$ a codimension at least three embedding, we describe the difference in a range of homotopical degrees between the spaces of block and ordinary embeddings of $P$ into $M$ as a certain infinite loop space involving the relative algebraic $K$-theory of the pair $(M,M-P)$. This range of degrees is the so-called concordance embedding stable range, which, by recent developments of Goodwillie--Krannich--Kupers, is far beyond that of the aforementioned theorem of Weiss--Williams. 

We use this result to obtain split fibre sequences in the concordance embedding stable range, with explicit, analysable base and fibre, which determine the homotopy type of spaces of long knots of codimension at least 3. This leads to explicit computations of homotopy groups, including torsion information, in that range. In doing so, we carry out an extensive analysis of certain geometric involutions in algebraic $K$-theory that may be of independent interest.

\end{abstract}
\maketitle

{
  \hypersetup{linkcolor=black}
  \tableofcontents
}

\section{Introduction}

The classical approach to study the homotopy type of the diffeomorphism group $\diff_\partial(M)$ of a compact, possibly with boundary, high-dimensional manifold $M^d$ (ie $d\geq 5$) is based on the so called \textit{surgery-pseudoisotopy} program, which focuses on the homotopy fibre sequence
\begin{equation}
\begin{tikzcd}\label{bdiffmoddiffsequence}
(\bdiff/\diff)_\partial(M)\rar & B\diff_\partial(M)\rar["i"] & B\bdiff_\partial(M).
\end{tikzcd}
\end{equation}
The right-hand term is the classifying space of the simplicial group $\bdiff_\partial(M)_\bullet$ of \textit{block diffeomorphisms} of $M$ (cf Definition \ref{simplicialboundeddefn}), an approximation to the ordinary diffeomorphism group of $M$ that closely resembles the behaviour of the topological monoid $h\mathrm{Aut}_\partial(M)$ of homotopy automorphisms of $M$; for instance, one of the defining properties of $\bdiff_\partial(-)$ is that it satisfies a natural equivalence $\bdiff_\partial(M\times I)\simeq \Omega\bdiff_\partial(M)$, which also holds for $h\mathrm{Aut}_\partial(-)$ (but is very much not true for $\diff_\partial(-)$). \textit{Surgery theory}, as developed by Browder, Novikov, Ranicki, Sullivan, Wall, et al., roughly studies the difference between $\bdiff_\partial(M)$ and $h\mathrm{Aut}_\partial(M)$ in terms of the relative mapping space $(G/O)_*^{(M,\partial M)}$ and the quadratic $L$-theory of the integral group ring $\Z[\pi_1(M)]$ of the fundamental group(oid) of $M$, making the homotopy type of $\bdiff_\partial(M)$ theoretically accessible via homotopy theory and $L$-theory.

It is in understanding the homotopy type of $(\bdiff/\diff)_\partial(M)$, the homotopy fibre of the map $i$, where \textit{pseudoisotopy theory} \cite{IgusaConcordanceStability,HatcherWag} comes into play. Originally, this theory was concerned with the study of the topological group $C(M)$ of \textit{concordances} or \textit{pseudoisotopies} of $M$ consisting of diffeomorphisms of $M\times I$ that restrict to the identity on a neighbourhood of $M\times\{0\}\cup \partial M\times I$; in other words, pseudoisotopies of $M$ are precisely the automorphisms of the trivial $h$-cobordism starting at $M$. The spaces $C(M)$ and $\bdiff/\diff(M)$ are intimately related, as was first made precise by Hatcher \cite[Proposition 2.1]{HatcherSSeq} through a spectral sequence. Hatcher's realisation eventually evolved into the following celebrated theorem of Weiss and Williams \cite[Theorem A]{WWI}, which will be of central importance all throughout this paper.

\begin{thm}[Weiss--Williams]\label{WWbdiffmoddiff} Let $M^d$ be a compact smooth $d$-manifold. There exists a map
$$
\Phi^\diff: (\bdiff/\diff)_\partial(M)\longrightarrow \Omega^{\infty}\left(\Hsp^s(M)_{hC_2}\right)
$$
which is $(\phi(d)+1)$-connected\footnote{Recall that a map of spaces is said to be \textit{$n$-connected} if, for every choice of basepoint in the domain, it induces isomorphisms on homotopy groups in degrees $*<n$ and a surjection in degree $*=n$.}, where $\phi(d)$ denotes the concordance stable range of dimension $d$ (which by Igusa's theorem \cite{IgusaConcordanceStability} is at least $\min(\frac{d-4}{3},\frac{d-7}{2})$).  
\end{thm}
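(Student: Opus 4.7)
The plan is to construct $\Phi^\diff$ by factoring through the (stabilised) concordance space of $M$, equipped with its natural $C_2$-action, and then to invoke Waldhausen's identification of the stable concordance space with a Whitehead spectrum.

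First, I would exploit the simplicial definition of $\bdiff_\partial(M)$ to give a concordance-theoretic description of the fibre $(\bdiff/\diff)_\partial(M)$. Following Hatcher, a $1$-simplex of $\bdiff_\partial(M)$ that becomes null-homotopic in $B\diff_\partial(M)$ restricts along an interior face to a pseudoisotopy of $M$, well-defined only up to the ``flip'' involution that turns the cylinder upside down. Promoted to all simplicial degrees, this yields a natural map
$$
(\bdiff/\diff)_\partial(M)\longrightarrow C(M)_{hC_2},
$$
where $C_2$ acts on $C(M)$ by the flip; the higher simplices contribute iterated concordance spaces $C(M\times I^k)$, which assemble coherently into the target through the stabilisation maps.

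Second, I would stabilise using Igusa's concordance stability theorem: the suspension $C(M)\to C(M\times I)$ is $\phi(d)$-connected, so the canonical map $C(M)\to \mathcal{C}(M):=\hocolim_k C(M\times I^k)$ is an equivalence in the stable range, and $\mathcal{C}(M)$ inherits a $C_2$-action together with an infinite loop space structure. Passing to homotopy orbits absorbs the error from the unstable contributions and produces a map landing in $\Omega^\infty$ of a genuine $C_2$-spectrum, with the connectivity governed precisely by $\phi(d)+1$ (one gains a degree from the loop/suspension relation between the $k$-th and $(k{+}1)$-st concordance spaces).

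Third, I would identify $\mathcal{C}(M)\simeq \Omega^\infty\Hsp^s(M)$ as $C_2$-infinite loop spaces via the stable parametrised $h$-cobordism theorem of Waldhausen--Jahren--Rognes, which expresses the stable smooth concordance space as $\Omega\Whspace(M)$ built from $A(M)$; the flip on pseudoisotopies is matched with the duality involution on $A(M)$. The main obstacle I anticipate is precisely this last point: establishing that the geometric flip involution corresponds, under the stable parametrised $h$-cobordism equivalence, to the algebraic duality involution on Waldhausen's $A$-theory, and doing so with enough coherence to commute with the $hC_2$ construction. This compatibility is the technical heart of Weiss--Williams and requires working with bundles of $h$-cobordisms and their Poincar\'e-duality structure, rather than with individual pseudoisotopies.
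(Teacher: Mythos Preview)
This theorem is cited from \cite{WWI}, not proved in the present paper, but the paper explains (beginning of Section~\ref{section2} and Example~\ref{orthexampleWWI}) that the original Weiss--Williams argument proceeds via what is now recognised as orthogonal calculus applied to the functor $B(V)=B\diff^b_\partial(M\times V)$ of \emph{bounded} diffeomorphisms. The spectrum $\Hsp(M)$ in the statement is \emph{defined} as the first orthogonal derivative $\Theta B^{(1)}$ of this functor (Notation~\ref{FEBorthfunctorsnotn}), with its $C_2$-action arising intrinsically from the calculus (Remark~\ref{orthcalcinvolutionrem}); the map $\Phi^{\diff}$ is then essentially the map $\Phi^B_\infty$ of Proposition~\ref{propunstablemapsorthcalc}, after identifying $\hofib(B(0)\to B(\R^\infty))$ with $(\bdiff/\diff)_\partial(M)$ via \cite[Thm.~B]{WWI}. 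Connectivity in the concordance stable range comes from the Alexander-trick equivalences of Proposition~\ref{alexpropBfunctor} linking $C(M\times D^k)$ to the derivative spaces.

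Your route is genuinely different and has a real gap. Your Step~3 presupposes that $\Hsp(M)$ is (a shift of) the smooth Whitehead spectrum and that one can reach it via Waldhausen--Jahren--Rognes; but as Remark~\ref{justificationrem1} makes explicit, the identification $\Hsp(M)\simeq\Sigma^{-1}\Whsp(M)$ is only known at the level of infinite loop spaces and negative homotopy groups, and remains conjectural---not established---as an equivalence of spectra, let alone of $C_2$-spectra. So even granting everything else, your construction does not land in the target of the theorem as stated. (Indeed, the paper devotes all of Section~\ref{pseudoisotopyknotsection} to a much weaker comparison of these $C_2$-structures, and only achieves it up to homotopy and away from~$2$.) There is a secondary gap in Step~1: Hatcher's analysis yields a spectral sequence, not a map, and ``promoting to all simplicial degrees'' to obtain a single coherent map into homotopy $C_2$-orbits is precisely the problem that the bounded-diffeomorphism framework solves; without it you have not explained how the higher simplices of $\bdiff_\partial(M)_\bullet$ assemble into a map rather than merely a filtration.
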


The $C_2$-spectrum $\Hsp^s(M)$ is the $1$-connective cover of a spectrum $\Hsp(M)$ which is roughly built out of deloopings of spaces of smooth $h$-cobordisms (cf Notation \ref{FEBorthfunctorsnotn} and Section \ref{hcobsection}), and whose involution corresponds (up to a minus sign) to ``reversing the direction of an $h$-cobordism''. This latter spectrum has a close connection to algebraic $K$-theory: its infinite loop space $\Omega^\infty \Hsp(M)$ is equivalent to the (smooth) \textit{stable $h$-cobordism space} $\mathcal{H}(M)$ (cf Remark \ref{justificationrem2}), which fits in a fibre sequence of spaces \cite{WJR}
\begin{equation}\label{WaldhausenWhFibseq}
\begin{tikzcd}
\mathcal{H}(M)\rar & Q_+M:=\Omega^\infty\Sigma^\infty_+M\rar["\nu"] & A(M),
\end{tikzcd}
\end{equation}
where $A(M)$ denotes Waldhausen's $A$-theory space of $M$ (cf \cite{WaldhausenAtheory}). This sequence is natural in codimension-zero embeddings, and the map $\nu$ is (naturally) a split injection. Even though the homotopy type of $A(M)$ and its involutions are difficult to understand in general, much can be said when $M$ is homotopy equivalent to a point \cite{Rognesprime2, RognesOddprimes, Blumberg2019} or the circle \cite{HesselholtWhiteheadS1}, or when working rationally \cite{BurgheleaFiedII}.

\subsection{The surgery-pseudoisotopy program for spaces of embeddings}\label{SurgPseudosection}
The homotopy type of embedding spaces is intrinsically tied to that of diffeomorphism groups as seen via the \textit{isotopy extension theorem}. More precisely, let $M^d$ be as before and let $\iota: P\xhookrightarrow{} M$ be a compact submanifold that meets $\partial M$ transversely. Write $\emb_{\partial_0}(P,M)$ for the space of smooth embeddings of $P$ into $M$ which agree with $\iota$ in a neighbourhood of $\partial_0 P:= P\cap \partial M$ and send $\partial P- \partial_0 P$ to the interior of $M$. Then there is a homotopy fibre sequence
\begin{equation}\label{firstisotopyextensionseq}
\begin{tikzcd}
\emb_{\partial_0,\langle\iota\rangle}(P,M)\rar & B\diff_\partial(M-\nu P)\rar &B\diff_\partial(M),
\end{tikzcd}
\end{equation}
where $\nu P$ is a small tubular neighbourhood of the standard embedding $\iota: P\xhookrightarrow{}M$, and the subscript $\langle\iota\rangle$ stands for the collection of components of $\emb_{\partial_0}(P,M)$ that are hit by the restriction map $\iota^*: \diff_\partial(M)\to \emb_{\partial_0}(P,M)$ that sends a diffeomorphism $\phi$ to $\phi\circ\iota$. In this sense, embedding spaces are the corresponding ``relative analogues'' of diffeomorphism groups, and often their homotopy types become easier to study.

In this paper we would like to advertise a direct approach for studying the homotopy type of embedding spaces (in a range of degrees) which is analogous to the one for diffeomorphism groups previously surveyed. As before, one first analyses the space of block embeddings $\bemb_{\partial_0}(P,M)$ via relative surgery methods; the main result in this direction is due to Browder--Casson--Sullivan--Wall (cf \cite[Theorem 2.2.1]{GoodwillieKleinWeiss}), and asserts that, as long as the codimension of $P\subset M$ is at least $3$, then the space of block embeddings is the homotopy pullback of a diagram involving so called \textit{Poincaré block embeddings} and \emph{immersions} and ordinary block immersions. Due to the Smale--Hirsch immersion theorem, all the ingredients that come into the mix are accessible through homotopy theory and thus, up to extensions, so are block embeddings.

Following the same strategy as for the classical surgery-pseudoisotopy program, it remains to understand the difference between ordinary and block embeddings, ie the homotopy fibre
\begin{equation}\label{pseudoembdefn}
\emb^{(\sim)}_{\partial_0}(P,M):=\hofib_\iota(\emb_{\partial_0}(P,M)\to \bemb_{\partial_0}(P,M)),
\end{equation}
by means of pseudoisotopy theory. This space also fits in another homotopy fibre sequence
\begin{equation}\label{bdiffmoddiffisotopyextension}
\begin{tikzcd}
    \emb^{(\sim)}_{\partial_0}(P,M)\rar & (\bdiff/\diff)_\partial(M-\nu P)\rar["\mu"] & (\bdiff/\diff)_\partial(M)
\end{tikzcd}
\end{equation}
obtained as the fibre of the map from (\ref{firstisotopyextensionseq}) to its block analogue (see (\ref{blockIET})). What was previously fulfilled by Theorem \ref{WWbdiffmoddiff} for the pseudoisotopy part of diffeomorphism groups seems to be missing in the case of embedding spaces; the best result known in this direction is \textit{Morlet's lemma of disjunction} \cite[Theorem 3.1]{BurgLashRoth} which, in that reformulation, determines the connectivity of the map $\mu$. 

Our first main result fills in this gap in the surgery-pseudoisotopy program for embedding spaces, and describes the homotopy type of $\emb_{\partial_0}^{(\sim)}(P,M)$ in a range outside of the connectivity of $\mu$.

\begin{thmx}\label{EmbWWIThm}
There exists a map 
$$
\Phi^{\emb}: \emb^{(\sim)}_{\partial_0}(P,M)\longrightarrow \Omega^{\infty}\left(\CEsp(P, M)_{hC_2}\right)
$$
which is $\phi_{\cemb}(d,p)$-connected if $d\geq 4$ and the handle dimension $p$ of $P$ relative to $\partial_0P$ satisfies $p\leq d-3$. Here $\phi_{\cemb}$ is the concordance embedding stable range (see (\ref{concembstablerange})) and
$$
\CEsp(P, M):=\operatorname{hofib}(\Hsp(M-\nu P)\to \Hsp(M)).
$$
\end{thmx}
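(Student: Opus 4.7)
The plan is to reprove the Weiss--Williams construction in a form adapted to embeddings, with concordance embedding spaces $\cemb_{\partial_0}(P\times I^k, M\times I^k)$ replacing spaces of concordances, and to read off the connectivity from the concordance embedding stable range of Goodwillie--Krannich--Kupers rather than from Igusa's concordance stable range. The $C_2$-spectrum $\CEsp(P,M)$ admits a concrete geometric model: an element of $\hofib\bigl(\mathcal H(M\times I^k - \nu P\times I^k)\to \mathcal H(M\times I^k)\bigr)$ is, modulo tubular neighbourhood data, an $h$-cobordism of $M\times I^k - \nu P\times I^k$ together with a trivialisation of its completion across $\nu P\times I^k\times I$ inside $M\times I^k\times I$, which is the same data as a concordance embedding of $P\times I^k$ into $M\times I^k$. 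Under this identification, $\Omega^\infty\CEsp(P,M)$ becomes a stabilisation of concordance embedding spaces, and one defines $\Phi^\emb$ as the canonical map from $\emb^{(\sim)}_\partial(P,M)$ into it, constructed exactly as in the Weiss--Williams assembly for $(\bdiff/\diff)_\partial$ but at the relative level.

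A first, cheap version of $\Phi^\emb$ can also be obtained by taking horizontal homotopy fibres in the comparison square for (\ref{firstisotopyextensionseq}) under $\Phi^\diff_{M-\nu P}$ and $\Phi^\diff_M$ of Theorem \ref{WWbdiffmoddiff}; however, this only yields the Weiss--Williams connectivity $\phi(d)$, which falls far short of $\phi_{\cemb}(d,p)$. To reach the stronger bound, I would argue at each finite stage of the concordance embedding tower: the structure maps
$$\cemb_{\partial_0}(P\times I^k, M\times I^k)\longrightarrow \Omega\cemb_{\partial_0}(P\times I^{k+1}, M\times I^{k+1})$$
are $\phi_{\cemb}(d,p)$-connected by the Goodwillie--Krannich--Kupers stability theorem, and the initial map from $\emb^{(\sim)}_\partial(P,M)$ into the first stage is highly connected by an iterated isotopy extension argument mirroring the Weiss--Williams analysis for ambient diffeomorphisms. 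Taking homotopy $C_2$-orbits does not worsen the connectivity.

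The main obstacle is the equivariant identification of the concordance embedding spectrum with $\CEsp(P,M)$. The non-equivariant comparison reduces to Waldhausen's delooping of spaces of $h$-cobordisms combined with the fibre sequence of concordance embeddings $\cemb_{\partial_0}(P\times I^k, M\times I^k)\to C(M\times I^k - \nu P)\to C(M\times I^k)$. The delicate point is matching the geometric "reverse the direction of a concordance embedding" involution with the duality-type involution on $\Hsp$ underlying (\ref{WaldhausenWhFibseq}), restricted to the fibre $\CEsp(P,M)$. Tracking framings, tubular neighbourhoods and the relevant transfers through this comparison of involutions is where the bulk of the technical work will lie --- and, as the abstract foreshadows, is presumably responsible for the paper's extensive analysis of geometric involutions in algebraic $K$-theory.
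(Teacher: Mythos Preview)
Your proposal has the right overall shape but misidentifies where the work lies.

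\textbf{The involution is not the obstacle for Theorem~\ref{EmbWWIThm}.} The paper defines $\CEsp(P,M)$ as the first orthogonal derivative $\Theta F^{(1)}$ of the functor $F(V):=\hofib\bigl(B\diff^b_\partial(\overline{M-P}\times V)\to B\diff^b_\partial(M\times V)\bigr)$; the $C_2$-action then comes \emph{for free} from the orthogonal calculus machinery (Remark~\ref{orthcalcinvolutionrem}), and $\Phi^{\emb}$ is simply the Taylor approximation map $\Phi^F_\infty$ of Proposition~\ref{propunstablemapsorthcalc} composed with an identification of its domain. No separate equivariance verification is needed. The involution analysis you anticipate (Section~\ref{pseudoisotopyknotsection}) is entirely for the \emph{application} to long knots (Theorem~\ref{LongKnotsThm}), not for Theorem~\ref{EmbWWIThm} itself. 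By working directly with concordance embedding spaces rather than through orthogonal calculus, you have chosen the route on which equivariance genuinely would be an issue; the paper sidesteps this.

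\textbf{The genuine obstacles you skip over are two.} First, one must identify the domain of $\Phi^F_\infty$: the equivalence $\hofib(F(0)\to F(\R^\infty))\simeq \emb^{(\sim)}_{\partial_0}(P,M)$, equivalently $F(\R^\infty)\simeq \bemb_{\partial_0,\langle\iota\rangle}(P,M)$ (Proposition~\ref{hofibembprop}). This is not a formal consequence of taking fibres of the Weiss--Williams setup for $M-\nu P$ and $M$; it requires showing the block-analogue layers $\widetilde{F}^{(1)}(\R^n)$ are contractible, which uses $\pi_1(M-P)\cong\pi_1(M)$ (from $p\leq d-3$) together with the description of $\pi_*(\widetilde B^{(1)}(\R^n))$ in terms of lower $K$-groups of $\Z[\pi_1]$. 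Second, the connectivity induction over the finite-stage maps $\Phi^F_n$ hinges on the $n$-connectivity of $\Theta F^{(1)}_n$ (Lemma~\ref{ThetaF1nconnectivity}), which combines Hudson's theorem with the fact that $\pi^s_{\leq 0}\Hsp(-)$ depends only on $\pi_1$. Only after these two ingredients are established does the Goodwillie--Krannich--Kupers range enter, via the comparison squares~(\ref{commsquareFconc}). Your ``iterated isotopy extension argument mirroring the Weiss--Williams analysis'' for the initial map does not correspond to any step in the actual proof.
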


That is, under the assumptions of the statement, the homotopy type (eg homotopy/homology groups) of $\emb^{(\sim)}_{\partial_0}(P,M)$ in degrees $* < \phi_{\cemb}(d,p)$ agrees with that of the infinite loop space $\Omega^{\infty}(\CEsp(P,M)_{hC_2})$.

\begin{rem}
The involutions in the $h$-cobordism spectra involved in the statement of Theorem \ref{EmbWWIThm} are exactly those of Theorem \ref{WWbdiffmoddiff}, which naturally arise from Weiss' orthogonal calculus (see Sections \ref{OrthReviewSection} and \ref{ThetaO1appendix}). When $M$ is stably parallelisable (and localising away from $2$), we relate these involutions to well-known algebraic ones in Theorem \ref{absolutepropTWWvsTepsilon} and Corollary \ref{propTWWvsTepsilon} (cf Notation \ref{involutionnotn} for conventions). See also Corollary \ref{milnorinvcor} for the effect of these involutions on $\pi_0^s(\Hsp(M))=\Wh(\pi_1(M))$ in terms of Milnor's involution \cite{MilnorWhiteheadTorsion}.
\end{rem}

\begin{rem}[Splitting results and the Gromoll filtration]\label{IntroRemGromoll}
    As a consequence of Theorem \ref{EmbWWIThm}, we establish a splitting result (cf Theorem \ref{splittingthm}) for embedding spaces of manifolds containing interval factors reminiscent of work of Burghelea--Lashof \cite[Corollary E]{BurgLash}. This has remarkable consequences for the Gromoll filtration of embedding spaces (cf Definition \ref{GromollDefn} and Corollary \ref{CorGromollFiltration}).
\end{rem}

\begin{rem}[Topological version of Theorem \ref{EmbWWIThm}]\label{remembWWI}
Theorem \ref{WWbdiffmoddiff} and the results of \cite{WWI} admit topological analogues. Likewise, a topological version of Theorem \ref{EmbWWIThm} holds after some adjustments. First, smooth embedding spaces must be replaced by spaces of locally flat topological embeddings, and the $h$-cobordism spectra by their topological versions. Second, the result is only valid when $P$ has geometric codimension zero in $M$, since a topological analogue of Proposition \ref{positivecodimprop} cannot hold (see Remark \ref{codimzerotopremark}). Third, when $d=4$ and $CAT=\mathrm{Top}$, we additionally require $M$ to be $1$-connected (cf\ Remark \ref{HudsonTopRem}). One should also bear in mind, however, that the bound~(\ref{concembbound}) is, a priori, only valid for smoothable topological manifolds. See Remarks \ref{HudsonTopRem}, \ref{topIETrem}, and \ref{TOPsimplicialrem} for modified arguments in the topological setting. A $PL$ analogue of Theorem \ref{EmbWWIThm} should also exist after similar adjustments, though we leave the details to the reader (see also Warning \ref{SimplicialWarning}).
\end{rem}


We highlight two remarkable features of Theorem~\ref{EmbWWIThm} that make it particularly well-suited for computations.

\subsubsection{The concordance embedding stable range}\label{concembsection}
Fix $\iota: P\xhookrightarrow{} M$ as before. A \textit{concordance embedding} of $P$ into $M$ is an embedding $\varphi: P\times I\xhookrightarrow{}M\times I$ such that
    \begin{itemize}[itemsep=2pt, leftmargin=+.4in]
        \item[($a$)] $\varphi^{-1}(M\times\{i\})=P\times\{i\}$ for $i=0,1$ and
        \item[($b$)] $\varphi$ agrees with the inclusion $\iota\times \mathrm{Id}_I$ on a neighbourhood of $P\times \{0\}\cup \partial_0 P\times I$.
    \end{itemize}\vspace{1pt}
    We denote by $\cemb(P,M)$ the space of all such embeddings, topologised as a subspace of $\emb(P\times I, M\times I)$. There are stabilisation maps 
    $$\Sigma: \cemb(P,M)\to \cemb(P\times I, M\times I)$$
    given by taking the product of an embedding with $I$ and unbending corners  appropriately (cf \cite[Fig.~1]{ConcEmbStablerange}), and the \textit{concordance embedding stable range} of the pair $(M,P)$, denoted $\phi_{\cemb}(M,P)$, is the largest integer $k$ such that all the stabilisations in
$$
\begin{tikzcd}
    \cemb(P,M)\rar["\Sigma"] & \cemb(P\times I, M\times I)\rar["\Sigma"] & \cemb(P\times I^{2}, M\times I^{2})\rar["\Sigma"] &\dots
\end{tikzcd}
$$
    are $k$-connected. Then, the \textit{concordance stable range} for a tuple $(d,p)$ is
    \begin{equation}\label{concembstablerange}
    \phi_{\cemb}(d,p):=\min\left\{\phi_{\cemb}(M,P) : \text{$\dim M=d$ and \textit{h}-$\dim(P,\partial_0P)=p$}\right\}.
    \end{equation}
    Here, \textit{h}-$\dim(P,\partial_0P)$ denotes the \emph{handle dimension} of the pair $(P,\partial_0P)$ which is, by definition, the smallest number $p$ such that $P$ can be built from a closed collar on $\partial_0 P$ by attaching handles of index at most $p$. Goodwillie--Krannich--Kupers \cite{ConcEmbStablerange} have recently shown that if $p\leq d-3$, then 
    \begin{equation}\label{concembbound}
    \phi_{\cemb}(d,p)\geq 2d-p-5,\end{equation}
    which is far beyond Igusa's lower bound for the concordance stable range $\phi(d)$. In the concordance stable range $\phi(d)$, Theorem \ref{EmbWWIThm} is a consequence of Theorem \ref{WWbdiffmoddiff} and the isotopy extension sequence (\ref{bdiffmoddiffisotopyextension}), so our main contribution is improving the connectivity of the map $\Phi^{\emb}$ to the concordance embedding stable range $\phi_{\cemb}(d,p)$. We will also see in Remark \ref{rationallongknotsrem} that the bound (\ref{concembbound}) is sharp (this was apparent in \cite{ConcEmbStablerange}).

\begin{rem}
    In fact, our proof will show that the map $\Phi^{\emb}$ of Theorem \ref{EmbWWIThm} is $\phi_{\cemb}(M,P)$-connected under the codimension assumption $p\leq d-3$.
\end{rem}

\subsubsection{Relative algebraic \texorpdfstring{$K$}{K}-theory via trace methods}\label{relAtheorysubsubsection} Given a map of spaces $Y\to X$, let $A(Y\to X)$ denote the homotopy fibre of the induced map $A(Y)\to A(X)$. By (\ref{WaldhausenWhFibseq}), there is an equivalence of spaces
$$
\Omega A(M-P\to M)\simeq \Omega^{\infty}\CEsp(P,M)\times \Omega^2 Q(M/M-P).
$$
The codimension assumption on the embedding $\iota: P\subset M$ in the statement of Theorem \ref{EmbWWIThm} guarantees that the inclusion $M-P\to M$ is $2$-connected. This can be used to our advantage, as firstly it ensures that the spectrum $\CEsp(P,M)$ is connective (see Lemma \ref{ThetaF1nconnectivity}), but more importantly that, via \textit{trace methods}, the homotopy type of $A(M-P\to M)$ is far more accessible than that of $A(M-P)$ and $A(M)$ on their own.

\textit{Trace methods}  are concerned with the study of \emph{topological cyclic homology} \cite{BokHsiMadTC}, denoted $TC(-)$, and related invariants as an approximation to algebraic $K$-theory. This is something sensible to do by the seminal work of Dundas--Goodwillie--McCarthy \cite{DundasMcCarthy, Dundas1997, DundasGoodwillieMcCarthy}, who showed that the cyclotomic trace map provides an equivalence of relative theories $A(Y\to X)\simeq TC(Y\to X)$, so long as $Y\to X$ is $2$-connected. The treatment of TC by Nikolaus--Scholze \cite{NikolausScholze} provides even further computational control of this invariant. In the cases we are concerned with (spherical group rings), the homotopy type of TC was fully described by Bökstedt--Hsiang--Madsen \cite{BokHsiMadTC} in terms of the stable homotopy of the free loop space $L(-):=\mathrm{Map}(S^1,-)$ together with its natural $S^1$-action and cyclotomic structure. When working over the field of rational numbers, this whole story simplifies even further by Goodwillie's isomorphism \cite{GoodwillierelativeAtheory} 
\begin{equation}\label{rationalisomorphismAtheory}
\pi_*(A(Y\to X))\otimes \Q\cong HC_{*}(\Omega X, \Omega Y; \Q)\cong H^{S^1}_{*}(LX,LY;\Q), 
\end{equation}
where $HC_*$ denotes Connes' cyclic homology, and $H^{S^1}_*$ stands for the $S^1$-equivariant homology. 

So as we have just seen, the homotopy type of (the infinite loop space of) the connective spectrum $\CEsp(P,M)$ of Theorem \ref{EmbWWIThm} is pretty accessible in general. However, one still needs to deal with the involution appearing in the statement in order to apply the result, which is a rather technical task that, so far, had only been carried out rationally by Bustamante--Farrell--Jiang \cite{InvolutionAtheoryBustamanteFarrell} (they relate this involution to one on the right hand side of (\ref{rationalisomorphismAtheory})). Integrally, one has to proceed with more care; our analysis in Section \ref{pseudoisotopyknotsection} deals with this issue localised away from $2$ and when $M$ is stably parallelisable.

\subsection{The homotopy type of spaces of long knots} The homology and homotopy of \emph{spaces of long knots} $\emb_\partial(D^p,D^d)$ has been subject to extensive reasearch in recent years, especially through the lens of embedding calculus and its relation to the little disks operad and graph complexes. See for instance Volić \cite{VolicLongKnots}, Watanabe \cite{WatanabeLongKnots}, Sinha and Scannel \cite{ScannellSinha, DevPSinha2009}, Budney and Cohen \cite{BudneyIntegralLongKnots, BudneyCohen} for when $p=1$ and $d=3,4$ mainly, or more modern treatments as in Arone--Turchin \cite{AroneTurchinHomologyLK, AroneTurchinHomotopyLK}, Dwyer--Hess \cite{DwyerHess}, Boavida de Brito--Weiss \cite{BoavidadeBritoWeissLK}, at last culminating in the work of Fresse--Turchin--Willwacher \cite{FresseTurchinWillwacher} where a complete description of $\pi_*(\emb_\partial(D^p,D^d))\otimes \Q$ is given in terms of the homology of the \textit{hairy graph complex}. See also Boavida de Brito--Horel \cite{BritoHorelLongKnots} for some torsion computations in the homotopy groups of spaces of long knots when $p=1$.

The second main result of this paper is a full description of the homotopy type of $\emb_\partial(D^p, D^d)$ for $d-p\geq 3$ roughly in the concordance embedding stable range (\ref{concembstablerange}) and localised away from $2$. This is done by analysing the homotopy fibre sequence
\begin{equation}\label{knotfibseq1}
\begin{tikzcd}
\pemb_\partial(D^p,D^d)\rar & \emb_\partial(D^p, D^d)\rar & \bemb_{\partial}(D^p,D^d)
\end{tikzcd}
\end{equation}
following the surgery-pseudoisotopy program for embedding spaces surveyed in Section \ref{SurgPseudosection}, a crucial step of which is Theorem \ref{EmbWWIThm}. Given a finite dimensional virtual $G$-representation $\rho$ over $\R$, denote by $\s^\rho$ the representation sphere spectrum associated to it; we will consider its homotopy orbit spectrum $\s^\rho_{hG}$, which is equivalent to the Thom spectrum of the associated virtual vector bundle $EG\times_G\rho\to BG$. Let $\psi_m$ denote the real $m$-dimensional representation of the dihedral group $D_m$ (seen as a subgroup of the symmetric group $\Sigma_m$) given by permuting the factors of $\R^m$, and let $\sigma: C_2=\{\pm 1\}\xhookrightarrow{}\R^\times$ be the sign representation (also regarded as a $D_m$-representation by restriction along the determinant $D_m\xhookrightarrow{}O(2)\overset\det\to \{\pm 1\}= C_2$). 

\begin{thmx}\label{LongKnotsThm}
   For $p\leq d-3$ and $d\geq 5$, consider the virtual $D_m$-representations
$$
    \rho_m:=(d+1)(\sigma-1)+\psi_m\otimes(d-p-3+\sigma).
$$  
    Then the homotopy fibre sequence (\ref{knotfibseq1}), upon localising away from $2$ and taking $(\phi_{\cemb}(d,p)-1)$-th Postnikov sections, takes the form
 \begin{equation}\label{LKfibrationStatement}
 \begin{tikzcd}
\prod_{m\geqs 2} \Omega^{\infty}\big(\s^{\rho_m}_{hD_m}\big)\rar&\emb_\partial(D^p, D^d)\rar&\Omega^{p}\hofib\big(G(d-p)/O(d-p)\to G/O\big).
\end{tikzcd}
   \end{equation}
    The resulting sequence is split if $p\geq 2$, and splits after being looped once if $p=1$.
\end{thmx}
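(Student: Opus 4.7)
The plan is to analyse the homotopy fibre sequence \eqref{knotfibseq1} following the surgery-pseudoisotopy program outlined in Section \ref{SurgPseudosection}, computing the base and the fibre separately and then identifying \eqref{knotfibseq1} with \eqref{LKfibrationStatement}.

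For the base, I would apply the Browder--Casson--Sullivan--Wall theorem, which for $d-p\geq 3$ expresses $\bemb_{\partial}(D^p,D^d)$ as a homotopy pullback of the spaces of block immersions and of Poincar\'e block embeddings. The immersion space is $\Omega^p O(d)/O(d-p)$ by Smale--Hirsch, while its Poincar\'e counterpart is, by an analogous argument for spherical fibrations, a $p$-fold loop space on a classifying space of $G$-framed disk bundles. Combining these under the natural comparison $BO\to BG$ and computing the homotopy pullback yields $\bemb_{\partial}(D^p,D^d)\simeq\Omega^{p}\hofib\big(G(d-p)/O(d-p)\to G/O\big)$, matching the base of \eqref{LKfibrationStatement}.

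For the fibre, Theorem \ref{EmbWWIThm} gives $\pemb_{\partial}(D^p,D^d)\simeq\Omega^{\infty}\big(\CEsp(D^p,D^d)_{hC_2}\big)$ in the concordance embedding stable range. Since $D^d-\nu D^p$ deformation retracts onto $S^{d-p-1}$ and $D^d$ is contractible, $\CEsp(D^p,D^d)\simeq\hofib\big(\Hsp(S^{d-p-1})\to\Hsp(*)\big)$, and splitting off the $Q_+$-part via \eqref{WaldhausenWhFibseq} reduces the problem to computing the relative $A$-theory $A(S^{d-p-1}\to *)$. Since $d-p\geq 3$, the map $S^{d-p-1}\to*$ is at least $2$-connected, so by Dundas--Goodwillie--McCarthy the cyclotomic trace identifies this with $TC(S^{d-p-1}\to*)$, and the B\"okstedt--Hsiang--Madsen description of $TC$ of spherical group rings (or its Nikolaus--Scholze reformulation) decomposes it, via the cyclic stratification of the free loop space $LS^{d-p-1}$ into constant and non-constant loops, into a product indexed by $m\geq 2$ of $C_m$-homotopy orbits of equivariant Thom spectra over $BC_m$ built from the permutation representation $\psi_m$.

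The main technical obstacle is to match the Weiss--Williams involution on $\CEsp(D^p,D^d)$ with an explicit algebraic involution on this BHM decomposition. Theorem \ref{absolutepropTWWvsTepsilon}, applied to the stably parallelisable manifolds $D^d$ and $S^{d-p-1}$, shows that after inverting $2$ this involution corresponds to one on $TC$ implemented by the reversal of the cyclic ordering of loops. Under this correspondence the $C_m$-action on the $m$-th summand extends to a $D_m$-action and homotopy orbits under Weiss--Williams become $D_m$-homotopy orbits. The precise virtual $D_m$-representation $\rho_m=(d+1)(\sigma-1)+\psi_m\otimes(d-p-3+\sigma)$ is then read off from the geometry: $\psi_m\otimes(d-p-3)$ captures the trivial part of the normal bundle to the diagonal cyclic stratum, $\psi_m\otimes\sigma$ the orientation-reversal on each of the $m$ loops, and $(d+1)(\sigma-1)$ accounts for the Weiss--Williams shift together with a Spanier--Whitehead duality discrepancy specific to codimension $d-p$. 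Finally, the splitting of \eqref{LKfibrationStatement} for $p\geq 2$ should follow from an obstruction-theoretic argument, since the base is simply connected and a section can be constructed using the infinite-loop space structure on the fibre; for $p=1$ the $\pi_0$ of the base (detected by classical knot invariants) obstructs a direct section, but the obstruction vanishes after looping once.
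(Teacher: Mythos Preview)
Your identification of the base via Browder--Casson--Sullivan--Wall matches the paper's approach. However, there are two genuine gaps.

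For the fibre, you propose computing via the cyclotomic trace and the B\"okstedt--Hsiang--Madsen description of $TC$, then matching the Weiss--Williams involution with loop-reversal on $TC$. But Theorem \ref{absolutepropTWWvsTepsilon} only identifies the Weiss--Williams involution (away from $2$) with the canonical involution $\tau_\epsilon$ on $\Whsp(-)$; it says nothing about $TC$. Transporting $\tau_\epsilon$ across the cyclotomic trace $C_2$-equivariantly would require a \emph{real} cyclotomic trace, which the paper explicitly notes is not yet available. The paper instead avoids $TC$ entirely: it uses the Carlsson--Cohen--Goodwillie--Hsiang splitting of $A(\Sigma Y)$ (Proposition \ref{Aofsuspensionprop} and Corollary \ref{Whofsuspension}) and carries out a direct analysis of $\tau_\epsilon$ on the matrix model $GL_m(Q_+G)$ (Proposition \ref{tranpositionprop}) to read off the $D_m$-representations $\rho_m$. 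Your ``read off from the geometry'' step is precisely where the real work lies, and the $TC$ route does not currently supply it.

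For the splitting, your obstruction-theoretic argument does not work: the base $\Omega^p\hofib(G(d-p)/O(d-p)\to G/O)$ is only $(2d-3p-4)$-connected, which is not positive for many $(d,p)$ in the stated range, and in any case an infinite loop structure on the fibre alone does not produce a section. The paper's splitting (Theorem \ref{splittingthm} and Corollary \ref{splittinglongknotcor}) is obtained by an entirely different mechanism: one shows that the graphing map $\Gamma: \Omega\pemb_{\partial_0}(P,M)\to \pemb_{\partial_0}(P\times I, M\times I)$ is nullhomotopic in the relevant range via a geometric ``$U$-shape'' trick on concordance embeddings, and combines this with the fact that the $O(1)$-transfer is split injective away from $2$.
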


\begin{rem}\label{thmBrem}
    (i)\hspace{1.5pt} The spaces $G(n)/O(n)$ and $G/O$ appearing in (\ref{LKfibrationStatement}) denote the homotopy fibres of the natural maps $BO(n)\to BG(n)$ and $BO\to BG$, respectively, where $G(n)$ is the topological grouplike monoid of self-homotopy equivalences of $S^{n-1}$, and $G$ is the homotopy colimit of the suspension maps $G(n)\to G(n+1)$. Understanding the homotopy groups of these spaces roughly amounts to understanding unstable and stable homotopy groups of spheres.

    \noindent (ii)\hspace{1.5pt} The space $\emb_\partial(D^p,D^d)$ is an $\mathbb{E}_p$-algebra, and so it can indeed be localised. When $d-p\geq 3$, it is exactly $(2d-3p-4)$-connected by work of Budney \cite[Proposition 3.9]{BudneyIntegralLongKnots}. So when $2d-3p-4<0$, by \textit{localising} $\emb_\partial(D^p,D^d)$ we really mean localising each of its connected components one at a time (and \emph{not} localising the abelian group\footnote{If $d-p\geq 3$, the monoid $\pi_0(\emb_\partial(D^p,D^d))$ is indeed an abelian group: that it is a group follows from Hudson's Theorem \ref{Hudsonstheorem}, which says that it is isomorphic to $\pi_0(\bemb_\partial(D^p,D^d))\cong \pi_p(\bemb_\partial(*,D^{d-p}))$. That it is abelian when $p=1$ is well-known, and can be proved by the ``pulling one knot through another'' trick, as illustrated in~\cite[Fig.~2]{Budney2007}.} $\pi_0(\emb_\partial(D^p,D^d))$ directly). The same applies to the outer terms in (\ref{knotfibseq1}). 

    \noindent (iii)\hspace{1.5pt} When $p=1$ and $d=4$ (ie the lowest dimensional case of interest if $d-p\geq 3$), Theorem \ref{LongKnotsThm} holds after looping both (\ref{knotfibseq1}) and (\ref{LKfibrationStatement}); see Remark \ref{p1d4caserem}. Using this to study the homotopy groups of $\emb_\partial(D^1,D^4)$, however, yields weaker results than the ones of Budney \cite[Proposition 3.9]{BudneyIntegralLongKnots}. See Remark~\ref{BudneyComparisonRem} for further comparison of our work with Budney's, and for consequences of Theorem \ref{LongKnotsThm} for the Gromoll filtration of spaces of long knots.
\end{rem}

For $A$ an abelian group and $\ell$ a prime, write $A_{(\ell)}:=A\otimes \Z_{(\ell)}$, ie the localisation of $A$ at the prime $\ell$. We will compute some of the homotopy groups of the fibre in (\ref{LKfibrationStatement}) in Section \ref{htpyLKsection}, and hence deduce new torsion information about the homotopy groups of $\emb_\partial(D^p,D^d)$ in high-dimensions (ie $d\geq 5$). 

\begin{corx}[Propositions \ref{htpygroupsEmcoprimeprop} \& \ref{htpyat3prop}] \label{LongKnotsCor} For $d-p\geq 3$ and $\ell$ an odd prime, there are isomorphisms
$$
\pi_*(\emb_\partial(D^p,D^d))_{(\ell)}\cong \pi_{*+p}(\hofib(G(d-p)/O(d-p)\to G/O))_{(\ell)}\oplus\bigoplus_{m\geq 2}\pi_*^s(\s^{\rho_m}_{hD_m})_{(\ell)}
$$
in degrees $*\leq \phi_{\cemb}(d,p)-1$. When $m\geq 2$ and $\ell\nmid 2m$,
    \begin{itemize}
    \item if $d$ is even and $p$ is even, then
    $$
\pi_*^s(\s^{\rho_m}_{hD_m})_{(\ell)}\cong\left\{
\begin{array}{cl}
    \pi_{*-m(d-p-2)}^s\otimes \Z_{(\ell)}, & m=3,5,7,\dots\\
    0, & \text{otherwise}.
\end{array}
\right.
    $$

    \item if $d$ is odd and $p$ is odd, then
    $$
\pi_*^s(\s^{\rho_m}_{hD_m})_{(\ell)}\cong\left\{
\begin{array}{cl}
    \pi_{*-m(d-p-2)}^s\otimes \Z_{(\ell)}, & m=2,4,6,\dots\\
    0, & \text{otherwise}.
\end{array}
\right.
    $$

    \item if $d$ is even and $p$ is odd, then
    $$
\pi_*^s(\s^{\rho_m}_{hD_m})_{(\ell)}\cong\left\{
\begin{array}{cl}
    \pi_{*-m(d-p-2)}^s\otimes \Z_{(\ell)}, & m=5,9,13,\dots\\
    0, & \text{otherwise}.
\end{array}
\right.
$$ 

    \item if $d$ is odd and $p$ is even, then
    $$
\pi_*^s(\s^{\rho_m}_{hD_m})_{(\ell)}\cong\left\{
\begin{array}{cl}
    \pi_{*-m(d-p-2)}^s\otimes \Z_{(\ell)}, & m=3,7,11,\dots\\
    0, & \text{otherwise}.
\end{array}
\right.
    $$
\end{itemize}
If $\ell$ divides $m$, the computation of $\pi_*^s(\s^{\rho_m}_{D_m})_{(\ell)}$ must be treated case by case. When $m=\ell=3$ and $d-p=3$, the first few such homotopy groups are given in Table \ref{tablehtpygroupsd-p3} (cf Proposition \ref{htpyat3prop} for notation). 
\end{corx}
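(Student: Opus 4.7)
The plan is first to apply Theorem~\ref{LongKnotsThm} to obtain the splitting
$$
\pi_*(\emb_\partial(D^p,D^d))_{(\ell)} \cong \pi_{*+p}\bigl(\hofib(G(d-p)/O(d-p) \to G/O)\bigr)_{(\ell)} \oplus \bigoplus_{m\geq 2} \pi_*^s(\s^{\rho_m}_{hD_m})_{(\ell)}
$$
in degrees $* \leq \phi_{\cemb}(d,p)-1$. For $p\geq 2$ the fibre sequence (\ref{LKfibrationStatement}) is split as stated; for $p=1$ the splitting only appears after one loop, but this still induces a splitting on positive-degree homotopy groups, with the anomalous $p=1,\,d=4$ case absorbed by Remark~\ref{p1d4caserem}. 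The task thereby reduces to computing the summands $\pi_*^s(\s^{\rho_m}_{hD_m})_{(\ell)}$.

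In the regime $\ell\nmid 2m$ the order $|D_m|$ is invertible in $\Z_{(\ell)}$, so the $\ell$-local Tate construction $(\s^{\rho_m})^{tD_m}$ vanishes; the resulting norm equivalence $\s^{\rho_m}_{hD_m}\simeq (\s^{\rho_m})^{hD_m}$ combined with the averaging idempotent $\tfrac{1}{|D_m|}\sum_{g\in D_m}g$ yields
$$
\pi_n^s(\s^{\rho_m}_{hD_m})_{(\ell)} \cong \bigl(\pi_{n-m(d-p-2)}^s\otimes\Z_{(\ell)}\bigr)^{D_m},
$$
where $D_m$ acts on the stable stems through the orientation character $\det_{\rho_m}\colon D_m\to O(1)=\{\pm1\}$. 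Since $\ell$ is odd, this invariant subgroup equals $\pi_{n-m(d-p-2)}^s\otimes\Z_{(\ell)}$ when $\det_{\rho_m}$ is trivial and vanishes otherwise.

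It remains to determine when $\det_{\rho_m}$ is the trivial character. Using $\det_{V\oplus W}=\det_V\det_W$ and $\det_{V\otimes W}=\det_V^{\dim W}\det_W^{\dim V}$, extended additively to virtual representations, one computes directly
$$
\det_{\rho_m} \;=\; \chi_\sigma^{\,d+m+1}\cdot\det_{\psi_m}^{\,d-p-2},
$$
where $\chi_\sigma$ denotes the canonical surjection $D_m\twoheadrightarrow C_2$ (the sign of a reflection) and $\det_{\psi_m}$ is the signature of the permutation action of $D_m\leq\Sigma_m$. The signature sends a rotation to $(-1)^{m-1}$ and, for $m$ odd, a reflection to $(-1)^{(m-1)/2}$; the $m$ even case is analogous and splits further along the two conjugacy classes of reflections. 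Testing triviality on a rotation and a reflection in each of the four parity regimes for $(d,p)$ recovers exactly the four congruence conditions on $m$ recorded in the statement.

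The main obstacle is the complementary case $\ell\mid m$, where $|D_m|$ is no longer invertible and the averaging argument collapses. For the distinguished subcase $m=\ell=3$, $d-p=3$ tabulated in Proposition~\ref{htpyat3prop} and Table~\ref{tablehtpygroupsd-p3}, I would run the homotopy orbit spectral sequence
$$
E^2_{s,t} \;=\; H_s\bigl(BD_3;\,\pi_t^s(\s^{\rho_3})_{(3)}\bigr)\;\Longrightarrow\;\pi_{s+t}^s(\s^{\rho_3}_{hD_3})_{(3)},
$$
using $H^*(BD_3;\mathbb{F}_3)\cong H^*(BC_3;\mathbb{F}_3)^{C_2}$, the known $3$-local structure of the low-dimensional stable stems, and the $D_3$-action on coefficients governed by the very characters $\chi_\sigma$ and $\det_{\psi_3}$ computed above. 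The requisite differentials and extensions are where the remaining computational effort concentrates.
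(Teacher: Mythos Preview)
Your treatment of the splitting and of the coprime case $\ell\nmid 2m$ is correct and is essentially the paper's argument: the paper invokes the homotopy orbit spectral sequence directly, observing that the higher group homology of $D_m$ is $2m$-torsion so only $H_0(D_m;\pi_*^s(\s^{\rho_m}))$ survives, and then computes the signs $(-1)^{\epsilon_t}$, $(-1)^{\epsilon_r}$ by which the generators $t,r\in D_m$ act. Your formulation via Tate vanishing and the determinant character $\det_{\rho_m}=\chi_\sigma^{d+m+1}\cdot\det_{\psi_m}^{d-p-2}$ is a clean repackaging of the same computation, and one checks that your exponents agree modulo~$2$ with the paper's $\epsilon_t=(m-1)(d-p-2)$ and $\epsilon_r\equiv d+1+m+\tfrac12 m(m-1)(d-p-2)$.

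For the case $m=\ell=3$, $d-p=3$, the paper does \emph{not} run the homotopy orbit (Atiyah--Hirzebruch type) spectral sequence you propose. Instead it computes $H^*(\s^{\rho_3}_{hC_3};\mathbb{F}_3)$ as a module over the Steenrod algebra $\mathcal A_3$, splits it into $C_2$-eigenspaces $J_0\oplus J_1$ to extract $H^*(\s^{\rho_3}_{hD_3};\mathbb{F}_3)$, and runs the Adams spectral sequence at~$3$. To resolve the first nontrivial differentials it exploits a further geometric input: a cofibre sequence identifying $\s^{\rho_3}_{hD_3}$ (away from~$2$) with $\Sigma^{\infty+1}BD_3$ when $d$ is even, and with a related cofibre when $d$ is odd, so that the Kahn--Priddy theorem at~$3$ forces $\pi_*^s(\s^{\rho_3}_{hD_3})_{(3)}$ to split-surject onto $\pi_{*-1}^s\otimes\Z_{(3)}$. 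This surjectivity constraint is what pins down several Adams differentials that would otherwise be ambiguous. Your homotopy orbit spectral sequence would face the analogous problem of undetermined differentials and extensions, and it is not clear you would have access to a comparable external constraint to resolve them; the paper's route through the Adams filtration together with Kahn--Priddy is what makes the table actually computable.
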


Rationally, this computation roughly recovers the homology of the $0$- and $1$-loop order part of the hairy graph complex appearing in \cite[Eq. 2]{FresseTurchinWillwacher} (see Remark \ref{rationallongknotsrem} for more details).

\subsection*{Structure of the paper} Sections \ref{section2} and \ref{section4} will be devoted to the proof of Theorem \ref{EmbWWIThm}. We start by briefly reviewing Weiss' theory of orthogonal calculus and then, in Section \ref{boundedsection}, we present the orthogonal functors that will play a role in the proof of Theorem \ref{EmbWWIThm}. In doing this, we will have to carefully describe the topology on spaces of bounded diffeomorphisms in such a way that we can employ the machinery of orthogonal calculus in this setting. After reducing to the codimension zero case in Section \ref{codimensionsection}, we use the results in the preceding section to define the map $\Phi^{\emb}$ in Section \ref{ThmAproofsubsection} and analyse its connectivity in \ref{connectivitysection}.

Section \ref{splittingknotsection} concerns the splitting result (Theorem \ref{splittingthm}) for embedding spaces mentioned in Remark \ref{IntroRemGromoll}, and its consequences for the Gromoll filtration (cf Corollary \ref{CorGromollFiltration}).

Section \ref{pseudoisotopyknotsection} deals with the analysis of the $C_2$-spectra involved in the statements of Theorems \ref{WWbdiffmoddiff} and \ref{EmbWWIThm}. The main results in this direction are Theorem \ref{absolutepropTWWvsTepsilon} and Corollary \ref{propTWWvsTepsilon}, where the involutions on these spectra are expressed (up to homotopy) in terms of the standard involution in algebraic $K$-theory.

Section \ref{longknotsection} is devoted to Theorem \ref{LongKnotsThm}, whose proof is a formal consequence of the results in the preceding sections. We then draw some conclusions on the homotopy groups of spaces of long knots in Section \ref{htpyLKsection}.

Appendix \ref{orthcalcappendix} deals with some subtleties regarding the definition of the first derivative of an orthogonal functor as an $O(1)$-spectrum, and with a technical argument in the proof of Proposition \ref{propunstablemapsorthcalc}.

In Appendix \ref{boundedappendix} we explore certain aspects related to spaces of bounded diffeomorphisms and embeddings. Namely in Section \ref{boundeddifftopmodelsection} we show that the topological models for these spaces introduced in Section \ref{boundedsection} coincide (up to weak equivalence) with the simplicial ones of Definition \ref{simplicialboundeddefn}. In Section \ref{bdiffbmodelsection} we give a ``moduli space of manifolds'' description for the classifying space of the bounded diffeomorphism group.

In Appendix \ref{AppendixB} we show that the $h$-cobordism stabilisation map anti-commutes with the involutions in these spaces. This is analogous to a result of Hatcher \cite[Appendix I, Lemma]{HatcherSSeq} and Burghelea--Lashof \cite[Corollary A7]{BurgLash} for spaces of concordance diffeomorphisms.

\subsection*{Acknowledgements}
The author is immensely grateful to his PhD supervisor Oscar Randal-Williams for suggesting the application to spaces of long knots of Theorem~\ref{EmbWWIThm}, and for his continuous support, discussions, and motivation throughout the project. He also thanks Manuel Krannich for pointing out Lemma~\ref{ThetaF1nconnectivity} and how it could be used to greatly simplify our original proof of Theorem~\ref{EmbWWIThm}, and Alexander Kupers for providing various comments and corrections when carefully reviewing the author's PhD thesis (which this paper was part of). The anonymous referees are thanked for their helpful comments, including suggesting a simpler proof of Proposition~\ref{propunstablemapsorthcalc} and encouraging the author to investigate consequences of Theorem~\ref{LongKnotsThm} for the Gromoll filtration of long knots of Budney \cite[Defn. 3.8]{BudneyIntegralLongKnots}. The author is also grateful to Mauricio Bustamante, Tom Goodwillie, Bjørn Jahren, and John Rognes for general helpful conversations. This work was supported by an EPSRC PhD Studentship, grant no.~2597647.

\section{Orthogonal calculus and spaces of bounded diffeomorphisms}\label{section2}
Much of the proof of Theorem \ref{WWbdiffmoddiff} in \cite{WWI} is an application of Weiss' \textit{orthogonal calculus} but in disguise, as this theory was not yet formalised at the time. In this section we briefly review the main aspects of this theory and develop some necessary tools required for the proof of Theorem \ref{EmbWWIThm}.

\subsection{A quick tour through orthogonal calculus} \label{OrthReviewSection}Weiss' \textit{orthogonal calculus} \cite{WeissOrthCalc} is a calculus of functors useful to understand objects of geometric flavour. It studies \textit{continuous} functors from the category $\mathcal J$ of real finite-dimensional inner product vector spaces and linear isometries to the category of (compactly generated weakly Hausdorff) spaces $\mathsf{Top}$. Such a functor $F: \mathcal J\to \mathsf{Top}$ is said to be \textit{continuous} if the evaluation map
$$
\mathrm{mor}_{\mathcal J}(U,V)\times F(U)\longrightarrow F(V)
$$
is continuous for all $U,V\in\mathcal J$. Here $\mathrm{mor}_{\mathcal J}(U,V)$ denotes the \textit{Stiefel manifold} of linear isometries from $U$ to $V$, so that $\mathcal J$ is enriched over $\mathsf{Top}$. We will work in a slightly different setup, where $\mathsf{Top}$ is replaced by the category $\mathsf{Top}_*$ of pointed spaces and $\mathcal J$ is replaced by the pointed topological category $\mathcal J_0$ with the same objects and with
$$
\mathrm{mor}_{\mathcal J_0}(U,V):=\mathrm{mor}_{\mathcal J}(U,V)_+,
$$
as morphism spaces. Similarly, a functor $F: \mathcal J_0\to \mathsf{Top}_*$ is \textit{continuous} if the evaluation map
$$
\mathrm{mor}_{\mathcal J_0}(U,V)\wedge F(U)\longrightarrow F(V)
$$
is continuous for all $U,V\in \mathcal J_0$. Such a functor $F(-)$ is also sometimes called an \textit{orthogonal functor}.

The machinery of orthogonal calculus associates to each such orthogonal functor $F(-)$ a sequence of (naïve) $O(k)$-spectra\footnote{By \emph{spectrum}, we will always mean pre-spectrum. For us, a \textit{$G$-spectrum} $X$ will be a sequence of based $G$-spaces $X_n$ together with $G$-equivariant maps $S^1\wedge X_n\to X_{n+1}$, where $G$ acts trivially on $S^1$.} $\Theta F^{(k)}$ for $k\geq 1$---the \textit{derivatives of} $F$---which fit in a tower
\begin{equation}\label{OrthTower}
\begin{tikzcd}[row sep=tiny, column sep=scriptsize]
     &&& \vdots\dar &\\
     &&& T_2F(-)\dar &\lar \Omega^\infty\left((S^{2\cdot(-)}\wedge \Theta F^{(2)})_{hO(2)}\right)\\
     &&& T_1F(-)\dar &\lar \Omega^\infty\left((S^{1\cdot(-)}\wedge \Theta F^{(1)})_{hO(1)}\right)\\
     F(-)\ar[rrr, "\eta_0"']\arrow[urrr]\arrow[uurrr] \arrow[uuurrr] &&&T_0F(-)\rar[equal, shorten >=12ex]&\hspace{-70pt}F(\R^\infty)
\end{tikzcd}
\end{equation}
of orthogonal functors---the \textit{Taylor tower}. Here
\begin{itemize}[itemsep = 3pt]
    \item $S^{k\cdot V}$ is the one point compactification of $k\cdot V:=\R^k\otimes V$, which is acted upon by $O(k)$ in the $\R^k$ component, and diagonally on the smash $S^{k\cdot V}\wedge \Theta F^{(k)}$,
    
    \item the right hand horizontal maps---the \textit{layers}---indicate the inclusions of the homotopy fibres of the subsequent vertical maps between the \textit{stages} $T_k F(-)$ of the tower,

    \item the zeroth stage $T_0F(-)$ is given by
    $
    T_0F(V):=\operatorname{hocolim}_k F(V\oplus \R^k)
    $, and thus admits a canonical equivalence from the constant orthogonal functor with \textit{value at infinity} $F(\R^\infty):=\operatorname{hocolim}_k F(\R^k)$. The map $\eta_0: F(V)\to T_0F(V)$ is simply the inclusion map.
\end{itemize}

In the proof of Theorem \ref{EmbWWIThm} we will analyse the Taylor tower (\ref{OrthTower}) only up to the first layer, so we shall now describe the spectrum $\Theta F^{(1)}$ in detail. For $V\in \mathcal J_0$, consider $$F^{(1)}(V):=\hofib(F(V)\to F(V\oplus \R)),$$ 
the homotopy fibre of the map induced by the standard inclusion $V\to V\oplus \R$. These spaces inherit an $O(1)$-action by declaring $-1\in O(1)$ to act on $V$ and $V\oplus \R$ by $-1$ on \textit{all} coordinates. There are $O(1)$-equivariant maps
\begin{equation}\label{orthstructuremap}
s_V: S^1\wedge F^{(1)}(V)\longrightarrow F^{(1)}(V\oplus \R),
\end{equation}
given, roughly, by performing a 180º rotation of $V\oplus \R^2$ about the $2$-plane $0\oplus \R^2$; we give an explicit model of this map in Section \ref{ExplicitStructureMapSection}. As notation suggests in (\ref{orthstructuremap}), $O(1)$ acts trivially on the suspension coordinate. (In general, we adopt the convention that $S^n$ denotes the $n$-sphere with the trivial $O(1)$-action.) Then, the $O(1)$-spectrum $\Theta F^{(1)}$ has $F^{(1)}(\R^n)$ as its $n$-th space, and $s_{\R^n}$ as the structure map.

\begin{rem}\label{orthcalcinvolutionrem}
This is not quite the $O(1)$-action described in \cite[Proposition 3.1]{WeissOrthCalc}; rather, it more closely follows the convention adopted in \cite[p. 601]{WWI}. We justify our convention choice in Section \ref{ThetaO1appendix}.
\end{rem}

For $V\in \mathcal J_0$, let $S(V)$ denote the unit sphere of $V$, seen as an unbased $O(1)$-space by the antipodal action. The following proposition will be the main ingredient for the construction of the map $\Phi^{\emb}$ of Theorem \ref{EmbWWIThm}.

\begin{prop}\label{propunstablemapsorthcalc}
Let $F: \mathcal J_0\to \mathsf{Top}_*$ be an orthogonal functor. For each $n\geq 0$, there are maps
\begin{equation}\label{unstableWWImaps}
\begin{tikzcd}
    \Phi_n^F: \hofib(F(0)\to F(\R^n))\rar["\eta_1"] &\hofib(T_1F(0)\to T_1F(\R^n))\simeq \Omega^\infty(S(\R^n)_+\wedge_{O(1)}\Theta F^{(1)})
\end{tikzcd}
\end{equation}
giving rise to a map of homotopy fibre sequences
$$
\begin{tikzcd}[column sep = 12pt]
    \hofib(F(0)\to F(\R^n))\rar\dar["\Phi_n^F"] &\hofib(F(0)\to F(\R^{n+1}))\rar\dar["\Phi_{n+1}^F"] &\hofib(F(\R^n)\to F(\R^{n+1}))=:\Theta F^{(1)}_n\dar["\mathrm{stab.}"] \\
    \Omega^\infty(S(\R^n)_+\wedge_{O(1)}\Theta F^{(1)})\rar & \Omega^\infty(S(\R^{n+1})_+\wedge_{O(1)}\Theta F^{(1)})\rar & \Omega^\infty(\Sigma^n\Theta F^{(1)}),
\end{tikzcd}
$$
where the vertical map ``$\mathrm{stab.}$'' is $\Theta F^{(1)}_n\xhookrightarrow{}\operatorname{hocolim}_k\Omega^k(\Theta F^{(1)}_{n+k})$. Letting $n\to \infty$ in (\ref{unstableWWImaps}), we get
\begin{equation}\label{stableWWImaps}
   \Phi^F_\infty: \hofib(F(0)\to F(\R^\infty))\longrightarrow \Omega^\infty(EO(1)_+\wedge_{O(1)}\Theta F^{(1)})=:\Omega^\infty(\Theta F^{(1)}_{hO(1)}). 
\end{equation}
\end{prop}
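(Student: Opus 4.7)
My plan is to identify the homotopy fibre $\hofib(T_1F(0)\to T_1F(V))$ naturally in $V\in\mathcal J_0$ with the infinite loop space $\Omega^\infty(S(V)_+\wedge_{O(1)}\Theta F^{(1)})$, and then define $\Phi_n^F$ as the composition of $\eta_1$ (applied on homotopy fibres) with this equivalence specialised to $V=\R^n$. The stable map (\ref{stableWWImaps}) will then follow by passing to the colimit as $n\to\infty$, since $S(\R^\infty)=\bigcup_n S(\R^n)$ is a free contractible $O(1)$-space and hence a model for $EO(1)$.

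To extract the identification, I would start from the first layer of the Taylor tower displayed in (\ref{OrthTower}), which provides a natural fibre sequence of infinite loop spaces
$$
\Omega^\infty\big((S^V\wedge \Theta F^{(1)})_{hO(1)}\big)\longrightarrow T_1F(V)\longrightarrow T_0F(V).
$$
Since $T_0F(V)\simeq F(\R^\infty)$ is a constant functor of $V$, taking homotopy fibres of the map from $V=0$ to $V$ reduces the task to computing the homotopy fibre of the left-hand map, i.e., the map induced by $S^0\hookrightarrow S^V$. For this I would use the cofibre sequence of pointed $O(1)$-spaces
$$
S^0\longrightarrow S^V\longrightarrow \Sigma S(V)_+,
$$
obtained by realising $S^V$ as the unreduced suspension of $S(V)$ with antipodal $O(1)$-action and collapsing the two cone points. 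Smashing with $\Theta F^{(1)}$ and taking $(-)_{hO(1)}$ preserves cofibre sequences (which agree with fibre sequences in spectra), and freeness of the $O(1)$-action on $S(V)$ lets one replace homotopy orbits by orbits on the rightmost term, yielding the sought equivalence.

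For naturality in $n$, I would apply $\eta_1$ to the horizontal fibre sequence involving $\hofib(F(\R^n)\to F(\R^{n+1}))=\Theta F^{(1)}_n$, and analyse the bottom row of the compatibility diagram using the cofibre sequence of pointed $O(1)$-spaces $S(\R^n)_+\hookrightarrow S(\R^{n+1})_+\to O(1)_+\wedge S^n$ (arising because the two open hemispheres of $S(\R^{n+1})\setminus S(\R^n)$ are interchanged by the antipodal action). Since the $O(1)$-action on $O(1)_+\wedge S^n\wedge \Theta F^{(1)}$ is free on the $O(1)_+$-factor, homotopy orbits collapse to orbits, producing $\Omega^\infty\Sigma^n\Theta F^{(1)}$ as the bottom horizontal fibre.

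The main obstacle, I expect, is verifying that the induced map on horizontal fibres $\Theta F^{(1)}_n=F^{(1)}(\R^n)\to\Omega^\infty\Sigma^n\Theta F^{(1)}$ coincides with the canonical stabilisation map of the spectrum $\Theta F^{(1)}$. This is not purely formal: it requires tracking the structure maps (\ref{orthstructuremap}) through the cofibre-sequence identifications above, together with the comparison between the two models $\Theta F^{(1)}$ and $\Theta^\# F^{(1)}$ supplied by the zig-zag (\ref{Thetaheartzigzag}). Once this compatibility is established, every other step in the argument is a direct application of standard orthogonal-calculus formalism.
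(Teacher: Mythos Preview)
Your proposal is correct and follows essentially the same strategy as the paper's own proof. The paper uses the cofibre sequence $S(V)_+\to *_+\to S^{V\cdot\sigma}$ (a rotation of yours) to establish the natural equivalence, and for the key compatibility of the right-hand vertical map with the stabilisation map it first reduces to the case where $F$ is homogeneous of degree~$1$ (via \cite[Thm.~7.3]{WeissOrthCalc}) before carrying out exactly the diagram chase through the zig-zag~(\ref{Thetaheartzigzag}) that you anticipate.
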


\begin{proof}
    Given $F: \mathcal{J}_0\to \mathsf{Top}_*$, write $L_1 F:=\hofib(T_1F\to T_0 F)$ for the first homogeneous layer of $F$. Consider also the orthogonal functor $$E(-):= \Omega^\infty\big((S^{(-)\cdot\sigma}\wedge \Theta F^{(1)})_{hO(1)}\big).$$
    By the classification of homogeneous functors of \cite[Theorem 9.1]{WeissOrthCalc}, there is a natural equivalence\footnote{Theorem 9.1 in \cite{WeissOrthCalc} states that \( L_1 F \) is naturally equivalent to the functor \( E' = \Omega^\infty\left((S^{(-)\cdot\sigma} \wedge \Theta^{\#} F^{(1)})_{hO(1)}\right) \), where \( \Theta^{\#} F^{(1)} \) is defined in \eqref{Thetasharpdefn}. Since this \( O(1) \)-spectrum is naturally equivalent to \( \Theta F^{(1)} \) by \eqref{Thetaheartzigzag}, it follows that \( E' \) is naturally equivalent to \( E \).} of functors $L_1 F\simeq E$, and hence a zig-zag of orthogonal functors
    $$
    \begin{tikzcd}
   F\rar["\eta_1"] &T_1F &\lar L_1 F\simeq E.  
    \end{tikzcd}
    $$
This zig-zag gives rise to the following commutative diagram, in which the vertical arrows induce maps of homotopy fibre sequences:
$$
\begin{tikzcd}
\hofib(F(0)\to F(\R^n))\rar\dar["\eta_1"] &\hofib(F(0)\to F(\R^{n+1}))\rar\dar["\eta_1"] &\Theta F^{(1)}_n\dar["\eta_1"]\\
\hofib(T_1F(0)\to T_1F(\R^n))\rar &\hofib(T_1F(0)\to T_1F(\R^{n+1}))\rar &\Theta (T_1F)^{(1)}_n\\
\hofib(L_1F(0)\to L_1F(\R^n))\rar\uar["\vsim"] &\hofib(L_1F(0)\to F(\R^{n+1}))\rar\uar["\vsim"] &\Theta (L_1F)^{(1)}_n\uar["\vsim"]\\
\hofib(E(0)\to E(\R^n))\rar\uar[dash, "\vsim"] &\hofib(E(0)\to E(\R^{n+1}))\rar\uar[dash, "\vsim"] &\Theta E^{(1)}_n.\uar[dash, "\vsim"]
\end{tikzcd}
$$

We claim that the bottom fibre sequence in this diagram is naturally equivalent to the bottom fibre sequence in the map of fibre sequences of the statement: indeed, for $n\geq 0$, write $\mathrm{Ind}_e^{O(1)}S^n$ for the wedge $S^n\vee S^n$ with the flip action. Then, for $V\in \mathcal{J}_0$ with $\dim V=v$, there is a diagram of $O(1)$-spaces
\begin{equation}\label{O(1)cofibrediagram}
\begin{tikzcd}
S(V)_+\rar\dar[equal] &\dar S(V\oplus \R)_+\rar\dar & \mathrm{Ind}_e^{O(1)} S^v\dar["a"]\\
S(V)_+\dar\rar& S^0\dar\rar &S^{V\cdot \sigma}\dar\\
*\rar & S^{(V\oplus \R)\cdot \sigma}\rar[equal] & S^{(V\oplus \R)\cdot \sigma},
\end{tikzcd}
\end{equation}
where every row and column is an $O(1)$-equivariant (homotopy) cofibre sequence. Here, the map $a: \mathrm{Ind}_e^{O(1)} S^v\cong S^V_0\vee S^V_1\to S^{V\cdot \sigma}$ sends $x\in V\subset S^{V}_i$ to $(-1)^ix$ for $i=0,1$. Applying $\Omega^\infty\big((-\wedge \Theta F^{(1)})_{hO(1)}\big)$ to \eqref{O(1)cofibrediagram}, we obtain a diagram for $V=\R^n$:
$$
\begin{tikzcd}
 \Omega^\infty\big(S(\R^n)_+\wedge_{O(1)}\Theta F^{(1)}\big)\rar\dar["\vsim"'] & \Omega^\infty\big(S(\R^{n+1})_+\wedge_{O(1)}\Theta F^{(1)}\big)\rar\dar & \Omega^\infty\big(\Sigma^n\Theta F^{(1)}\big)\dar\\
\hofib(E(0)\to E(\R^n))\dar\rar& E(0)\dar\rar &E(\R^n)\dar\\
*\rar & E(\R^{n+1}) \rar[equal] & E(\R^{n+1}),
\end{tikzcd}
$$
where every row and column is a homotopy fibre sequence. Observe that the top horizontal cofibre sequence of \eqref{O(1)cofibrediagram} consists of free \( O(1) \)-spaces, which explains the underived balanced smash product \( \wedge_{O(1)} \); note also that \( \mathrm{Ind}_e^{O(1)} S^n \wedge_{O(1)} X \simeq S^n \wedge X \cong \Sigma^n X \) for any \( O(1) \)-spectrum \( X \).

Consequently, we obtain a natural map of homotopy fibre sequences
\begin{equation}\label{snFdiagram}
\begin{tikzcd}[column sep = 12pt]
    \hofib(F(0)\to F(\R^n))\rar\dar["\Phi_n^F"] &\hofib(F(0)\to F(\R^{n+1}))\rar\dar["\Phi_{n+1}^F"] &\Theta F^{(1)}_n\dar["s_n^F"] \\
    \Omega^\infty(S(\R^n)_+\wedge_{O(1)}\Theta F^{(1)})\rar & \Omega^\infty(S(\R^{n+1})_+\wedge_{O(1)}\Theta F^{(1)})\rar & \Omega^\infty(\Sigma^n\Theta F^{(1)}).
    \end{tikzcd}
\end{equation}
It remains to argue that $s_n^F$ is naturally homotopic to the map $\mathrm{stab}^F_n:\Theta F^{(1)}_n\xhookrightarrow{}\hocolim_k \Omega^k(\Theta F^{(1)}_{n+k})$. Indeed, there is a commutative diagram
\begin{equation}\label{snFdiagram2}
\begin{tikzcd}
&&&\Omega^\infty(\Sigma^n\Theta F^{(1)})\dar[dash, "\vsim"']\\
    \Theta F^{(1)}_n\ar[rrru, "s_n^F", bend left = 8pt]\rar\dar[hook,"\mathrm{stab}^F_n"] & \Theta (T_1F)^{(1)}_n\dar[hook,"\mathrm{stab}^{T_1F}_n", "\vsim"'] &\lar["\sim"']\Theta (L_1F)^{(1)}_n\rar[dash, "\sim"]\dar[hook,"\mathrm{stab}^{L_1F}_n", "\vsim"'] &\Theta E^{(1)}_n \dar[hook,"\mathrm{stab}^E_n", "\vsim"']\\
    \Omega^\infty(\Sigma^n\Theta F^{(1)})\rar["\sim"] &\Omega^\infty(\Sigma^n\Theta (T_1F)^{(1)}) &\lar["\sim"']\Omega^\infty(\Sigma^n\Theta (L_1F)^{(1)})\rar[dash, "\sim"] &\Omega^\infty(\Sigma^n\Theta E^{(1)}),
\end{tikzcd}
\end{equation}
where the leftmost horizontal is an equivalence since $\eta_1:\Theta F^{(1)}\to \Theta(T_1F)^{(1)}$ is an equivalence by \cite[Theorem 6.3(bis)]{WeissOrthCalc}, whilst all but the leftmost vertical stabilisation maps are equivalences because the spectra involved are $\Omega$-spectra: indeed this is the case for $\Theta (T_1F)^{(1)}$ and $\Theta (L_1 F)^{(1)}$ by \cite[Corollary 5.12]{WeissOrthCalc}, whilst $\Theta E^{(1)}$ is the \textit{spectrification} of the pre-spectrum $\Theta F^{(1)}$. (This is the case for any functor of the form $H_X(-)= \Omega^\infty((S^{(-)\cdot\sigma}\wedge X)_{hO(1)})$, for $X$ an $O(1)$-(pre-)spectrum; the first derivative of $H_X$ is, non-equivariantly, the spectrification of $X$ by \cite[Section 7]{WeissOrthCalc}.) From this last observation, it also follows that the right vertical composite $\Omega^\infty(\Sigma^n\Theta F^{(1)})\xrightarrow{\sim} \Omega^\infty(\Sigma^n\Theta E^{(1)})$ is induced by the spectrification map $\Theta F^{(1)}\xrightarrow{\sim} \Theta E^{(1)}$. Thus, the sequence of equivalences in the lower row and rightmost column are all induced by equivalences of spectra
\begin{equation}\label{IdentityThetaF1}
\begin{tikzcd}
    \Theta F^{(1)}\rar["\sim"] &\Theta(T_1 F)^{(1)}&\lar["\sim"']  \Theta (L_1F)^{(1)}\rar[dash, "\sim"] & \Theta E^{(1)} & \lar["\sim"'] \Theta F^{(1)}.
\end{tikzcd}
\end{equation}
To argue that \( \mathrm{stab}_n^F \) and \( s_n^F \) are homotopic, we need to show that \eqref{IdentityThetaF1} lies in the homotopy class of the identity map of \( \Theta F^{(1)} \). For this, we need to be slightly more explicit about the equivalence \( L_1 F \simeq E \), which factors as
$$
L_1 F\overset{(!)}{\simeq}H_{L_1F}:=\Omega^\infty\big((S^{(-)\cdot \sigma}\wedge \Theta(L_1 F)^{(1)})_{hO(1)}\big)\xleftarrow{\sim}\Omega^\infty\big((S^{(-)\cdot \sigma}\wedge \Theta F^{(1)})_{hO(1)}\big)=E. 
$$
The right map in this zig-zag is induced by the natural map \( \Theta F^{(1)} \xrightarrow{\eta_1} \Theta(T_1F)^{(1)} \simeq \Theta(L_1F)^{(1)} \), whereas the equivalence \((!)\) is provided by \cite[Theorem~7.3]{WeissOrthCalc}; this equivalence satisfies that the induced one on first derivatives \( (!):\Theta(L_1 F)^{(1)} \simeq \Theta(H_{L_1F})^{(1)} \) becomes the identity when composed with the natural equivalence \( \Theta(H_{L_1F})^{(1)} \xleftarrow{\sim} \Theta(L_1F)^{(1)} \). It now easily follows that the zig-zag \eqref{IdentityThetaF1} represents the homotopy class of \( \mathrm{Id}_{\Theta F^{(1)}} \), and hence \( \mathrm{stab}_n^F \) and \( s_n^F \) are indeed homotopic, as desired.
\end{proof}

\begin{rem}
    In the proof of Theorem~\ref{EmbWWIThm}, we will only need that the map \( s_n^F \) in \eqref{snFdiagram} is as connected as the stabilisation map \( \mathrm{stab}_n^F \). This already follows from the commutative diagram \eqref{snFdiagram2}, which shows that \( s_n^F \) and \( \varphi^F_n\circ \mathrm{stab}_n^F \) are naturally homotopic, for some natural automorphism $\varphi_n^F$ of the codomain $\Omega^\infty(\Sigma^n\Theta F^{(1)})$. Thus, the last part of the proof above is not strictly necessary for our purposes. Nevertheless, we believe it is useful to have this technical point clarified.
\end{rem}

\subsection{The orthogonal functors of bounded diffeomorphisms}\label{boundedsection}

All throughout, let $\iota:P\xhookrightarrow{}M$ be as in the statement of Theorem \ref{EmbWWIThm}. In this section we present the orthogonal functors that will play a role in the proof of Theorem \ref{EmbWWIThm}. These are built out of spaces of bounded diffeomorphisms, for which we will present point-set topological models that agree up to weak equivalence with the more classical simplicial ones.

Let $V\in \mathcal J$ be an inner product finite-dimensional real vector space with associated norm $\norm{-}_V$, and let $Q$ and $Q'$ be smooth (possibly non-compact) manifolds equipped with proper maps $\pi: Q\to V$ and $\pi': Q'\to V$. For $t\geq 0$, a smooth map $f: Q\to Q'$ is said to be \textit{t-bounded} if the set $
\{\norm{\pi'(\hspace{2pt}f(q))-\pi(q)}_V: q\in Q\}\subset \R$ is bounded by $t$. More generally, $f$ is \textit{bounded} if it is $t$-bounded for some $t\geq 0$. If $Q=N\times V$ for some compact manifold $N$, $\pi$ will be assumed to be the projection to $V$.

\begin{defn}\label{topboundeddefn}
    Let $V\in \mathcal J$. The \textbf{space of bounded diffeomorphisms} of $M\times V$ relative to $\partial M\times V$ is 
    $$
\diff^b_\partial(M\times V):=\{(t,\phi)\in [0,\infty)\times \diff_\partial(M\times V): \text{$\phi$ is $t$-bounded}\},
    $$
    endowed with the subspace topology inherited from the product $[0,\infty)\times \diff_\partial(M\times V)$. Here $\diff_\partial(M\times V)$ is endowed with the weak Whitney $C^\infty$-topology. It is a group-like topological monoid under the rule $$(t,\phi)\cdot(t',\phi'):=(t+t',\phi\circ\phi').$$
    Similarly, we define the space $\mathrm{Homeo}^b_{\partial}(M\times V)$ of \textbf{bounded homeomorphisms} of $M\times V$ as a subspace of the product $\mathrm{Homeo}_{\partial}(M\times V)\times[0,+\infty)$, where $\mathrm{Homeo}_\partial(M\times V)$ is endowed with the compact-open topology.
\end{defn}

\begin{exm}\label{orthexampleWWI}
    Orthogonal calculus was largely inspired by the work of Weiss--Williams in \cite{WWI}, as can be seen in \cite[Digr. 3.8]{WWI}. For $U\in \mathcal J_0$, let $B\diff_\partial^b(M\times U)$ be the classifying space of the group-like topological monoid just introduced. Denote by $B(-)$ the orthogonal functor given by $B(U):=B\diff^b_\partial(M\times U)$ and, for $i: U\to V$ a morphism in $\mathcal J_0$, write $V=U\oplus U^\perp$ and let $B(i)$ be induced by the monoid homomorphism sending $(t,\phi)\in\diff^b_\partial(M\times U)$ to $(t,\phi\oplus\mathrm{Id}_{U^\perp})\in\diff^b_\partial(M\times U\oplus U^\perp)$. Then 
    $$
\Phi^{B}_\infty: \diff^b_\partial(M\times \R^\infty)/\diff_\partial(M)\longrightarrow \Omega^\infty(\Theta B^{(1)})
    $$
    should be\footnote{Though $\Phi^B_\infty$ is not visibly the same map as the one appearing in loc. cit., they share the same formal properties by Proposition \ref{propunstablemapsorthcalc}.} the map from \cite[Theorem C]{WWI}, and Proposition \ref{propunstablemapsorthcalc} recovers \cite[Proposition 3.1]{WWI} in this case.
\end{exm}

\begin{notn}\label{FEBorthfunctorsnotn}
    In the remainder of Section \ref{section2}, we will denote by $E(-)$ and $B(-)$ the orthogonal functors given on objects by
    \begin{equation}\label{functors}
E(V):=B\diff^b_{\partial}((M-\nu P)\times V),\qquad B(V):=B\diff^b_{\partial}(M\times V),
\end{equation}
where $\nu P$ is an open tubular neighbourhood of the embedding $\iota: P\subset M$, and on morphisms as in Example \ref{orthexampleWWI}. There is a natural transformation $E(-)\to B(-)$ given by extending a diffeomorphism by the identity on $\nu P\times (-)$, and the orthogonal functor $F(-):=\hofib(E(-)\to B(-))$ will play an especially important role in the proof of Theorem \ref{EmbWWIThm}. We will often use the following notation for the derivatives of these functors:
\begin{equation}\label{Hspectrumdefn}
\CEsp(P,M):=\Theta F^{(1)}, \qquad \Hsp(M-\nu P):=\Theta E^{(1)}, \qquad \Hsp(M):=\Theta B^{(1)}.
\end{equation}
Thus, $\CEsp(P,M)$ is, by definition, the homotopy fibre of the map $\Hsp(M-\nu P)\to \Hsp(M)$.
\end{notn}

\begin{rem}\label{justificationrem1}
    Let us comment on the notation in (\ref{Hspectrumdefn}). Write $H(M)$ for the space of smooth $h$-cobordisms starting at $M$ (cf Section \ref{hcobsection}). We will see in Remark \ref{justificationrem2} that there is an equivalence (of spaces)
   \begin{equation}\label{Hspinfloopsace}
\Omega^\infty \Hsp(M)\simeq \mathcal H(M):=\underset{k}{\hocolim}\  H(M\times D^k),
    \end{equation}
    where the colimit on the right hand side---the \textit{stable $h$-cobordism space}---is induced by the $h$-cobordism stabilisation maps of Appendix \ref{AppendixB}. The equivalence \eqref{Hspinfloopsace} is natural in codimension-zero embeddings, provided that we restrict to basepoint components. It should also be natural when considering all components, but establishing this seems more tedious. See Remark \ref{justificationrem2} for further discussion of this naturality.
    
    By the stable parametrised $h$-cobordism theorem of Waldhausen--Jahren--Rognes \cite{WJR}, the infinite loop space of the desuspension of the \textit{smooth Whitehead spectrum} $\Sigma^{-1}\Whsp(M)$ is also equivalent to $\mathcal H(M)$ (as ordinary spaces). Moreover, Weiss and Williams showed in \cite[Corollary 5.6]{WWI} that the spectra $\Theta B^{(1)}=\Hsp(M)$ and $\Sigma^{-1}\Whsp(M)$ also share the same negative homotopy groups, which led them to rename the former as the latter. This, though conjecturally true, was not fully justified since no equivalence between these two spectra was given. 
    
    We hope that the homotopy fibre sequence $\cemb(P,M)\to H(M-\nu P)\to H(M)$ and Proposition \ref{alexmapsprop} together explain why we denote $\Theta F^{(1)}$ by $\CEsp(P,M)$.
\end{rem}

Spaces of bounded diffeomorphisms are usually defined as the geometric realisation of certain simplicial groups/sets. Before we recall these simplicial models in Definition \ref{simplicialboundeddefn} below, let us fix some notation first. For a subset $S\subset \R^{p+1}$ and $\epsilon>0$, let $B_\epsilon(S)\subset \R^{p+1}$ denote the open $\epsilon$-ball around $S$. For $0<\epsilon\leq 1/2$ and for any face $\sigma\subset \Delta^p$, we fix radial identifications $\rho_\sigma: \partial\sigma(\epsilon):=B_\epsilon(\partial\sigma)\cap \sigma \cong \partial\sigma \times [0,\epsilon)$; let us first do it for $\sigma=\Delta^p$. Given $x=(t_0,\dots, t_p)\in\partial \Delta^p(\epsilon)$, let $j\in[p]$ be such that $t_j\leq t_i$ for every $i\in [p]$. Note that since $x$ cannot be the barycenter $b_p=(\tfrac{1}{p+1},\dots, \tfrac{1}{p+1})$ of $\Delta^p$ (since this lies at distance greater than $1/2\geq \epsilon$ from $\partial \Delta^p$), we must have that $t_j$ is strictly smaller than $\tfrac{1}{p+1}$. Then set 
$$
\rho_p: \partial\Delta^p(\epsilon)\overset{\cong}\longrightarrow \partial\Delta^p\times[0,\epsilon), \quad x=(t_0,\dots, t_p)\mapsto \left(x-\tfrac{t_j}{\tfrac{1}{p+1}-t_j}(b_p-x), d(x,\partial\Delta^p)\right),
$$
where $j=j(x)$ is as above, and $d(x,\partial\Delta^p)$ stands for the (Euclidean) distance between $x$ and $\partial \Delta^p$. For a general face $\sigma\subset \Delta^p$, fix the standard order-preserving identification $\eta_\sigma: \sigma\cong \Delta^{|\sigma|}$; then the radial identification $\rho_\sigma: \partial\sigma(\epsilon)\cong \partial\sigma\times[0,\epsilon)$ is 
$$
\begin{tikzcd}[column sep = 40pt]
    \rho_\sigma: \partial\sigma(\epsilon)\rar["\eta_\sigma"] &\partial \Delta^{|\sigma|}(\epsilon)\rar["\rho_{|\sigma|}"] & \partial \Delta^{|\sigma|}\times[0,\epsilon)\rar["\eta_{\sigma}^{-1}\times \mathrm{Id}_{[0,\epsilon)}"] &\partial\sigma\times[0,\epsilon).
\end{tikzcd}
$$

We will say that a continuous map $f: X\times \Delta^p\to Y\times \Delta^p$ over $\Delta^p$ (ie, such that $\mathrm{proj}_{\Delta^p}=\mathrm{proj}_{\Delta^p}\circ f$) satisfies the \textit{$\epsilon$-collaring condition} if for every face $\sigma\subset\Delta^p$,
$$
f\mid_{X\times \partial\sigma(\epsilon)}\equiv f\mid_{X\times \partial \sigma}\times \mathrm{Id}_{[0,\epsilon)}
$$
under the identifications $\rho_\sigma: \partial\sigma(\epsilon)\cong \partial\sigma\times[0,\epsilon)$ fixed above.

\begin{defn}\label{simplicialboundeddefn}
Let $V\in \mathcal J$. The \textbf{semi-simplicial group} $\diff_\partial^b(M\times V)_\bullet$ \textbf{of bounded diffeomorphisms} of $M\times V$ relative to $\partial M\times V$ has as $p$-simplices the set of diffeomorphisms of $\Delta^p\times M\times V$ over $\Delta^p$ which are bounded (with respect to $V$), that are the identity in a neighbourhood of $\Delta^p\times \partial M\times V$, and that satisfy the $\epsilon$-collaring condition for some $0<\epsilon\leq 1/2$. Face maps are determined by the coface maps of the cosimplicial space $\Delta^\bullet$. If we relax the condition on diffeomorphisms to be over $\Delta^p$ to only face-preserving (ie diffeomorphisms that send $\sigma\times M\times V$ to itself for every face $\sigma\subset\Delta^p$), we obtain the semi-simplicial group $\bdiffb_\partial(M\times V)_\bullet$ of \textbf{bounded block diffeomorphisms} of $M\times V$.
\end{defn}

\begin{warn}\label{SimplicialWarning}
    One could have defined the orthogonal functor $B(-)$ of Notation \ref{FEBorthfunctorsnotn}, for instance, to be
    $$
\mathcal J_0\longrightarrow \Top_*, \quad U\longmapsto B|\diff_\partial^b(M\times U)_\bullet|.
$$
This latter rule, however, does not give rise to a continuous functor in the sense of orthogonal calculus, ie, it is not enriched over $\Top_*$. A way to fix this is to replace $\Top_*$ by $\sSet_*$, $\mathcal J_0$ by a category $\mathcal J_0^\Delta$ enriched now over $\sSet_*$, and doing orthogonal calculus for $\sSet_*$-enriched functors $\mathcal J_0^\Delta\to \sSet_*$. This is morally the point of view taken by Weiss and Williams in \cite{WWI}, but orthogonal calculus for simplicially enriched functors has not yet been carried out rigorously, so we prefer to not pursue this approach. This also appears to be the main technicality in the $PL$ case: we do not know how to define an actual \emph{topological} (as opposed to simplicial) orthogonal functor out of spaces of bounded $PL$-homeomorphisms.
\end{warn}

The simplicial models of Definition \ref{simplicialboundeddefn} are more convenient to work with than the point-set topological ones of Definition \ref{topboundeddefn}. Moreover, we will need some results of \cite{WWI} that are stated in the simplicial setting, so we will have to argue that both models share the same weak homotopy type. 

\begin{repproposition}{myAmazingTheorem}
    There is a zig-zag of weak equivalences of semi-simplicial group-like monoids
    $$
    \begin{tikzcd}
       \diff^b_\partial(M\times V)_\bullet&\lar["\sim"']\cdot \rar["\sim"]&\Sing_\bullet(\diff_\partial^b(M\times V)).
    \end{tikzcd}
    $$
    In particular, there is a zig-zag of weak equivalences of group-like topological monoids connecting $|\diff^b_\partial(M\times V)_\bullet|$ and $\diff_\partial^b(M\times V)$.
\end{repproposition}

We defer the proof of this proposition to Section \ref{boundeddifftopmodelsection} in the appendix.

\section{Proof of Theorem \ref{EmbWWIThm}}\label{section4}
We now prove Theorem \ref{EmbWWIThm}. Section \ref{codimensionsection} will first reduce it to the case when $P$ is a codimension zero submanifold of $M$. Some necessary preliminaries will be presented in Section \ref{lastingrsection}. Finally the map $\Phi^{\emb}$ of Theorem \ref{EmbWWIThm} and its connectivity will be analysed in Sections \ref{ThmAproofsubsection} and \ref{connectivitysection}.

Before we move on to the next section, let us record a disjunction result for concordance embeddings known as \textit{Hudson's concordance-implies-isotopy theorem} \cite[Theorem 2.1, Addendum 2.1.2]{Hudson}. 
\begin{thm}[Hudson]\label{Hudsonstheorem}
    The space $\cemb(P,M)$ is connected if $p\leq d-3$. Equivalently, the natural map $\pi_0(\emb_{\partial_0}(P,M))\to \pi_0(\bemb_{\partial_0}(P,M))$ is an isomorphism.
\end{thm}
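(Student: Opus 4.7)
The plan is first to identify the two formulations of the statement, then to prove connectedness of $\cemb(P, M)$ by handle induction.

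For the equivalence: the map $\pi_0(\emb_{\partial_0}(P,M)) \to \pi_0(\bemb_{\partial_0}(P,M))$ is clearly surjective since every block embedding restricts on a vertex of $\Delta^\bullet$ to an ordinary embedding. Injectivity amounts to saying that if a face-preserving embedding $\varphi : \Delta^1 \times P \hookrightarrow \Delta^1 \times M$ restricts to $f_0, f_1$ on the vertices, then $f_0$ and $f_1$ are isotopic. Such a $\varphi$, after a small adjustment near the vertices using the collaring condition built into Definition \ref{simplicialboundeddefn}, becomes a concordance embedding between $f_0$ and $f_1$, so injectivity of the displayed map is equivalent to $\pi_0(\cemb(P, M)) = 0$.

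For the main content I would proceed as follows. Given $\varphi \in \cemb(P, M)$, choose a handle decomposition $\partial_0 P = P_0 \subset P_1 \subset \cdots \subset P_n = P$ with $P_j = P_{j-1} \cup h_j$ and $h_j = D^{k_j} \times D^{p - k_j}$ a handle of index $k_j \leq p$. I would then construct a path in $\cemb(P, M)$ from $\varphi$ to $\iota \times \Id_I$ by inductively modifying $\varphi$ so that it agrees with $\iota \times \Id_I$ on $P_j \times I$; the base case $j = 0$ is given by the boundary condition defining $\cemb$. For the inductive step, restricting $\varphi$ to the core $D^{k_j} \times \{0\} \times I \subset h_j \times I$ produces an embedded $(k_j + 1)$-disk in $M \times I$ which is already standard on its boundary. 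Since the codimension is $(d + 1) - (k_j + 1) = d - k_j \geq d - p \geq 3$, a general position argument produces an ambient isotopy of $M \times I$, supported away from $\varphi(P_{j-1} \times I)$, carrying this disk onto its standard vertical position. Composing $\varphi$ with the isotopy straightens the core, and uniqueness of tubular neighbourhoods (again using the codimension hypothesis) extends the straightening from the core to all of $h_j \times I$.

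The main obstacle will be to ensure that the deformations constructed at each step remain within the space of concordance embeddings throughout, i.e.\ that the level-preserving condition $\varphi^{-1}(M \times \{i\}) = P \times \{i\}$ for $i = 0, 1$ is preserved at every moment of the isotopy (it is not enough to merely produce an ambient isotopy of $M \times I$). Controlling this is precisely the content of Hudson's \textquotedblleft push moves\textquotedblright, and the careful verification is the substance of the cited work.
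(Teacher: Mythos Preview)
The paper does not actually prove this statement: it is recorded as Hudson's theorem with a citation to \cite[Thm.~2.1, Addendum~2.1.2]{Hudson}, and is used throughout as a black box. So there is no ``paper's own proof'' to compare against beyond the reference.

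Your outline is a faithful sketch of Hudson's argument. The reduction of the equivalence to connectedness of $\cemb(P,M)$ is correct, and the handle-by-handle straightening via general position in codimension $\geq 3$ together with uniqueness of tubular neighbourhoods is exactly the strategy. You are also right that the genuine work lies in keeping the deformation inside the space of concordance embeddings (preserving the level conditions at $M\times\{0,1\}$ throughout), and that this is what Hudson's careful push-move analysis accomplishes. As a proof \emph{proposal} this is entirely adequate; as a self-contained proof it would of course need those details filled in, which is why the paper simply cites Hudson rather than reproducing the argument.
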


\begin{rem}\label{HudsonTopRem}
    Hudson's theorem also holds in the $PL$ setting \cite[Theorem 1.5]{Hudson}. As long as $M$ is $1$-connected if $d=\dim M=4$, it also holds in the topological setting \cite{Pedersen1976}.
\end{rem}

\subsection{Reduction to geometric codimension zero embeddings} \label{codimensionsection}
Let $\iota: P\xhookrightarrow{}M$ be as in the statement of Theorem \ref{EmbWWIThm}. It will be convenient to be able to assume that $P\subset M$ is a codimension zero submanifold (though of handle codimension at least $3$). The following result deals with this technicality, and shows that the difference between block and ordinary \emph{smooth} embeddings is insensitive to the geometric codimension.

\begin{prop}\label{positivecodimprop}
Let $M^d$ be a compact smooth Riemannian manifold and $\iota: P^p\xhookrightarrow{} M^d$ a neat submanifold that is closed as a subspace. Let $\overline{\nu}P$ be the closed disk bundle of the normal bundle $\nu_\iota$ of the embedding $\iota$, and let $\hat\iota: \overline{\nu}P\xhookrightarrow{} M$ be the induced embedding. Then the square
\begin{equation}\label{noncodim0embeddingsfibseq}
\begin{tikzcd}
    \emb_{\partial_0,\hat\iota}(\overline{\nu}P,M)\rar["\mathrm{res}_P"]\dar[hook] & \emb_{\partial_0,\iota}(P,M)\dar[hook]\\
    \bemb_{\partial_0,\hat\iota}(\overline{\nu}P,M)\rar["\widetilde{\mathrm{res}}_P"] &\bemb_{\partial_0,\iota}(P,M).
\end{tikzcd}
\end{equation}
is homotopy cartesian. Here the subscripts $\iota$ or $\hat\iota$ in the embedding spaces stand for the path component consisting of embeddings isotopic to $\iota$ or $\hat\iota$ (relative to $\partial_0P$).

Equivalently, by taking vertical homotopy fibres in (\ref{noncodim0embeddingsfibseq}) and noting Hudson's Theorem \ref{Hudsonstheorem} and that $\mathrm{res}_P$ and $\widetilde{\mathrm{res}}_P$ are surjective, there is a weak equivalence
$$
\hofib_\iota(\emb_{\partial_0}(P,M)\xhookrightarrow{}\bemb_{\partial_0}(P,M))\simeq \hofib_{\hat\iota}(\emb_{\partial_0}(\overline{\nu}P,M)\xhookrightarrow{}\bemb_{\partial_0}(\overline{\nu}P,M)).
$$
\end{prop}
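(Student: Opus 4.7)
The plan is to show directly that the induced map between horizontal homotopy fibres at $\iota$ is a weak equivalence. First I would argue that both $\mathrm{res}_P$ and $\widetilde{\mathrm{res}}_P$ are fibrations (Serre, resp.\ Kan), using the parametrised isotopy extension theorem for tubular neighbourhoods to propagate an extension at one point of the parameter space to the whole family; in the block setting, the face-preservation condition requires one to apply the smooth statement inductively over the faces of $\Delta^k$, respecting the collaring conventions of Definition~\ref{simplicialboundeddefn}. The horizontal fibres at $\iota$ are then $T(\iota)$, the space of smooth embeddings $\overline{\nu}P\hookrightarrow M$ extending $\iota$, and its block analogue $\widetilde T(\iota)$.

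I would then identify both fibres with the topological group $\mathrm{Aut}_{\mathrm{vb}}(\nu_\iota)$ of vector bundle automorphisms of $\nu_\iota$. Taking the normal derivative along the zero section defines natural comparison maps
\[
T(\iota) \longrightarrow \mathrm{Aut}_{\mathrm{vb}}(\nu_\iota) \qquad \text{and} \qquad \widetilde T(\iota)_\bullet \longrightarrow \Sing(\mathrm{Aut}_{\mathrm{vb}}(\nu_\iota))_\bullet.
\]
The condition $\Phi|_{\Delta^k\times P}=\mathrm{id}_{\Delta^k}\times\iota$ on a block $k$-simplex $\Phi$ guarantees that its normal derivative varies smoothly in the $\Delta^k$-parameter, so the right-hand map is well-defined. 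The fibre of either map over the identity bundle automorphism consists of (smooth, resp.\ block) tubular neighbourhoods of $\iota$ with trivial normal derivative, and is contractible by the classical uniqueness of tubular neighbourhoods, applied parametrically and, in the block case, face-by-face. Both comparison maps are therefore weak equivalences, so $T(\iota)\to\widetilde T(\iota)$ is too, and the square is homotopy cartesian.

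The main technical obstacle is the parametrised, block-simplicial form of this tubular neighbourhood uniqueness: one must show that face-preserving families of "trivial-derivative" tubular neighbourhoods can be contracted relative to $\partial\Delta^k$. This is morally soft, since the classical statement extends to smooth families parameterised by manifolds with corners, but the simplicial bookkeeping needed to handle face-preservation and the collaring conditions of Definition~\ref{simplicialboundeddefn} requires some care. Once that is in place, the rest of the argument amounts to the 5-lemma for the comparison of horizontal fibre sequences, and the assertion about vertical homotopy fibres stated at the end of the proposition then follows from Hudson's Theorem~\ref{Hudsonstheorem} together with the surjectivity of $\mathrm{res}_P$ and $\widetilde{\mathrm{res}}_P$.
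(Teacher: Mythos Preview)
Your proposal is correct and follows essentially the same strategy as the paper: identify both horizontal homotopy fibres with $\mathrm{Aut}_{\partial_0}(\nu_\iota)$ via the normal derivative, with the remaining freedom contracted by (parametrised, block) uniqueness of tubular neighbourhoods. The paper organises this slightly differently, factoring through an auxiliary fibration $E\to\emb_{\partial_0,\iota}(P,M)$ (and a semi-simplicial analogue $\widetilde E_\bullet$) whose points record an embedding together with its full normal derivative into $\tau_M$ (resp.\ $\tau_{\Delta^n}\boxplus\tau_M$); this has the advantage of making the block case explicit, since the derivative of a face-preserving $\Phi$ picks up components in $\tau_{\Delta^n}$ and $\tau_P$ that must be scaled away separately, whereas you absorb all of this into the single phrase ``block tubular neighbourhood uniqueness''. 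Both routes arrive at the same identification and neither requires an idea the other lacks.
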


\begin{proof}
    We will show that the horizontal homotopy fibre of the vertical inclusions in (\ref{noncodim0embeddingsfibseq}) can be identified, up to equivalence, with the identity map of the topological group $\mathrm{Aut}_{\partial_0}(\nu_\iota)$ of bundle automorphisms of $\nu_\iota$ which are standard near $\partial_0P$. In particular, the total homotopy fibre of (\ref{noncodim0embeddingsfibseq}) will be weakly contractible.
    
    We first deal with the top horizontal homotopy fibre. Consider the fibration
    $$
E:=\left\{\begin{tikzcd}
    \nu_{\iota}\rar[hook, "G"]\dar & \tau_M\dar\\
    P\rar[hook, "\varphi"] &M
\end{tikzcd}\;\middle|\;
\begin{aligned}
    &\varphi\in\emb_{\partial_0,\iota}(P,M),\\
    &G\in \operatorname{BunInj}_{\partial_0}(\nu_\iota,\tau_M),\\
    &D\varphi\oplus G: \tau_P\oplus\nu_{\iota}\cong \varphi^*\tau_M.
\end{aligned}
\right\}\overset{r}\longrightarrow \emb_{\partial_0,\iota}(P,M),\quad (G,\varphi)\mapsto \varphi.
    $$
    Taking derivatives at the zero section of $\overline{\nu}P$ defines a map $D: \emb_{\partial_0,\hat\iota}(\overline{\nu}P,M)\to E$ over $\emb_{\partial_0,\iota}(P,M)$. A homotopy inverse $E\to \emb_{\partial_0,\hat\iota}(\overline{\nu}P,M)$ to $D$ can be defined using the exponential map. Therefore the homotopy fibre of $\mathrm{res}_P$ is equivalent to the fibre of $r$ (observe that $r$ is a fibration). Now $\iota^*\tau_M$ is already identified with $\tau_P\oplus\nu_\iota$, so the fibre $F:=r^{-1}(\iota)$ can be described as the subspace of bundle automorphisms of $\tau_P\oplus\nu_\iota$ over $P$ which are the identity on the tangent summand $\tau_P$ (and near $\partial_0P$). As the space of bundle maps $\nu_\iota\to\tau_P$ over $P$ is contractible, it follows that the inclusion $\mathrm{Aut}_{\partial_0}(\nu_\iota)\xhookrightarrow{}F$ is a homotopy equivalence.

    The argument for the bottom map of (\ref{noncodim0embeddingsfibseq}) is similar but trickier; we work with the simplicial model of block embeddings of Definition \ref{simplicialboundeddefn}. First let $\xi$ and $\pi$ be vector bundles over spaces $B$ and $B'$, respectively, and fix some bundle map $I: \xi\to \pi$. For any closed subset $\partial_0\subset B$, let $\operatorname{\widetilde{BunMap}}_{\partial_0}(\xi,\pi)_\bullet$ denote the semi-simplicial set whose $n$-simplices consist of bundle maps $G:\Delta^n\times \xi\to \tau_{\Delta^n}\boxplus \pi:=(\tau_{\Delta^n}\times B')\oplus (\Delta^n\times \pi)$ such that 
    \begin{itemize}
        \item $G$ agrees with $\mathbf{0}_{\Delta^n}\boxplus I$ near $\Delta^n\times\partial_0$, where $\mathbf{0}_{\Delta^n}: \epsilon^0_{\Delta^n}\cong \Delta^n\to \tau_{\Delta^n}$ is the inclusion as the zero section, and

        \item for every face $\sigma\subset \Delta^n$, $G(\sigma\times \xi)\subset \tau_\sigma\boxplus \pi\subset \tau_{\Delta^n}\boxplus \pi$.
    \end{itemize}
    Given a map $i: B\to B'$ which agrees with the underlying map of $I$ on $\partial_0\subset B$, let $\operatorname{\widetilde{BunMap}}_{\partial_0}(\xi,\pi;i)_\bullet$ the semi-simplicial subset consisting of those bundle maps $G$ whose underlying map on the base spaces $\Delta^n\times B\to \Delta^n\times B'$ is $\Id_{\Delta^n}\times i$. Let $\operatorname{\widetilde{BunInj}}_{\partial_0}(\xi,\pi)_\bullet$ and $\operatorname{\widetilde{BunInj}}_{\partial_0}(\xi,\pi;i)_\bullet$ be the semi-simplicial subsets of those bundle maps that are fibrewise injective. Then again, taking derivatives at the zero section of $\Delta^\bullet\times \overline{\nu}P$ yields a simplicial map $\widetilde{D}_\bullet$ from $\bemb_{\partial_0,\hat\iota}(\overline{\nu}P,M)_\bullet$ to a semi-simplicial set $\widetilde{E}_\bullet$ whose $n$-simplices are
    $$
\widetilde{E}_n:=\left\{\begin{tikzcd}
    \Delta^n\times\nu_{\iota}\rar[hook, "G"]\dar & \tau_{\Delta^n}\boxplus\tau_M\dar\\
    \Delta^n\times P\rar[hook, "\varphi"] &\Delta^n\times M
\end{tikzcd}\;\middle|\;
\begin{aligned}
    &\varphi\in\bemb_{\partial_0,\iota}(P,M)_n,\\
    &G\in \operatorname{\widetilde{BunInj}}_{\partial_0}(\nu_\iota,\tau_M)_n,\\
    D\varphi\oplus G&: \tau_{\Delta^n}\boxplus(\tau_P\oplus\nu_{\iota})\cong \varphi^*(\tau_{\Delta^n}\boxplus\tau_M).
\end{aligned}
\right\},
    $$
    and whose face maps are given by restriction to face strata. The map $\widetilde{r}_\bullet:\widetilde{E}_\bullet\to \bemb_{\partial_0}(P,M)_\bullet$ given by $\widetilde{r}(G,\varphi):=\varphi$ is now a Kan fibration, and $\widetilde{r}_\bullet\circ\widetilde{D}_\bullet=\widetilde{\mathrm{res}}_P$. By a similar argument as in the previous case, the homotopy fibre of $\widetilde{\mathrm{res}}_P$ is equivalent to the fibre of $\widetilde{r}_\bullet$. Using the canonical identification
    $$
(\Id_{\Delta^n}\times\iota)^*(\tau_{\Delta^n}\boxplus \tau_M)=\tau_{\Delta^n}\boxplus\iota^*\tau_M\cong \tau_{\Delta^n}\boxplus(\tau_P\oplus\nu_\iota),
    $$
    the fibre $\widetilde{F}_\bullet:=\widetilde{r}^{-1}_\bullet(\iota)$ is isomorphic to the semi-simplicial subset of $\operatorname{\widetilde{BunInj}}_{\partial_0}(\nu_\iota, \iota^*\tau_M; \Id_P)_\bullet$ of bundle maps 
     $$
     G=G_{\Delta^n}\oplus G_\tau\oplus G_\nu: \Delta^n\times \nu_\iota\longrightarrow \tau_{\Delta^n}\boxplus(\tau_P\oplus \nu_\iota)=(\tau_{\Delta^n}\times P)\oplus(\Delta^n\times\tau_P)\oplus(\Delta^n\times\nu_\iota)
     $$
for which $G_\nu$ is an isomorphism. Thus it follows that 
$$
\widetilde{F}_\bullet=\operatorname{\widetilde{BunMap}}_{\partial_0}(\nu_\iota,\tau_P;\Id_P)_\bullet\times\operatorname{Aut}_{\partial_0}(\nu_\iota)_\bullet,
$$
where the boundary condition on $\operatorname{\widetilde{BunMap}}_{\partial_0}(\nu_\iota,\tau_P;\Id_P)_\bullet$ forces bundle maps to be zero near $\Delta^\bullet\times\partial_0P$. Clearly $\operatorname{\widetilde{BunMap}}_{\partial_0}(\nu_\iota,\tau_P;\Id_P)_\bullet$ is weakly contractible: indeed given an $n$-cycle $G$ in this semi-simplicial set, a nullhomotopy of $G$ is roughly given by regarding $\Delta^{n+1}$ as $(\Delta^n\times [0,1],\Delta^{n}\times\{0\})$ and applying $t\cdot G$ on $\Delta^{n}\times\{t\}$, for $0\leq t\leq 1$. Therefore $|\widetilde{F}_\bullet|\simeq |\mathrm{Aut}_{\partial_0}(\nu_\iota)_\bullet|=\mathrm{Aut}_{\partial_0}(\nu_\iota)$, as required.
\end{proof}

\begin{rem}\label{codimzerotopremark}
    Proposition \ref{positivecodimprop} is false in the topological (and $PL$) setting. First of all, a locally flat embedding $\iota: P^p\xhookrightarrow{}M^d$ does not always admit a normal microbundle (cf \cite{RourkeSanderson}; they do admit one stably though \cite[Theorem B]{Hirschmicrobundle}). But even if it did, the statement would still not hold in general: the homotopy fibre of $\emb_{\partial_0}^{\rmTop}(\overline{\nu}P,M)\to \emb_{\partial_0}^{\rmTop}(P,M)$ is a section space of a bundle over $P$ whose fibre is the topological group $\rmTop(d,p)$ of homeomorphisms of $\R^d$ that fix pointwise the subspace $\R^p\times \{0\}$, whereas the homotopy fibre of $\bemb_{\partial_0}^{\rmTop}(\overline{\nu}P,M)\to \bemb_{\partial_0}^{\rmTop}(P,M)$ is a similar section space, but of a bundle whose fibre is the colimit \[
    \begin{tikzcd}[column sep = 35pt]
        \widetilde{\rmTop}(d-p):=\operatorname{colim}\big(\rmTop(d,p)\rar["-\times \Id_{\R}"]&\rmTop(d+1,p+1)\rar["-\times \Id_{\R}"]&\rmTop(d+2,p+2)\rar["-\times \Id_{\R}"]&\dots\big).
    \end{tikzcd}
    \]
    The map $\rmTop(d,p)\to \widetilde{\rmTop}(d-p)$ is \emph{not} an equivalence. (Crucially, though, the smooth analogues $O(d-p)=O(d-p,0)\to \dots\to O(d+n,p+n)$ are indeed equivalences.)

    To see this, consider the case $(M,P)=(D^d,D^p)$ for $p\leq d-3$. Both $\emb_{\partial}^{\mathrm{Top}}(D^p,D^d)$ and $\bemb_{\partial}^{\mathrm{Top}}(D^p,D^d)$ are contractible by the Alexander trick. However, using the topological version of Theorem \ref{EmbWWIThm} (cf Remark \ref{remembWWI}), we will see in Remark \ref{TopLongKnotRem} that the homotopy fibre of the map
    $$
\emb_{\partial_0}^{\mathrm{Top}}(D^p\times D^{d-p}, D^d)\longrightarrow \bemb_{\partial_0}^{\mathrm{Top}}(D^d\times D^{d-p},D^d)
    $$
    is not contractible. In particular, the topological analogue of the square (\ref{noncodim0embeddingsfibseq}) cannot possibly be homotopy cartesian in this case.
\end{rem}

\subsection{Last ingredients}\label{lastingrsection} From now on, let $\iota: P^d\xhookrightarrow{}M^d$ be a codimension zero closed embedding that meets $\partial M$ transversely in $\partial_0 P$, and denote by $p$ the handle dimension of $P$ relative to $\partial_0 P$; we will write $\overline{M-P}$ instead of the isotopy equivalent manifold $M-\nu P$ to emphasise that $P$ has codimension zero in $M$. It suffices to prove Theorem \ref{EmbWWIThm} in this case by Proposition \ref{positivecodimprop}. We now present the last necessary prelimary results.

\subsubsection{Parametrised isotopy extension theorem}
The \textit{parametrised isotopy extension theorem} states that for $\varphi_t: P\xhookrightarrow{} M$ any continuous family of embeddings parametrised by $t\in \Delta^k$ (with $P$ \textit{compact}), there exists a continuous family of diffeomorphisms $\{\phi_t\}_{t\in \Delta^k}$ of $M$ (which are the identity away from a compact set of $M$) such that $\phi_0=\mathrm{Id}_M$ and $\phi_t(\varphi_0(x))=\varphi_t(x)$ for all $(x,t)\in P\times \Delta^k$. Moreover, if  $K\subset\Delta^k$ is some contractible subcomplex containing the $0$-th vertex and $\{\phi'_t\}_{t\in K}$ is another continuous family of diffeomorphisms of $M$ parametrised by $K$ such that $\phi'_0=\Id_M$ and $\phi'_t(\varphi_0(x))=\varphi_t(x)$ for all $(x,t)\in P\times K$, then we can arrange $\{\phi_t\}_{t\in \Delta^k}$ as above to agree with $\{\phi'_t\}_{t\in K}$ on $K$. A consequence of this fact due to Palais \cite{Palais} (see \cite{Lima1963/64} for a simple proof) is that the restriction map $\diff_\partial(M)\to \emb_{\partial_0}(P,M)$ is a locally trivial fibre bundle with $\diff_{\partial}(\overline{M-P})$ as fibre. Such a fibration can be delooped to the homotopy fibre sequence
\begin{equation}\label{normalIET}
\begin{tikzcd}
    \emb_{\partial_0,\langle\iota\rangle}(P,M)\rar & B\diff_\partial(\overline{M-P})\rar &B\diff_\partial(M),
\end{tikzcd}
\end{equation}
where the subscript $\langle\iota\rangle$ stands for the union of all the components in $\emb_{\partial_0}(P,M)$ that contain embeddings of the form $\phi\circ\iota$ for $\phi\in \diff_\partial(M)$. By replacing $P$ and $M$ in (\ref{normalIET}) by $P\times I$ and $M\times I$, and modifying the boundary conditions, we get a similar homotopy fibre sequence
\begin{equation}\label{ConcIET}
\begin{tikzcd}
    \cemb(P,M)\rar & BC(\overline{M-P})\rar &BC(M).
\end{tikzcd}
\end{equation}
Note that $\cemb(M,P)$ is connected by Hudson's Theorem \ref{Hudsonstheorem}. Finally, there is a block analogue of (\ref{normalIET}).

\begin{prop}
There is a homotopy fibre sequence
    \begin{equation}\label{blockIET}
\begin{tikzcd}
    \bemb_{\partial_0,\langle\iota\rangle}(P,M)\rar &B\bdiff_{\partial}(\overline{M-P})\rar&B\bdiff_{\partial}(M).
    \end{tikzcd}
\end{equation}
\end{prop}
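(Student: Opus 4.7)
The plan is to mimic the construction of the ordinary parametrised isotopy extension sequence (\ref{normalIET}), but carried out at the semi-simplicial level using the models of Definition \ref{simplicialboundeddefn}. Concretely, I will establish that the restriction-along-$\iota$ map of simplicial groupoids/sets
$$
\rho_\bullet \colon \bdiff_\partial(M)_\bullet \longrightarrow \bemb_{\partial_0}(P,M)_\bullet, \qquad \Phi \longmapsto \Phi \circ (\Id_{\Delta^\bullet} \times \iota),
$$
is a Kan fibration onto its image, identify that image with $\bemb_{\partial_0,\langle\iota\rangle}(P,M)_\bullet$, and identify the fibre over the 0-simplex $\iota$ with $\bdiff_\partial(\overline{M-P})_\bullet$. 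Delooping (i.e.\ passing to classifying spaces of the simplicial group action) then yields the desired homotopy fibre sequence.

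For Step 1, the key ingredient is a block version of parametrised isotopy extension: given a horn $\Lambda^k_j \hookrightarrow \Delta^k$, a lift to $\bdiff_\partial(M)_\bullet$ of a $k$-simplex $\Phi \in \bemb_{\partial_0,\langle\iota\rangle}(P,M)_k$ restricted to $\Lambda^k_j$, a filler in $\bdiff_\partial(M)_k$ restricting to $\Phi$ and extending the given horn exists. I would prove this by induction on codimension of the missing faces of $\Delta^k$: on each new face $\sigma$, applied with parameter space $\sigma$ rel $\partial\sigma$, Palais's classical parametrised isotopy extension theorem produces an extension of the already-built diffeomorphism on $\partial\sigma \times M$ to one on $\sigma \times M$ that realises $\Phi|_{\sigma \times P}$. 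The $\epsilon$-collaring condition of Definition \ref{simplicialboundeddefn}, together with the relative form of Palais's theorem that allows the extension to agree with a preassigned one on a contractible subcomplex, guarantees that these extensions glue to a single face-preserving diffeomorphism of $\Delta^k \times M$ satisfying the collaring condition.

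For Step 2, note that the image of $\rho_\bullet$ on $\pi_0$ consists exactly of those block embeddings which lie in the orbit of $\iota$ under the action of $\bdiff_\partial(M)_\bullet$ by postcomposition, and this orbit is $\pi_0(\bemb_{\partial_0,\langle\iota\rangle}(P,M)_\bullet)$ by definition of the subscript $\langle\iota\rangle$. For Step 3, observe that a face-preserving diffeomorphism of $\Delta^p \times M$ fixing $\Delta^p \times \partial M$ and restricting to $\Id_{\Delta^p} \times \iota$ on $\Delta^p \times P$ is, by our assumption that $P \subset M$ is codimension-zero and neat, uniquely determined by (and determines) a face-preserving diffeomorphism of $\Delta^p \times \overline{M-P}$ fixing a neighbourhood of the boundary, i.e.\ a $p$-simplex of $\bdiff_\partial(\overline{M-P})_\bullet$. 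Applying $B(-)$ to the resulting fibration sequence of simplicial groupoids gives the sequence (\ref{blockIET}).

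The main obstacle is the block isotopy extension lemma of Step 1. Although morally it reduces to the classical Palais theorem applied face-by-face, carrying out the induction requires care to ensure that each extension produced is compatible with the preassigned extensions on the already-filled sub-horn and simultaneously matches a prescribed collar neighbourhood of $\partial\sigma$ inside $\sigma$. Once this lemma is in place, Steps 2, 3 and 4 are essentially formal.
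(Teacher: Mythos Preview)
Your approach is correct and essentially identical to the paper's: both reduce to a block isotopy extension lemma proved face-by-face via the classical Palais theorem, and both then identify the level-wise quotient $\bdiff_\partial(M)_\bullet/\bdiff_\partial(\overline{M-P})_\bullet$ with $\bemb_{\partial_0,\langle\iota\rangle}(P,M)_\bullet$. The only cosmetic difference is that the paper verifies this identification as a simplicial isomorphism directly (showing level-wise surjectivity of the action map), whereas you phrase the same content as ``$\rho_\bullet$ is a Kan fibration'' plus a $\pi_0$ computation; your Step~1 argument in fact already yields level-wise surjectivity onto $\bemb_{\partial_0,\langle\iota\rangle}(P,M)_\bullet$, so the two formulations coincide.
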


\begin{proof}
    There is a right action of the simplicial group $\bdiff_\partial(\overline{M-P})_\bullet$ on $\bdiff_\partial(M)_\bullet$; we will write $\bdiff_\partial(M)_\bullet/\bdiff_\partial(\overline{M-P})_\bullet$ for the simplicial set of (level-wise) cosets of this right action. The geometric realisation $|\bdiff_\partial(M)_\bullet/\bdiff_\partial(\overline{M-P})_\bullet|$ of this simplicial set is homotopy equivalent to the homotopy fibre of the right map of (\ref{blockIET}), so it suffices to show that the action map
    $$
a: \bdiff_\partial(M)_\bullet/\bdiff_\partial(\overline{M-P})_\bullet\longrightarrow \bemb_{\partial_0,\langle\iota\rangle}(P,M)_\bullet, \quad [\phi]\longmapsto \phi\circ \iota
    $$
    is an isomorphism. It is visibly injective, for if $\phi\circ\iota=\psi\circ\iota$ for $\phi,\psi\in \bdiff_\partial(M)_\bullet$, then $\psi^{-1}\circ\phi\in \bdiff_\partial(\overline{M-P})_\bullet$ and hence $[\psi]=[\psi\circ\psi^{-1}\circ\phi]=[\phi]$ in $\bdiff_\partial(M)_\bullet/\bdiff_\partial(\overline{M-P})_\bullet$.

    For surjectivity, let $\varphi$ be some $k$-simplex in $\bemb_{\partial_0,\langle\iota\rangle}(P,M)_\bullet$. Then there exists some $\phi\in \bdiff_\partial(M)_k$ for which   $\varphi$ and $\phi\circ\iota$ lie in the same component in $\bemb_{\partial_0}(P,M)_\bullet$. Then $\varphi':=\phi^{-1}\circ\varphi\in \bemb_{\partial_0,\iota}(P,M)_k$ and, in fact, we can arrange that its restriction to the zero-th vertex $\varphi'_0$ is $\iota$ by rechoosing $\phi$ (if necessary) using the isotopy extension theorem. Then applying the isotopy extension theorem to $\varphi'$ restricted to each of the faces that contains the $0$-th vertex, inductively on the dimension of the face, we obtain some $\Phi'\in \bdiff_\partial(M)_k$ such that $\Phi'\mid_{P\times \Delta^k}\equiv\varphi'$. Then $\Phi:=\phi\circ\Phi'\in \bdiff_\partial(M)_k$ is such that $\Phi\mid_{P\times \Delta^k}\equiv \varphi$, as desired.
\end{proof}

\begin{rem}\label{topIETrem}
    There also exist topological and $PL$ versions of the isotopy extension theorem (cf \cite[Corollary~1.4]{TopIsotopyExtensionTheorem} and \cite{PLHudsonIET}, respectively). The same proof as above also works in the topological or $PL$ setting. 
\end{rem}

\begin{rem}[Speculative]
    Weiss and Williams point out in \cite[Section 1]{WWI} that an analogue of the (parametrised) isotopy extension theorem in the bounded setting does not hold (see \cite[Chapter 8, Exercise 9]{Hirsch1976} for a counterexample in codimension $2$). However, we believe that a weaker version of the theorem should still hold: namely, for $V\in \mathcal J_0$ define the bounded embedding space $\emb^b_{\partial_0}(P\times V,M\times V)$ as in Definition \ref{topboundeddefn}. Then, there should be a homotopy fibre sequence
    $$
\begin{tikzcd}[scale = 0.95, column sep = 12pt]
    \emb^b_{\partial_0,\langle\iota\rangle}(P\times V, M\times V)\rar & E(V)=B\diff_\partial^b(\overline{M-P}\times V)\rar &B(V)=B\diff_\partial^b(M\times V),
\end{tikzcd}
    $$
where $E(-)$ and $B(-)$ are as in Notation \ref{FEBorthfunctorsnotn}. We will not give a proof of this claim, as it seems rather technical and we will not need it for the argument of Theorem \ref{EmbWWIThm}. The reader may however find it useful to think of the orthogonal functor $F(-):=\hofib(E(-)\to B(-))$ as $\emb^b_{\partial_0,\langle\iota\rangle}(P\times (-), M\times (-))$.
    
\end{rem}

\subsubsection{Alexander trick-like equivalences} For $V\in \mathcal J_0$, let $D(V)\subset V$ denote the correspoding closed unit disk (so that $D^k=D(\R^k)$). The following is proved in Propositions 1.8, 1.10 and 1.12 of \cite{WWI}. Even though we state it for the orthogonal functor $B(-)$ of Notation \ref{FEBorthfunctorsnotn}, it of course holds for $E(-)$ too.

\begin{prop}\label{alexpropBfunctor}
    For $V\in \mathcal J_0$, the Alexander trick-like map
    $$
\mathrm{alex}: C(M\times D(V))\overset{\sim}\longrightarrow \Omega^{V\oplus \R}B^{(1)}(V)=\Omega^{V\oplus \R}\left(\diff^b_\partial(M\times V\oplus\R)/\diff^b_\partial(M\times V)\right)
    $$
is a weak equivalence. Moreover, there is a homotopy commutative diagram
\begin{equation}\label{commsquareBconc}
\begin{tikzcd}
    C(M\times D(V))\dar["\mathrm{alex}", "\vsim"']\rar["\Sigma"] & C(M\times D(V)\times D^1)\rar[equal,"\sim"] &C(M\times D(V\oplus\R))\dar["\mathrm{alex}", "\vsim"']\\
    \Omega^{V\oplus \R} B^{(1)}(V)\ar[rr,"s^{\vee}_V"]&& \Omega^{V\oplus \R^2}B^{(1)}(V\oplus\R),
\end{tikzcd}
\end{equation}
where $\Sigma$ denotes the usual concordance stabilisation map and $s^\vee_V$ is the adjoint of the structure map (\ref{orthstructuremap}) for the orthogonal spectrum $\Theta B^{(1)}=\Hsp(M)$.
\end{prop}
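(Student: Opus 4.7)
The plan is to adapt the proofs of \cite[Props.\ 1.8, 1.10 and 1.12]{WWI}, which are phrased in terms of the simplicial model of Definition \ref{simplicialboundeddefn}, and transfer the conclusions to our point-set topological models via Proposition \ref{myAmazingTheorem}. Two things need to be verified: that $\mathrm{alex}$ is a weak equivalence, and that (\ref{commsquareBconc}) commutes up to homotopy.

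For the map itself, the construction is as follows. A concordance $\phi \in C(M \times D(V))$, i.e.\ a diffeomorphism of $M \times D(V) \times I$ equal to the identity near $M \times D(V) \times \{0\} \cup \partial(M \times D(V)) \times I$, extends by the identity to a compactly supported---hence bounded---diffeomorphism $\overline\phi$ of $M \times (V \oplus \R)$, under an identification of $D(V) \times I$ with a compact subset of $V \oplus \R$. A rescaling/translation procedure parametrised by $(v,t) \in V \oplus \R$ then produces a $(V \oplus \R)$-family $\{\psi_{(v,t)}\}$ of elements of $\diff^b_\partial(M \times (V \oplus \R))$ whose cosets in $B^{(1)}(V)$ trivialise outside a compact set, yielding a pointed map $S^{V \oplus \R} \to B^{(1)}(V)$. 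To show this is a weak equivalence I would argue by induction on $\dim V$: the base case $V = 0$ is the classical identification of $C(M)$ with $\Omega(\diff^b_\partial(M \times \R)/\diff_\partial(M))$, obtained via a stacking construction that expresses a bounded self-diffeomorphism of $M \times \R$, modulo $\diff_\partial(M)$, as an infinite concatenation of concordances along $M \times \Z$; the inductive step uses a Mather-type fibration argument comparing the stabilisation fibre sequence on the concordance side with the orthogonal-spectrum fibre sequence on the $B^{(1)}(-)$ side, and concludes with the five lemma.

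For the second claim, the homotopy commutativity of (\ref{commsquareBconc}) is verified by unpacking the two stabilisation maps. The concordance stabilisation $\Sigma$ amounts to crossing with the interval $D^1$ and smoothing corners, while $s^\vee_V$ is adjoint to the structure map (\ref{orthstructuremap}), realised geometrically as the rotation of $V \oplus \R^2$ about the $2$-plane $0 \oplus \R^2$. Both routes around the square turn a concordance $\phi \in C(M \times D(V))$ into a $(V \oplus \R^2)$-parametrised family of bounded diffeomorphisms of $M \times (V \oplus \R^2)$; an explicit isotopy interpolating between these two presentations of the same underlying diffeomorphism supplies the required homotopy.

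The hard part, as is usually the case in this circle of ideas, will be producing the Alexander trick map with sufficiently continuous dependence on the parameter $(v, t)$ while preserving the various boundary conditions built into $\diff^b_\partial(-)$, and then carrying out the inductive step in a way compatible with the orthogonal-spectrum structure maps. Once the geometric constructions are in place, the transfer between the simplicial model of \cite{WWI} and the topological model used here is purely formal, \emph{via} Proposition \ref{myAmazingTheorem}.
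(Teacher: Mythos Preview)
Your overall approach matches the paper's: the proposition is imported wholesale from \cite[Props.\ 1.8, 1.10 and Lem.\ 1.12]{WWI}, and the paper's proof does nothing more than cite those results and recall the construction of $\mathrm{alex}$. The transfer to the point-set model via Proposition~\ref{myAmazingTheorem} is exactly the intended bridge.

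There is, however, a genuine error in your description of $\mathrm{alex}$. A concordance $\phi\in C(M\times D(V))$ is \emph{not} the identity on the top face $M\times D(V)\times\{1\}$, so it cannot be extended by the identity to a compactly supported diffeomorphism of $M\times(V\oplus\R)$; the extension you describe would be discontinuous along that face. The correct construction (as in the paper and in \cite{WWI}) extends $\phi$ by $\phi\mid_{M\times D(V)\times\{1\}}\times\Id_{[1,+\infty)}$ on $M\times D(V)\times[1,+\infty)$, and by the identity elsewhere. The resulting $\widehat\phi$ is bounded but supported on the non-compact set $M\times D(V)\times[0,+\infty)$. Shifting along $V\oplus\R$ then yields a $(V\oplus\R)$-loop only in the \emph{quotient} $\diff^b_\partial(M\times(V\oplus\R))/\diff^b_\partial(M\times V)$: pushing the $\R$-coordinate towards $+\infty$ gives $\phi_1\times\Id_\R$, which is nontrivial in $\diff^b_\partial(M\times(V\oplus\R))$ but lies in the subgroup $\diff^b_\partial(M\times V)$. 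This is precisely why the target of $\mathrm{alex}$ is $\Omega^{V\oplus\R}B^{(1)}(V)$ rather than $\Omega^{V\oplus\R}\diff^b_\partial(M\times(V\oplus\R))$, and your ``compactly supported---hence bounded'' reasoning obscures the role of the quotient.
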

We will describe the map ``alex'' below, but first note a few remarks on this statement and its consequences.

\begin{rem}\label{deloopingrem}
Both the domain and codomain of the map ``alex'' of Proposition \ref{alexpropBfunctor} are group-like $\mathbb{E}_1$-spaces; the former by composition of concordance diffeomorphisms, and the latter by the loop space structure induced by $\Omega^{\R}(-)$.  In Section \ref{hcobsection}, we construct a (non-connected) delooping of this map (see (\ref{alexhcob})).

It seems likely that the homotopy commutative square (\ref{commsquareBconc}) can also be delooped in a similar manner. Proving this, however, is quite technical and we will not need it in any case. What we will need instead is the observation that if $M$ is replaced by $M\times I$ in Proposition \ref{alexpropBfunctor}, the whole statement can be delooped once with respect to the $\mathbb{E}_1$-structures induced by \emph{stacking in the $I$-direction}. This is straightforward to check from the proofs in \cite{WWI}. 
\end{rem}

\begin{rem}\label{justificationrem2}
    It follows from this proposition that there is a natural\footnote{Natural for codimension-zero embeddings.} equivalence
    $$
\Omega^{\infty+1}\Hsp(M)\simeq \mathcal C(M):=\underset{k}{\hocolim}\ C(M\times D^k).
    $$
As pointed out in Remark \ref{deloopingrem}, this equivalence can be delooped once if we replace $M$ by $M\times I$. Moreover, the (non-equivariant) homotopy types of both $\Hsp(-)$ and $\mathcal{C}(-)$ are invariant under crossing with $I$, namely, there are natural equivalences $\Hsp(M\times I)\simeq \Hsp(M)$ (by Lemma \ref{MIKlem} below) and $\mathcal{C}(M\times I)\simeq \mathcal{C}(M)$ (by definition). By this line of reasoning, we obtain natural equivalences
$$
\Omega^\infty_0\Hsp(M)\simeq \Omega^\infty_0\Hsp(M\times I)\simeq B\mathcal{C}(M\times I)\simeq B\mathcal{C}(M)
$$
By \cite[Proposition 2.1]{VogellInvolution}, $B\mathcal C(M)$ is also naturally equivalent to the basepoint component of the space of stable $h$-cobordisms $\mathcal H(M)$ of Remark \ref{justificationrem1}, and by \cite[Corollary 5.6]{WWI} and the $s$-cobordism theorem, the groups $\pi_0^s(\Hsp(M))$ and $\pi_0(\mathcal{H}(M))$ are both isomorphic to the Whitehead group $\Wh(\pi_1M)$. Since there is an (non-natural) equivalence \emph{of spaces} $\Omega^\infty X\simeq \Omega^\infty_0 X\times \pi_0^s(X)$, we obtain the promised equivalence (\ref{Hspinfloopsace})
    $$
\Omega^\infty\Hsp(M)\simeq \mathcal{H}(M).
    $$
    This, of course, ought to be an equivalence of infinite loop spaces, but that seems to be more difficult to see. Making the above equivalence natural for codimension-zero embeddings requires establishing the (non-connected) delooped analogues of \eqref{commsquareBconc}, using the delooped Alexander trick-like maps (\ref{alexhcob}) constructed in Section \ref{hcobsection} (which satisfy this naturality by construction). However, this is a rather tedious task that we do not undertake.
\end{rem}

\begin{proof}[Proof of Proposition \ref{alexpropBfunctor}]
This is proved in Propositions 1.8, 1.10 and Lemma 1.12 of \cite{WWI}. Let us just explain how the Alexander trick-like map
    $$
\mathrm{alex}: C(M\times D(V))\longrightarrow \Omega^{V\oplus\R}\left(\diff^b_\partial(M\times V\oplus\R)/\diff^b_\partial(M\times V)\right)
    $$
    is defined: given a concordance diffeomorphism $\phi: M\times D(V)\times I\cong M\times D(V)\times I$, extend it by $\phi\mid_{M\times D(V)\times \{1\}}\times \Id_{[1,+\infty)}$ on $M\times D(V)\times [1,+\infty)$ and by the identity elsewhere to obtain a bounded self-diffeomorphism $\widehat\phi$ of $M\times V\oplus \R$; then shift it along $V\oplus \R$ to obtain a $(V\oplus \R)$-fold loop in $\diff^b_\partial(M\times V\oplus\R)/\diff^b_\partial(M\times V)$. We refer to loc. cit. for the rest of the proofs.
\end{proof}

Taking fibres of Proposition \ref{alexpropBfunctor} for $E(-)$ and $B(-)$ yields the first part of the analogous result for $F(-)$.

\begin{prop}\label{alexmapsprop}
For $V\in \mathcal J_0$, there are weak equivalences
$$
\mathrm{alex}: \Omega\cemb(P\times D(V),M\times D(V))\overset\sim\longrightarrow \Omega^{1+V}F^{(1)}(V),
$$
making the following diagram commute up to homotopy:
\begin{equation}\label{commsquareFconc}
\begin{tikzcd}
    \Omega\cemb(P\times D(V),M\times D(V))\dar["\mathrm{alex}", "\vsim"']\rar["\Sigma"] & \Omega\cemb(P\times D(V\oplus \R),M\times D(V\oplus \R))\dar["\mathrm{alex}", "\vsim"']\\
    \Omega^{1+V} F^{(1)}(V)\ar[r,"s^{\vee}_V"]& \Omega^{1+V\oplus \R}F^{(1)}(V\oplus\R),
\end{tikzcd}
\end{equation}
where $\Sigma$ is the concordance embedding stabilisation map of Section \ref{concembsection}. Moreover, if $p\leq d-3$, there is a natural equivalence
\begin{equation}\label{firstderivativeF}
\Omega^\infty(\CEsp(P,M)):=\Omega^\infty(\Theta F^{(1)})\simeq \mathcal{CE}\mathrm{mb}(P,M):=\underset{k}{\hocolim}\ \cemb(P\times D^k, M\times D^k).
\end{equation}
\end{prop}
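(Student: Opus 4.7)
The plan is to take vertical homotopy fibres in Proposition \ref{alexpropBfunctor} applied separately to the orthogonal functors $E(-)$ and $B(-)$ of Notation \ref{FEBorthfunctorsnotn}, and then to identify the resulting top fibre via the concordance isotopy extension sequence (\ref{ConcIET}).

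First I would note that the operation $F\mapsto F^{(1)}$, being itself defined as a levelwise homotopy fibre, commutes with taking homotopy fibres of natural transformations; in particular $F^{(1)}(V) \simeq \hofib(E^{(1)}(V) \to B^{(1)}(V))$. Applying Proposition \ref{alexpropBfunctor} to $E(-)$ and $B(-)$ separately, and using the naturality of Alexander's construction under extension by the identity on $\nu P \times D(V)$ (immediate from the explicit description of $\mathrm{alex}$ recalled in the proof of that proposition), one obtains a homotopy commutative square with vertical weak equivalences
$$
\mathrm{alex}\colon C(\overline{M-P}\times D(V)) \xrightarrow{\sim} \Omega^{V\oplus \R} E^{(1)}(V), \qquad \mathrm{alex}\colon C(M\times D(V)) \xrightarrow{\sim} \Omega^{V\oplus \R} B^{(1)}(V),
$$
and horizontal arrows the natural ``extend by the identity'' maps. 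The bottom horizontal homotopy fibre is $\Omega^{V\oplus \R} F^{(1)}(V)=\Omega^{1+V}F^{(1)}(V)$, while the top horizontal homotopy fibre is $\Omega\cemb(P\times D(V), M\times D(V))$, obtained by looping the concordance isotopy extension sequence (\ref{ConcIET}) for the pair $P\times D(V)\subset M\times D(V)$ once, using connectivity of concordance embedding spaces (Hudson's Theorem \ref{Hudsonstheorem}). Defining $\mathrm{alex}$ as the induced map on fibres yields the first claim, and taking horizontal homotopy fibres in the two instances of the square (\ref{commsquareBconc}) (for $E$ and $B$) yields the homotopy commutativity of (\ref{commsquareFconc}).

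For the equivalence (\ref{firstderivativeF}), I would take $\hocolim_k$ with $V=\R^k$ of the equivalence just established. Since each $\cemb(P\times D^k, M\times D^k)$ is connected (Hudson's Theorem \ref{Hudsonstheorem}) and $\Omega$ commutes with filtered hocolims of pointed connected spaces, one obtains
$$
\Omega\, \mathcal{CE}\mathrm{mb}(P,M) \;\simeq\; \hocolim_k\, \Omega\, \cemb(P\times D^k, M\times D^k) \;\simeq\; \hocolim_k\, \Omega^{1+k}F^{(1)}(\R^k) \;\simeq\; \Omega\cdot \Omega^\infty \Theta F^{(1)}.
$$
To cancel the outer $\Omega$, I would appeal to Remark \ref{deloopingrem}: after replacing $(M,P)$ by $(M\times I, P\times I)$, the whole argument above can be carried out compatibly with the additional $\mathbb{E}_1$-structures coming from stacking in the $I$-direction, thereby gaining one delooping. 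Invariance of the relevant orthogonal functors under crossing with $I$ (Lemma \ref{MIKlem}, used for exactly the same purpose in Remark \ref{justificationrem2}) then identifies the two sides of this delooped equivalence with the original objects $\mathcal{CE}\mathrm{mb}(P,M)$ and $\Omega^\infty\Theta F^{(1)}$.

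The main technical obstacle is the delooping step in the last paragraph, which requires tracking the stacking $\mathbb{E}_1$-structures through the Alexander construction; everything else is essentially formal once Proposition \ref{alexpropBfunctor} and the concordance isotopy extension sequence (\ref{ConcIET}) are in hand.
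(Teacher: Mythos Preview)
Your proposal is correct and follows essentially the same route as the paper: take homotopy fibres in Proposition~\ref{alexpropBfunctor} for $E(-)$ and $B(-)$ to get the map and the square~(\ref{commsquareFconc}), then pass to the colimit and deloop by replacing $(M,P)$ with $(M\times I,P\times I)$ via Remark~\ref{deloopingrem} and Lemma~\ref{MIKlem}. One point the paper makes explicit that you should too: the delooping step a priori only yields an equivalence on basepoint components, so to conclude~(\ref{firstderivativeF}) one needs both $\mathcal{CE}\mathrm{mb}(P,M)$ and $\Omega^\infty\CEsp(P,M)$ to be connected; you cite Hudson's Theorem~\ref{Hudsonstheorem} for the former, but the latter requires Lemma~\ref{ThetaF1nconnectivity} (connectivity of $\Theta F^{(1)}$ when $p\le d-3$), which you do not invoke.
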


To establish (\ref{firstderivativeF}), we will need the following result, which was suggested to us by Manuel Krannich.

\begin{lem}\label{ThetaF1nconnectivity}
    For $p\leq d-3$ and $d+n\geq 5$, the space $\Theta F^{(1)}_n=F^{(1)}(\R^n)$ is $n$-connected.
\end{lem}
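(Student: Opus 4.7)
The plan is to establish two facts and combine them. I claim (i) the spectrum $\CEsp(P,M) = \Theta F^{(1)}$ is $0$-connected, meaning $\pi_i^s \CEsp(P,M) = 0$ for all $i \leq 0$; and (ii) the adjoint structure maps $F^{(1)}(\R^n) \to \Omega F^{(1)}(\R^{n+1})$ of this spectrum are at least $(n+1)$-connected for each $n \geq 0$. Granting these, a standard stabilization argument identifies $\pi_i F^{(1)}(\R^n)$ with $\pi_{i-n}^s \CEsp(P,M)$ whenever $i \leq n$ (iterating the isos $\pi_{i+k}F^{(1)}(\R^{n+k}) \xrightarrow{\cong} \pi_{i+k+1}F^{(1)}(\R^{n+k+1})$ provided by (ii), which hold throughout since $i+k \leq n+k$ always), and this vanishes by (i) since $i - n \leq 0$. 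Hence $F^{(1)}(\R^n)$ is $n$-connected.

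For (i), I would split into non-negative and negative degrees. The non-negative part uses Proposition \ref{alexmapsprop}: the equivalence $\Omega^\infty \CEsp(P,M) \simeq \mathcal{CE}\mathrm{mb}(P,M) = \hocolim_k \cemb(P \times D^k, M \times D^k)$, combined with the path-connectedness of each $\cemb(P \times D^k, M \times D^k)$ from Hudson's Theorem \ref{Hudsonstheorem} (the hypothesis $p \leq d-3$ ensures $p+k \leq (d+k)-3$), gives $\pi_0^s \CEsp(P,M) = 0$. For $i < 0$, the description $\CEsp = \hofib(\Hsp(M - \nu P) \to \Hsp(M))$, together with the split fibre sequence \eqref{WaldhausenWhFibseq}, reduces the problem to showing that $\pi_i^s \Hsp(-)$ at non-positive degrees depends only on $\pi_1(-)$; this is standard since $\Sigma^\infty_+(-)$ is connective (so $\pi_i^s\Hsp$ is a retract of $\pi_i^s A(-)$ in this range) and negative $K$-theory of a connective ring spectrum agrees with that of its $\pi_0$, yielding $\pi_i^s A(X) \cong K_i(\Z[\pi_1 X])$ for $i \leq 0$. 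As $p \leq d-3$ forces the codimension-$3$ handle condition and hence $\pi_1(M - \nu P) \cong \pi_1 M$, the map $\Hsp(M - \nu P) \to \Hsp(M)$ is an isomorphism on $\pi_i^s$ for $i \leq 0$, and the long exact sequence produces $\pi_i^s \CEsp(P,M) = 0$ for $i < 0$.

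For (ii), the homotopy commutative square \eqref{commsquareFconc} identifies, via the Alexander trick equivalences of Proposition \ref{alexmapsprop}, the $(n+1)$-fold loop of the adjoint structure map with $\Omega$ of the concordance embedding stabilization map $\Sigma: \cemb(P \times D^n, M \times D^n) \to \cemb(P \times D^{n+1}, M \times D^{n+1})$. The Goodwillie--Krannich--Kupers bound \eqref{concembbound} asserts $\Sigma$ is at least $(2d - p - 5 + n)$-connected; undoing the $(n+1)$-fold looping, the adjoint structure map itself is at least $(2d - p + 2n - 5)$-connected. Under $p \leq d-3$ and $d \geq 5$ (or more directly $d-p \geq 3$), this exceeds $n+1$ for every $n \geq 0$, so (ii) holds.

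The hard part will be the $\pi_1$-only dependence of $\pi_i^s \Hsp(-)$ at non-positive degrees used in (i): although this is classical, a careful account is required to match the splitting \eqref{WaldhausenWhFibseq} with the identification of lower algebraic $K$-theory of the spherical group ring $\mathbb{S}[\Omega(-)]$ and that of $\Z[\pi_1(-)]$, and to ensure the relevant maps induced by $M - \nu P \hookrightarrow M$ commute with these identifications.
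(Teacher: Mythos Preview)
There are two gaps. The minor one: your step~(i) for $\pi_0^s$ invokes the equivalence $\Omega^\infty \CEsp(P,M) \simeq \mathcal{CE}\mathrm{mb}(P,M)$ of~\eqref{firstderivativeF}, but the paper's proof of that equivalence \emph{uses} Lemma~\ref{ThetaF1nconnectivity}, so this is circular. The part of Proposition~\ref{alexmapsprop} that is available independently only gives $\Omega^{\infty+1}\CEsp(P,M)\simeq \Omega\mathcal{CE}\mathrm{mb}(P,M)$, which controls $\pi_i^s$ for $i\geq 1$ but not $i=0$.

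The serious one is step~(ii). The square~\eqref{commsquareFconc} only identifies $\Omega^{n+1}s^\vee_{\R^n}$ with $\Omega\Sigma$; knowing that $\Omega^{n+1}f$ is $k$-connected tells you about $\pi_j(f)$ for $j\geq n+1$, and says \emph{nothing} about $\pi_j(f)$ for $j\leq n$. But your combination argument needs precisely the range $j\leq n$. The step ``undoing the $(n+1)$-fold looping'' is invalid: delooping does not transport connectivity of a map downward without prior control on the low-degree homotopy of the spaces, which is exactly what is being proved. The paper avoids this trap by working with $E^{(1)}$ and $B^{(1)}$ separately rather than with $F^{(1)}$: for those, Proposition~\ref{alexpropBfunctor} gives \emph{unlooped} equivalences $C(-\times D^n)\simeq \Omega^{n+1}(-)^{(1)}(\R^n)$, so Hudson's theorem handles $\pi_{n+1}$ of $\lambda\colon E^{(1)}(\R^n)\to B^{(1)}(\R^n)$ directly; and for the low degrees $0\leq *\leq n$ it invokes \cite[Cor.~5.8]{WWI}, a non-trivial input asserting that the stabilisation maps $\Theta E^{(1)}_n\to \Omega^\infty(\Sigma^n\Theta E^{(1)})$ and likewise for $B$ are injective on $\pi_0$ and isomorphisms on $\pi_{1\leq *\leq n}$. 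Combined with your (correct) $\pi_1$-only dependence of $\pi^s_{*\leq 0}\Hsp(-)$, a diagram chase then finishes. Your proposal lacks any analog of \cite[Cor.~5.8]{WWI} for $F$, and none follows from the concordance-embedding stable range alone.
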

\begin{proof}
It suffices to show that the map $\Theta E^{(1)}_n\to \Theta B^{(1)}_n$, call it $\lambda$, is such that $\pi_*(\lambda)$ is
$$
\text{($a$) surjective if $*=n+1$,}\quad \text{($b$) injective if $*=0$,} \quad \text{($c$) an isomorphism if $1\leq *\leq n$.}
$$
For ($a$), observe that $\Omega^{n+1}\lambda$ is, up to equivalence, the natural map of concordance spaces $C(\overline{M-P}\times D^n)\to C(M\times D^n)$ by Proposition \ref{alexpropBfunctor}. By exactness of
$$
\begin{tikzcd}
    \pi_0(C(\overline{M-P}\times D^n))\rar["\pi_{n+1}(\lambda)"] &\pi_0(C(M\times D^n))\rar & \pi_0(\cemb(P\times D^n, M\times D^n))=*,
\end{tikzcd}
$$
where the equality on the right is the statement of Hudson's Theorem \ref{Hudsonstheorem}, it follows that $\pi_{n+1}(\lambda)$ is surjective.

    For $(b)$ and $(c)$, consider the commutative diagram
    $$
\begin{tikzcd}
    \Theta E^{(1)}_n\rar["\lambda"]\dar["\mathrm{stab.}"]&\Theta B^{(1)}_n\dar["\mathrm{stab.}"]\\
   \Omega^{\infty}(\Sigma^n\Theta E^{(1)})\rar["\eta"] &\Omega^{\infty}(\Sigma^n\Theta B^{(1)}).
\end{tikzcd}
    $$
 We claim that the map of (non-connective) spectra $\Hsp(\overline{M-P})\to \Hsp(M)$ underlying $\eta$ is an isomorphism in $\pi_*^s$ for $*\leq 0$: indeed, the inclusion $\overline{M-P}\xhookrightarrow{}M$ is $2$-connected and $\pi_{*}^s(\Hsp(-))$ for $*\leq 0$ (see (\ref{kappagroups})) only depends on $\pi_1(-)$ by \cite[Corollary 5.6]{WWI}. So $\eta$ itself satisfies ($b$) and ($c$). By \cite[Corollary 5.8]{WWI}, both vertical maps are injective in $\pi_0$ and isomorphisms in $\pi_{1\leqs*\leq n}$ if $d+n\geq 5$. Claims $(b)$ and $(c)$ now follow.
\end{proof}

\begin{proof}[Proof of Proposition \ref{alexmapsprop}]
It remains to deloop the natural equivalence $$\Omega^{\infty+1}(\CEsp(P,M))\simeq \Omega\mathcal{CE}\mathrm{mb}(P,M)$$ obtained from the squares (\ref{commsquareFconc}), so to yield (\ref{firstderivativeF}). We do this as in Remark \ref{justificationrem2}.  

First observe that both $\Omega^\infty\CEsp(P,M)$ and $\mathcal{CE}\mathrm{mb}(P,M)$ are connected under the codimension assumption---the former by Lemma \ref{ThetaF1nconnectivity} and the latter by Hudson's Theorem \ref{Hudsonstheorem}. Just like in Remark \ref{deloopingrem}, the homotopy commutative square (\ref{commsquareFconc}) can be delooped if we replace $\iota:P\xhookrightarrow{}M$ by $\iota\times\Id_{I}:P\times I\xhookrightarrow{}M\times I$. Finally, observe that there are natural equivalences $\CEsp(P\times I,M\times I)\simeq \CEsp(P,M)$ (by Lemma \ref{MIKlem}) and $\mathcal{CE}\mathrm{mb}(P\times I,M\times I)\simeq \mathcal{CE}\mathrm{mb}(P,M)$ (by definition). We thus obtain the desired chain of natural equivalences
$$
\Omega^\infty\CEsp(P,M)\simeq \Omega^\infty\CEsp(P\times I,M\times I)\simeq \mathcal{CE}\mathrm{mb}(P\times I,M\times I)\simeq \mathcal{CE}\mathrm{mb}(P,M).
$$
\end{proof}

\subsection{The map \texorpdfstring{$\Phi^{\emb}$}{Phi emb} of Theorem \ref{EmbWWIThm}}\label{ThmAproofsubsection} Recall the map $\Phi^F_\infty$ of Proposition \ref{propunstablemapsorthcalc} for $F(-)$. Noting that $F(0)\simeq \emb_{\partial_0,\langle\iota\rangle}(P,M)$ by the isotopy extension sequence (\ref{normalIET}), this map, up to equivalence, takes the form
$$
\Phi^F_\infty: \hofib(\emb_{\partial_0,\langle\iota\rangle}(P,M)\to F(\R^\infty))\longrightarrow \Omega^\infty(\CEsp(P,M)_{hC_2}).
$$
To obtain $\Phi^{\emb}$, we need to replace $F(\R^\infty)$ by $\bemb_{\partial_0,\langle\iota\rangle}(P,M)$ above (and deal with some path component considerations). This turns out to be possible by a principle similar to that of \cite[Rem. 3.5]{WWI}.

\begin{prop}\label{hofibembprop}
    There is a map
    $$
    \begin{tikzcd}
\hofib_\iota(\emb_{\partial_0}(P,M) \to \bemb_{\partial_0}(P,M)) \rar &\hofib(F(0)\to F(\R^\infty))
\end{tikzcd}
    $$
    which is an equivalence if $d\geq 5$ and $d-p\geq 3$. If $d=4$ and $d-p\leq 3$, the map becomes an equivalence upon looping once.
\end{prop}
\begin{proof}
    First observe that the map
    \[
\hofib_\iota(\emb_{\partial_0,\langle\iota\rangle}(P,M)\to\bemb_{\partial_0,\langle\iota\rangle}(P,M))\longrightarrow \hofib_\iota(\emb_{\partial_0}(P,M)\to\bemb_{\partial_0}(P,M))
    \]
    is an inclusion of path components, and it is an equivalence if $d-p\leq 3$ by Hudson's Theorem \ref{Hudsonstheorem}. We obtain a map in the opposite direction by sending every component that is not hit to the basepoint. Therefore, it suffices to construct a homotopy commutative diagram
\begin{equation}\label{replacingFRinftysquare}
\begin{tikzcd}
    F(0)\rar &F(\R^\infty)\rar["i", "\sim"'] &\widetilde{F}(\R^\infty)\\
    \emb_{\partial_0,\langle\iota\rangle}(P,M)\uar["\vsim", "(\ref{normalIET})"']\ar[hook, rr] &&\bemb_{\partial_0,\langle\iota\rangle}(P,M),\uar[hook, "j"']
\end{tikzcd}
    \end{equation}
where the map $j$ will be an inclusion of path components if $d-p\leq 3$ and $d\geq 5$. (This will be the case when $d=4$ after looping.)

To that end, let $\mathcal J_0^\delta$ denote the underlying ordinary category of the topological category $\mathcal J_0$, and write $\widetilde{E}(-)$ and $\widetilde{B}(-)$ for the functors $\mathcal J_0^\delta\to \mathsf{Top}_*$ given by
$$
\widetilde E(V):=B|\bdiffb_{\partial}(\overline{M-P}\times V)_\bullet|,\qquad \widetilde B(V):=B|\bdiffb_{\partial}(M\times V)_\bullet|.
$$
Set $\widetilde{F}(-):=\hofib(\widetilde{E}(-)\to \widetilde{B}(-))$. Then the map $i$ of (\ref{replacingFRinftysquare}) arises as the map on homotopy fibres in
    $$
\begin{tikzcd}
    F(\R^\infty)\rar\dar[dashed, "i"] &E(\R^\infty)=B\diff^b_{\partial}(\overline{M-P}\times \R^\infty)\rar\dar[hook, "\vsim"] &B(\R^\infty)=B\diff^b_{\partial}(M\times \R^\infty)\dar[hook, "\vsim"]\\
    \widetilde{F}(\R^\infty)\rar &\widetilde{E}(\R^\infty)=B\bdiffb_{\partial}(\overline{M-P}\times \R^\infty)\rar &\widetilde{B}(\R^\infty)=B\bdiffb_{\partial}(M\times \R^\infty).
\end{tikzcd}
    $$
The middle and right vertical maps are equivalences by \cite[Theorem B]{WWI}, so $i$ is too by the five lemma. 

The map $j$ of (\ref{replacingFRinftysquare}) arises as the map on homotopy fibres in
    $$
\begin{tikzcd}
    \widetilde{F}(\R^\infty)\rar &\widetilde{E}(\R^\infty)\rar &\widetilde{B}(\R^\infty)\\
    \widetilde{F}(0)\simeq \bemb_{\partial_0,\langle\iota\rangle}(P,M)\uar[dashed, "j"]\rar &\widetilde{E}(0)=B\bdiff_{\partial}(\overline{M-P})\ar[ur, phantom, "(\dagger)"]\uar[hook]\rar &\widetilde{B}(0)=B\bdiff_{\partial}(M).\uar[hook]
\end{tikzcd}
    $$
Then the square (\ref{replacingFRinftysquare}) is the homotopy fibre of the map between the similar (strictly commutative) squares associated to $E(-)$ and $B(-)$, and so it is homotopy commutative by construction.

It remains to show that $j$ is an equivalence or, equivalently, that the square $(\dagger)$ is homotopy cartesian. Write $\widetilde{F}^{(1)}(V):=\hofib(\widetilde F(V)\to \widetilde F(V\oplus \R))$, and similarly for $\widetilde{E}^{(1)}(V)$ and $\widetilde{B}^{(1)}(V)$. In other words,
$$
\widetilde{E}^{(1)}(V):=\frac{\bdiffb_\partial(\overline{M-P}\times V\oplus \R)}{\bdiffb_\partial(\overline{M-P}\times V)}, \qquad \widetilde{B}^{(1)}(V):=\frac{\bdiffb_\partial(M\times V\oplus \R)}{\bdiffb_\partial(M\times V)}.
$$
For a group $\pi$ and an integer $j\leq 1$, set $\kappa_j(\pi):=\pi_j^s(\Whsp(B\pi))$. More explicitly,
\begin{equation}\label{kappagroups}
\kappa_j(\pi)=\left\{\begin{array}{cc}
    \Wh_1(\pi), & j=1, \\
    \widetilde{K}_0(\Z\pi), & j=0,\\
    K_j(\Z\pi), & j\leq -1.
\end{array}
\right.
\end{equation}
It was shown in \cite[Corollary 5.5]{WWI} (see also \cite{PedersenAnderson}) that, for a certain $C_2$-action on $\kappa_j(\pi)$, there are maps for $n\geq 0$
\begin{align*}
    \beta&:\pi_*(\widetilde{E}^{(1)}(\R^n))\longrightarrow H_{*}(C_2; \kappa_{1-n}(\pi_1(M-P))),\\
    \beta&:\pi_*(\widetilde{B}^{(1)}(\R^n))\longrightarrow H_{*}(C_2; \kappa_{1-n}(\pi_1(M))), 
\end{align*}
which, as long as $d+n\geq 5$, are injective if $*=0$ and isomorphisms if $*\geq 1$. Moreover, it is not difficult to see from its proof that these are compatible, in the sense that the square
$$
\begin{tikzcd}
    \pi_*(\widetilde{E}^{(1)}(\R^n))\rar\dar["\beta"] & \pi_*(\widetilde{B}^{(1)}(\R^n))\dar["\beta"]\\
    H_{*}(C_2; \kappa_{1-n}(\pi_1(M-P)))\rar["\cong"] & H_{*}(C_2; \kappa_{1-n}(\pi_1(M)))
\end{tikzcd}
$$
is commutative. The lower horizontal map is an isomorphism because the fundamental groups of $M-P$ and $M$ can be identified under the obvious inclusion by the assumption that $p\leq d-3$. Hence, as $\widetilde{F}^{(1)}(V)\to \widetilde{E}^{(1)}(V)\to \widetilde{B}^{(1)}(V)$ is a homotopy fibre sequence for all $V$, it follows that $\widetilde{F}^{(1)}(\R^n)$ is weakly contractible for all $n\geq 0$ with $d+n\geq 5$. If $d\geq 5$, using the homotopy fibre sequences
$$
\begin{tikzcd}
     \hofib(\widetilde{F}(0)\to\widetilde F(\R^n))\rar & \hofib(\widetilde{F}(0)\to\widetilde F(\R^{n+1}))\rar & \widetilde{F}^{(1)}(\R^n)\simeq *
\end{tikzcd}
$$
for $n\geq 0$, we must have by induction that $\hofib(\hspace{1pt}j:\widetilde{F}(0)\to\widetilde F(\R^\infty))$ is contractible, ie, that $j$ is an inclusion of path components, as desired.

If $d=4$, the argument above shows that $\widetilde{F}^{(1)}(\R^n)\simeq *$ for $n\geq 1$. We claim further that $\Omega \widetilde{F}^{(1)}(0)$ is also contractible---if so, looping \eqref{replacingFRinftysquare} once, we obtain a commutative diagram
\[
\begin{tikzcd}
    \Omega F(0)\rar &\Omega F(\R^\infty)\rar["i", "\sim"'] &\Omega\widetilde{F}(\R^\infty)\\
    \Omega \emb_{\partial_0,\langle\iota\rangle}(P,M)\uar["\vsim", "(\ref{normalIET})"']\ar[hook, rr] &&\Omega \bemb_{\partial_0,\langle\iota\rangle}(P,M),\uar[hook, "\Omega j"']
\end{tikzcd}
\]
where the rightmost vertical map $\Omega j$ is an inclusion of path components, and hence the conclusion of the statement would hold after looping once if $d-p\leq 3$.

To show that $\Omega \widetilde{F}^{(1)}(0) \simeq *$, it suffices to prove that both $\Omega \widetilde{E}^{(1)}(0)$ and $\Omega \widetilde{B}^{(1)}(0)$ are contractible. This follows from the block analogue of Proposition~\ref{alexpropBfunctor} (whose proof in \cite[Section 1]{WWI} applies verbatim in the block setting), which identifies $\Omega \widetilde{E}^{(1)}(0) \simeq \widetilde{C}(\overline{M-P})$ and $\Omega \widetilde{B}^{(1)}(0) \simeq \widetilde{C}(M)$. As the space of block concordances of a manifold $\widetilde{C}(X)$ is well-known to be contractible, the claim follows.
\end{proof}

\begin{defn}\label{Phiembdefn}
    The map $\Phi^{\emb}$ of Theorem \ref{EmbWWIThm} is the zig-zag 
\begin{alignat*}{1}
 \hofib_\iota(\emb_{\partial_0}(P,M)\xhookrightarrow{}\bemb_{\partial_0}(P,M))\longrightarrow \hofib(F(0)\to F(\R^\infty)) & \qquad
\text{by Proposition \ref{hofibembprop},}\\          
\xrightarrow{\Phi_\infty^F}\Omega^\infty(\Theta F^{(1)}_{hO(1)})=\Omega^\infty(\CEsp(P,M)_{hC_2}) &\quad\text{by (\ref{stableWWImaps}) and (\ref{Hspectrumdefn}).}
\end{alignat*}
By Proposition \ref{hofibembprop}, the first map is an equivalence if $d-p\geq 3$ and $d\geq 5$ (or $d=4$ after looping once).
\end{defn}

\subsection{Connectivity of the map \texorpdfstring{$\Phi^{\emb}$}{Phi emb}}\label{connectivitysection} Let $d-p\geq 3$. In this section we show that the map $\Phi^{\emb}$ just defined is $\phi_{\cemb}(M,P)$-connected, at last establishing Theorem \ref{EmbWWIThm} (modulo the proof of Proposition \ref{myAmazingTheorem}). First assume that $d\geq 5$, so that by Proposition \ref{hofibembprop}, the connectivity of $\Phi^{\emb}$ is that of $\Phi^{F}_\infty$; we show by induction on $n\geq 0$ that the maps $\Phi^{F}_n$ of Proposition \ref{propunstablemapsorthcalc} are at least $\phi_{\cemb}(M,P)$-connected. Note that this is clear for $n=0$, as both the domain and codomain are contractible. 

Suppose now that $\Phi^F_n$ is $\phi_{\cemb}(M,P)$-connected for some $n\geq 0$. To show that $\Phi^F_{n+1}$ has this connectivity, it suffices to show that the map $\mathrm{stab.}: \Theta F^{(1)}_n\to \Omega^\infty(\Sigma^n\Theta F^{(1)})$ of Proposition \ref{propunstablemapsorthcalc} is $(\phi_{\cemb}(M,P)+n)$-connected. But $\Theta F^{(1)}_n$ is $n$-connected by Lemma \ref{ThetaF1nconnectivity} and $\Sigma^n\Theta F^{(1)}$ is $(n+1)$-connective. So it suffices to show that $\Omega^{n+1}(\mathrm{stab.})$ is $(\phi_{\cemb}(M,P)-1)$-connected. This follows from the homotopy commutative diagram
\begin{equation}\label{connectivityCEmbsquare}
\begin{tikzcd}
\Omega\cemb(P\times D^n, M\times D^n)\dar["\mathrm{alex}", "\vsim"']\rar[hook] & \Omega\mathcal{CE}\mathrm{mb}(P,M)\dar["\vsim"', "(\ref{firstderivativeF})"]\\
\Omega^{n+1}\Theta F^{(1)}_n\rar["\Omega^{n+1}(\mathrm{stab.})"] & \Omega^{\infty+1}(\Theta F^{(1)}).
\end{tikzcd}
\end{equation}
By definition, the connectivity of the top horizontal map is 
$$\phi_{\cemb}(M\times D^n,P\times D^n)-1\geq \phi_{\cemb}(M,P)-1.$$
One obtains the above homotopy commutative diagram from stacking together squares of the form (\ref{commsquareFconc}) with $V=\R^k$ for $k\geq n$. This establishes Theorem \ref{EmbWWIThm} when $d\geq 5$.

Now if $d=4$, we shall show that $\Omega \Phi^{\emb}$ is $(\phi_{\cemb}(M,P)-1)$-connected---this would show that $\Phi^{\emb}$ is indeed $\phi_{\cemb}(M,P)$-connected as both the domain and codomain of the map are connected (the former by Hudson's Theorem \ref{Hudsonstheorem}, and the latter as $\CEsp(P,M)$ is $1$-connective by Lemma \ref{ThetaF1nconnectivity}, and $(-)_{hC_2}$ preserves connectivity). By Proposition \ref{replacingFRinftysquare}, the connectivity of $\Omega \Phi^{\emb}$ is that of $\Omega \Phi^{F}_\infty$ so, just like before, we need only show that the maps $\Omega \operatorname{stab.}: \Omega \Theta F^{(1)}_n\to \Omega^{\infty+1}(\Sigma^n\Theta F^{(1)})$ are $(\phi_{\cemb}(M,P)+n-1)$-connected for $n\geq 0$. When $n=0$, this follows from the homotopy commutative diagram \eqref{connectivityCEmbsquare} (for $n=0$), and for $n\geq 1$, the same argument as before applies since $\Theta F^{(1)}_n$ is indeed $n$-connected for $n\geq 1$ when $d\geq 4$. This concludes the proof of Theorem \ref{EmbWWIThm}.
\qed

\section{A splitting result for embedding spaces and the Gromoll filtration}\label{splittingknotsection}

In this section we derive, as a consequence of Theorem \ref{EmbWWIThm}, a general splitting result\footnote{This should be compared with the analogous result of Burghelea and Lashof \cite[Corollary~E]{BurgLash}.} for embedding spaces of manifolds with interval factors. This will be used for the splitting part of Theorem \ref{LongKnotsThm}. Later we discuss consequences for the Gromoll filtration of embedding spaces (cf\ Definition \ref{GromollDefn}). Throughout, let $\iota: P^p \subset M^d$ be as in the statement of Theorem \ref{EmbWWIThm}.

For $D(-)$ any of $\diff_\partial^b(-\times V)$ with $V\in \mathcal J_0$, $\bdiff_\partial(-)$ or $C(-)$, there are \textit{graphing maps}
$$
\Gamma: \Omega D(M)\longrightarrow D(M\times I)
$$
given (roughly) by regarding a $1$-parameter family of automorphisms of $M$ as an automorphism of $M\times I$ itself. These are natural with respect to codimension zero embeddings. Moreover, these maps can be delooped as, up to homotopy, they intertwine the (group-like) $\mathbb{E}_1$-structures of concatenating loops for the domain, and stacking automorphisms in the $I$-direction for the codomain. There are similar maps
\begin{equation}\label{graphmapemb}
\Gamma: \Omega E(P,M)\longrightarrow E(P\times I, M\times I)
\end{equation}
for $E(-,-)$ denoting any of $\emb_{\partial_0}(-,-)$, $\bemb_{\partial_0}(-,-)$, $\emb_{\partial_0}^{(\sim)}(-,-)$ (see (\ref{pseudoembdefn})) or $\cemb(-,-)$. In what follows, we will write $\Gamma$ for any map of this same nature.

\begin{rem}\label{colsmrem}
    Most of the functors $D(-)$ and $E(-,-)$ above either admit a point-set topological model or a simplicial model. In the first case, the graphing maps just introduced are really zig-zags of maps
    $$
    \begin{tikzcd}
    \Omega E(P,M) &\lar[hook', "\sim"'] \Omega^{\mathsf{col}, \mathsf{sm} }E(P,M)\rar["\Gamma"] &E(P\times I,M\times I),
\end{tikzcd}
$$
where, for $X$ a pointed (Fréchet) manifold, here $\Omega^{\mathsf{col}, \mathsf{sm}}X$ stands for the space of smooth loops $\gamma: S^1\to X$ which are \textit{collared} in the sense that there exists some neighbourhood of $1\in S^1$ which is sent by $\gamma$ to the basepoint in $X$. The inclusion $\Omega^{\mathsf{col}, \mathsf{sm} }E(P,M)\xhookrightarrow{}\Omega E(P,M)$ is an equivalence by smooth approximation of continuous functions. 

In the simplicial case, the graphing maps $\Gamma$ are the geometric realisations of the simplicial maps
$$
\Gamma_\bullet: (\Omega E(P,M))_\bullet\longrightarrow E(P\times I, M\times I)_\bullet
$$
that send a $q$-simplex in $(\Omega E(P,M))_\bullet$ (seen as a $(q+1)$-simplex $g\in E(P,M)_\bullet$ whose $0$-th face and vertex are the basepoint $*\in E(P,M)_\bullet$) to the $q$-simplex in $E(P\times I,M\times I)_\bullet$ obtained from $g$ by expanding out the $0$-th vertex of $\Delta^{q+1}$ to a $q$-dimensional simplex (ie regarding $\Delta^{q+1}$ as $(\Delta^q\times I, \Delta^q\times\{0\})$). 

In the cases when $D(-)$ or $E(-,-)$ admit both models, one verifies that these two graphing maps agree up to homotopy. We ignore both of these technicalities in most of what follows.
\end{rem}

\subsection{Splitting results} Observe that there is a graphing map
\begin{equation}\label{pseudoGraphingMap}
\Gamma^{(\sim)}: \Omega \pemb_{\partial_0}(P,M)\longrightarrow \pemb_{\partial_0}(P\times I,M\times I)
\end{equation}
obtained as the homotopy fibre of the ordinary and block graphing maps.

\begin{prop}\label{pseudographnullhomotopyprop}
    If $d-p\geq 3$, then the pseudoisotopy graphing map $\Gamma^{(\sim)}$ of \eqref{pseudoGraphingMap} is nullhomotopic after localising away from $2$ and taking $(\phi_{\cemb}(d+1,p+1)-1)$-th Postnikov sections.
\end{prop}

\begin{proof}
    By Proposition \ref{positivecodimprop}, we may assume that $\dim P=\dim M=d$. Then, resembling Notation \ref{FEBorthfunctorsnotn}, let $\Omega F(-)$ and $FI(-)$ denote the orthogonal functors given by
\begin{align*}
\Omega F(V):&=\Omega\hofib(B\diff^b_\partial(\overline{M-P}\times V)\to B\diff^b_\partial(M\times V)),\\
FI(V):&=\hofib(B\diff^b_\partial(\overline{M-P}\times I\times V)\to B\diff^b_\partial(M\times I\times V)).
\end{align*}
By taking fibres of the (delooped) graphing maps introduced at the beginning of the section, we obtain a natural transformation $\Gamma: \Omega F(-)\to FI(-)$ of orthogonal functors, giving rise to a map of $O(1)$-spectra $\Gamma: \Theta(\Omega F)^{(1)}\longrightarrow \Theta FI^{(1)}$ and a commutative diagram
\begin{equation}\label{OmegaFIdiagram}
\begin{tikzcd}
\pemb_{\partial_0}(P\times I, M\times I)\rar["\Phi^{FI}_\infty"] & \Omega^\infty(\Theta FI^{(1)}_{hO(1)})\rar["\mathrm{Trf}_{O(1)}"] & \Omega^\infty\Theta FI^{(1)}\\
\Omega\pemb_{\partial_0}(P, M)\uar["\Gamma"]\rar[" \Phi^{\Omega F}_\infty"] & \Omega^\infty(\Theta (\Omega F)^{(1)}_{hO(1)})\uar["\Gamma"]\rar["\mathrm{Trf}_{O(1)}"] & \Omega^\infty\Theta(\Omega F)^{(1)}\uar["\Gamma"],
\end{tikzcd}
\end{equation}
where $\mathrm{Trf}_{O(1)}$ is the $O(1)$-\textit{transfer map}. This map is injective in the homotopy category of infinite loop spaces at odd primes (it splits the quotient map $X\to X_{hC_2}$). Therefore, since the map $\Phi^{FI}_\infty$ is $\phi_{\cemb}(d+1,p+1)$-connected by Section \ref{connectivitysection} (and thus becomes an equivalence after taking $(\phi_{\cemb}(d+1,p+1)-1)$-th Postnikov sections), it will suffice to show that the rightmost vertical map in (\ref{OmegaFIdiagram}) is nullhomotopic. By (\ref{firstderivativeF}), we have that $\Omega^\infty\Theta(\Omega F)^{(1)}\simeq \Omega\mathcal{CE}\mathrm{mb}(P,M)$ and $\Omega^\infty\Theta FI^{(1)}\simeq \mathcal{CE}\mathrm{mb}(P\times I,M\times I)$ and, under these equivalences, the right vertical map in (\ref{OmegaFIdiagram}) then becomes the graphing map
\begin{equation}\label{splittingpropstablecembmap}
\Gamma: \Omega\mathcal{CE}\mathrm{mb}(P,M)\longrightarrow \mathcal{CE}\mathrm{mb}(P\times I,M\times I).
\end{equation}
This is because both the concordance stabilisation map and the Alexander trick-like map of Proposition \ref{alexmapsprop} that give rise to the previous equivalences commute on the nose with the graphing maps, ie, the following diagrams commute:
$$
\begin{tikzcd}
\Omega\cemb(P\times D^{n+1}, M\times D^{n+1})\rar["\Gamma"]& \cemb(P\times I\times D^{n+1}, M\times I\times D^{n+1})\\
    \Omega\cemb(P\times D^n, M\times D^n)\rar["\Gamma"]\uar["\Sigma"] & \cemb(P\times I\times D^n, M\times I\times D^n),\uar["\Sigma"]\\
    \Omega^n\Theta(\Omega F)^{(1)}_n\rar["\Gamma"] & \Omega^n\Theta FI^{(1)}_n\\
    \Omega\cemb(P\times D^n, M\times D^n)\rar["\Gamma"]\uar["\mathrm{alex}", "\vsim"'] & \cemb(P\times I\times D^n, M\times I\times D^n).\uar["\mathrm{alex}", "\vsim"']
\end{tikzcd}
$$

So in order to show that (\ref{splittingpropstablecembmap}) is nullhomotopic, it suffices to argue that it is so unstably, ie that the graphing maps
$$
\Gamma:\Omega\cemb(P\times D^n, M\times D^n)\longrightarrow \cemb(P\times I\times D^n, M\times I\times D^n) 
$$
are nullhomotopic for all $n\geq 0$. Replacing $M\times D^n$ by $M$, we may assume $n=0$. This claim is a consequence of the following trick, due to Oscar Randal-Williams: there is a ``$U$-shaped graphing map''
$$
U\Gamma: \Omega \cemb(P,M)\longrightarrow \cemb(P\times I, M\times I),
$$
which is homotopic to the standard $\Gamma$ by pulling down the $U$-shape to the base of the concordance. This homotopy is illustrated in Figure \ref{Ushapeconcfig}, where we replace $\cemb(-,-)$ by standard concordances $C(-)$ because it is easier to depict, but the idea is the same. Observe that, throughout the homotopy, there are no issues about smoothness in the upper corners because the concordances are equal to the identity near these. Here we are explicitly using the collared condition imposed by the functor $\Omega^{\mathsf{col,sm}}(-)$ (see Remark \ref{colsmrem}).

\begin{figure}[h]
    \centering
   \includegraphics[scale = 0.65]{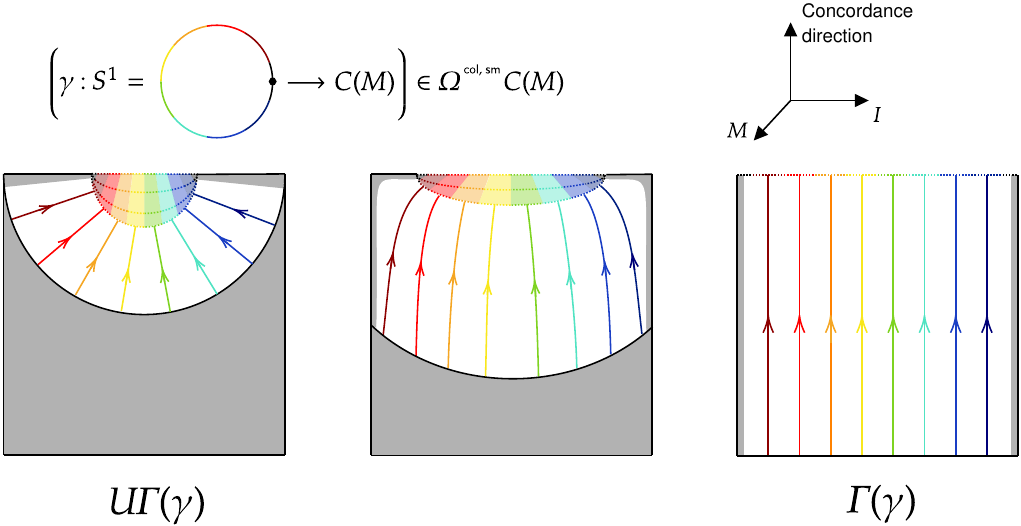}
    \caption{Images of $\gamma\in \Omega C(M)$ under the graphing maps $\Gamma$ and $U\Gamma$, and the homotopy between them. The concordances are equal to the identity on grey shaded regions.}
    \label{Ushapeconcfig}
\end{figure}

But clearly $U\Gamma$ factors through the path space $\mathrm{Map}(I,\cemb(P,M))$, and hence there is a homotopy commutative diagram
$$
\begin{tikzcd}[column sep =14pt, row sep = 8pt]
    \cemb(P\times I,M\times I)\rar[equal] & \cemb(P\times I,M\times I)&&\\
    &&\mathrm{Map}(I,\cemb(P,M))\rar["\mathrm{ev}_0", "\sim"']\ar[ul, "U\Gamma"] & \cemb(P,M),\\
    \Omega \cemb(P,M)\ar[uu, "\Gamma"]\rar[equal] & \Omega \cemb(P,M)\ar[uu, "U\Gamma"]\ar[hook, ur]\ar[urr, bend right= 7pt, "\mathrm{ev}_{0}= *"']&&
\end{tikzcd}
$$
which exhibits the leftmost vertical map as nullhomotopic, as desired. This establishes the statement.
\end{proof}


As a consequence of this proposition, we obtain the following splitting result for embedding spaces.

\begin{thm}\label{splittingthm}
Let $I$ and $J$ both denote closed intervals. For $p\leq d-3$, $N:=\phi_{\cemb}(d+1,p+1)-2$ and $N':=\phi_{\cemb}(d+2,p+2)-1$, there are equivalences away from $2$
\begin{align*}
&\tau_{\leqs N} \big(\Omega \emb_{\partial_0}(P\times I, M\times I)\big)\simeq_{[\frac{1}{2}]}\tau_{\leqs N}\left(\Omega\bemb_{\partial_0}(P\times I,M\times I)\times \Omega \pemb_{\partial_0}(P\times I,M\times I)\right),\\
&\tau_{\leqs N'} \emb_{\partial_0}(P\times I\times J, M\times I\times J)\simeq_{[\frac{1}{2}]}\tau_{\leqs N'}
\begin{pmatrix}
    \bemb_{\partial_0}(P\times I\times J,M\times I\times J)\\
    \times\\
    \pemb_{\partial_0}(P\times I\times J,M\times I\times J)
\end{pmatrix}.
\end{align*}
These splittings are compatible with graphing maps, in the sense that
\[
\begin{tikzcd}[scale cd= 0.9]
    \tau_{\leqs N} \big(\Omega \emb_{\partial_0}(P\times I, M\times I)\big)\dar["\Gamma"]\rar[phantom, "\simeq_{[\frac{1}{2}]}"] &\tau_{\leqs N}\left(\Omega\bemb_{\partial_0}(P\times I,M\times I)\times \Omega \pemb_{\partial_0}(P\times I,M\times I)\right)\dar["\widetilde{\Gamma}\times \Gamma^{(\sim)}\ \sim\ \widetilde{\Gamma}\times *"]\\
    \tau_{\leqs N} \emb_{\partial_0}(P\times I\times J, M\times I\times J)\rar[phantom, "\simeq_{[\frac{1}{2}]}"] & \tau_{\leqs N} \left(
    \bemb_{\partial_0}(P\times I\times J,M\times I\times J) \times 
    \pemb_{\partial_0}(P\times I\times J,M\times I\times J)\right) 
\end{tikzcd}
\]
is homotopy commutative. (The homotopy in the rightmost vertical map follows from Proposition \ref{pseudographnullhomotopyprop}.)
\end{thm}

\begin{proof}
Suppose given a map of fibration sequences 
$$
\begin{tikzcd}
    F'\rar & E'\rar["p'"] & B'\\
    F\rar\uar["f\simeq *"] &E\rar["p"]\uar["e"]&B\uar["b", "\vsim"']
\end{tikzcd}
$$
such that $f$ is nullhomotopic and $b$ is an equivalence. If $\delta: \Omega B\to F$ (and similarly for $\delta'$) denotes the connecting map, then it follows that $\delta' \circ \Omega b\simeq f\circ \delta\simeq *$, and thus $\Omega b$ lifts, up to homotopy, to a map $\tilde\sigma: \Omega B\to \Omega E'$. Then for $(\Omega b)^{-1}$ any homotopy inverse to $\Omega b$, the map $\sigma:= \tilde\sigma\circ (\Omega b)^{-1}: \Omega B'\to \Omega E'$ is a homotopy section of the fibration $\Omega F'\to \Omega E'\to \Omega B'$ and so provides a splitting $
\Omega E'\simeq \Omega B'\times \Omega F'
$. This observation, applied to the map of fibre sequences obtained from
$$
\begin{tikzcd}
\pemb_{\partial_0}(P\times I,M\times I)\rar & \emb_{\partial_0}(P\times I, M\times I)\rar & \bemb_{\partial_0}(P\times I, M\times I)\\
\Omega\pemb_{\partial_0}(P,M)\uar["\Gamma^{(\sim)}"]\rar & \Omega\emb_{\partial_0}(P, M)\uar["\Gamma"]\rar &\Omega\bemb_{\partial_0}(P,M)\uar["\widetilde{\Gamma}", "\vsim"']
\end{tikzcd}
$$ 
by localising away from $2$ and taking Postnikov $(N+1)$-th sections, yields the first equivalence in the statement by Proposition \ref{pseudographnullhomotopyprop}. (Note that, for any space $X$, we have $\tau_{\leqs N}(\Omega X)\simeq \Omega\tau_{\leqs N+1} X$.)

To obtain the second equivalence, observe that for $E(-,-)$ any of the mapping spaces involved in the proof of Proposition \ref{pseudographnullhomotopyprop}, the space $E(P\times J,M\times J)$ is a group-like topological monoid with respect to stacking in the $J$-direction. Then replacing $(M,P)$ by $(M\times J, P\times J)$, one checks that each of the steps in the argument of Proposition \ref{pseudographnullhomotopyprop} can be delooped with respect to this $\mathbb{E}_1$-structure. This results in getting rid of the loopings in the first equivalence of the statement, thus yielding the second one.

For the compatibility part of the statement, note that given a diagram of fibre sequences
\[
    \begin{tikzcd}
    F''\rar & E''\rar["p''"] & B''\\
    F'\rar\uar["f'"] &E'\rar["p'"]\uar["e'"]&B\uar["b'", "\vsim"']\\
    F\rar\uar["f\simeq *"] &E\rar["p"]\uar["e"]&B,\uar["b", "\vsim"']
\end{tikzcd}
\]
the splitting of the top fibre sequence (upon looping) provided by the nullhomotopy $f'\circ f\simeq f'\circ *=*$ is compatible with that of the middle fibre sequence. If, moreover, $f'$ is nullhomotopic and we have a homotopy of nullhomopies of $f'\circ f$ from $f'\circ f\simeq f'\circ *=*$ to $f'\circ f\simeq *\circ f=*$, then the splitting of the top fibre sequence induced by $f'\simeq *$ is compatible with that induced by $f'\circ f\simeq f'\circ *=*$. The compatibility part in the statement of the theorem follows from this observation, since it is clear from construction that the two nullhomotopies of the composition $\Omega^2\cemb(P,M)\to \Omega \cemb(P\times J, M\times J)\to \cemb(P\times I\times J, M\times I\times J)$ are themselves homotopic (both come from different deformation retractions of the space $I\times J$). This finishes the proof of the theorem.
\end{proof}

The following result will be used to establish the splitting part of Theorem \ref{LongKnotsThm}.

\begin{cor}\label{splittinglongknotcor}
    For $2\leq p\leq d-3$ and $N:=\phi_{\cemb}(d,p)-1$, there is an equivalence away from $2$
$$
\tau_{\leqs N} \emb_{\partial}(D^p, D^d)\simeq_{[\frac{1}{2}]}\tau_{\leqs N}\left(\bemb_{\partial}(D^p,D^d)\times \pemb_{\partial}(D^p,D^d)\right).
$$
For $p=1$, such equivalence exists only after looping, ie, 
$$
\Omega\tau_{\leqs N} \emb_{\partial}(D^1, D^d)\simeq_{[\frac{1}{2}]}\Omega\tau_{\leqs N}\left(\bemb_{\partial}(D^1,D^d)\times \pemb_{\partial}(D^1,D^d)\right).
$$
\end{cor}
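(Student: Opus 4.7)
The plan is to deduce Corollary \ref{splittinglongknotcor} as a direct consequence of Theorem \ref{splittingthm}, by choosing $(M,P)$ so that splitting off one or two $I$-factors recovers the long knot setup $(D^d, D^p)$.

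For $p \geqs 2$, I would take $P := D^{p-2}$ with the standard neat embedding $\iota: D^{p-2} \xhookrightarrow{} D^{d-2} =: M$ as the first $p-2$ coordinates. Then $\partial P \subset \partial M$, so $\partial_0 P = \partial P$, and consequently $\partial_0(P \times I \times J) = \partial(P \times I \times J)$. After smoothing corners there are diffeomorphisms $P \times I \times J \cong D^p$ and $M \times I \times J \cong D^d$ identifying the product embedding with the standard $D^p \xhookrightarrow{} D^d$, so
\[
\emb_{\partial_0}(P \times I \times J, M \times I \times J) \cong \emb_\partial(D^p, D^d),
\]
and similarly for the block and pseudo versions. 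Since $(P, \partial_0 P) = (D^{p-2}, \partial D^{p-2})$ has handle dimension $p-2$ and $\dim M = d-2$, the hypothesis $p \leqs d-3$ of the corollary is precisely the inequality needed to invoke the second half of Theorem \ref{splittingthm}. The resulting connectivity bound is $N' = \phi_{\cemb}((d-2)+2, (p-2)+2) - 1 = \phi_{\cemb}(d,p) - 1 = N$, giving the stated (unlooped) equivalence. The edge case $p = 2$ is legitimate, since then $P$ is a point and $\partial_0 P = \emptyset$ has handle dimension $0 = p-2$.

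For $p = 1$, take $P$ a single interior point of $M := D^{d-1}$, so that $P \times I \cong D^1$ and $M \times I \cong D^d$ together recover the standard long knot setup $D^1 \xhookrightarrow{} D^d$. The first half of Theorem \ref{splittingthm} requires handle dimension $0 \leqs (d-1) - 3$, which holds since $d \geqs 4$, and produces precisely the looped equivalence at connectivity level $\phi_{\cemb}(d,1) - 1 = N$. The second half is not available here, since a further splitting $D^1 = P \times I \times J$ would force $P$ to be $(-1)$-dimensional; this is consistent with the statement only giving an equivalence after looping.

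There is no real obstacle beyond checking the product identifications, i.e.\ noting that cubes and discs of the same dimension are diffeomorphic after smoothing corners and that the boundary conditions match under these identifications. The one subtlety worth flagging, already present in Remark \ref{thmBrem}(ii), is that $\pi_0(\emb_\partial(D^p, D^d))$ may be nontrivial, so the localisation away from $2$ and the Postnikov truncation are to be read componentwise.
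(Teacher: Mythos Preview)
Your proof is correct and follows exactly the same approach as the paper: for $p\geq 2$ you specialise the second equivalence of Theorem \ref{splittingthm} to $(M,P)=(D^{d-2},D^{p-2})$, and for $p=1$ you specialise the first equivalence to $(M,P)=(D^{d-1},D^0)$. Your additional verification of the handle dimensions, boundary conditions, and the edge case $p=2$ is accurate and more detailed than the paper's own two-line proof.
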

\begin{proof}
    When $p\geq2$, set $(M,P)=(D^{d-2}, D^{p-2})$ in the second equivalence of Theorem \ref{splittingthm}. For $p=1$, set $(M,P)=(D^{d-1},D^0)$ in the first equivalence of the same theorem.
\end{proof}

\subsection{The Gromoll filtration}

For $n\geq 5$, Gromoll introduced in \cite{GromollFiltration} a descending filtration on the group of exotic $(n+1)$-spheres $\Theta_{n+1}\cong\pi_0(\diff_\partial(D^n))$ given by
\[
0=\Gamma^{n+1}\Theta_{n+1}\subset \Gamma^n\Theta_{n+1}\subset \dots\subset \Gamma^1\Theta_{n+1}\subset \Gamma^0\Theta_{n+1}=\Theta_{n+1},
\]
where $\Gamma^j\Theta_{n+1}$ is, by definition, the image of the $j$-th graphing homomorphism
\[
\pi_0(\Gamma^j):\pi_j(\diff_\partial(D^{n-j}))\longrightarrow \pi_0(\diff_{\partial}(D^n))\cong\Theta_{n+1}.
\]

 When $n-m\geq 3$, Budney \cite[Defn. 3.7]{BudneyIntegralLongKnots} considered an analogous filtration on the abelian group $\pi_0(\emb_{\partial}(D^m,D^n))$ of isotopy classes of long knots. We will consider an even more general case.

\begin{defn}\label{GromollDefn}
    Fix $\iota: P^p\subset M^d$ as in Theorem \ref{EmbWWIThm}. A homotopy class $x\in \pi_k(\emb_{\partial_0}(P\times D^n, M\times D^n))$ is said to have \textbf{Gromoll degree $j$}, for $0\leq j\leq n$, if it is in the image of the $j$-th graphing homomorphism
    \[
    \pi_k(\Gamma^j): \pi_{k+j}(\emb_{\partial_0}(P\times D^{n-j}, M\times D^{n-j}))\longrightarrow \pi_{k}(\emb_{\partial_0}(P\times D^{n}, M\times D^{n}))
    \]
    but, if $j<n$, not in the image of the $(\hspace{1pt}j+1)$-st graphing homomorphism $\pi_k(\Gamma^{j+1})$. We will say the same for homotopy classes localised away from $2$, that is, in the group $\pi_k(\emb_{\partial_0}(P\times D^n, M\times D^n))[\frac{1}{2}]$.
\end{defn}

As a consequence of Theorem \ref{splittingthm}, we can determine the Gromoll degree of many homotopy classes away from $2$ of the embedding spaces $\emb_{\partial_0}(P\times D^n, M\times D^n)$.

\begin{cor}\label{CorGromollFiltration}
    Let $d-p\geq 3$, let $k$ be an integer such that $k\leq 2d-p-6+n$, and set
    \[
    j:=\min\left(n-1,\lfloor \tfrac{2d-p-6+n-k}{2}\rfloor\right).
    \]
    Then, any $k$-th homotopy class $x\in \pi_k(\emb_{\partial_0}(P\times D^n, M\times D^n))[\frac{1}{2}]$ has Gromoll degree either $0$ or $\geq j$. Moreover, letting $x=(a,b)$ under the splitting\footnote{The splitting of homotopy groups also holds when $n=1$ and $k=0$ since $\pemb_{\partial_0}(P\times D^n, M\times D^n)$ is connected by Hudson's Theorem \ref{Hudsonstheorem}.}
        \[
        \pi_k(\emb_{\partial_0}(P\times D^n, M\times D^n))[\tfrac{1}{2}]\cong \pi_k(\bemb_{\partial_0}(P\times D^n, M\times D^n))[\tfrac{1}{2}]\oplus \pi_k(\pemb_{\partial_0}(P\times D^n, M\times D^n))[\tfrac{1}{2}]
        \]
        of Theorem \ref{splittingthm}:
        
    \begin{itemize}[itemsep=3pt]
        \item[(i)] If $a=0$, ie, $x$ is in the image of
        \[
        \pi_k(\pemb_{\partial_0}(P\times D^n, M\times D^n))[\tfrac{1}{2}]\longrightarrow \pi_k(\emb_{\partial_0}(P\times D^n, M\times D^n))[\tfrac{1}{2}],
        \]
        then the Gromoll degree of $x$ is zero. 

        \item[(ii)] If $b=0$, then the Gromoll degree of $x$ is $\geq j$.
    \end{itemize}
\end{cor}

\begin{proof}
    First note that, by the bound \eqref{concembbound} of \cite{ConcEmbStablerange}, we have $\phi_{\cemb}(d+n,p+n)-1\geq 2d-p-6+n$ and that $j$ is the biggest integer $\leq n-1$ such that 
    \[
    k+j\leq 2d-p-6+n-j\leq \phi_{\cemb}(d+n-j,p+n-j)-1.
    \]
Thus, if $0<\ell\leq j$, it follows by the last part of Theorem \ref{splittingthm} that the $\ell$-th graphing homomorphism
    \[
    \pi_k(\Gamma^\ell): \pi_{k+\ell}(\emb_{\partial_0}(P\times D^{n-\ell}, M\times D^{n-\ell}))[\tfrac{1}{2}]\longrightarrow \pi_{k}(\emb_{\partial_0}(P\times D^{n}, M\times D^{n}))[\tfrac{1}{2}]
    \]
    is of the form
    \[
   \begin{matrix}
    \Id\\
    \oplus\\
    0
\end{matrix}: \begin{matrix}
    \pi_{k+\ell}(\bemb_{\partial_0}(P\times D^{n-j},M\times D^{n-j}))[\tfrac{1}{2}]\\
    \oplus\\
    \pi_{k+\ell}(\pemb_{\partial_0}(P\times D^{n-\ell},M\times D^{n-\ell}))[\tfrac{1}{2}]
\end{matrix}\longrightarrow
\begin{matrix}
    \pi_{k}(\bemb_{\partial_0}(P\times D^{n},M\times D^{n}))[\tfrac{1}{2}]\\
    \oplus\\
    \pi_{k}(\pemb_{\partial_0}(P\times D^{n},M\times D^{n}))[\tfrac{1}{2}],
\end{matrix} \quad \begin{pmatrix}
    a\\b
\end{pmatrix}
\longmapsto \begin{pmatrix}
    a\\0
\end{pmatrix}.
    \]
    It follows that $x=(a,b)$ is in the image of $\pi_k(\Gamma^\ell)$ for some $0<\ell\leq j$ if and only if this is the case for $\ell=j$ and $x=(a,0)$. This proves the first assertion of the statement and part (ii).

    Part (i) follows from the following elementary claim: given a commutative diagram of groups
    \[
    \begin{tikzcd}
        0\rar &A\rar["i",hook] &B\rar[two heads] &C\rar &0\\
        &A'\uar["0"]\rar & B'\rar\uar["\beta"] & C,\uar[equal] &
    \end{tikzcd}
    \]
    where the top is a short exact sequence and the bottom is only exact at $B'$, then $\im i\cap \im \beta=0$.
\end{proof}

\begin{rem}
    We will compare this result to those of Budney \cite{BudneyIntegralLongKnots} on the Gromoll filtration of $\pi_0(\emb_{\partial}(D^p,D^d))$ in Remark \ref{BudneyComparisonRem}.
\end{rem}


\section{Involutions in algebraic \texorpdfstring{$K$}{K}-theory}\label{pseudoisotopyknotsection} 

The aim of this section is to explore the involutions of the $C_2$-spectra involved in the statements of Theorems \ref{WWbdiffmoddiff} and \ref{EmbWWIThm}, and to express them in terms of simpler and more computable involutions coming from algebraic $K$-theory---the main result in this direction is Theorem \ref{absolutepropTWWvsTepsilon}, which is further simplified by Proposition \ref{Aofsuspensionprop} in the case of a suspension. This will then be used in Section \ref{longknotsection} to study the case $(M,P)=(D^d,D^p)$. As we will shortly see in Section \ref{homotopyinvsection}, it will be significantly helpful to invert the prime $2$ in the analysis of these involutions. Let us now introduce the notation that will be relevant in this section.

\begin{notn}\label{involutionnotn}($i$)\hspace{1.5pt} For $M$ a compact (smooth) manifold and $\iota: P\subset M$ a compact submanifold, recall from Notation \ref{FEBorthfunctorsnotn} the definitions of the $C_2$-spectra $\Hsp(M)$, the \textbf{$h$-cobordism spectrum} of $M$, and $\CEsp(P,M)$, the \textbf{concordance embedding spectrum} of $\iota:P\xhookrightarrow{}M$. We refer to their involutions by $\tau_{WW}$, for Weiss--Williams.

($ii$)\hspace{1.5pt} Given a space $X$ and a spherical fibration $\xi$ over $X$ equipped with a section, Vogell defined in \cite[p. 300]{VogellInvolution} an involution $\tau_\xi$ on\footnote{Vogell defined $\tau_\xi$ on the $A$-theory space $A(X)$, but this involution can be upgraded to $\Asp(X)$ by specifying it on the Waldhausen category of retractive spaces over $X$ ``with $\xi$-duality'' and appealing to the definition of algebraic $K$-theory via the $S_\bullet$-construction.} $\Asp(X)$ by means of Spanier--Whitehead duality with respect to the Thom spectrum of $\xi$; we will write $\Asp(X;\xi)$ for the corresponding $C_2$-spectrum. When $\xi= \epsilon:= X\times S^0$ is the trivial $0$-dimensional sphere bundle, $\tau_\epsilon$ fits in a commutative square
\[
\begin{tikzcd}
    \Sigma^\infty_+ X\rar["\nu"]\dar[equal] & \Asp(X)\dar["\tau_\epsilon"]\rar[two heads] & \Whsp(X)\dar[dashed, "\tau_\epsilon"]\\
    \Sigma^\infty_+X\rar["\nu"] & \Asp(X)\rar[two heads] & \Whsp(X),
\end{tikzcd}
\]
and hence, on cofibres, induces the dashed vertical arrow---this is an involution on $\Whsp(X)$, which we shall also denote by $\tau_\epsilon$. We will refer to $\tau_\epsilon$ as the \textbf{canonical involution} of $K$-theory, and sometimes write $\Asp(X)$ and $\Whsp(X)$ for $\Asp(X;\epsilon)$ and $\Whsp(X;\epsilon)$. We will recall a construction of $\tau_\epsilon$ in terms of Spanier--Whitehead duality in Section \ref{canonicalinvsection}.
\end{notn}

\subsection{Homotopy involutions}\label{homotopyinvsection} A \textit{homotopy involution} $\tau$ on a space or infinite loop space or spectrum $X$ is a self-map $\tau: X\to X$ whose square $\tau^2$ is homotopic to the identity $\Id_X$. In this section we explain why, in the stable setting and once the prime $2$ is inverted, an involution carries the same amount of information as its underlying homotopy involution. This will be very useful when comparing the $C_2$-spectra $\Hsp(M)$ and $\Sigma^{-1}\Whsp(M;\epsilon)$ (see Proposition \ref{propTWWvsTepsilon}). Let us fix some notation first.

\begin{notn}\label{equivariantnotn}
    ($i$)\hspace{1.5pt}Let $\mathsf{C}$ denote any of $\mathsf{Top}_*$, $\Omega^\infty$-$\mathsf{Top}$ or $\mathsf{Sp}$, and let $X, X'\in \mathsf{C}$ be equipped with homotopy involutions $\tau$ and $\tau'$, respectively. A map $f:X\to X'$ will be said to be \textbf{homotopy $C_2$-equivariant}, or \textbf{$C_2$-equivariant up to homotopy}, if $f\tau\simeq \tau'f$. If $X$ and $X'$ can be connected by a zig-zag of homotopy $C_2$-equivariant weak equivalences, we will say that $X$ and $X'$ are \textbf{homotopy $C_2$-equivariantly equivalent} and write 
    $$X\approx X'.$$
    A \textbf{$C_2$-equivariant equivalence} will always mean a zig-zag of weak equivalences which are $C_2$-equivariant.

    \noindent ($ii$)\hspace{1.5pt} An \textbf{$H$-group} $(X,\mu)$ is a group-like $\mathbb{A}_3$-space (ie a homotopy associative $H$-space such that $\pi_0(X)$ is a group with respect to $\mu$). Given $H$-spaces $(X,\mu)$ and $(X',\mu')$, a based map $f: X\to X'$ will be said to be \textbf{monoidal up to homotopy}, or simply an \textbf{$H$-map}, if the following diagram is homotopy commutative:
    $$
\begin{tikzcd}
    X\times X\rar["f\times f"]\dar["\mu"] &X'\times X'\dar["\mu'"]\\
    X\rar["f"]&X'. 
\end{tikzcd}
    $$
    An \textbf{equivalence of $H$-groups} will mean a zig-zag of $H$-maps that are additionally weak equivalences. In practice, all $H$-groups we will consider are actually $\mathbb{E}_1$-groups (ie group-like $\mathbb{E}_1$-spaces), and all $H$-maps can be upgraded to $\mathbb{E}_1$-maps even though we will not need this. 

    \noindent($iii$)\hspace{1.5pt} Given a $C_2$-object $X$ in an appropriate category, the symbol $X_{hC_2}$ will stand for $\operatorname{hocolim}_{BC_2} X$.
\end{notn}

In the cases of interest to us and once the prime $2$ is inverted, taking homotopy $C_2$-orbits with respect to a homotopy involution turns out to make sense.

\begin{prop}\label{hC2construction}
    Let $X$ denote a spectrum or infinite loop space (aka connective spectrum), and let $\tau$ be a homotopy involution on $X$. Suppose that multiplication by two is invertible on $X$, ie $2: X\overset{\sim}\longrightarrow X$ is an equivalence, and define
    $$
E(X,\tau):=\hocolim\big(\begin{tikzcd}
  X\rar["\tfrac{1+\tau}{2}"]& X\rar["\tfrac{1+\tau}{2}"]&\dots
\end{tikzcd}\big),
    $$
    where $\frac{1+\tau}{2}$ really stands for the zig-zag $\begin{tikzcd}[column sep=15pt]
        X\rar["1+\tau"] &X&\lar["2"', "\sim"] X
    \end{tikzcd}$. Then if $\tau$ is an actual involution on $X$, there is a natural equivalence away from two
    $$
X_{hC_2}\simeq_{[\frac{1}{2}]} E(X,\tau).
    $$
\end{prop}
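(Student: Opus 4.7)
The plan is to establish the displayed equivalence first, and then derive the follow-up statement by naturality. The key observation is that once $2$ is inverted, $e := \tfrac{1+\tau}{2}$ becomes a strict idempotent endomorphism of $X$: from $\tau^2 = \Id$ we deduce $(1+\tau)^2 = 2(1+\tau)$, hence $e^2 = e$. Consequently $E(X,\tau) = \hocolim(X \xrightarrow{e} X \xrightarrow{e} \cdots)$ realises the splitting of this idempotent, and the task reduces to identifying this splitting with $X_{hC_2}$ after inverting $2$.

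To do this I would use the natural projection $\pi: X \to X_{hC_2}$ and the Becker--Gottlieb transfer $\mathrm{tr}: X_{hC_2} \to X$, which satisfy the standard formulas
$$
\mathrm{tr}\circ\pi \simeq 1+\tau : X \longrightarrow X, \qquad \pi\circ\mathrm{tr} \simeq 2 : X_{hC_2} \longrightarrow X_{hC_2},
$$
the latter encoding the fact that composing the transfer with the projection is multiplication by $|C_2|$ (the $C_2$-action on the orbit spectrum is homotopically trivial). Since $\pi \circ \tau \simeq \pi$, the first formula implies $\pi \circ e \simeq \pi$, so $\pi$ defines a homotopy-coherent map from the mapping telescope of $e$ to $X_{hC_2}$ and thereby induces $f: E(X,\tau) \to X_{hC_2}$. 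Conversely, composing $\mathrm{tr}$ with the initial structure map $X \to E(X,\tau)$ yields $g: X_{hC_2} \to E(X,\tau)$. A direct computation from the two formulas above gives $g\circ f \simeq 2$ and $f \circ g \simeq 2$, so after inverting $2$ the maps $f$ and $g/2$ are mutually inverse, natural equivalences exhibiting $X_{hC_2} \simeq_{[\frac{1}{2}]} E(X,\tau)$.

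For the second statement, the construction $(X,\tau) \mapsto E(X,\tau)$ is functorial up to homotopy in pairs: any map $\phi: X \to X'$ satisfying $\phi\tau \simeq \tau'\phi$ also satisfies $\phi\circ e \simeq e'\circ\phi$, and so induces a map of mapping telescopes $E(X,\tau) \to E(X',\tau')$ which is a weak equivalence whenever $\phi$ is. Applied along a zig-zag of homotopy $C_2$-equivariant weak equivalences $X \approx X'$, this yields $E(X,\tau) \simeq E(X',\tau')$, and combining with the first part gives $X_{hC_2} \simeq_{[\frac{1}{2}]} E(X,\tau) \simeq E(X',\tau') \simeq_{[\frac{1}{2}]} X'_{hC_2}$, as required.

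The main technical concern is verifying the transfer identities uniformly across the three settings considered (spectra, infinite loop spaces and $H$-groups). For spectra these are classical consequences of the Wirthm\"uller isomorphism and the double-coset formula, and for infinite loop spaces they follow by applying $\Omega^\infty$. For an arbitrary $H$-group the Becker--Gottlieb transfer is less standard; however, in every intended application of the proposition $X$ will in fact be an infinite loop space, so this subtlety will not pose a difficulty in practice.
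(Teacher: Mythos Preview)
Your argument via the transfer is correct for spectra and infinite loop spaces, and the second part (functoriality of $E(X,\tau)$ in homotopy $C_2$-equivariant maps) matches the paper's. The main difference is in how the first equivalence is verified. The paper constructs exactly the same map you call $f$ (denoted $\eta_{(X,\tau)}$ there), but rather than producing an inverse via the transfer, it simply checks that $\eta$ is an isomorphism on homotopy groups: with $2$ inverted, the Bousfield--Kan spectral sequence collapses to give $\pi_*(X_{hC_2})\cong \pi_*(X)_{C_2}$, while $\pi_*(E(X,\tau))\cong\mathrm{Im}\big(\tfrac{1+\tau}{2}\big)$, and $\eta_*$ is the evident identification between these.

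This buys the paper a uniform treatment of all three cases, including the $H$-group case, which is precisely where your argument has a gap. You acknowledge that the Becker--Gottlieb transfer is ``less standard'' for $H$-groups and propose to sidestep the issue by noting that the applications only involve infinite loop spaces; but that does not prove the proposition as stated. The paper's spectral-sequence computation works verbatim for $H$-groups (the action being group-like is exactly what removes the fringing), so no such concession is needed. If you want to repair your approach without abandoning it, you could replace the transfer step by this direct $\pi_*$ computation once $f$ has been constructed.
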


\begin{proof}
    Let us assume that $X$ is a $C_2$-spectrum (the other case is completely analogous). We also assume that $2$ is inverted. Observe now that as $t\cdot\tfrac{1+t}{2}=\tfrac{1+t}{2}$ in $\Z[C_2]$, then the following commutes up to homotopy
$$
\begin{tikzcd}
    X\ar[r, "\frac{1+\tau}{2}"]\ar[dr, "q"]&X\rar["\frac{1+\tau}{2}"]\dar["q"]&\dots\ar[dl, "q"']\\
    &X_{hC_2},&
\end{tikzcd}
$$
where $q: X\to X_{hC_2}=\mathrm{hocolim}_{BC_2}X$ is the map on colimits induced by the inclusion of categories $\{*\}\xhookrightarrow{}BC_2$. We thus obtain a map $\eta_{(X,\tau)}: E(X,\tau)\to X_{hC_2}$. The homotopy orbits spectral sequence for $X$, together with the assumption that $2$ is inverted, gives a natural isomorphism $\pi_*(X_{hC_2})\cong H_0(C_2;\pi_*(X))\cong \pi_*(X)_{C_2}$. Also by definition, we have that $\pi_*(E(X,\tau))\cong \mathrm{Im}(\tfrac{1+\tau}{2}: \pi_*(X)\to \pi_*(X))$. Under these identifications, the map $\pi_*(\eta_{(X,\tau)})$ is the natural isomorphism (away from $2$) sending an element $\beta=\tfrac{1+\tau}{2}\alpha\in \pi_*(E(X,\tau))$ to $[\beta]=[\alpha]\in \pi_*(X)_{C_2}$. So $\eta_{(X,\tau)}$ is the desired equivalence $X_{hC_2}\simeq_{[\frac{1}{2}]}E(X,\tau)$.
\end{proof}

\begin{cor}\label{corinvolutionsuptohomotopy}
    Let $X$ and $X'$ be $C_2$-spectra and let the prime $2$ be inverted. 
    \begin{itemize}
        \item[(i)] If there is a homotopy $C_2$-equivariant equivalence $X\approx X'$, then there is an equivalence of spectra
    $$
X_{hC_2}\simeq_{[\frac12]} X'_{hC_2}.
    $$

    \item[(ii)] If there is only a homotopy $C_2$-equivariant equivalence $\Omega^\infty X\approx \Omega^\infty X'$ of \textbf{$H$-spaces}, then we still have an equivalence of spaces
    $$
\Omega^\infty(X_{hC_2})\simeq_{[\frac{1}{2}]} \Omega^\infty(X'_{hC_2}).
    $$
    \end{itemize}
\end{cor}

\begin{proof}
Let us only deal with (ii) (as (i) is analogous and easier). Assume without loss of generality that the equivalence $\Omega^\infty X\approx \Omega^\infty X'$ of $H$-spaces is induced by a single homotopy $C_2$-equivariant $H$-map $g: \Omega^\infty X\xrightarrow{\sim} \Omega^\infty X'$. Then, the diagram \textit{of spaces}
$$
\begin{tikzcd}
    \Omega^\infty X\rar["\frac{1+\tau}{2}"]\dar["g", "\vsim"']&\Omega^\infty X\rar["\frac{1+\tau}{2}"]\dar["g", "\vsim"'] &\dots\\
    \Omega^\infty X'\rar["\frac{1+\tau'}{2}"]&\Omega^\infty X'\rar["\frac{1+\tau'}{2}"] & \dots
\end{tikzcd}
$$
commutes up to homotopy, where $\tau$ and $\tau'$ are the involutions of $X$ and $X'$, respectively. Since the fogetful map from infinite loop spaces to spaces preserves directed colimits, the diagram above (upon taking horizontal colimits) induces an equivalence of spaces $$E(\Omega^\infty X, \tau)\simeq E(\Omega^\infty X', \tau').$$ 

The claim now follows from Proposition \ref{hC2construction} and because the natural map $(\Omega^\infty X)_{hC_2}\to \Omega^\infty(X_{hC_2})$ is an equivalence away from $2$ (this is a consequence of the homotopy orbits spectral sequence).
\end{proof}

\begin{rem}
    The upshot of part (ii) of the previous corollary is that, given a $C_2$-spectrum $X$ that is local away from $2$, the homotopy type of $\Omega^\infty(X_{hC_2})$ \textit{as a space} is completely determined by the homotopy type of the space $\Omega^\infty X$ and the homotopy classes of the maps $\tau:\Omega^\infty X\to\Omega^\infty X$ and $+: \Omega^\infty X\times \Omega^\infty X\to \Omega^\infty X$.
\end{rem}

The following result, though unrelated to what has been discussed so far in this section, will be useful later on. Given an $\mathbb{E}_1$-space $X$, we will write $X^{\mathrm{op}}$ for $X$ equipped with the opposite $\mathbb{E}_1$-structure. An \textit{anti-involution} $\tau$ on an $\mathbb{E}_1$-space $X$ is an $\mathbb{E}_1$-map $\tau: X\to X^{\mathrm{op}}$ whose square equals the identity of $X$ (noting that $(X^{\mathrm{op}})^{\mathrm{op}}\cong X$). Up to equivalence, there is a standard way of delooping such an anti-involution.

\begin{lem}\label{antiinvolutionlem}
Let $X$ be an $\mathbb{E}_1$-space. There is a natural equivalence
$$
\iota: B(X^{\mathrm{op}})\simeq BX. 
$$
such that, for any anti-involution $\tau$ on $X$, the composition
$$
\begin{tikzcd}
\overline{B}\tau: BX\rar["B\tau"] &B(X^{\mathrm{op}})\overset{\iota}\simeq BX
\end{tikzcd}
$$
is an involution on $BX$.
\end{lem}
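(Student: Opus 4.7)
The plan is to realise $\iota$ via the inversion map, which naturally converts the anti-homomorphism $\tau$ into a homomorphism whose bar construction will be the desired involution. Working up to $\mathbb{E}_1$-equivalence, I first rectify $X$ to a strictly associative topological group, so that the inversion map $\mathrm{inv}\colon X^{\mathrm{op}}\to X$ sending $x\mapsto x^{-1}$ becomes a genuine homomorphism of topological monoids: indeed, $\mathrm{inv}(y\cdot^{\mathrm{op}}x)=\mathrm{inv}(xy)=y^{-1}x^{-1}=\mathrm{inv}(y)\cdot\mathrm{inv}(x)$. Applying the bar construction yields a natural map
\[
\iota:=B(\mathrm{inv})\colon B(X^{\mathrm{op}})\longrightarrow BX,
\]
which is an equivalence because $\mathrm{inv}$ is (using that $X$ is group-like).

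For an anti-involution $\tau\colon X\to X^{\mathrm{op}}$, I set $\tilde\tau:=\mathrm{inv}\circ\tau\colon X\to X$; as the composition of two $\mathbb{E}_1$-maps this is itself an $\mathbb{E}_1$-endomorphism of $X$. By construction $\overline{B}\tau=\iota\circ B\tau=B(\mathrm{inv}\circ\tau)=B\tilde\tau$, so it suffices to verify that $\tilde\tau^{\,2}\simeq\mathrm{id}_X$. Applying $\tau$ to the identity $x\cdot x^{-1}=1$ and using that $\tau$ is an anti-homomorphism with $\tau(1)=1$ gives
\[
1=\tau(x\cdot x^{-1})=\tau(x^{-1})\cdot^{\mathrm{op}}\tau(x)=\tau(x)\cdot\tau(x^{-1})\quad\text{in }X,
\]
so $\tau(x^{-1})=\tau(x)^{-1}$. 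Combining this with the assumption $\tau^2=\mathrm{id}_X$ yields
\[
\tilde\tau^{\,2}(x)=\bigl(\tau(\tau(x)^{-1})\bigr)^{-1}=\bigl(\tau(\tau(x))^{-1}\bigr)^{-1}=\tau^2(x)=x,
\]
as required.

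The only real technical point is justifying the reduction to a strict topological group and showing that an $\mathbb{E}_1$ anti-involution transports along such a rectification to a strict anti-involution (at least up to coherent homotopy). This is routine via standard rectification of $\mathbb{E}_1$-algebras, but is arguably the least transparent step. Alternatively, one can argue entirely in the $\infty$-category of $\mathbb{E}_1$-monoidal spaces: inversion assembles into a natural equivalence between the identity functor and the $(-)^{\mathrm{op}}$-endofunctor on the $\infty$-category of $\mathbb{E}_1$-groups, and the above computation then runs coherently without any strictification. Either way, the heart of the argument is the elementary identity $\tau(x^{-1})=\tau(x)^{-1}$, and naturality of $\iota$ is automatic since $\mathrm{inv}$ is a natural transformation.
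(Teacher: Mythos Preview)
Your approach is correct in spirit but proves a weaker statement than the one claimed: the lemma is stated for an arbitrary $\mathbb{E}_1$-space $X$, with no group-like hypothesis, whereas your construction of $\iota$ as $B(\mathrm{inv})$ only makes sense once $X$ has been rectified to a topological \emph{group}. The paper instead rectifies only to a strictly associative monoid (via the natural zig-zag through the two-sided bar construction), and then defines $\iota$ on the realisation of the nerve by the level-wise maps
\[
(x_1,\dots,x_q,r)\longmapsto (x_q,\dots,x_1,\Phi_q(r)),
\]
where $\Phi_q$ is the order-reversing affine automorphism of $\Delta^q$. This uses no inverses at all, and one checks directly on formulas that $\iota\circ B\tau$ squares to the identity on the nose.

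The two constructions do agree when $X$ is group-like, and in fact the paper later proves exactly this comparison (Lemma~\ref{semidirectproductlem}) for the particular monoids it needs. So your definition would make that later lemma a tautology, at the cost of restricting the present one. Since every $\mathbb{E}_1$-space to which the paper applies Lemma~\ref{antiinvolutionlem} is group-like, your argument suffices for the applications; but as a proof of the lemma as stated it has a genuine gap. The rectification issue you flag is also more delicate in your setup than in the paper's: rectifying a group-like $\mathbb{E}_1$-space with anti-involution to a strict topological group with strict anti-involution is not entirely routine, whereas the paper's rectification to an associative monoid is manifestly natural and carries $\tau$ along for free.
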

\begin{proof}
For each $k\geq 0$, the map $\mathbb{E}_1(k)\to \pi_0(\mathbb{E}_1(k))$ is an equivalence, and hence there is a natural zig-zag of equivalences of $\mathbb{E}_1$-algebras
$$
\begin{tikzcd}
B(\pi_0(\mathbb{E}_1),\mathbb{E}_1,X)&\lar["\sim"']B(\mathbb{E}_1,\mathbb{E}_1,X)\rar["\sim"] & X.
\end{tikzcd}
$$
But the $\mathbb{E}_1$-structure on the left hand side factors through the associative operad $\mathcal{A}ss:=\pi_0(\mathbb{E}_1)$, so for simplicity, we may assume that $X$ is strictly associative. The equivalence $\iota$ is then induced on the realisation of the nerve $N_\bullet X$ by the maps
    $$
X^{q}\times \Delta^q\longrightarrow X^{q}\times \Delta^q, \quad (x_1,\dots, x_q,r)\longmapsto (x_q,\dots, x_1, \Phi_q(r)),
    $$
    where $\Phi_q: \Delta^q\cong \Delta^q$ is the linear homeomorphism induced by reversing the order of the vertices. It is easy to check that the map $\overline{B}\tau$ indeed defines an involution on $BX$.\end{proof}


\subsection{From the \texorpdfstring{$h$}{h}-cobordism spectrum to spaces of \texorpdfstring{$h$}{h}-cobordisms}\label{hcobsection} All throughout this section, assume that $d=\dim M\geq 5$; this condition will not be a problem later, as all of the results in this section will be used only once our original manifold $M$ has been stabilised sufficiently many times.

We now recall Vogell's model for spaces of $h$-cobordisms (cf \cite[p. 296]{VogellInvolution}). A \textit{partition} of a manifold $M^d$ is a triple $(W,F,V)$, where $W$ is a codimension zero submanifold of $M\times[-1,1]$, $V$ is the closure of the complement of $W$ and $F^d:=W\cap V$. For technical reasons, we require $F$ to be standard near $\partial M\times [-1,1]$, and that it intersects it in $\partial M\times\{0\}$. Let $H(M)_\bullet$ denote the simplicial set a $p$-simplex of which is a (locally trivial smooth) family of partitions of $M$ parametrised by $\Delta^p$ such that $W$ is an $h$-cobordism from $M\times \{-1\}\times \Delta^p$ to $F$. Set $H(M):=|H(M)_\bullet|$ and write $H^s(M)\subset H(M)$ for the connected component containing the trivial partition $*=(M\times [-1,0], M\times \{0\}, M\times[0,1])$. There is a canonical involution $\iota_H$ given by turning upside down partitions. Namely
$$
\iota_H: H(M)\longrightarrow H(M), \quad \rho=(W,F,V)\longrightarrow \rho^*:=(V^*,F^*,W^*),
$$
where $W^*$, $F^*$ and $V^*$ are respectively the images of $W$, $F$ and $V$ under the reflection $r=\mathrm{Id}_M\times -1$. For the smooth case, we will also need a small variant of this $h$-cobordism space, denoted $H_{\mathsf{col}}(M)$, a point of which consists of a partition $\rho=(W,F,V)\in H(M)$ together with a bicollar of $F$ for $W$ and $V$ which is standard near $\partial M\times[-1,1]$. The forgetful map $H_{\mathsf{col}}(M)\to H(M)$ is a weak equivalence by the contractibility of the space of collars. In this section we construct a homotopy $C_2$-equivariant $(\phi(d)+1)$-connected map
\begin{equation}\label{totalalexhcobmap}
    \mathrm{alex}: H(M)\longrightarrow \Omega^\infty \Hsp(M)
\end{equation}
which generalises the map $B(\mathrm{alex})$ of Proposition \ref{alexpropBfunctor}. We first recall an important construction.

\subsubsection{The geometric Eilenberg swindle} An $h$-cobordism $W: M\hcob M'$ induces a unique (up to contractible choice) bounded diffeomorphism
\begin{equation}\label{ES}
    ES_W: M\times \R\cong M'\times \R
\end{equation}
as follows: choose embeddings $i_r: W\xhookrightarrow{} M\times I$ rel $M\times \{0\}$ and $i_\ell: W\xhookrightarrow{}M'\times I$ rel $M'\times \{1\}$. Both of these embeddings are unique up to isotopy, for given another such embedding $i'_r: W\xhookrightarrow{}M\times I$ rel $M\times \{0\}$, the embeddings $\Id_{-W}\cup_M i_r$ and $\Id_{-W}\cup_M i'_r$, where $-W:M'\hcob M$ is an inverse of $W$, are isotopic by the contractibility of the space of collars. But then, so are $\Id_{W\cup_{M'}-W}\cup_M i_r$ and $\Id_{W\cup_{M'}-W}\cup_M i'_r$, and these, in turn, are isotopic to $i_r$ and $i'_r$ (rel $M$) by choosing an identification $W\cup_{M'}-W\cong M\times I$. Similarly for $i_\ell$.

Now write $V_r:= \overline{M\times I - i_r(W)}$ and $V_\ell:=\overline{M'\times I-i_\ell(W)}$; both of these manifolds are $h$-cobordisms $M'\hcob M$, and in fact they are diffeomorphic relative to \emph{both} ends since
$$
V_\ell\cong V_\ell\cup_{M} M\times I\cong V_\ell\cup_{M} W\cup_{M'}V_r\cong M'\times I\cup_{M'} V_r\cong V_r\quad \mathrm{rel}\ M'\sqcup M.
$$
Then the \textit{Eilenberg swindle} diffeomorphism $ES_W$ is given by the composition
\[
ES_W:M\times \R=\dots \cup_{M'}V_r\cup_M W\cup_{M'} V_r\cup_{M}\ensuremath{\dots}\cong\dots \cup_{M'}V_\ell\cup_M W\cup_{M'} V_\ell\cup_{M}\dots =M'\times \R.
\]
Clearly, by construction, $ES_W$ is bounded by $1$ and unique up to contractible choice.

\subsubsection{The map (\ref{totalalexhcobmap})}

Fix an embedding $M^d\subset \R^N\subset \R^\infty$ and recall that $B\diff_\partial(M)$ admits a model as the moduli space of manifolds embedded in $\R^\infty$ which are abstractly diffeomorphic to $M$ relative to the boundary $\partial M$. Similarly $B\diff_\partial^b(M\times \R)$ is the moduli space of manifolds embedded in $\R^\infty\times \R$ which are abstractly diffeomorphic to $M\times \R\subset \R^\infty\times \R$ \textit{boundedly} with respect to the $\R$-direction and relative to the boundary $\partial M\times \R$ (this is proved in Appendix \ref{bdiffbmodelsection}). For the remaining of this section, we will denote by $\underline\R$ the bounded direction, ie, the last coordinate in $\R^\infty\times \R=:\R^\infty\times\underline\R$. There is a natural map $-\times\underline\R: B\diff_\partial(M)\xhookrightarrow{} B\diff^b_\partial(M\times\underline{\R})$ given by sending a manifold $N\subset \R^\infty$ to $N\times\underline \R\subset \R^\infty\times\underline \R$. In fact, in light of (\ref{ES}), this map extends to
$$
-\times \underline{\R}: \coprod_{[M']} B\diff_\partial(M')\longrightarrow B\diff^b_\partial(M\times \underline{\R}), \quad N\longmapsto N\times \underline{\R},
$$
where the coproduct in the domain runs over all diffeomorphism classes of manifolds $M'$ with boundary $\partial M$ that are $h$-cobordant to $M$ rel $\partial M$. The map $-\times\underline{\R}$ is the value of the morphism $0\to \underline{\R}$ in $\mathcal{J}_0$ under
$$
\widehat{B}: \mathcal{J}_0\longrightarrow \mathsf{Top}_*, \quad \widehat{B}(V):=\left\{\begin{array}{cc}
    \coprod_{[M']} B\diff_\partial(M'), & V=0, \\[5pt]
    B(V)=B\diff^b_\partial(M\times V), & \text{otherwise}.
\end{array}
\right.
$$
Clearly $\widehat{B}$ is an orthogonal functor and we will write $\widehat{\Hsp}(M)$ for its first derivative, which is canonically equivalent to $\Hsp(M)$. The Alexander trick-like map (\ref{totalalexhcobmap}) will factor through
$
\widehat{\Hsp}(M)_0\xhookrightarrow{}\Omega^\infty \widehat{\Hsp}(M)\simeq \Omega^\infty \Hsp(M)
$.

Suppose we are given some partition $\rho=(W,F,V)\in H(M)$ of $M\times \underline{[-1,1]}\subset \R^N\times \underline{\R}$. Then $W$ is an $h$-cobordism from $M$ to $F$ rel boundary, and so the manifold $F\subset \R^N\times \R\subset \R^\infty$ gives rise to a point in $B\diff_\partial(F)\subset \widehat{B}(0)$; more precisely, the image of the embedding
$$
i_\rho: F\subset M\times I\subset \R^N\times \underline \R\cong \R^{N+1}\subset \R^\infty,
$$
is a point in $B\diff_\partial(F)$, where the isomorphism $\R^{N}\times\underline\R\cong \R^{N+1}$ identifies $\underline \R$ with the last coordinate in $\R^{N+1}$. We now construct a point in $B\diff_\partial^b(M\times \underline{\R})$ by \emph{extending $W$ towards infinity}. Consider the embedding of $F\times [0,1]$ into $\R^N\times \R\times \underline \R$ given by
$$
R: F\times [0,1]\xhookrightarrow{} \R^N\times \R\times \underline\R, \quad (x,t)\longmapsto \underline{e}+(\Id_{\R^N}\times Q_{-\pi t/2})(x-\underline{e}),
$$
where $Q_\theta: \R\times \underline\R\cong \R\times \underline\R$ is the rotation matrix $\begin{psmallmatrix}
    \cos\theta & -\sin\theta\\ \sin\theta & \cos\theta
\end{psmallmatrix}$, and $\underline e$ denotes the unit length vector in $\underline \R$. Write $r:=R\mid_{F\times \{1\}}: F\hookrightarrow \R^{N+1}\times \underline{\{1\}}$, and consider 
$$
S: F\times [1,+\infty)\xhookrightarrow{}\R^{N+1}\times \underline \R, \quad (x,t)\longmapsto \left\{
\begin{array}{cc}
    r(x)+(1-t)\cdot e_{N+1}+(t-1)\cdot\underline{e}, & t\in[1,2], \\
    r(x)-e_{N+1}+(t-1)\cdot\underline{e}, & t\geq 2.
\end{array}
\right.
$$
Finally consider the region $D\subset\R\times \underline \R$ given by tuples $(u,v)$ with
$$
u\geq 0, \quad u\leq 2-v, \quad \text{and if $0\leq u\leq 1$, then $u\leq (1-(v-1)^2))^{1/2}$.}
$$
Then we define a topological manifold $\widehat{a}(\rho)\subset \R^{N+1}\times\underline\R$, depicted in Figure \ref{hcobmapfigure}, by
$$
\widehat{a}(\rho):=M\times\{0\}\times(-\infty\times -1]\cup W\cup R(F\times [0,1])\cup S(F\times[1,+\infty))\cup \partial M\times D.
$$

\begin{figure}[h]
    \centering
    \includegraphics[scale = 0.54]{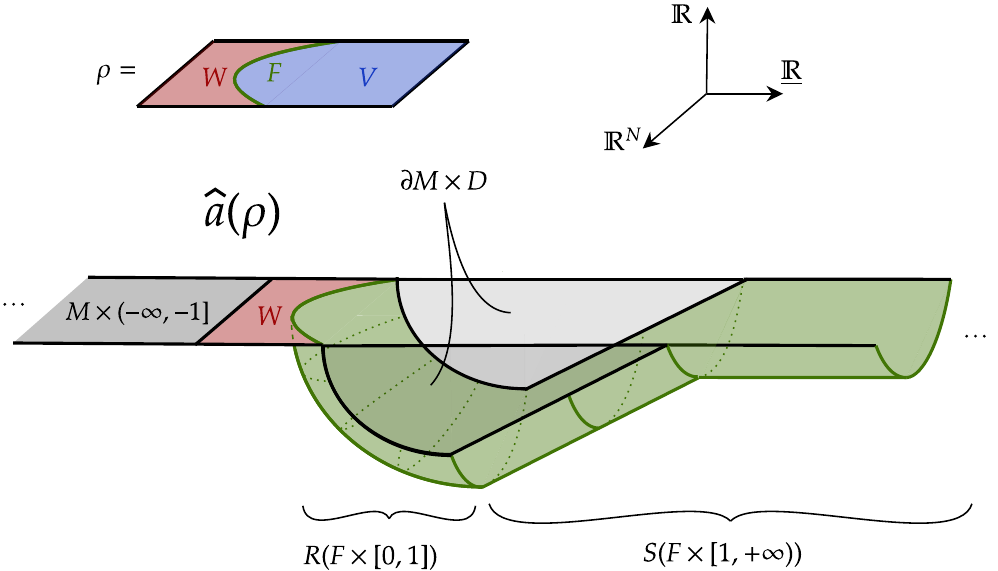}
    \caption{Depiction of the topological manifold $\widehat{a}(\rho)$ for $\rho\in H^s(M)$.}
    \label{hcobmapfigure}
\end{figure}

Now if $\rho$ is a collared partition, ie a point in $H_{\mathsf{col}}^s(M)$, one can use the collar of $F$ to smooth out the corners of the topological manifold $\widehat{a}(\rho)$, and thus obtain a smooth manifold $\widetilde{a}(\rho)\subset \R^{N+1}\times \underline\R$ with the same boundary as $M\times \underline\R$, and which is boundedly diffeomorphic to $M\times \R$ relative to the boundary by a one-sided Eilenberg swindle argument. This construction can be done simplex-wise in $H_{\mathsf{col}}(M)_\bullet\simeq H(M)_\bullet$, and so up to weak equivalence gives rise to an Alexander trick-like map
\begin{align}
\mathrm{alex}: H(M)&\overset{\sim}\longrightarrow \widehat{\Hsp}(M)_0:=\hofib\left(\coprod_{[M']} B\diff_\partial(M')\to B\diff_\partial^b(M\times \underline{\R})\right),\label{alexhcob}\\
\rho=(W,F,V)&\longmapsto \left(i_\rho(F),\ \gamma_W: [-\infty,\infty]\ni t\mapsto \left\{\begin{array}{cc}
    M\times \underline\R, & t=-\infty, \\
    \widetilde{a}(\rho)-t\cdot\underline{e}, & -\infty<t<+\infty,\\
    i_\rho(F)\times\underline\R, & t=+\infty.
\end{array}\right.\right),\nonumber
\end{align}
where we can regard the path $\gamma_W$ as a $1$-simplex in $B\diff_\partial^b(M\times \underline{\R})_\bullet$ from $M\times \underline{\R}$ to $F\times \underline{\R}$. Then (\ref{totalalexhcobmap}) is
$$
\begin{tikzcd}
    \mathrm{alex}:H(M)\rar["(\ref{alexhcob})", "\sim"'] & \widehat{\Hsp}(M)_0\rar[hook]&\Omega^\infty\widehat{\Hsp}(M)\simeq \Omega^\infty\Hsp(M)
\end{tikzcd}
$$

\begin{prop}\label{alexequivlemma}
    The map (\ref{alexhcob}) is indeed an equivalence. Therefore, (\ref{totalalexhcobmap}) is $(\phi(d)+1)$-connected.
\end{prop}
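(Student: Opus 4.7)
The proof of Proposition \ref{alexequivlemma} naturally breaks into two steps: first we show that \eqref{alexhcob} is a weak equivalence, and then we deduce the $(\phi(d)+1)$-connectivity of \eqref{totalalexhcobmap}.

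\emph{Step 1: weak equivalence of \eqref{alexhcob}.} The idea is to identify both the source and target with the same moduli space of ``bounded $h$-cobordisms''. Using the moduli-space description of $B\diff_\partial^b(M\times\underline\R)$ from Appendix \ref{bdiffbmodelsection}, a point in the homotopy fibre $\widehat\Hsp(M)_0$ is, up to contractible choice, a pair $(M',f)$ consisting of a manifold $M'$ with $\partial M'=\partial M$ that is $h$-cobordant to $M$ rel $\partial M$, together with a bounded diffeomorphism $f\colon M\times\underline\R\cong M'\times\underline\R$ relative to the boundary. Under this identification, $\mathrm{alex}$ sends a partition $\rho=(W,F,V)$ to $(F,ES_W)$. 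The plan is to construct a homotopy inverse via a \emph{cut} construction: given $(M',f)$ of bound $b$, fix a level $t\gg b$ and take $W_f$ to be the region in $M'\times\underline\R$ bounded by $M'\times\{0\}$ on the left and by $f(M\times\{t\})$ on the right, regarded as an $h$-cobordism from $M$ to $M'$ after smoothing the corners and rescaling to a partition of $M\times[-1,1]$. The two compositions are then checked to be homotopic to the identity: the cut of $ES_W$ recovers $W$ up to contractible choice, essentially from the construction of $ES_W$ and the contractibility of the spaces of collars; and the Eilenberg swindle of a cut of $f$ recovers $f$ up to bounded isotopy, which follows from uniqueness of $ES$ together with a parametric version of the bounded $h$-cobordism theorem in the style of Anderson--Hsiang--Pedersen. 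The whole argument is carried out parametrically by applying the constructions simplex-wise in $H_{\mathsf{col}}(M)_\bullet\simeq H(M)_\bullet$.

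\emph{Step 2: connectivity of \eqref{totalalexhcobmap}.} Granted Step 1, the map \eqref{totalalexhcobmap} factors as
$$
H(M)\xrightarrow{\ \sim\ }\widehat\Hsp(M)_0\hookrightarrow \Omega^\infty\widehat\Hsp(M)\simeq \Omega^\infty\Hsp(M),
$$
so it suffices to show that the middle inclusion is $(\phi(d)+1)$-connected. The plan is to identify this inclusion, via Proposition \ref{alexpropBfunctor} and the chain of identifications in Remark \ref{justificationrem2}, with the $h$-cobordism stabilisation map $H(M)\to \mathcal H(M)=\hocolim_k H(M\times D^k)$: the commutative squares \eqref{commsquareBconc} applied to the derivatives $V=\R^k$ translate the structure maps of $\Hsp(M)$ into concordance stabilisations $C(M\times D^k)\to C(M\times D^{k+1})$, and delooping once (Remark \ref{deloopingrem}) converts these to $h$-cobordism stabilisations. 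Now the concordance stabilisation map is $\phi(d+k)$-connected by definition of the concordance stable range, so after delooping the $h$-cobordism stabilisation is $(\phi(d+k)+1)$-connected; since $\phi(d+k)\geq \phi(d)$ by monotonicity, the entire tower is $(\phi(d)+1)$-connected, and so therefore is $H(M)\to \mathcal H(M)$. Igusa's theorem then supplies the numerical lower bound.

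\emph{Main obstacle.} The principal technical difficulty lies in Step 1, specifically the parametric verification that the cut and swindle constructions are mutually inverse. Both constructions involve choices---the Eilenberg swindle depends on embeddings $i_r,i_\ell$ and an identification $V_\ell\cong V_r$ rel boundary, while the cut depends on the level $t$ and the smoothing near $f(M\times\{t\})$---and the assertion that the spaces of such choices are contractible amounts, in essence, to a parametric bounded $h$-cobordism theorem. One has to be careful here given the known failure of naive bounded isotopy extension noted in \cite[\S 1]{WWI}; a viable workaround is to argue only with isotopies of \emph{cut-data} (levels $t$ and local collars near $f(M\times\{t\})$), which live in an unbounded region and for which ordinary parametrised isotopy extension applies. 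Once this foundational input is secured, the remainder of the proof is formal.
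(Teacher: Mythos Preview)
Your Step 2 is essentially correct and close to the paper's argument, though the paper is more economical: rather than delooping the squares \eqref{commsquareBconc} (which, as noted in Remark \ref{deloopingrem}, is ``quite technical'' in general), it simply cites \cite[Lem. 1.12]{WWI} to get that $\Omega(\widehat\Hsp(M)_0\hookrightarrow\Omega^\infty\widehat\Hsp(M))$ is $\phi(d)$-connected, and then handles the missing $\pi_0$ statement separately via \cite[Prop. 1.8 \& Cor. 5.3]{WWI} together with the explicit $\pi_0$ analysis from Step 1.

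Your Step 1, however, takes a genuinely different route from the paper, and the obstacle you flag is exactly the one the paper's method is designed to avoid. The paper does \emph{not} attempt to build an explicit inverse via a cut construction. Instead it argues in two pieces. On basepoint components, it observes that $H^s(M)\simeq BC(M)$ (Vogell) and that under this identification \eqref{alexhcob} is, up to homotopy, a delooping of the Alexander trick-like equivalence $C(M)\simeq \Omega(\diff^b_\partial(M\times\R)/\diff_\partial(M))$ already established in \cite[Prop. 1.10]{WWI}; this immediately gives the equivalence on each component. To show \eqref{alexhcob} is a bijection on $\pi_0$, the paper uses the $s$-cobordism theorem to set up, for each $[M']$, change-of-basepoint equivalences $H(M',M')\simeq H(M,M')$ compatible with change-of-basepoint on the target, reducing to the statement that $\mathrm{alex}\colon H(M',M')\to\Hsp(M')_0$ is a $\pi_0$-bijection---for which an inverse is written down directly: send $[\phi]\in\pi_0(\diff^b_\partial(M'\times\underline\R)/\diff_\partial(M'))$ to the region between $M'\times\{0\}$ and $\phi(M'\times\{1\})$ (your ``cut'', but only at the level of $\pi_0$, where no parametric statement is needed).

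The upshot is that the paper never invokes a parametric bounded $h$-cobordism theorem; it imports the hard parametric content from \cite{WWI} and confines the new geometric input to a $\pi_0$ statement. Your direct approach is plausible in principle, but securing the ``swindle of a cut recovers $f$ up to bounded isotopy'' step parametrically is a serious undertaking---your suggested workaround via isotopies of cut-data does not obviously produce the required bounded isotopy of $f$ itself---and the paper's strategy makes it unnecessary.
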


\begin{proof}
Noting the equivalence $H^s(M)\simeq BC(M)$ (cf \cite[Proposition 2.1]{VogellInvolution}), the map (\ref{alexhcob}) is, up to homotopy, a (non-connected) delooping of the Alexander trick-like equivalence $C(M)\simeq \Omega(\diff_\partial^b(M\times \R)/\diff(M))$ of \cite[Proposition 1.10]{WWI}, and therefore it is an equivalence on basepoint components. 

Given a diffeomorphism class $[M']$ of manifolds $h$-cobordant to $M$ (rel boundary), denote by $H(M,M')$ the collection of path components in $H(M)$ consisting of (collared) partitions $\rho=(W,F,V)$ with $F\in [M']$. A choice of basepoint $\rho_0=(W_0,F_0,V_0)\in H(M,M')$, a bicollar $c_0: F_0\times [-\epsilon,\epsilon]\xhookrightarrow{} M\times [-1,1]$ and a diffeomorphism $\phi_0:M'\cong F_0$ gives rise to an equivalence $H(M',M')\overset{\sim}\to H(M,M')$ which sends a partition $\rho'=(W',F',V')$ of $M'\times [-1,1]$ to the partition of $M\times [-1,1]$ whose $F$-part is the image of $F'$ under
$$
M'\times [-1,1]\xrightarrow{\phi_0\times \cdot\epsilon} F_0\times [-\epsilon,\epsilon]\xhookrightarrow{c_0} M\times [-1,1].
$$
By the $s$-cobordism theorem, a homotopy inverse $H(M,M')\xrightarrow{\sim} H(M',M')$ is given by the same kind of map for a choice of basepoint $\rho'_0=(W'_0,F'_0,V'_0)\in H(M',M')$ such that $(\phi_0\times \Id_{[-1,1]})(W'_0)$ is an $h$-cobordism starting at $F_0$ with the same torsion as $V_0$ (the inverse of $W_0$). 

Observe also that the choices $(\rho_0,c_0)$ and $\phi_0$ above give a preferred path in $B\diff^b_\partial(M\times\underline{\R})=B\diff^b_\partial(M'\times\underline{\R})$ from $M\times \underline{\R}$ to $M'\times \underline{\R}$; namely, it is the composition of $\gamma_{W_0}$, as defined in (\ref{alexhcob}), with the mapping cylinder of $\phi_0\times \Id_{\underline{\R}}$. These preferred paths give rise to the ``change of basepoint'' equivalences in the right column of the homotopy commutative diagram
$$
\begin{tikzcd}[row sep = 10pt]
    H(M)\ar[rr, "(\ref{alexhcob})"]\dar[equal] &&\hofib_{M\times \underline{\R}}\left(\coprod_{[M']} B\diff_\partial(M')\to B\diff_\partial^b(M\times \underline{\R})\right)\dar[phantom, "\vsimeq"]\\
    \coprod_{[M']}H(M,M')&&\coprod_{[M']}\hofib_{M'\times \underline{\R}}\left(B\diff_\partial(M')\to B\diff_\partial^b(M'\times \underline{\R})\right)\\
    \coprod_{[M']} H(M',M')\uar[phantom, "\vsimeq"]\ar[rr, "\coprod_{[M']}\mathrm{alex}"] &&\coprod_{[M']} \Hsp(M')_0,\uar[equal]
\end{tikzcd}
$$
where $\mathrm{alex}: H(M',M')\to \Hsp(M')_0\subset \widehat{\Hsp}(M')_0$ is the restriction to $H(M',M')\subset H(M')$ of the map (\ref{alexhcob}) for $M=M'$. Moreover if $d\geq 5$, this map is an isomorphism in $\pi_0$ (in fact it is an equivalence by the argument above); indeed, the inverse
$$
\pi_0\left(\diff^b_\partial(M'\times \underline{\R})/\diff_\partial(M')\right)\longrightarrow \pi_0(H(M',M'))\subset \Wh(M)
$$
sends the coset $[\phi]$ of some bounded diffeomorphism $\phi\in \diff^b_\partial(M'\times \underline{\R})$ (say bounded by $1/2$ for simplicity) to a partition of $M'\times[-1,1]$ whose $W$-part is the $h$-cobordism obtained as the region in $M'\times \underline{\R}$ between $M'\times \{0\}$ and $\phi(M'\times \{1\})$ (cf \cite[Corollary 5.4]{WWI}). It follows that the lower horizontal map, and hence (\ref{alexhcob}), is an isomorphism in $\pi_0$. This proves the first claim.

The second claim is a consequence of the fact that $\widehat{\Hsp}(M)_0\xhookrightarrow{}\Omega^\infty\widehat{\Hsp}(M)$ is $(\phi(d)+1)$-connected: indeed this map is $\phi(d)$-connected upon looping once by \cite[Lemma 1.12]{WWI}, and is an isomorphism in $\pi_0$ by the analysis above and \cite[Proposition 1.8 \& Corollary 5.3]{WWI}.
\end{proof}

\begin{prop}\label{alexHcoblem}
    The map (\ref{alexhcob}) is $C_2$-equivariant up to homotopy. Therefore, so is (\ref{totalalexhcobmap}). 
\end{prop}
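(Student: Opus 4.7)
My strategy is to verify equivariance by tracking both involutions through the explicit formula for (\ref{alexhcob}). Under the convention of Remark~\ref{orthcalcinvolutionrem}, the involution $\tau_{WW}$ on $\widehat{\Hsp}(M)_0=\hofib(\widehat{B}(0)\to\widehat{B}(\underline{\R}))$ is induced by conjugation with $-\Id_{\underline{\R}}$ on $\widehat{B}(\underline{\R})$, acting pointwise on the connecting path of a hofib element while fixing the endpoint in $\widehat{B}(0)$ (since any $N\times\underline{\R}$ is $-\Id_{\underline{\R}}$-invariant). On $H(M)$, $\iota_H$ sends $\rho=(W,F,V)$ to $\rho^*=(V^*,F^*,W^*)$ via the reflection $r=\Id_M\times(-\Id_{[-1,1]})$ of the partition direction.

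Applied to $\rho$, the composition $\tau_{WW}\circ\mathrm{alex}$ yields the path $t\mapsto -\widetilde{a}(\rho)+t\underline{e}$ in $\widehat{B}(\underline{\R})$ from $M\times\underline{\R}$ to $i_\rho(F)\times\underline{\R}$, while $\mathrm{alex}\circ\iota_H$ yields the path $t\mapsto \widetilde{a}(\rho^*)-t\underline{e}$ ending at $i_{\rho^*}(F^*)\times\underline{\R}$. The two endpoints $i_\rho(F)$ and $i_{\rho^*}(F^*)$ in $\widehat{B}(0)$ differ by the reflection of $\R^{N+1}$ in its last coordinate, realised by an explicit rotation isotopy. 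The key geometric input is that both bounded submanifolds $-\widetilde{a}(\rho)$ and $\widetilde{a}(\rho^*)$ implement, via the construction $\widehat{a}(-)$, the same abstract Eilenberg-swindle data: $W^*=r(V)$, reinterpreted as an $h$-cobordism $M\hcob F^*$, agrees up to contractible choice with $W$ once one accounts for the reflection; by the essential uniqueness of the Eilenberg swindle recalled in (\ref{ES}), the two extensions toward $+\infty$ in $\underline{\R}$ differ only by a 1-parameter family of rotations in the plane spanned by the partition direction and $\underline{\R}$, and this family simultaneously provides the desired homotopy of paths in $\widehat{B}(\underline{\R})$ and of endpoints in $\widehat{B}(0)$.

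The main obstacle is the bookkeeping around the rotation $R$ and translation $S$ in the definition of $\widetilde{a}$: both paths extend $W$ (or $W^*$) to $+\infty$ in $\underline{\R}$, but the precise embeddings of $F$ at the right-hand asymptote differ by the rotation used to place $F$ in $\R^{N+1}$, and the interpolating homotopy must deform these embeddings consistently while staying inside bounded diffeomorphisms throughout. A shortcut that I expect to carry through is: Proposition~\ref{alexequivlemma} already guarantees (\ref{alexhcob}) is a weak equivalence, so after looping once using the stacking $\mathbb{E}_1$-structure of Remark~\ref{deloopingrem}, the restriction to basepoint components becomes the Alexander trick equivalence of Proposition~\ref{alexpropBfunctor} between $C(M)$ and $\Omega(B\diff^b_\partial(M\times\underline{\R})/B\diff_\partial(M))$, whose $C_2$-equivariance with respect to Hatcher's concordance-reversal involution and $\tau_{WW}$ is a direct check of the formula in the proof of \cite[Prop.~1.10]{WWI}; combined with the identification of both involutions on $\pi_0$ with Milnor's involution on $\Wh(\pi_1 M)$ (Corollary~\ref{milnorinvcor} and \cite{MilnorWhiteheadTorsion}), this reduces the problem to a component-wise check that is globally assembled using the $H$-group structure on both sides.
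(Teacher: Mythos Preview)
Your opening paragraph correctly identifies the two involutions and the two points $\tau_{WW}\circ\mathrm{alex}(\rho)$ and $\mathrm{alex}\circ\iota_H(\rho)$ that must be connected; this matches the paper's setup exactly.

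The gap is in your second paragraph. The claim that $-\widetilde{a}(\rho)$ and $\widetilde{a}(\rho^*)$ ``implement the same abstract Eilenberg-swindle data'' is not correct, and so the appeal to the contractibility statement around (\ref{ES}) does not apply. Unwinding the definitions: the reflected manifold $-\widetilde{a}(\rho)$ contains the reflected copy of $W$ and extends $F$ toward $-\infty$ in $\underline{\R}$, whereas $\widetilde{a}(\rho^*)$ is built from the $h$-cobordism $V^*$ (the $W$-part of $\rho^*$ is $V^*$, not $W^*$) and extends $F^*$ toward $+\infty$. These involve genuinely different $h$-cobordisms starting at $M$---one is $W$, the other is its inverse $V^*$---so the uniqueness of $ES_W$ up to contractible choice says nothing about relating them. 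A homotopy between $\gamma_W^*$ and $\gamma_{V^*}$ must somehow witness the relation $W\cup_F V\cong M\times[-1,1]$, and that is real geometric content, not a formality. The paper does this by hand: it glues the two paths into a single compact submanifold $U_\rho\subset\R^{N+2}\times\underline{[-4,4]}$ (using a half-rotation in an extra coordinate to join $i_\rho(F)$ to $i_{\rho^*}(F^*)$), then uses the bicollar of $F^*$ in $W^*\cup V^*=M\times[-1,1]$ to contract $U_\rho$ onto $M\times\underline{[-4,4]}$, and finally reads off the desired two-parameter family $H_s(t)$ from this contraction.

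Your proposed shortcut in the third paragraph does not work either. First, it is circular: Corollary~\ref{milnorinvcor} is deduced \emph{from} this proposition, so you cannot invoke it here. Second, even granting everything else, knowing that two self-maps of an $H$-group become homotopic after applying $\Omega$ and agree on $\pi_0$ does not imply they are homotopic; there is no Postnikov-type reassembly available without further structure. Third, the involution on $C(M)$ that is visibly compatible with $\tau_{WW}$ under the map of \cite[Prop.~1.10]{WWI} is $\iota_C$, and Warning~\ref{vogellwarning} explains that $\iota_C$ corresponds to $\iota_H$ only after a twist by the sign representation on the loop coordinate (i.e.\ $C(M)\approx\Omega^\sigma H(M)$, not $\Omega H(M)$), so the passage from looped equivariance back to $H(M)$ is not as direct as you suggest.
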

\begin{proof}
We will give an argument only in the topological setting; in the smooth setting one works with $H_{\mathsf{col}}(M)$ instead to smooth out corners, and uses smooth approximations of the continuous functions that will appear in proof below. We will however state the argument in the smooth setting to simplify notation. We will also assume at any point in the argument where it is necessary that a partition $(W,F,V)$ is equipped with some collar of $F$ in $W$ and $V$. We adopt the convention that $\pm\infty + r\equiv\pm \infty$ for any real number $r\in \R$.

The Weiss--Williams involution on $\widehat{\Hsp}(M)_0$ is induced by the identity on $\coprod_{[M']}B\diff_\partial(M')$ and the involution $U\mapsto U^*=(\mathrm{Id}_{\R^\infty}\times (-1)_{\underline\R})(U)$ on $B\diff^b_\partial(M\times \underline\R)$. Then for $\rho=(W,F,V)\in H^s(M)$,
$$
\tau_{WW}\circ\mathrm{alex}(W,F,V)=(i_\rho(F),\gamma_W^*), \qquad \mathrm{alex}\circ\iota_H(W,F,V)=(i_{\rho^*}(F^*),\gamma_{V^*}),
$$
where $\gamma_W^*(t):=(\gamma_W(t))^*$. We have depicted the paths $\gamma_{W}^*$ and $\gamma_{V^*}$ in Figure \ref{pathshcob}. We need to find a path $\eta:[-1,1]\to B\diff_\partial(M)$ from $i_\rho^*(F^*)$ to $i_\rho(F)$ and a homotopy $\{H_s(-)\}_{-1\leq s\leq 1}$ from $\gamma_{V^*}(-)$ to $\gamma_W^*(-)$ such that $H_s(-\infty)=M\times \underline\R$ and $H_s(+\infty)=\eta(s)\times \underline\R$ for all $s\in[-1,1]$.

\begin{figure}[h]
    \centering
    \includegraphics[scale = 0.65]{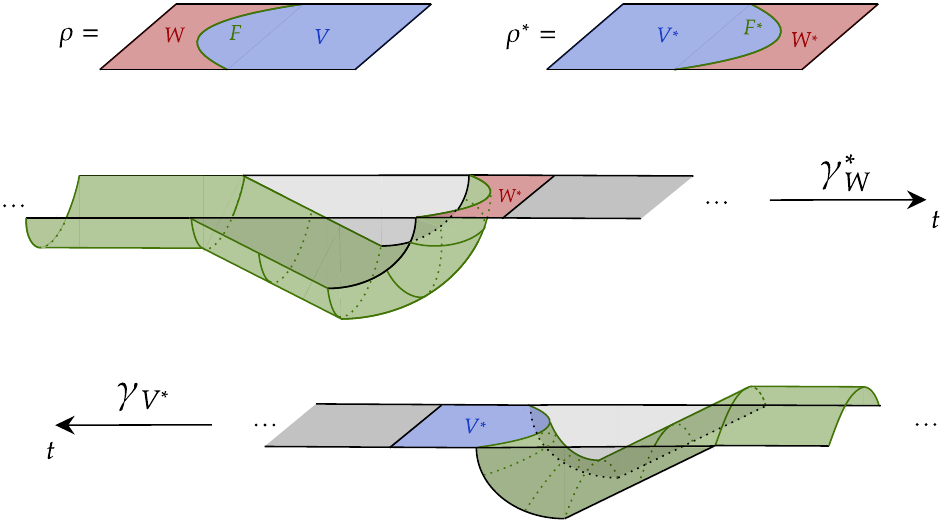}
    \caption{Paths $\gamma_{W}^*$ and $\gamma_{V^*}$ in $B\diff^{b}_{\partial}(M\times \underline{\mathbb{R}})$. The arrow indicates the direction of the path as time increases.}
    \label{pathshcob}
\end{figure}

For $\eta$, we use the last two coordinates in $\R^{N+2}$ to do a half rotation of that plane. More explictly, $\eta(s):=(\Id_{\R^N}\times Q_{\pi\cdot (s+1)/2})(i_\rho(F))$ where $Q_{\theta}: \R^2\cong \R^2$ is as before. 

View $\R^\infty$ as $\R^\infty\times\underline{\{0\}}\subset \R^\infty\times \underline\R$ and write $N:=\bigcup_{s\in[-1,1]} \eta(s)+s\cdot\underline{e}$. For $X\subset \R^\infty\times \underline \R$, write $X\mid_{[a,b]}$ for $X\cap (\R^\infty\times\underline{[a,b]})$. Then consider the compact manifold 
$$
U_\rho:= \left(\widehat{a}(\rho^*)\mid_{[-1,2]}-3\cdot\underline e\right)\cup N\cup \left((\widehat{a}(\rho)\mid_{[-1,2]})^*+3\cdot\underline{e}\right)
$$
depicted in Figure \ref{toboganxd}. Using the contractibility of $\emb_{\partial}(F^*\times \underline{[-3,3]},\R^\infty\times \underline{[-3,3]})$, we obtain a path from $\overline{U_\rho\setminus (V^*\cup W^*)}$ (the green part in Figure \ref{toboganxd}) to a scaled (in the $\underline{\R}$-direction) version of the bicollar of $F^*$ in $W^*$ and $V^*$. Rescaling this bicollar back to normal whilst dragging $V^*$ and $W^*$ in the process, we obtain a path $\psi$ from $U_\rho$ to $M\times \underline{[-4,4]}=M\times\underline{[-4,-1]}\cup V^*\cup W^*\cup M\times\underline{[1,4]}$ in the moduli space of manifolds inside $\R^{N+2}\times \underline{[-4,4]}$ which are diffeomorphic to $M\times\underline{[-4,4]}$ relative to its boundary $M\times\underline{\{-4\}}\cup \partial M\times\underline{[-4,4]}\cup M\times\underline{\{4\}}$.

\begin{figure}[h]
    \centering
    \includegraphics[scale = 0.7]{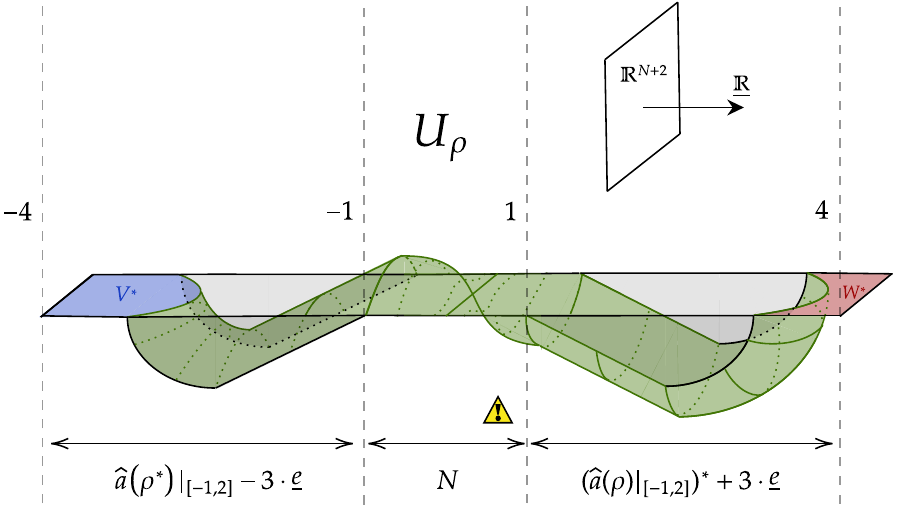}
    \caption{Depiction of the manifold $U_\rho$. Proceed with caution: the part of the picture corresponding to $N$ takes place in an extra dimension that we are unable to depict accurately.}
    \label{toboganxd}
\end{figure}

We now describe the homotopy $\{H_s(-)\}_{-1\leq s\leq 1}$. Fix some homeomorphism $l: [-1,1]\cong [-\infty,+\infty]$, and assume that the path $\psi$ from $M\times \underline{[-4,4]}$ to $U_\rho$ just described is parametrised by $[-\infty,+\infty]$. Then $H_s(-)$ is the concatenation of two paths $H_s^{(1)}(-)$ and $H_s^{(2)}(-)$ in $B\diff^b_\partial(M\times \underline\R)$: the path $H_s^{(1)}(-)$ performs $\psi(-)$ on $M\times\underline{[-4,4]}+l(s)\cdot\underline{e}\subset M\times \underline{\R}$ (if $s=\pm 1$, $H^{(1)}_s(-)$ is constant on $M\times\underline \R$). The path $H_s^{(2)}(-)$ starts at $H^{(1)}_s(+\infty)$, and sends $H^{(1)}_s(+\infty)\mid_{(-\infty, l(s)+s]}$ and $H^{(1)}_s(+\infty)\mid_{[l(s)+s, +\infty)}$ towards $\R^{N+2}\times\underline{\pm\infty}$, respectively, extending by $H^{(1)}_s(+\infty)\mid_{l(s)+s}$ times an interval of diverging length. The resulting paths $H_{\pm1}(-)=H^{(1)}_{\pm 1}(-)\cdot H^{(2)}_{\pm 1}(-)$ are reparametrisations of $\gamma_{V^*}$ and $\gamma_{W^*}$ (the reparametrisations only depend on our choice of homeomorphism $l:[-1,1]\cong[-\infty,+\infty]$ and the parametrisation of the path $\psi$). Thus $\eta$ and $H$ give rise to the required homotopy $\tau_{WW}\circ\mathrm{alex}\simeq \mathrm{alex}\circ\iota_H$.
\end{proof}

\begin{cor}\label{milnorinvcor}
    The Weiss-Williams involution on $\pi_0^s(\Hsp(M))\cong\pi_0(H(M))\cong\Wh(\pi_1M)$ corresponds to the rule $\kappa\mapsto (-1)^{d-1}\overline{\kappa}$, where $\overline{(-)}$ is Milnor's involution \cite{MilnorWhiteheadTorsion} on $\Wh(\pi_1(M))$ (see Warning \ref{Milnorinvolutionwarning}).
\end{cor}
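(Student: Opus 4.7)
The plan is to leverage Proposition \ref{alexHcoblem}, which asserts that the Alexander trick-like map $\mathrm{alex}\colon H(M)\to \Omega^\infty\Hsp(M)$ is homotopy $C_2$-equivariant with respect to $\iota_H$ on the source and $\tau_{WW}$ on the target. Combined with Proposition \ref{alexequivlemma} (giving $(\phi(d)+1)$-connectivity, hence at least $1$-connectivity since $d\geq 5$), this produces a $C_2$-equivariant bijection $\pi_0(H(M)) \cong \pi_0^s(\Hsp(M))$. The classical $s$-cobordism isomorphism $\pi_0(H(M))\cong \Wh(\pi_1 M)$ sends a partition $\rho=(W,F,V)$ to the Whitehead torsion $\tau(W, M\times\{-1\})$, so the task reduces to computing the effect of $\iota_H$ on Whitehead torsion.

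Fix a partition $\rho = (W, F, V)$ with $\kappa := \tau(W, M\times\{-1\})$. By definition $\iota_H(\rho) = (r(V), r(F), r(W))$ with $r := \mathrm{Id}_M\times (-1)$, so its torsion is $\tau(r(V), M\times\{-1\}) = \tau(V, M\times\{1\})$ since $r$ is a diffeomorphism. From here the computation is a two-step application of classical Whitehead torsion formulas. First, additivity for the trivial decomposition $W\cup_F V = M\times[-1,1]$ together with the vanishing of the torsion of a trivial $h$-cobordism yields $\tau(V, F) = -\kappa$ (after the canonical identifications of Whitehead groups along the deformation retractions). Second, Milnor's duality formula applied to the $h$-cobordism $V$ of dimension $d+1$ gives $\tau(V, M\times\{1\}) = (-1)^{(d+1)-1}\overline{\tau(V,F)} = (-1)^d\overline{\tau(V,F)}$. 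Combining the two immediately produces $\iota_H(\kappa)=(-1)^{d-1}\overline{\kappa}$, as required.

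There is essentially no obstacle here; the main content has already been carried out by Proposition \ref{alexHcoblem}, which does the geometric work of identifying the Weiss--Williams involution with the partition-flipping involution $\iota_H$. The only care needed is to respect the orientation conventions on the ends of the $h$-cobordism, and to match the resulting sign against the convention for Milnor's involution recorded in Warning \ref{Milnorinvolutionwarning}.
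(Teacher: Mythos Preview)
Your proof is correct and follows essentially the same approach as the paper's: invoke Proposition \ref{alexHcoblem} to transport $\tau_{WW}$ to $\iota_H$ on $\pi_0$, then compute $\iota_H$ on torsions via additivity (giving $\tau(V,F)=-\kappa$) and Milnor's duality formula for the $(d+1)$-dimensional $h$-cobordism $V$. The paper's proof is more terse but cites exactly the same two ingredients (\cite[Lem.~7.8 and \S10]{MilnorWhiteheadTorsion}).
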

\begin{proof}
    The isomorphism $\pi_0(H(M))\cong \Wh(\pi_1M)$ sends the class $[\rho]$ of a partition $\rho=(W,F,V)$ to the Whitehead torsion $\tau(W,M)$ of $W$ with respect to $M$. The claim follows from Proposition \ref{alexHcoblem}, the \textit{duality formula} of \cite[Section 10]{MilnorWhiteheadTorsion} and the fact that, upon identifying $\pi_1 M\cong \pi_1 W\cong \pi_1 F$, we have $\tau(V,F)=-\tau(W,M)$ (cf \cite[Lemma 7.8]{MilnorWhiteheadTorsion}).
\end{proof}

Recall that $H^s(M)\simeq BC(M)$. Now if $P\subset M$ is a codimension zero embedding and $p\leq d-3$ (in the notation of Theorem \ref{EmbWWIThm}), then $\cemb(P,M)\simeq \hofib(H^s(\overline{M-P})\to H^s(M))$ by the isotopy extension sequence (\ref{ConcIET}), and therefore $\cemb(P,M)$ inherits an involution $\iota_H$ up to weak equivalence. The map (\ref{alexhcob}) is functorial with respect to codimension zero embeddings, so the following diagram is commutative:
\begin{equation}\label{functorialityalex}
\begin{tikzcd}
    H^s(\overline{M-P})\dar\rar["\mathrm{alex}"] &\Hsp(\overline{M-P})_0\dar\rar[hook] & \Omega^\infty(\Hsp(\overline{M-P}))\dar\\
    H^s(M)\rar["\mathrm{alex}"] &\Hsp(M)_0\rar[hook] & \Omega^\infty(\Hsp(M)).
\end{tikzcd}
\end{equation}

\begin{cor}\label{WWhcobcor}
The vertical homotopy fibre of the horizontal compositions in (\ref{functorialityalex}) gives a map
$$
\mathrm{alex}: \cemb(P,M)\longrightarrow \Omega^\infty(\CEsp(P,M))
$$
which is $\phi_{\cemb}(d,p)$-connected and $C_2$-equivariant up to homotopy. 
\end{cor}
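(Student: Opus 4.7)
The plan is to take vertical homotopy fibres of the horizontal compositions in diagram (\ref{functorialityalex}). On the left, the map $H^s(\overline{M-P})\to H^s(M)$ has homotopy fibre equivalent to $\cemb(P,M)$, combining Vogell's equivalence $H^s(-)\simeq BC(-)$ with the isotopy extension sequence (\ref{ConcIET}). On the right, the map $\Omega^\infty\Hsp(\overline{M-P})\to \Omega^\infty\Hsp(M)$ has homotopy fibre $\Omega^\infty\CEsp(P,M)$ straight from the definition of $\CEsp(P,M)$ in Notation \ref{FEBorthfunctorsnotn}. This induces the claimed map $\mathrm{alex}\colon \cemb(P,M)\to \Omega^\infty\CEsp(P,M)$.

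The heart of the argument is connectivity. A direct five-lemma style comparison using only that both outer alex maps are $(\phi(d)+1)$-connected (Proposition \ref{alexequivlemma}) would yield at best $\phi(d)$-connectivity on fibres, which is far short of the target $\phi_{\cemb}(d,p)$. The strategy is instead to identify the fibre map, up to homotopy, with the canonical stabilisation map
$$
\cemb(P,M)\longrightarrow \mathcal{CE}\mathrm{mb}(P,M)\simeq \Omega^\infty\CEsp(P,M),
$$
where the right-hand equivalence is (\ref{firstderivativeF}). This identification should follow by inspection of the two constructions: the $h$-cobordism Alexander map (\ref{alexhcob}) applied to a partition $(W,F,V)$ whose complement in $M\times[-1,1]$ is the image of a concordance embedding $\varphi\colon P\times I\hookrightarrow M\times I$ specialises to the Alexander trick-like map of Proposition \ref{alexpropBfunctor}, out of which the equivalence (\ref{firstderivativeF}) is built. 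Once granted, the desired $\phi_{\cemb}(d,p)$-connectivity is immediate from the very definition (\ref{concembstablerange}) of the concordance embedding stable range and the estimate (\ref{concembbound}); indeed this is the same mechanism already exploited in Section \ref{connectivitysection}.

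For the homotopy $C_2$-equivariance, I would invoke Proposition \ref{alexHcoblem} separately for each row of (\ref{functorialityalex}) to obtain homotopies $\tau_{WW}\circ \mathrm{alex}\simeq \mathrm{alex}\circ \iota_H$ on the two horizontal compositions. Inspection of the proof of that proposition reveals that every geometric ingredient (the manifold $U_\rho$, its straightening path $\psi$, and the subsequent families $H_s^{(1)}$ and $H_s^{(2)}$) is constructed functorially from the partition data and extends by the identity on $P\times[-1,1]$ under the codimension zero inclusion $\overline{M-P}\hookrightarrow M$. Consequently the two row-homotopies assemble into a compatible pair that descends to a homotopy on the vertical fibres, yielding the required $C_2$-equivariance up to homotopy of $\mathrm{alex}\colon \cemb(P,M)\to \Omega^\infty\CEsp(P,M)$. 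The main technical obstacle is the identification step in the connectivity argument, as it demands a careful comparison of two \emph{a priori} distinct geometric constructions, but this should reduce to a bookkeeping exercise once both maps are written out on representatives.
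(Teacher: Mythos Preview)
Your proposal is correct and follows the paper's approach: connectivity is deduced by identifying the fibre map with the stabilisation into $\mathcal{CE}\mathrm{mb}(P,M)\simeq\Omega^\infty\CEsp(P,M)$ (which the paper packages as a one-line citation to Proposition~\ref{alexmapsprop}), and homotopy $C_2$-equivariance by applying Proposition~\ref{alexHcoblem} to each row. You are right to flag the compatibility of the two row-homotopies as a point needing attention---the paper leaves this implicit, relying on the evident naturality of the construction in Proposition~\ref{alexHcoblem} with respect to codimension-zero inclusions.
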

\begin{proof}
     The connectivity of this map is the content of Proposition \ref{alexmapsprop}. It is homotopy $C_2$-equivariant since both $\mathrm{alex}: H(M)\to \Omega^\infty(\Hsp(M))$ and $\mathrm{alex}: H(\overline{M-P})\to \Omega^\infty(\Hsp(\overline{M-P}))$ are by Proposition \ref{alexHcoblem}.
\end{proof}

\begin{warn}\label{vogellwarning}
    There is a canonical involution $\iota_C$ in the concordance space $C(M)$ given by turning upside down a concordance and precomposing by the inverse of the top diffeomorphism (see eg \cite[p. 296]{VogellInvolution}). The restriction map $C(M)\to \cemb(P,M)$ is \emph{not} $C_2$-equivariant with respect to $\iota_C$ and $\iota_H$---rather, it is anti-equivariant. This may seem to contradict \cite[Proposition 2.2]{VogellInvolution}, but what Vogell really proves there is that there is a homotopy $C_2$-equivariant equivalence $C(M)\approx \Omega^\sigma H(M):=\mathrm{Map}_*(S^\sigma, H(M))$, where we recall $S^\sigma$ stands for the representation sphere of the $1$-dimensional sign representation $\sigma$. This is due to an extra flip in the loop component that he introduces at the end of the proof of the proposition. 
\end{warn}

\subsection{From \texorpdfstring{$h$}{h}-cobordism spaces back to \texorpdfstring{$A$}{A}-theory}\label{Vogellinvsection} 

Given a spherical fibration $\xi$ over $M$ equipped with a section, fibrewise smashing a retractive space over $M$ with $\xi$ gives rise to a functor $-\cdot \xi: \Asp(M)\to \Asp(M)$ which, by \cite[Proposition 2.5]{VogellInvolution}, makes the following diagram homotopy commutative:
\begin{equation}\label{Vogellcommutativity}
\begin{tikzcd}
\Asp(M)\rar["\tau_{\epsilon}"]\ar[dr, "\tau_\xi"'] & \Asp(M)\dar["-\cdot \xi"]\\
&\Asp(M),
\end{tikzcd}
\end{equation}
where $\epsilon:=\epsilon^0=M\times S^0$ is the trivial $0$-dimensional sphere bundle over $M$. When $\xi=\epsilon^d=M\times S^d$ is the trivial $d$-spherical fibration, the functor $-\cdot \xi$ corresponds to $\Sigma^d_M(-): \Asp(M)\to \Asp(M)$, the $d$-fold \textit{fibrewise suspension over $M$} (cf \cite[p. 281]{VogellInvolution}). By the \textit{additivity theorem} of \cite[Proposition 1.6.2]{WaldhausenAtheory} applied to the Waldhausen category of retractive spaces over $M$, it follows that $\Sigma^d_M$ acts (up to homotopy) as $(-1)^d$ on $\Asp(M)$, and thus by (\ref{Vogellcommutativity})
\begin{equation}\label{Vogellinvtrivbundle}
\xi=M\times S^d\implies \Asp(M;\xi)\approx S^{d\cdot(\sigma-1)}\wedge \Asp(M;\epsilon).
\end{equation}

Vogell also introduced \cite[p. 299]{VogellInvolution} a model for the homotopy fibre sequence 
\begin{equation}\label{WJRsequence}
\begin{tikzcd}
    \mathcal{H}(M)\rar & Q_+ M\rar & A(M):=\Omega^\infty \Asp(M)
\end{tikzcd}
\end{equation}
of the parametrised $h$-cobordism theorem of Waldhausen--Jahren--Rognes \cite{WJR} (see Remark \ref{VogellWJRrem}), and equipped each of the terms in the sequence with compatible homotopy involutions. The one on $\mathcal{H}(M)$ is compatible with $\iota_H$ on $H(M)\subset \mathcal{H}(M)$.

    \begin{rem}\label{VogellWJRrem}
Vogell's work precedes (by more than 25 years) that of Waldhausen--Jahren--Rognes, so let us explain how both fit together. In \cite[p.~299]{VogellInvolution}, Vogell presents a commutative square with compatible homotopy involutions in each of the terms, and this square is equivalent to the one considered by Waldhausen \cite[p.~8]{WaldManifoldApproach} in his ``manifold approach'' paper. This latter square is, up to equivalence, of the form
\[
\begin{tikzcd}
    \mathcal{H}(M) \arrow{r} \arrow{d} & Q \arrow{d} \\
    * \arrow{r} & A,
\end{tikzcd}
\]
for some spaces $Q$ and $A$, and the goal of that paper was to argue that (i) the square becomes homotopy cartesian upon plus-constructing the vertical right map, and (ii) that $A^+ \simeq A(M)$; these are, respectively, Propositions 5.5 and 5.4 in \cite{WaldManifoldApproach}. While (ii) was fully proved there, only an outline of the argument for (i) was provided, with forward references to a preliminary version of \cite{WJR}.

As stated at the very end of page 22 in \cite{WJR}, the square considered by Waldhausen (after plus-constructing the vertical right arrow) is equivalent to the homotopy cartesian square of \cite[Proposition~1.4.8]{WJR}, which in turn is equivalent to one giving rise to the fibre sequence~(\ref{WJRsequence}).

All of the above takes place in the $PL$-setting, but as explained in \cite[Pages~15--16]{WJR}, these arguments also deal with the remaining categories $\mathrm{Top}$ and $\mathrm{Diff}$.
\end{rem}

Going back to Vogell's work, his homotopy involution\footnote{Vogell refers to $\mathscr{T}$ as a \emph{weak} involution in the sense that it is a homotopy involution when restricted to ``any compactum'' (cf \cite[Lemma 2.4]{VogellInvolution}) or, in better words, to each stage of the colimit in \cite[p. 299]{VogellInvolution} modelling $A(M)$.} $\mathscr T$ on $A(M)$ is further showed in \cite[Corollary 2.10]{VogellInvolution} to agree up to equivalence with the involution $\tau_\xi$ when $\xi$ is the $d$-spherical fibration associated to the once stabilised tangent bundle $TM\oplus \epsilon^1$ of $M^d$.

The upshot of this discussion then is that when $M^d$ is stably parallelisable, the Weiss--Williams involution is compatible with $(-1)^d\tau_\epsilon$ in the following sense.

\begin{thm}\label{absolutepropTWWvsTepsilon}
If $M$ is stably parallelisable, then there is an equivalence away from two
$$
\Omega^\infty\big(\Hsp(M)_{hC_2}\big)\simeq_{[\frac12]}\Omega^\infty\big(\big(S^{d\cdot(\sigma-1)-1}\wedge\Whsp(M;\epsilon)\big)_{hC_2}\big).
$$
\end{thm}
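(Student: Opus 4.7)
The plan of proof is to assemble a chain of four homotopy $C_2$-equivariant weak equivalences of infinite loop spaces connecting $\Omega^\infty\Hsp(M)$ equipped with $\tau_{WW}$ to $\Omega^\infty\big(S^{d(\sigma-1)-1}\wedge\Whsp(M;\epsilon)\big)$ equipped with $\mathrm{id}\wedge\tau_\epsilon$, and then to descend to pointed homotopy $C_2$-orbits after inverting $2$ via Proposition~\ref{hC2construction}. The first link is the Alexander-trick-like map $\mathrm{alex}: H(M)\to\Omega^\infty\Hsp(M)$ of (\ref{totalalexhcobmap}), which is $(\phi(d)+1)$-connected by Proposition~\ref{alexequivlemma} and homotopy $C_2$-equivariant between $\iota_H$ and $\tau_{WW}$ by Proposition~\ref{alexHcoblem}; stabilizing by crossing $M$ with discs and invoking Lemma~\ref{MIKlem} together with Appendix~\ref{AppendixB} (for coherence of the stabilisation with the involutions) one upgrades this to a genuine homotopy $C_2$-equivariant equivalence $\mathcal H(M)\simeq \Omega^\infty\Hsp(M)$. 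The second link is the Waldhausen--Jahren--Rognes stable parametrised $h$-cobordism theorem combined with Vogell's construction of $\mathscr T$, which together identify $(\mathcal H(M),\iota_H)\approx (\Omega^{\infty+1}\Whsp(M),\Omega\mathscr T)$ as $H$-groups with homotopy involution.

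The third link is Vogell's comparison theorem \cite[Cor.~2.10]{VogellInvolution}, asserting that the weak involution $\mathscr T$ on $A(M)$ is homotopic to $\tau_\xi$ where $\xi := TM\oplus\epsilon^1$ is the $d$-spherical fibration of the stable tangent bundle; under the $C_2$-compatible splitting (\ref{splittingAspectrum}) this restricts to an analogous homotopy identification on the Whitehead summand, yielding $(\Whsp(M),\mathscr T)\approx(\Whsp(M;\xi),\tau_\xi)$. The fourth and final link invokes the hypothesis that $M$ is stably paralellisable: then $\xi$ is fibre-homotopy equivalent to the trivial $d$-sphere bundle $M\times S^d$, so that (\ref{Vogellinvtrivbundle}) applied to the Whitehead summand furnishes a homotopy $C_2$-equivariant equivalence $\Whsp(M;\xi)\approx S^{d(\sigma-1)}\wedge\Whsp(M;\epsilon)$. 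Splicing all four links together and loop-desuspending once produces the homotopy $C_2$-equivariant equivalence
$$
\Omega^\infty\Hsp(M) \approx \Omega^\infty\bigl(S^{d(\sigma-1)-1}\wedge\Whsp(M;\epsilon)\bigr),
$$
and Proposition~\ref{hC2construction} then delivers the claimed equivalence of homotopy $C_2$-orbits after inverting~$2$.

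The principal obstacle will be the third link: Vogell's identification $\mathscr T\simeq\tau_\xi$ is phrased for $A(M)$ as a whole, so one must verify that the splitting (\ref{splittingAspectrum}) intertwines these two involutions on the Whitehead summand. This should follow from the fact that both $\mathscr T$ and $\tau_\xi$ are compatible with the unit $\Sigma^\infty_+ M\to\Asp(M)$ by construction, but the details have to be extracted carefully from \cite{VogellInvolution}. A secondary subtlety is that $\mathscr T$ is only a weak homotopy involution rather than a strict one; this is handled uniformly by Proposition~\ref{hC2construction}, which operates at the level of homotopy involutions and so requires no further equivariant rigidification.
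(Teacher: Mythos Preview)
Your proposal follows essentially the same strategy as the paper: assemble a homotopy $C_2$-equivariant zig-zag through $H(M)$ and its stabilisations, using the Alexander-trick-like map on one side and the Waldhausen--Jahren--Rognes theorem together with Vogell's work on the other, then invoke Proposition~\ref{hC2construction}. The ingredients you list are exactly those the paper uses.

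You have, however, misjudged where the technical effort lies. Your ``principal obstacle''---whether Vogell's identification $\mathscr T\simeq\tau_\xi$ restricts to the Whitehead summand---is not an issue: Vogell's $\mathscr T$ is compatible with the splitting (\ref{splittingAspectrum}) by construction, and $\tau_\xi$ descends as recorded in Notation~\ref{involutionnotn}($ii$), so the paper simply cites Vogell for that step. The paper instead spends its effort on two points you pass over. First, applying Proposition~\ref{hC2construction} requires the zig-zag to consist of $H$-maps; the paper verifies this (Claim~\ref{claim1}), in particular arguing that the Waldhausen--Jahren--Rognes equivalence is an $H$-map with respect to stacking in the $I$-direction, since \cite{RognesWald} only states it as an equivalence of spaces. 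Second, the stabilisation $\Sigma:H(M)\to H(M\times I)$ \emph{anti}-commutes with $\iota_H$ (Lemma~\ref{suspensionequivariancelem}($a$)), so one cannot directly put a homotopy involution on $\mathcal H(M)$; the paper therefore works with $\hocolim_k H(M\times I^{2k})$ under $\Sigma^2$, and checks (Claim~\ref{claim2}) that this even stabilisation is compatible with both outer maps of the zig-zag. Your reference to Appendix~\ref{AppendixB} suggests you are aware of the anti-commutativity, but these two verifications are the substance of the proof rather than routine bookkeeping.
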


\begin{rem}
Though it probably is, we do not claim the equivalence above is one of infinite loop spaces. 
\end{rem}

 We will need the a few preliminary results for the proof of Theorem \ref{absolutepropTWWvsTepsilon}.

\begin{lem}\label{MIKlem}
For each $k\geq 0$, there is a natural $C_2$-equivariant equivalence of spectra
\begin{equation}\label{ThetaMIkeq}
e_k:\Hsp(M\times I^k)\simeq S^{k\cdot(\sigma-1)}\wedge \Hsp(M).
\end{equation}
such that the following square is homotopy commutative:
$$
\begin{tikzcd}
    \Hsp(M\times I^{k})\dar[dash, "\vsim"', "e_k"]\rar[dash, "\sim", "e_1"'] &S^{\sigma-1}\wedge \Hsp(M\times I^{k-1})\dar[dash, "\vsim"', "{S}^{\sigma-1}\wedge e_{k-1}"]\\
    S^{k\cdot(\sigma-1)}\wedge \Hsp(M)\rar[equal] &S^{k\cdot(\sigma-1)}\wedge \Hsp(M).
\end{tikzcd}
$$
\end{lem}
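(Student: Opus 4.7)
The plan is to induct on $k$: given a natural $C_2$-equivariant equivalence $e_1: \Hsp(M\times I)\simeq S^{\sigma-1}\wedge \Hsp(M)$ that is natural in the manifold argument, one defines $e_k$ by iteration as $(S^{\sigma-1}\wedge e_{k-1})\circ e_1^{M\times I^{k-1}}$, which automatically makes the compatibility square in the statement commute by construction. So the content is to produce $e_1$.

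The geometric idea is that, non-equivariantly, both sides are already naturally equivalent, reflecting the standard invariance of the stable $h$-cobordism space under crossing with an interval (as used in Remark \ref{justificationrem2}): $\mathcal H(M\times I)\simeq \mathcal H(M)$, or equivalently, stable concordance is invariant under the same operation. I would make this precise at the level of the orthogonal functors $B(V):=B\diff^b_\partial(M\times V)$ and $B^I(V):=B\diff^b_\partial(M\times I\times V)$, whose first derivatives are $\Hsp(M)$ and $\Hsp(M\times I)$ respectively. Using the natural diffeomorphism $D(V)\times I\cong D(V\oplus\R)$, the Alexander-trick equivalence of Proposition \ref{alexpropBfunctor} identifies
$$
\Omega^{V\oplus\R}(B^I)^{(1)}(V)\simeq C(M\times I\times D(V))\simeq C(M\times D(V\oplus\R))\simeq \Omega^{(V\oplus\R)\oplus\R}B^{(1)}(V\oplus\R),
$$
and these identifications are natural in $V$ and compatible (up to homotopy) with the stabilisation maps appearing in the formation of the derivatives. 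Assembling them over all $V$ yields the desired non-equivariant equivalence $\Hsp(M\times I)\simeq S^{\sigma-1}\wedge\Hsp(M)$, where the extra $S^{\sigma-1}=\Sigma^{-1}S^\sigma$ comes from absorbing the shift $V\mapsto V\oplus\R$ into the derivative (the new $\R$ coordinate carries the sign action in the $O(1)$-spectrum structure of $\Theta(-)^{(1)}$, cf.\ Remark \ref{orthcalcinvolutionrem}) together with the extra loop $\Omega^\R$ appearing in the displayed chain above.

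The main obstacle is upgrading this to a $C_2$-equivariant equivalence, i.e.\ matching the Weiss--Williams involutions on both sides. On $\Hsp(M\times I)$ the involution $\tau_{WW}$ uniformly reverses the bounded direction $V$; on $S^{\sigma-1}\wedge\Hsp(M)$ it reverses the $\sigma$-coordinate simultaneously with the bounded direction on $\Hsp(M)$. Under the chain of equivalences above, the $\sigma$-direction corresponds precisely to the shift coordinate $\R$ that absorbs the extra $I$-factor of $M\times I$. The crucial observation making the compatibility work is that the Alexander-trick map of Proposition \ref{alexpropBfunctor} can be arranged to be equivariant under simultaneously reversing the bounded direction and reversing this shift coordinate, provided one uses a symmetric embedding $I\hookrightarrow \R$ in the identification $D(V)\times I\cong D(V\oplus\R)$. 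The compatibility square for iterated $e_k$ then holds by construction, since both routes from $\Hsp(M\times I^k)$ to $S^{k(\sigma-1)}\wedge \Hsp(M)$ arise by applying the same natural $e_1$ construction successively, and naturality in the manifold argument forces agreement.
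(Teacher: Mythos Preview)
Your inductive setup (define $e_k$ by iterating $e_1$, so the square commutes by construction) matches the paper exactly. The difference lies in how $e_1$ is produced.

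You pass through concordance spaces via Proposition \ref{alexpropBfunctor}, chaining
$\Omega^{V\oplus\R}(B^I)^{(1)}(V)\simeq C(M\times I\times D(V))\simeq C(M\times D(V\oplus\R))\simeq \Omega^{(V\oplus\R)\oplus\R}B^{(1)}(V\oplus\R)$,
and then argue that this can be made $C_2$-equivariant ``provided one uses a symmetric embedding $I\hookrightarrow\R$''. That last step is where the argument is thin: Proposition \ref{alexpropBfunctor} is never asserted to be $C_2$-equivariant, and the alex map there (extend a concordance by its top and shift along $V\oplus\R$) does not obviously intertwine the Weiss--Williams involution with anything natural on the concordance side. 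You would need to actually verify this, and ``symmetrising the embedding $I\hookrightarrow\R$'' is not enough of an argument.

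The paper avoids this by not going through concordance spaces at all. It uses a \emph{different} Alexander trick, the one from \cite[Prop.~1.5]{WWI}, which is a $C_2$-equivariant equivalence
\[
\mathrm{alex}:\diff^b_\partial(M\times I\times \R^{a,b})\overset{\sim}{\longrightarrow}\Omega\diff^b_\partial(M\times \R^{a+1,b})
\]
directly at the level of bounded diffeomorphisms (the $I$ is absorbed into a new \emph{trivial} $\R$-coordinate). Delooping and taking first derivatives, this gives a $C_2$-equivariant map from $\Hsp(M\times I)$ to an intermediate $C_2$-spectrum $\Xi$ with $n$-th space $B^{(1)}(\R^{1,n+1})$. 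Separately, the stabilisation map $s_{1,0}:S^\sigma\wedge B^{(1)}(\R^{0,n})\to B^{(1)}(\R^{1,n})$ gives a $C_2$-equivariant equivalence $S^{\sigma-1}\wedge\Hsp(M)\simeq\Xi$. Composing through $\Xi$ yields $e_1$. The point is that both legs of this zig-zag are equivariant for reasons already established in \cite{WWI} and the orthogonal calculus machinery of Section \ref{OrthReviewSection}, so no new equivariance check is needed.
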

\begin{proof}
    Set $e_0=\Id_{\Hsp(M)}$. By inductively defining $e_k$ to fit in the commutative square above, we may assume that $k=1$. Recall $\R^{a,b}:=\R^{a}\oplus b\cdot \sigma$, and for any orthogonal functor $F(-)$ let $C_2=O(1)$ act on $F(\R^{a,b})$ by the induced action. Finally let $B(-):=B\diff^b_\partial(M\times(-))$ and $BI(-):=B\diff^b_\partial(M\times I\times (-))$. The Alexander trick-like map of \cite[Proposition 1.5]{WWI} is a $C_2$-equivariant map
    $$
\mathrm{alex}:\diff_\partial^b(M\times I\times \R^{a,b})\overset{\sim}\longrightarrow \Omega\diff_\partial^b(M\times \R^{a+1,b})
    $$
which, upon delooping, gives rise to a $C_2$-equivariant equivalence on basepoint components $B(\mathrm{alex}): BI(\R^{a,b})\simeq_0\Omega B(\R^{a+1,b})$. Writing $\Xi$ for the $C_2$-spectrum whose $n$-th space is $B^{(1)}(\R^{1,n+1})$ and with stabilisation maps $s_{0,1}: S^1\wedge B^{(1)}(\R^{1,n})\to B^{(1)}(\R^{1,n+1})$, we obtain a $C_2$-equivariant equivalence of spectra
\begin{equation}\label{Chimap}
B(\mathrm{alex}):\Hsp(M\times I):=\Theta(BI)^{(1)}\overset\sim\longrightarrow \Xi:=\{B^{(1)}(\R^{1,n+1})\}_{n\geq 0}.
\end{equation}
But now the stabilisation map $s_{1,0}: S^{\sigma}\wedge B^{(1)}(\R^{0,n})\to B^{(1)}(\R^{1,n})=\Xi_{n-1}$ induces another $C_2$-equivariant equivalence of spectra
$$
S^{\sigma-1}\wedge \Hsp(M)\overset{\sim}\longrightarrow \Xi.
$$
Composing these two equivalences gives the one in the statement.
\end{proof}

Vogell introduced in \cite[p. 298]{VogellInvolution} the \textit{lower} and \textit{upper stabilisation maps} $\Sigma_\ell, \Sigma_u: H(M)\to H(M\times I)$. Roughly, the former sends a partition $\rho=(W,F,V)$ to $(U(W),W\cup_{F}W, \overline{M\times I\times I\setminus U(W)})$, where $U(W)$ is obtained from $W$ by bending $W\times I$ into a $U$-shape, whilst $\Sigma_u$ does the same to $V$ instead of $W$ (see Figure \ref{lowerstabmap} for a pictorial representation of $\Sigma_\ell$). We will only be interested in the lower stabilisation $\Sigma_\ell$, which we will denote by $\Sigma$ for simplicity. Here's how it interacts with the $h$-cobordism involution $\iota_H$.

\begin{lem}\label{suspensionequivariancelem}
    Let $+_I$ stand for the ``stacking in the $I$-direction'' $\mathbb{E}_1$-algebra structure\footnote{The technical assumption we imposed on a partition $\rho=(W,F,V)\in H(M\times I)$ so that the intersection of $F$ with $\partial(M\times I)\times [-1,1]$ is standard and happens exactly at $\partial(M\times I)\times \{0\}$ makes $+_I$ well-defined.} in $H(M\times I)$. Then if $J$ denotes another copy of $I$:
   $$
(a)\hspace{3pt} \iota_H\Sigma+_I\Sigma\iota_H\simeq*: H(M)\to H(M\times I), \quad (b)\hspace{3pt} \iota_H\Sigma^2\simeq\Sigma^2\iota_H: H(M)\to H(M\times I\times J).
   $$
\end{lem}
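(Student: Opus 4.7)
The plan is to prove (a) directly, and then derive (b) formally from (a).

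For part (a), I would first observe that by direct inspection of the definitions, the identity
\begin{equation*}
\iota_H \circ \Sigma_\ell \;=\; \Sigma_u \circ \iota_H
\end{equation*}
holds as maps $H(M) \to H(M \times I)$: reflection in the $[-1,1]$-coordinate exchanges the roles of the $W$- and $V$-parts and simultaneously swaps the bottom face $M \times I \times \{-1\}$ with the top face $M \times I \times \{+1\}$, so lower-bending of $W$ followed by reflection produces the same partition as reflection followed by upper-bending of $V$. Substituting this into the expression of (a) and writing $\rho' := \iota_H \rho$, statement (a) reduces to constructing a canonical null-homotopy, continuously in $\rho'$, of
\begin{equation*}
\Sigma_u \rho' +_I \Sigma_\ell \rho' \;\simeq\; * \qquad \text{in } H(M \times I).
\end{equation*}

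To construct this null-homotopy, I would exhibit an explicit $1$-parameter family of partitions of $M \times I \times [-1,1]$ interpolating between $\Sigma_u \rho' +_I \Sigma_\ell \rho'$ and the trivial partition. Geometrically, the $F$-surface of the sum traces an S-shaped profile along the $I$-direction: it bulges upward in the first half of $I$ (from the dome contributed by $\Sigma_u$) and downward in the second half (from the cup contributed by $\Sigma_\ell$). This S-shape is straightened to the horizontal $F \times I$ by a smooth family of ambient deformations of the rectangle $M \times I \times [-1,1]$ that gradually pull the two bumps together and flatten them. Consistency on $\pi_0$ is verified via Corollary \ref{milnorinvcor} and the duality formula for Whitehead torsion: a computation using the additivity formula gives $\tau(\Sigma_\ell \rho') = \tau(\rho')$, while a second application of the duality formula yields $\tau(\Sigma_u \rho') = -\tau(\rho')$, so indeed the torsion of the sum vanishes.

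For part (b), I would apply (a) twice. Rewriting the conclusion of (a) as $\iota_H \Sigma \simeq -\Sigma \iota_H$ in the $H$-group $(H(M \times I), +_I)$ and combining with the analogous identity at the next stage, a formal computation gives
\begin{equation*}
\iota_H \Sigma^2 \;=\; (\iota_H \Sigma)\circ \Sigma \;\simeq\; (-\Sigma \iota_H) \circ \Sigma \;=\; -\Sigma \circ (\iota_H \Sigma) \;\simeq\; -\Sigma \circ (-\Sigma \iota_H) \;=\; \Sigma^2 \iota_H,
\end{equation*}
where the last step uses that $\Sigma : H(M \times I) \to H(M \times I^2)$ is an $H$-map with respect to the ``old'' $I$-direction and so commutes with the relevant inversion up to homotopy.

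The principal technical obstacle lies in the explicit construction of the continuous null-homotopy needed in Step 2 of part (a): the parametrised family of partitions must preserve smoothness and the collaring condition of Definition \ref{simplicialboundeddefn} throughout the deformation, and vary continuously in $\rho'$. A helpful viewpoint, which would streamline the geometric argument, is to regard this statement as a delooping of the anti-commutation of the concordance involution $\iota_C$ with the concordance stabilisation map, as established by Hatcher \cite[Appendix I, Lem.]{HatcherSSeq} and Burghelea--Lashof \cite[Cor. A7]{BurgLash}, transported from $C(M)$ to $H^s(M) \simeq BC(M)$.
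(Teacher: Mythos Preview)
Your derivation of (b) from (a) is the paper's argument, written in the shorthand $\iota_H\Sigma \simeq -\Sigma\iota_H$; the step requiring $\Sigma$ to be an $H$-map for $+_I$ is exactly what makes the paper's chain
\[
\iota_H\Sigma^2\simeq \iota_H\Sigma^2+_I\Sigma(\iota_H\Sigma+_I\Sigma\iota_H)\simeq (\iota_H\Sigma+_I\Sigma\iota_H)\Sigma+_I\Sigma^2\iota_H\simeq\Sigma^2\iota_H
\]
go through.

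For (a), the identity $\iota_H\Sigma_\ell = \Sigma_u\iota_H$ is correct and the reduction to $\Sigma_u +_I \Sigma_\ell \simeq *$ is fine, but the straightening step is where the content lies and your sketch does not carry it. The $F$-surface of $\Sigma_u\rho' +_I \Sigma_\ell\rho'$ is not $M$ times an S-curve: its walls are built from the $h$-cobordisms $V'$ and $W'$, and the trivial partition has $F$-part $M\times I\times\{0\}$ (not $F\times I$). Your $\pi_0$ computation does show that a null-homotopy exists for each fixed $\rho'$, but producing one \emph{continuously in $\rho'$} requires a continuous choice of trivialisation of the piece $V'\cup_M W'\cong F'\times[-1,1]$, and no such choice is canonical over $H(M)$. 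An ``ambient deformation of $M\times I\times[-1,1]$'' cannot be specified without first making this choice.

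The paper's proof (Appendix~\ref{AppendixB}) handles precisely this point by enlarging the domain to a space $\overline{H}_{\mathsf{col}}(M)$ of pairs $(\rho,\phi)$ with $\phi: V\cup_M W \cong F\times[-1,1]$ a diffeomorphism rel boundary; the null-homotopy then uses $\phi$ in an essential geometric move (the passage from $P_3$ to $P_4$), and a short splitting argument comparing $\overline{H}_{\mathsf{col}}(M)$ with $H_{\mathsf{col}}(M)\times\diff_\partial(M\times[-1,1])$ shows that null-homotoping the lifted map suffices. Your closing delooping remark is in the right spirit but would at best reach the component $H^s(M)$ and still leaves the continuous family of identifications to be supplied.
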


\begin{proof}
    We defer the proof of ($a$) to Lemma \ref{anticommutativitySigmaprop} in Appendix \ref{AppendixB} as it is a bit technical. Note that $+_I$ and $\Sigma: H(M\times I)\to H(M\times I\times J)$ are compatible in the sense that
    $$
\Sigma(\rho+_I\rho')=\Sigma\rho+_I\Sigma\rho', \quad \rho,\rho'\in H(M\times I).
    $$
Then $(b)$ follows from
    $$
\iota_H\Sigma^2\simeq \iota_H\Sigma^2+_I\Sigma(\iota_H\Sigma+_I\Sigma\iota_H)\simeq (\iota_H\Sigma+_I\Sigma\iota_H)\Sigma+_I\Sigma^2\iota_H\simeq\Sigma^2\iota_H.
    $$
    \end{proof}

\begin{proof}[Proof of Theorem \ref{absolutepropTWWvsTepsilon}]
We may assume without loss of generality that $\dim M\geq 5$, for if not replace it by $M\times J^{2k}$ for $k\geq 3$. The effect this has on both sides of the equivalence in the statement is rather mild: as there is a homotopy $C_2$-equivariant equivalence $S^2\approx S^{2\sigma}$, it follows by Lemma \ref{MIKlem} and Corollary \ref{corinvolutionsuptohomotopy}(i) that there are equivalences of spectra
\begin{align*}
\Hsp(M\times J^{2k})_{hC_2}&\simeq_{[\frac12]} \Hsp(M)_{hC_2},\\
\big(S^{(d+2k)\cdot(\sigma-1)-1}\wedge\Whsp(M\times J^{2k};\epsilon)\big)_{hC_2}&\simeq_{[\frac12]}\big(S^{d\cdot(\sigma-1)-1}\wedge\Whsp(M;\epsilon)\big)_{hC_2}.
\end{align*}
In the second equivalence we also use that $\Whsp(-)$ and $\tau_\epsilon$ are homotopy invariants of $(-)$.

Let $k\geq0$ and let $\xi$ denote the $(d+k)$-spherical fibration corresponding to the stable tangent bundle $T(M\times I^k)\oplus\epsilon^1$. If $M$ (and hence $M\times I^k$) is stably parallelisable, the involution $(-1)^{d+k}\tau_{\epsilon}$ is homotopic to $\tau_\xi$ by (\ref{Vogellinvtrivbundle}), and by \cite[Proposition Corollary 2.10]{VogellInvolution} it agrees with $\mathscr T$ (and hence extends $\iota_H$). All in all, we obtain a zig-zag of homotopy $C_2$-equivariant maps 
\begin{equation}\label{monoidalityzigzag}
\begin{tikzcd}[row sep = 7pt]
    \Omega^\infty\Hsp(M\times I^k)\dar[phantom, "\vsimeq"', "\hspace{-26pt}\scalebox{0.78}{(\ref{ThetaMIkeq})}"]&\lar["(\ref{totalalexhcobmap})", "\mathrm{alex}"'] H(M\times I^k)\rar["(\dagger)"] &\Omega^\infty\big(S^{(d+k)\cdot(\sigma-1)-1}\wedge\Whsp(M\times I^k)\big)\dar[phantom, "\vsimeq"]\\
    \Omega^\infty(S^{k\cdot(\sigma-1)}\wedge \Hsp(M))&&\Omega^\infty\big(S^{(d+k)\cdot(\sigma-1)-1}\wedge\Whsp(M)\big)
\end{tikzcd}
\end{equation}
Every space involved in (\ref{monoidalityzigzag}) is an $\mathbb{E}_1$-group if $k\geq 1$: both $H(M\times I^k)$ and $\Omega^\infty(\Hsp(M\times I^k))$ by stacking in the first of the $I^k$-coordinates, and the others by their own infinite loop structures.

\begin{claim}\label{claim1}
    All of the maps in (\ref{monoidalityzigzag}) are $H$-maps if $k\geq 1$. 
\end{claim}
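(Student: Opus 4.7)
The plan is to equip every space in (\ref{monoidalityzigzag}) with an $\mathbb{E}_1$-structure coming from \emph{stacking in the first of the $I^k$-coordinates} of $M\times I^k$ (which is available since $k\geq 1$), and then to check each map of the zig-zag intertwines these structures. On $H(M\times I^k)$ and on $\Omega^\infty\Hsp(M\times I^k)$ this is the $+_I$ structure already singled out in Remark \ref{deloopingrem}. For the spectra on the right, the same stacking operation equips $B(-)=B\diff_\partial^b(M\times I^k\times(-))$ with the structure of an $\mathbb{E}_1$-algebra in orthogonal functors, so its first derivative $\Hsp(M\times I^k)$ inherits an $\mathbb{E}_1$-algebra structure in spectra; similarly for $\Whsp(M\times I^k)$ via functoriality of Waldhausen's $S_\bullet$-construction on the category of retractive spaces over $M\times I^k$.

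For the outer entries $\Omega^\infty(S^{k\cdot(\sigma-1)}\wedge\Hsp(M))$ and $\Omega^\infty(S^{(d+k)\cdot(\sigma-1)-1}\wedge\Whsp(M))$, the natural $\mathbb{E}_1$-structure is the one from $\Omega^\infty$. I would next argue that the two $\mathbb{E}_1$-structures on any of the spectrum-based entries in (\ref{monoidalityzigzag})---the stacking one and the infinite-loop one---agree up to homotopy. Indeed, stacking in the first $I$-coordinate commutes with loop-sum (one sees this directly by stacking two "non-interacting" bounded diffeomorphisms in separate halves of the first $I$-slot), so an Eckmann--Hilton argument identifies the resulting $H$-space structures. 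This in particular lets us transport the stacking $\mathbb{E}_1$-structure across $e_k$ and across the homotopy-invariance equivalence $\Whsp(M\times I^k)\simeq\Whsp(M)$, so the two $\mathbb{E}_1$-structures on the outer entries agree.

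With these structures in place, verifying each map is an $H$-map becomes a naturality check. The Alexander trick-like map (\ref{alexhcob}) underlying $\mathrm{alex}$ is constructed fiberwise in the $I$-coordinates: applying $\widetilde{a}(-)$ to $\rho_1+_I\rho_2$ produces a bounded codimension-zero submanifold of $\R^{N+1}\times\underline\R$ which is, up to bounded isotopy rel boundary, the $+_I$-stack of $\widetilde{a}(\rho_1)$ and $\widetilde{a}(\rho_2)$; hence $\mathrm{alex}$ intertwines $+_I$. The map $(\dagger)$ factors through the inclusion $H(M\times I^k)\simeq\Omega^\infty_0\Sigma^{-1}\Whsp(M\times I^k)\hookrightarrow \Omega^\infty\Sigma^{-1}\Whsp(M\times I^k)$ followed by Vogell's comparison $\mathscr T\simeq(-1)^{d+k}\tau_\epsilon$; both of these are spectrum-level constructions, so they automatically respect the stacking $\mathbb{E}_1$-structure. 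The vertical equivalences $e_k$ of Lemma \ref{MIKlem}, and the homotopy-invariance equivalences for $\Whsp$, are induced by maps of orthogonal functors that are manifestly compatible with stacking in the \emph{first} $I$-coordinate (which is untouched by the constructions used to produce $e_k$, since those only involve suspending off the \emph{last} coordinate).

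The main obstacle I anticipate is the Eckmann--Hilton step: to invoke it cleanly one has to exhibit the stacking and loop-sum operations as a genuine pair of interchanging binary operations on the same $H$-space, which in turn requires being careful that the stacking operation on $\Hsp(M\times I^k)$ induced from $B\diff_\partial^b(M\times I^k\times(-))$ is suitably compatible with the structure maps of the orthogonal spectrum and the equivalence $e_k$. Once this compatibility is spelled out, the remaining verifications reduce to the evident functoriality of $\widetilde{a}$ and of $\mathscr T$ under $+_I$.
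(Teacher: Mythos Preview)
Your overall strategy---equip everything with the $+_I$ structure coming from stacking in the first $I$-coordinate and then invoke Eckmann--Hilton to reconcile this with the infinite loop structures---matches the paper's, and your treatment of $\mathrm{alex}$ and of the vertical equivalences is fine. In fact the paper disposes of the vertical maps even more quickly than you do: since $e_k$ (Lemma~\ref{MIKlem}) and the homotopy-invariance equivalence $\Whsp(M\times I^k)\simeq\Whsp(M)$ are induced by maps of spectra, they are infinite loop maps and hence $H$-maps automatically; there is no need to track which $I$-coordinate they touch.

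However, there is a genuine gap in your treatment of the map $(\dagger)$. You write that it ``factors through $H(M\times I^k)\simeq\Omega^\infty_0\Sigma^{-1}\Whsp(M\times I^k)$'' and that this is a ``spectrum-level construction'' that ``automatically respects the stacking $\mathbb{E}_1$-structure''. This is not correct: the equivalence $\mathcal H(M\times I^k)\simeq\Omega^{\infty+1}\Whsp(M\times I^k)$ is the stable parametrised $h$-cobordism theorem of Waldhausen--Jahren--Rognes, and as the paper explicitly notes (attributing the observation to Jahren and Rognes), that theorem is only stated in the literature as an equivalence of \emph{spaces}. Nothing about its construction makes it manifestly a map of infinite loop spaces or even of $H$-spaces, so this is precisely the step that requires work.

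The paper fills this gap by reducing to the PL case and passing to the simplicial category $s\mathscr C^h(X\times I)$ of acyclic cofibrations under $X\times I$. There one has two monoidal structures: $\mu_0$ given by stacking in the $I$-direction, and $\mu_1$ given by pushout along $X\times I$. The zig-zag connecting $\mathcal H(M\times I)$ to $\Omega\Wh^{PL}(M\times I)$ is compatible with $\mu_0$ on the geometric side and with $\mu_1$ on the $K$-theoretic side (via Waldhausen's results in \cite[\S3]{WaldhausenAtheory}), so one needs $\mu_0\simeq\mu_1$. The paper supplies this by a ``sliding'' homotopy $\mu_t(Y,Z)=Y{}_{X\times[1-t,1]}\cup_{X\times[0,t]}Z$, made rigorous in the larger simplicial category $s\widetilde{\mathscr C}^h_\bullet(X\times I)$. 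This is the missing ingredient in your argument.
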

\begin{proof}
The $\mathbb{E}_1$-algebra structure $+_I$ on $\Omega^\infty\Hsp(M\times I^k)$ is equivalent to any of the other ones coming from its infinite loop structure. As (\ref{ThetaMIkeq}) and the right vertical equivalence are infinite loop maps, they are in particular $H$-maps. As for (\ref{totalalexhcobmap}), it is an $\mathbb{E}_1$-map since $(\ref{alexhcob}): H(M\times I^k)\to\widehat{\Hsp}(M\times I^k)_0$ is (by construction).

 Non equivariantly, the map $(\dagger)$ is the composition $(\dagger): H(M\times I^k)\xhookrightarrow{}\mathcal H(M\times I^k)\simeq \Omega^{\infty+1}\Whsp(M\times I^k)$, where the last equivalence is the stable parametrised $h$-cobordism theorem of Waldhausen--Jahren--Rognes \cite[Theorem 0.1]{WJR}. As communicated to us in private by Bjørn Jahren and John Rognes, such equivalence is only stated to hold in the category of spaces (and not of infinite loop spaces, though it should definitely also hold there). We now explain why this equivalence is one of $H$-groups (which is the general consensus, but we couldn't find it written down anywhere): again assume $k=1$. One reduces to the $PL$-case as in \cite[Pages 15--16]{WJR}. Then if $X$ is a simplicial set such that $|X|\simeq M$, the equivalence in the $PL$-setting is induced by a zig-zag of equivalences of simplicial sets (cf the left vertical column of \cite[Eq. (0.4)]{WJR})
 $$
 \begin{tikzcd}
\mathcal{H}(M\times I)_\bullet &\lar["\sim"']\cdot\rar["\sim"] & s\mathscr{C}^h(X\times I),
\end{tikzcd}
 $$
 where $s\mathscr{C}^h(X\times I)$ is the category (seen as a simplicial set by taking its nerve) of finite\footnote{This means that $Y$ is generated by the image of $X\times I\xhookrightarrow{}Y$ and finitely many simplices.} acyclic cofibrations $X\times I\xhookrightarrow{}Y$ together with simple maps over $X\times I$. One can verify that it makes sense to stack in the $I$-direction in each of the simplicial sets involved in the zig-zag, and that the maps between them respect this monoidal structure. Let $\mu_0: s\mathscr{C}^h(X\times I)\times s\mathscr{C}^h(X\times I)\to s\mathscr{C}^h(X\times I)$ stand for this monoidal structure, given explicitly by
 $\mu_0(Y,Z):=Y {}_{X\times {1}}\cup_{X\times 0}Z$. There is another monoidal structure $\mu_1$ induced by the pushout along $X\times I$, ie $\mu_1(Y,Z):=Y\cup_{X\times I} Z$. Sliding gives a homotopy between $\mu_0$ and $\mu_1$: intuitively, the maps
 $$
\mu_t: s\mathscr{C}^h(X\times I)\times s\mathscr{C}^h(X\times I)\longrightarrow s\mathscr{C}^h(X\times I), \quad (Y,Z)\longmapsto Y{}_{X\times [1-t,1]}\cup_{X\times [0,t]} Z, \quad t\in [0,1],
 $$
constitute the homotopy. More precisely, consider the simplicial category $s\widetilde{\mathscr{C}}^h_\bullet(X)$ \cite[Defn. 3.1.1]{WJR} whose objects in simplicial degree $q$ consist of commutative diagrams
$$
\begin{tikzcd}
    X\times \Delta^q_\bullet\ar[dr, "\mathrm{pr}"]\ar[rr,hook, "i", "\sim"'] &&Y\ar[dl, "\pi"]\\
    &\Delta^q_\bullet,&
\end{tikzcd}
$$
where $i$ is an acyclic cofibration and $\pi$ is a Serre fibration. The inclusion $s\mathscr{C}^h(X)\xhookrightarrow{} s\widetilde{\mathscr{C}}^h_\bullet(X)$ as the $0$-simplices is a homotopy equivalence by \cite[Corollary 3.5.2]{WJR}, where a simplicial category is seen as a bisimplicial set by taking its nerve, and a bisimplicial set as an ordinary simplicial set by taking its totalisation. The monoidal structures $\mu_t$ make perfectly good sense in $s\widetilde{\mathscr{C}}^h_\bullet(X\times I)$, and  one can indeed define a simplicial homotopy between $\mu_0$ and $\mu_1$ in this setting resembling the idea above. 

But by Proposition 3.1.1, and Theorems 3.1.7 and 3.3.1 of \cite{WaldhausenAtheory} (see also \cite[p. 5]{WJR}), for any simplicial set $T$, there is a zig-zag of equivalences connecting $|s\mathscr{C}^h(T)|$ and $\Omega\Wh^{PL}(T):=\Omega^{\infty+1}(\mathbf{Wh}^{PL}(T))$ which is monoidal up to homotopy with respect to $\mu_1$ in the domain and the loop structure on the looped Whitehead space. It hence follows that ($\dagger$) is indeed a zig-zag of $H$-maps. 
\end{proof}

\begin{claim}\label{claim2}
For each $k\geq 1$, the diagram
\begin{equation}\label{colimitzgizagdiagram}
\begin{tikzcd}[column sep = 16pt, row sep = 15pt]
    \Omega^\infty(S^{2k\cdot(\sigma-1)}\wedge\Hsp(M))\dar[phantom, "\vapprox"]&\lar H(M\times I^{2k})\dar["\Sigma^2"]\rar & \Omega^\infty(S^{(d+2k)\cdot(\sigma-1)-1}\wedge\Whsp(M))\dar[phantom, "\vapprox"]\\
    \Omega^\infty(S^{(2k+2)\cdot(\sigma-1)}\wedge\Hsp(M))&\lar H(M\times I^{2k+2})\rar & \Omega^\infty(S^{(d+2k+2)\cdot(\sigma-1)-1}\wedge\Whsp(M))
\end{tikzcd}
\end{equation}
is homotopy commutative, where the rows are the zig-zags (\ref{monoidalityzigzag}) and the vertical external maps are induced by the homotopy $C_2$-equivariant equivalence $\s^0\approx \s^{2\cdot(\sigma-1)}$.
\end{claim}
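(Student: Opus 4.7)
The plan is to establish the homotopy commutativity of (\ref{colimitzgizagdiagram}) by separately verifying its left and right subsquares (sharing the middle vertical arrow $\Sigma^2$). The key observation underlying both is that non-equivariantly $\Sigma^2$ becomes an equivalence corresponding, via the relevant identifications, to a ``double structure map'', and the outer vertical arrows induced by $\s^0 \approx \s^{2(\sigma - 1)}$ correspond non-equivariantly to the same structure maps. So non-equivariantly commutativity is automatic; the content of the claim is to promote this to a statement of maps of spaces (so as to allow taking homotopy colimits in $k$ later on).

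For the left subsquare, I would argue that the composite $e_k \circ \mathrm{alex}: H(M \times I^k) \to \Omega^\infty(S^{k \cdot (\sigma - 1)} \wedge \Hsp(M))$ intertwines the lower $h$-cobordism stabilisation $\Sigma$ on the source with a structure map of the spectrum $\Hsp(M)$ on the target. Indeed, $e_k$ is constructed iteratively via the $O(1)$-equivariant stabilisation maps $s_{1,0}$ and $s_{0,1}$ of the orthogonal spectrum $\Hsp(M)$ (see the proof of Lemma \ref{MIKlem}), and the Alexander trick square (\ref{commsquareBconc}) of Proposition \ref{alexpropBfunctor} precisely records the compatibility between the concordance stabilisation and the structure map $s_V^\vee$ on the spectrum side. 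Delooping this as indicated in Remark \ref{deloopingrem} and passing to the $h$-cobordism model via (\ref{alexhcob}), the single stabilisation $\Sigma$ gets identified (up to the equivalences $e_k \circ \mathrm{alex}$) with smashing the spectrum with $\s^0 \to \s^{\sigma - 1}$. Iterating once more, $\Sigma^2$ is identified with smashing with $\s^0 \to \s^{2(\sigma - 1)}$. Crucially, a single $\Sigma$ is \emph{not} $C_2$-equivariant by Lemma \ref{suspensionequivariancelem}(a), but its square $\Sigma^2$ is by (b), matching the $C_2$-equivariance of $\s^0 \to \s^{2(\sigma - 1)}$.

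For the right subsquare, I would use the factorisation of $(\dagger)$ established in the proof of Claim \ref{claim1}: up to a zig-zag of $H$-maps, it decomposes as
\[
H(M \times I^k) \hookrightarrow \mathcal{H}(M \times I^k) \simeq \Omega^{\infty + 1} \Whsp(M \times I^k) \simeq \Omega^{\infty + 1} \Whsp(M),
\]
where the last equivalence uses the homotopy invariance of $\Whsp$. Under these identifications $\Sigma^2$ becomes the identity on $\mathcal{H}(M)$, and by the same naturality the outer vertical arrows induced by $\s^0 \approx \s^{2(\sigma - 1)}$ on $\Whsp(M)$ correspond to (homotopic copies of) themselves on $\mathcal{H}(M)$. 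Commutativity then follows by naturality of the Waldhausen--Jahren--Rognes identification.

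The main obstacle will lie in the left subsquare: one must carefully verify that the Alexander trick delooping (\ref{alexhcob}) is compatible with the lower stabilisation $\Sigma$ on $h$-cobordism spaces (extending the content of Remark \ref{deloopingrem}, which deals only with the ``crossing with $M \times I$'' delooping), and that the iterative construction of $e_k$ in Lemma \ref{MIKlem} makes $\Sigma^2$ correspond precisely to the composite of two consecutive structure maps. Once this compatibility is in hand, both subsquares, and hence the outer hexagon (\ref{colimitzgizagdiagram}), commute up to homotopy by pasting.
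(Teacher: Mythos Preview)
Your proposal is correct and follows essentially the same route as the paper: split into the left and right subsquares, dispatch the right one by noting both horizontal composites factor through $\mathcal{H}(M)$, and for the left one reduce to the compatibility of the Alexander trick with a single stabilisation via Proposition~\ref{alexpropBfunctor} and Lemma~\ref{MIKlem}.

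One small sharpening the paper makes that addresses the ``main obstacle'' you flag: rather than verifying directly that the delooping~(\ref{alexhcob}) is compatible with $\Sigma$, the paper uses \emph{two different} $I$-coordinates from $I^{2k}$. The first endows the square
\[
\begin{tikzcd}
    H(M\times I^{2k})\dar["\Sigma"]\rar["\mathrm{alex}"] & \Omega^\infty\Hsp(M\times I^{2k})\dar[dash, "e_1"]\\
    H(M\times I^{2k+1})\rar["\mathrm{alex}"] & \Omega^\infty\Hsp(M\times I^{2k+1})
\end{tikzcd}
\]
with an $\mathbb{E}_1$-structure by stacking, so it suffices to verify commutativity on basepoint components (hence on $H^s(-)$ and $\Hsp(-)_0$). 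The second $I$-coordinate then supplies exactly the delooping of~(\ref{commsquareBconc}) promised in Remark~\ref{deloopingrem}. This sidesteps having to analyse the full map~(\ref{alexhcob}) directly. Also note that Claim~\ref{claim2} asks only for ordinary homotopy commutativity; the $C_2$-equivariance of $\Sigma^2$ that you invoke from Lemma~\ref{suspensionequivariancelem}(b) is used elsewhere in the proof of Theorem~\ref{absolutepropTWWvsTepsilon} but is not needed for this claim itself.
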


\begin{proof}
    The right hand square is clearly commutative, for both right horizontal maps factor through $\mathcal{H}(M)$. For the commutativity of the left one, by Lemma \ref{MIKlem}, it suffices to argue that the square
\begin{equation}\label{stabilisationsquareHsp}
\begin{tikzcd}
    H(M\times I^{2k})\dar["\Sigma"]\rar["\mathrm{alex}"] & \Omega^\infty\Hsp(M\times I^{2k})\dar[dash, "\vsim", "e_1"']\\
    H(M\times I^{2k+1})\rar["\mathrm{alex}"] & \Omega^\infty\Hsp(M\times I^{2k+1})
\end{tikzcd}
\end{equation}
   homotopy commutes (non-equivariantly\footnote{Homotopy $C_2$-equivariant maps that are homotopic (as ordinary maps) induce the same morphism in the homotopy category of $C_2$-spaces.}). Recall that the map $e_1$, non-equivariantly, is induced by the zig-zags
   $$
   \begin{tikzcd}
\Hsp(M\times I^{2k})_n\rar["s^{\vee}"]&\Omega\Hsp(M\times I^{2k})_{n+1}&\lar["\mathrm{alex}"', "\sim"] \Hsp(M\times I^{2k+1})_n,
\end{tikzcd}
   $$
where $s^\vee$ is the (adjoint to the) structure map of $\Hsp(M\times I^{2k})$. Since (\ref{stabilisationsquareHsp}) is a diagram of $\mathbb{E}_1$-groups by stacking in the first of the $I^{2k}$-coordinates, it suffices to provide a homotopy for the diagram
    $$
\begin{tikzcd}
    H^s(M\times I^{2k})\dar["\Sigma"]\ar[rr,"(\ref{alexhcob})"] & & \Hsp(M\times I^{2k})_0\dar["s^{\vee}"]\\
    H^s(M\times I^{2k+1})\rar["(\ref{alexhcob})"] & \Hsp(M\times I^{2k+1})_0\rar["\mathrm{alex}"] &\Omega\Hsp(M\times I^{2k})_{1}.
\end{tikzcd}
    $$
    As in Remark \ref{deloopingrem}, such homotopy is obtained by delooping (with respect to stacking in the second of the $I^{2k}$-coordinates) the diagram (\ref{commsquareBconc}) of Proposition \ref{alexpropBfunctor}.
\end{proof}

Clearly $\Sigma^2$ is an $H$-map and also homotopy $C_2$-equivariant (with respect to $\iota_H$) by Lemma \ref{suspensionequivariancelem}($b$). Therefore by Claim \ref{claim1}, all of the maps involved in (\ref{colimitzgizagdiagram}) are $H$-maps and homotopy $C_2$-equivariant. Taking the homotopy colimit as $k\to\infty$, we obtain a homotopy $C_2$-equivariant zig-zag
\begin{equation}\label{zigzaginvolutions}
\begin{tikzcd}
\Omega^\infty\Hsp(M)&\lar["\approx"']\underset{k}{\hocolim}\ H(M\times I^{2k})\rar["\approx"] &\Omega^\infty(S^{d\cdot(\sigma-1)-1}\wedge\Whsp(M))
    \end{tikzcd}
\end{equation}
of $H$-maps. The connectivity of, say, the upper horizontal maps in (\ref{colimitzgizagdiagram}) is $\phi(d+2k)\gtrsim (d+2k)/3$ by Igusa's theorem and, as this lower bound increases linearly with $k$, the horizontal maps in (\ref{zigzaginvolutions}) are indeed equivalences. The equivalence in the statement now follows by Corollary \ref{corinvolutionsuptohomotopy}(ii) applied to (\ref{zigzaginvolutions}), and because taking homotopy $C_2$-orbits commutes up to equivalence with $\Omega^\infty(-)$ if $2$ is inverted (as in Corollary \ref{corinvolutionsuptohomotopy}). The proof of Theorem \ref{absolutepropTWWvsTepsilon} is now complete.
\end{proof}


\begin{cor}\label{propTWWvsTepsilon}
    If $M^d$ is stably parallelisable and $P\subset M^d$ is a codimension zero submanifold with $p\leq d-3$ (in the notation of Theorem \ref{EmbWWIThm}), then there is an equivalence away from two
    $$
\Omega^\infty\big(\CEsp(P,M)_{hC_2}\big)\simeq_{[\frac{1}{2}]}\Omega^\infty\big(\big(S^{d\cdot (\sigma-1)-2}\wedge\Whsp(M,M-P;\epsilon)\big)_{hC_2}\big),
    $$
    where $\Whsp(M,M-P;\epsilon)$ stands for the homotopy cofibre of $\Whsp(M-P;\epsilon)\to\Whsp(M;\epsilon)$.
\end{cor}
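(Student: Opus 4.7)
The strategy is to apply Theorem \ref{absolutepropTWWvsTepsilon} naturally to both $M$ and $\overline{M-P}$ and then take horizontal homotopy fibres. First observe that $\overline{M-P}$ inherits stable parallelisability from $M$, so the hypothesis of Theorem \ref{absolutepropTWWvsTepsilon} is satisfied for both manifolds. More importantly, inspection of the proof of that theorem shows that the homotopy $C_2$-equivariant equivalence is realised through the zig-zag (\ref{zigzaginvolutions}) of $H$-maps, every one of which is natural in codimension zero embeddings: the Alexander trick-like map (\ref{alexhcob}) by (\ref{functorialityalex}), the $C_2$-equivariant identification $e_k$ of Lemma \ref{MIKlem} by its construction, and the forgetful map $H(N) \to \mathcal{H}(N) \simeq \Omega^{\infty+1} \Whsp(N;\epsilon)$ by definition of $\mathcal{H}(-)$.

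Applied to the codimension zero inclusion $\overline{M-P} \subset M$, this yields a diagram (commuting up to homotopy, homotopy $C_2$-equivariantly) of $H$-maps whose horizontal zig-zags are weak equivalences and whose vertical maps are induced by inclusion. Taking horizontal homotopy fibres, and using that $\Omega^\infty$ preserves fibres of spectra together with the identity $\hofib(X \to Y) = \Omega\,\mathrm{cof}(X \to Y)$ for a map of spectra, we identify the homotopy fibre of the right-hand vertical map as
$$
\Omega^\infty\bigl(\Omega\bigl(S^{d(\sigma-1)-1} \wedge \Whsp(M, M-P; \epsilon)\bigr)\bigr) \simeq \Omega^\infty\bigl(S^{d(\sigma-1)-2} \wedge \Whsp(M, M-P; \epsilon)\bigr),
$$
while the homotopy fibre of the left-hand vertical map is $\Omega^\infty \CEsp(P, M)$ by definition of $\CEsp(P,M)$. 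We thereby obtain a homotopy $C_2$-equivariant equivalence
$$
\Omega^\infty \CEsp(P, M) \approx \Omega^\infty\bigl(S^{d(\sigma-1)-2} \wedge \Whsp(M, M-P; \epsilon)\bigr).
$$

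Finally, the second part of Proposition \ref{hC2construction}, combined with the fact that $(-)_{hC_2}$ commutes with $\Omega^\infty$ away from 2 (as in Corollary \ref{corinvolutionsuptohomotopy}, via collapse of the Bousfield--Kan orbit spectral sequence), upgrades this to the desired equivalence of pointed homotopy $C_2$-orbits localised away from 2.

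The main technical obstacle is to carefully track the naturality in codimension zero embeddings of the zig-zag from the proof of Theorem \ref{absolutepropTWWvsTepsilon}, and to ensure that both the homotopies witnessing $C_2$-equivariance and the $H$-structures are sufficiently natural for the homotopy fibres of the vertical maps to inherit a homotopy $C_2$-equivariant equivalence between them. In practice each step is either manifestly natural from its construction, or a formal consequence thereof, so this obstacle is more bureaucratic than substantive.
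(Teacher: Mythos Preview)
Your proposal is correct and follows essentially the same approach as the paper: note that $\overline{M-P}$ inherits stable parallelisability, use the naturality of the zig-zag (\ref{zigzaginvolutions}) in codimension zero embeddings, take homotopy fibres, and finish with Proposition \ref{hC2construction}. The paper's proof is terser but identical in structure; your explicit identification of the degree shift via $\hofib \simeq \Omega\,\mathrm{cof}$ and your remark on the bureaucratic nature of the naturality check are helpful elaborations rather than departures.
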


\begin{proof}
    Note that $\overline{M-P}$ is stably parallelisable because $M$ is. The zig-zag (\ref{zigzaginvolutions}) is functorial with respect to codimension zero embeddings of stably parallelisable manifolds, and hence taking homotopy fibres in the map from $(\ref{zigzaginvolutions})$ with $M$ replaced by $\overline{M-P}$ to $(\ref{zigzaginvolutions})$ itself, we obtain another homotopy $C_2$-equivariant zig-zag of equivalences
    $$
\begin{tikzcd}[ column sep = 20pt]    \Omega^\infty\CEsp(P,M)&\lar["\approx"']\underset{k}{\hocolim}\ \cemb(P\times I^{2k}, M\times I^{2k})\rar["\approx"] &\Omega^\infty(S^{d\cdot(\sigma-1)-2}\wedge\Whsp(M,M-P)).
    \end{tikzcd}
    $$
The same line of reasoning as before yields the desired result.
\end{proof}

\subsection{The canonical involution in algebraic \texorpdfstring{$K$}{K}-theory}\label{canonicalinvsection} We now define the canonical involution $\tau_\epsilon$ on $A(X)$, for $X$ based, and relate it to an involution in the model of $A$-theory via ``spaces of matrices with values in the ring up to homotopy'' $Q_+\Omega X$ \cite[Section 2.2]{WaldhausenAtheory}. Throughout, let $G:=GX$ denote the topological monoid of Moore loops on $X$, and write $\s[G]$ for the $\mathbb{E}_1$-ring spectrum $\s\wedge G_+$.

We will work over the $\infty$-category $\mathsf{Mod}_{\s[G]}$ of right $\s[G]$-module spectra; we will also write $_{\s[G]}\mathsf{Mod}$ for the $\infty$-category of left $\s[G]$-modules. Then for $m\geq 1$, if $\mathrm{Aut}_G(\oplus^{m}\s[G])$ denotes the homotopy invertible components of the mapping space $\mathsf{Mod}_{\s[G]}(\oplus^m\s[G],\oplus^m\s[G])$, Waldhausen showed in \cite[Theorem 2.2.1]{WaldhausenAtheory} that for $X$ connected, there is a natural equivalence 
\begin{equation}\label{BweirdHeq}
A(X)\simeq \Z\times\underset{m}{\hocolim}\ B\mathrm{Aut}_G(\oplus^m \s[G])^+.
\end{equation}
In order to define $\tau_\epsilon$, we will introduce compatible anti-involutions on $\mathrm{Aut}_G(\oplus^{m}\s[G])$ defined in terms of Spanier--Whitehead duality. As $\s[G]$ is not commutative, this duality really arises as an instance of a \textit{duality in the symmetric closed bicategory} $\mathsf{Bimod}_\s$ of bimodule spectra, in the sense of May--Sigurdsson \cite[Section 16.4]{MaySigurdsson}. This duality coincides with the one considered by Vogell in \cite[Section 1]{VogellInvolution}.

\begin{rem}
    We can safely import the duality theory of May--Sigurdsson \cite{MaySigurdsson}: even though it is developed only $2$-categorically, and $\mathsf{Bimod}_\s$ (aka the Morita category of the sphere spectrum, cf~\cite{HaugsengMoritaCategory}) is an $(\infty,2)$-category, the arguments that rely on duality only involve the homotopy $2$-category $\mathrm{Ho}_2(\mathsf{Bimod}_\s)$, which is \emph{symmetric closed} in the sense of \cite[Defn.~16.2.1 \& 16.3.1]{MaySigurdsson}.

\end{rem}

First observe that a right $\s[G]$-module $M$ can always be regarded as a left $\s[G]$-module by
\[
\begin{tikzcd}
    \s[G]\otimes M\rar["\text{swap}"] & M\otimes \s[G]\rar["\Id_M\otimes \text{inv}"] & M\otimes \s[G^{\mathrm{op}}]=M\otimes \s[G]\rar["\mathrm{act}"] & M,
\end{tikzcd}
\]
where ``inv'' stands for inversion in the monoid $G$---write $M_\ell$ for this left $\s[G]$-module. Here $\otimes=\otimes_\s$ stands for the usual smash product of spectra. Note also that $$_{\s[G]}\mathsf{Mod}(M_\ell,M_\ell)\simeq \mathsf{Mod}_{\s[G]}(M,M)$$ as $\mathbb{E}_1$-algebras. If $\nu: \s\to \s[G]_\ell$ denotes the unit, consider the map of spectra
$$
\begin{tikzcd}
\eta_1: \s\simeq \s[G]\otimes_{\s[G]}\s\rar["1\otimes\nu"] & \s[G]\otimes_{\s[G]}\s[G]_\ell
\end{tikzcd}
$$
and the map of $(\s[G],\s[G])$-bimodules
{\[
\begin{tikzcd}
I_1: \s[G]_\ell\otimes \s[G]\rar["\text{inv}\otimes 1"]&\s[G]\otimes \s[G]\rar["\text{act}"] &\s[G].
\end{tikzcd}
\]}
Then $(\eta_1,I_1)$ exhibits $(\s[G], \s[G]_\ell)$ as a dual pair in the sense of \cite[Defn. 16.4.1]{MaySigurdsson} by Example 16.4.3\footnote{This example is really concerned with ordinary rings and modules, but the same argument applies to bimodule spectra. Moreover, it really shows that $\s[G]$, as a right $\s[G]$-module, is \emph{left} dual to $\s[G]$ as a left $\s[G]$-module. We have identified the latter with $\s[G]_\ell$ via the isomorphism of left $\s[G]$-modules $\mathrm{inv}:\s[G]_\ell\cong \s[G]$.} loc. cit. More generally, the map of spectra
$$
\begin{tikzcd}
\eta_m:\s \ar[rr,"\bigoplus_{i,j}\delta_{ij}\eta_1"]&&\bigoplus_{i,j=1}^m \s[G]\otimes_{\s[G]}\s[G]_\ell\cong \bigoplus^m_{j=1}\s[G] \otimes_{\s[G]}\big(\bigoplus^m_{i=1}\s[G]\big)_\ell
\end{tikzcd}
$$
together with the map of $(\s[G],\s[G])$-bimodules
$$
\begin{tikzcd}
I_m: \big(\bigoplus^m_{i=1}\s[G]\big)_\ell \otimes\bigoplus^m_{j=1}\s[G]\cong \bigoplus_{i,j=1}^m \s[G]_\ell\otimes \s[G]\ar[rr,"\bigoplus_{i,j}\delta_{ij}I_1"] & &\s[G],
\end{tikzcd}
$$
exhibit $(\oplus^m\s[G])_\ell$ as a \textit{right} dual to $\oplus^m\s[G]$. (This pretty much follows from $(\s[G],\s[G]_\ell)$ being a dual pair.) Therefore, by \cite[Proposition 16.4.9]{MaySigurdsson}, $I_m$ induces an equivalence of left $\s[G]$-modules $$\widetilde{I}_m: (\oplus^m\s[G])_\ell\simeq D_r(\oplus^m\s[G]):=\underline{\mathrm{Hom}}_{\s[G]}(\oplus^m\s[G],\s[G]),$$ 
where $\underline{\mathrm{Hom}}_{\s[G]}$ denotes the right $\s[G]$-linear mapping spectrum. 

With (\ref{BweirdHeq}) and Lemma \ref{antiinvolutionlem} in mind, the involution $\tau_\epsilon$ on $A(X;\epsilon)$ is then induced by the map of $\mathbb{E}_1$-algebras
\begin{equation}\label{tauepsilonantiinv}
\begin{tikzcd}
    \mathrm{Aut}_G(\oplus^m\s[G])\rar["D_r"] & _{G}\mathrm{Aut}(D_r(\oplus^m\s[G]))^{\mathrm{op}}\overset{\widetilde{I}_\#}\simeq {}_{G}\mathrm{Aut}((\oplus^m\s[G])_\ell)^{\mathrm{op}}\simeq \mathrm{Aut}_G(\oplus^m\s[G])^{\mathrm{op}},
\end{tikzcd}
\end{equation}
where $\widetilde{I}_\#$ stands for conjugation with the equivalence $\widetilde{I}_m$. It will be convenient to think of (\ref{tauepsilonantiinv}) in the following way: let $GL_m(Q_+G)$ denote the union of path components in $(Q_+G)^{m\times m}$ in the image of $\mathrm{Aut}_G(\oplus^m\s[G])$ under the natural equivalence
\begin{align*}
u:\mathsf{Mod}_{\s[G]}(\oplus^m\s[G],\oplus^m\s[G])\overset{\sim}{\longrightarrow}& \ \mathsf{Mod}_{\s[G]}(\oplus^m\s[G],\prod^m\s[G])\\
\simeq &\ \mathsf{Sp}(\s,\s[G])^{m\times m}\simeq (Q_+G)^{m\times m},
\end{align*}
where $\mathsf{Sp}\simeq\mathsf{Mod}_\s$ stands for the $\infty$-category of spectra. So $u:\mathrm{Aut}_G(\oplus^m\s[G])\simeq GL_m(Q_+G)$ and, just as in standard linear algebra, under this equivalence the anti-involution (\ref{tauepsilonantiinv}) corresponds to the rule that sends a matrix $A$ to its conjugate transpose $A^\dagger$ (conjugate with respect to inversion of $G$ in $\s[G]$). More precisely:

\begin{prop}\label{tranpositionprop} Write $\mathrm{End}_G(\oplus^m\s[G]):=\mathsf{Mod}_{\s[G]}(\oplus^m\s[G],\oplus^m\s[G])$ with the action of the cyclic group $C_m$ by conjugation with the permutation automorphisms of $\oplus^m\s[G]$. Let $C_m$ act similarly on $(Q_+G)^{m\times m}$ by conjugation. Then the following square is commutative in the homotopy category of $C_m$-spaces:
\begin{equation}\label{transposematrixsquare}
\begin{tikzcd}
    \mathrm{End}_G(\oplus^m\s[G])\dar["u", "\vsim"'] \rar["(\ref{tauepsilonantiinv})", "\sim"'] &\mathrm{End}_G(\oplus^m\s[G])\dar["u", "\vsim"'] \\
    (Q_+G)^{m\times m}\rar["\dagger", "\sim"'] & (Q_+G)^{m\times m}.
\end{tikzcd}
\end{equation}
\end{prop}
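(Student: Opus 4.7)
The plan is to verify the claim by unpacking the definition of (\ref{tauepsilonantiinv}) and reducing to $m=1$.

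The case $m=1$ comes first: under the equivalence $u$, a right $\s[G]$-linear endomorphism $f: \s[G] \to \s[G]$ corresponds to $f(1)\in \s[G]\simeq Q_+G$, and any such $f$ is ``left multiplication by $\gamma := f(1)$'' for the ring multiplication of $\s[G]$. Writing $\bar{(-)}: \s[G]\simeq\s[G]^{\mathrm{op}}$ for the $\mathbb{E}_1$-isomorphism induced by the inversion map of $G$, the pairing $I_1: \s[G]_\ell\otimes\s[G]\to \s[G]$ is given by $(x,y)\mapsto \bar{x}y$, and therefore $\widetilde{I}_1(x)$ sends $y\mapsto \bar{x}y$. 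Computing $\widetilde{I}_1^{-1}\circ D_r(f)\circ\widetilde{I}_1$ on $x$: since $D_r(f)(\widetilde{I}_1(x))(y) = \widetilde{I}_1(x)(\gamma y)=\bar{x}\gamma y$, solving $\overline{x'}\,y=\bar{x}\gamma y$ yields $x'=\bar{\gamma}x$. Viewed as a right $\s[G]$-linear endomorphism of $\s[G]$, this is left multiplication by $\bar{\gamma}$, so under $u$ the anti-involution $\tau_\epsilon$ becomes $\bar{(-)}: Q_+G\to (Q_+G)^{\mathrm{op}}$, which is the $m=1$ case of $\dagger$.

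For general $m$, the dual pair $(\eta_m,I_m)$ is assembled from $m$ copies of $(\eta_1,I_1)$ via the ``Kronecker'' pattern $\delta_{ij}$. Repeating the computation above componentwise and using that $f=(\gamma_{ij})$ acts on $(y_j)$ as $(f((y_j)))_i=\sum_j \gamma_{ij}y_j$, one finds
\[
u(\tau_\epsilon(f))_{ij} \;=\; \overline{u(f)_{ji}}\,,
\]
which is precisely the conjugate transpose. The $C_m$-equivariance of the square is then automatic: $\tau_\epsilon$ is natural with respect to automorphisms of $\oplus^m\s[G]$ and so commutes up to homotopy with conjugation by the cyclic permutation, while on the matrix side conjugate transpose visibly commutes with conjugation by any permutation matrix $P$, since $P^\dagger=P^{-1}$.

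The main obstacle is making these ``direct computations'' rigorous at the $\infty$-categorical level, since the map $\bar{(-)}$ and the ``basis vectors'' are morphisms of $\mathbb{E}_1$-ring spectra rather than set-theoretic data. The cleanest route is to work inside Waldhausen's model (\ref{BweirdHeq}), so that the matrix identification $u$ is built into the very definition of $A(X)$; then the claim reduces to naturality of the evaluation and coevaluation morphisms of the dual pair $(\oplus^m\s[G], (\oplus^m\s[G])_\ell)$, together with the manifestly $C_m$-equivariant Kronecker structure of $\eta_m$ and $I_m$. This reduces (\ref{transposematrixsquare}) to a combinatorial statement which, after applying $\Omega^\infty$, becomes the matrix identity computed above.
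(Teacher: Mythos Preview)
Your componentwise computation is morally the same as the paper's: both reduce to checking $\tau_\epsilon(h)_{ij}\simeq\overline{h}_{ji}$, and your $m=1$ analysis captures the content of the paper's diagram chase. The paper makes this rigorous not by ``working in Waldhausen's model'' but by writing the desired identity as the commutativity of an explicit diagram of spectra and verifying each sub-square via the tensor-hom adjunction, ultimately reducing to the definitions of $I_1$ and $I_m$. Your element-level heuristic can be made precise along exactly these lines, so that part of the argument is fine in outline.

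The genuine gap is the passage to the homotopy category of $C_m$-spaces. You argue that each edge of the square is a $C_m$-map and that the square commutes up to non-equivariant homotopy, and then declare $C_m$-equivariance ``automatic''. It is not: two $C_m$-maps that are non-equivariantly homotopic need not be $C_m$-homotopic. The paper closes this gap by observing that the $C_m$-action on $(Q_+G)^{m\times m}$ by conjugation is \emph{cofree}: conjugation by the cyclic shift permutes each diagonal freely, so $(Q_+G)^{m\times m}\cong\prod^m\mathrm{coInd}_e^{C_m}Q_+G$ as a $C_m$-space. For maps into a cofree target, the induction--coinduction adjunction identifies $C_m$-homotopy classes with non-equivariant homotopy classes after projecting to a single column, and only then does your componentwise identity suffice. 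Without this cofreeness step, the upgrade from ordinary to $C_m$-equivariant homotopy commutativity is unjustified.
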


\begin{rem}
    Passing to the homotopy invertible components in (\ref{transposematrixsquare}), we obtain the following commutative square in the homotopy category of $C_m$-spaces
    $$
\begin{tikzcd}
    \mathrm{Aut}_G(\oplus^m\s[G])\dar["u", "\vsim"'] \rar["(\ref{tauepsilonantiinv})", "\sim"'] &\mathrm{Aut}_G(\oplus^m\s[G])\dar["u", "\vsim"'] \\
    GL_m(Q_+G)\rar["\dagger", "\sim"'] & GL_m(Q_+G).
\end{tikzcd}
    $$
    We have suppressed the $(-)^{\mathrm{op}}$ in the codomain of the map (\ref{tauepsilonantiinv}) in the previous squares to emphasise that such squares take place in the homotopy category of $C_m$-spaces, and not that of $\mathbb{E}_1$-spaces. 
\end{rem}

\begin{proof}[Proof of Proposition \ref{tranpositionprop}] First note that all the maps involved in (\ref{transposematrixsquare}) are indeed $C_m$-maps: the only one that is not obviously so is (\ref{tauepsilonantiinv}), but this follows from the observation that $I_m$ is $C_m$-equivariant for the diagonal action on the domain and the trivial action on the target. Note also that the $C_m$-action on $(Q_+G)^{m\times m}$ restricts to a cofree $C_m$-action on each of the right $C_m$-cosets of the diagonal subspace, and hence $(Q_+G)^{m\times m}=\prod^m\mathrm{coInd}_{e}^{C_m}Q_+G$ as a $C_m$-space. Thus, in order to show that (\ref{transposematrixsquare}) commutes in the homotopy category of $C_m$-spaces, it suffices to prove that it commutes in the homotopy category of spaces after postcomposing it with the map
$$
(Q_+G)^{m\times m}=\prod^m\mathrm{coInd}_{e}^{C_m}Q_+G\longrightarrow \prod^mQ_+G
$$
that records the first column of a matrix.

    Now given an endomorphism $h$ of $\oplus^m\s[G]$, the $(m\times m)$-matrix $u(h)=(h_{ij})\in GL_m(Q_+G)$ has components
    $$
    \begin{tikzcd}
h_{ij}: \s\rar["\nu"] & \s[G]\rar["\mathrm{inc}_j"]&\bigoplus_{k=1}^m\s[G]\rar["h"] & \bigoplus_{k=1}^m\s[G]\rar["\mathrm{pr}_i"] & \s[G].
\end{tikzcd}
    $$
    Slightly abusing the notation, we will write $\tau_\epsilon$ to mean (\ref{tauepsilonantiinv}). Then we must only check that $\tau_\epsilon(h)_{ij}$ is homotopic to $\overline{h}_{ji}$, coherently in $h$ (and for $j=1$, though it is still true for all $j$ of course). Observe now that $(-)_\ell: \mathsf{Mod}_{\s[G]}\to {}_{\s[G]}\mathsf{Mod}$ is a functor over $\mathsf{Sp}$, and hence the last equivalence in (\ref{tauepsilonantiinv}) happens over the automorphism space of $\oplus^m\s[G]$ as a regular spectrum. Consequently, $\tau_\epsilon(h)_{ij}$ is by definition the top horizontal composition in the diagram of spectra
    $$
\begin{tikzcd}[scale cd = 0.9,column sep = 13pt, row sep = 15pt]
\s\ar[dr, "\nu"]\rar["\nu"]&\s[G]_\ell\ar[ddr, phantom, "(*_1)"]\dar["\mathrm{inv}", "\vsim"']\rar["\mathrm{inc}_j"]&\ar[dd,"\widetilde{I}_m", "\vsim"']\bigoplus_{k=1}^m\s[G]_\ell\ar[ddr, phantom, "(*_2)"]\rar["\tau_\epsilon(h)"]&\ar[ddr, phantom, "(*_3)"]\ar[dd,"\widetilde{I}_m", "\vsim"']\bigoplus_{k=1}^m\s[G]_\ell\rar["\mathrm{pr}_i"]&\s[G]_\ell\\
    &\s[G]\dar[phantom, "\vsimeq"]&&&\s[G]\dar[phantom, "\vsimeq"]\uar["\mathrm{inv}", "\vsim"']\\
    &\underline{\mathrm{Hom}}_{\s[G]}(\s[G],\s[G])\rar["\mathrm{pr}_j^*"]&\underline{\mathrm{Hom}}_{\s[G]}(\oplus^m\s[G],\s[G])\rar["h^*"]&\underline{\mathrm{Hom}}_{\s[G]}(\oplus^m\s[G],\s[G])\rar["\mathrm{inc}_i^*"]&\underline{\mathrm{Hom}}_{\s[G]}(\s[G],\s[G]).
\end{tikzcd}
    $$
    On the other hand, one recognises the composite that goes through the bottom row to be $\overline{h}_{ji}$. Note that the left triangle is commutative, and that the square $(*_2)$ is too by definition of $\tau_{\epsilon}(h)$. Moreover, ($*_1$) and $(*_3)$ do not depend on $h$; we must then argue that ($*_1$) and $(*_3)$ are commutative up to homotopy. 
    
    For the commutativity of $(*_1)$, first observe that the left vertical composite equivalence of $(*_1)$ coincides up to homotopy with the equivalence of left $\s[G]$-modules $\widetilde{I}_1: \s[G]_\ell\simeq \underline{\mathrm{Hom}}_{\s[G]}(\s[G],\s[G])$. This is because, under the usual tensor-hom adjunction, both maps represent the same element in
    $$
\pi_0\left({}_{\s[G]}\mathsf{Mod}\big(\s[G]_\ell,\underline{\mathrm{Hom}}_{\s[G]}(\s[G],\s[G])\big)\right)\cong\pi_0\big({}_{\s[G]}\mathsf{Mod}_{\s[G]}\big(\s[G]_\ell\otimes \s[G],\s[G]\big)\big)
    $$
    by definition of $I_1$. But now ($*_1$), with $\widetilde{I}_1$ in place of the left vertical composite, commutes up to homotopy as both composites represent the same element in
    $$
    \begin{tikzcd}
\pi_0\left({}_{\s[G]}\mathsf{Mod}\big(\s[G]_\ell,\underline{\mathrm{Hom}}_{\s[G]}(\oplus^m\s[G],\s[G])\big)\right)\cong\pi_0\big({}_{\s[G]}\mathsf{Mod}_{\s[G]}\big(\s[G]_\ell\otimes (\oplus^m\s[G]),\s[G]\big)\big)
\end{tikzcd}
    $$
    simply because the following diagram commutes by definition of $I_1$ and $I_m$:
    $$
\begin{tikzcd}
\s[G]_\ell\otimes\bigoplus^m\s[G]\dar["1\otimes \mathrm{pr}_j"]\rar["\mathrm{inc}_j\otimes 1"] &\bigoplus^m\s[G]_\ell\otimes \bigoplus^m\s[G]\dar["I_m"]\\
    \s[G]_\ell\otimes \s[G]\rar["I_1"] & \s[G].
\end{tikzcd}
    $$
    Finally the commutativity of ($*_3$) follows by a similar reasoning using that
        $$
\begin{tikzcd}
    \bigoplus^m\s[G]_\ell\otimes\s[G]\dar["1\otimes\mathrm{inc}_i"]\rar["\mathrm{pr}_i\otimes 1"] &\s[G]_\ell\otimes \s[G]\dar["I_1"]\\
    \bigoplus^m\s[G]_\ell\otimes \bigoplus^m\s[G]\rar["I_m"] & \s[G]
\end{tikzcd}
    $$
    is also commutative by definition.
\end{proof}

\begin{warn}\label{Milnorinvolutionwarning}
    The canonical involution $\tau_{\epsilon}$ induces an involution on the Whitehead group 
    $$\Wh(X):=\pi_1^s(\Whsp(X))\cong GL(\Z[\pi_1(X)])^{\mathrm{ab}}/(\pm \pi_1(X)).$$ 
    In the foundational paper \cite{MilnorWhiteheadTorsion}, Milnor also defined an involution $\Wh(X)\ni\kappa\mapsto \overline\kappa$ induced by sending a matrix in $GL(\Z[\pi_1(X)])^{\mathrm{ab}}$ to its conjugate transpose (conjugate with respect to inversion in $\pi_1(X)$). This is an actual homomorphism because of the abelianisation present in the general linear group of $\Z[\pi_1(X)]$. It is worth being aware that these two involutions on $\Wh(X)$ are only the same after introducing a minus sign, ie 
    \begin{equation}\label{Milnorinvrelation}
\tau_\epsilon(\kappa)=-\overline{\kappa}.
    \end{equation}
This does \textit{not} contradict the commutativity of (\ref{transposematrixsquare}). On the contrary, this extra minus sign is the result of having to deloop the anti-involution (\ref{tauepsilonantiinv}) in the sense of Lemma \ref{antiinvolutionlem} in order to obtain $\tau_\epsilon$.  
\end{warn}

\subsection{\texorpdfstring{$A$}{A}-theory of a suspension}
In this section we focus our attention on the homotopy type of $A(X;\epsilon)$ when $X$ is the suspension $\Sigma Y$ of a connected based space $Y$. By a theorem of Carlsson--Cohen--Goodwillie--Hsiang\footnote{As pointed out in \cite[p. 543]{AvsTC}, the proof in \cite{Atheorysuspension} has a serious flaw around page 71. This issue was fixed in \cite[Corollary 4.15]{AvsTC}, and in particular the map $\theta$ of (\ref{Aofsuspensioneq}) constructed in \cite[Section 1]{Atheorysuspension} is still an equivalence. We are indebted to Tom Goodwillie for his help in clearing out this matter and for carefully explaining to us another more general principle for which (\ref{Aofsuspensioneq}) holds---namely, it is the observation that if $F$ is a functor (from based spaces to based spaces, say) whose $m$-th derivative spectrum is of the form $ X\mapsto X^{\wedge m}_{hC_m}$ for every $m\geq 1$, then its Taylor tower must split globally. This is indeed the case for the functor $F(-):= \Omega\circ A\circ \Sigma(-)$.} \cite[Theorem 3]{Atheorysuspension}, in such cases there is an equivalence of infinite loop spaces
\begin{equation}\label{Aofsuspensioneq}
    \theta: \prod_{m\geq 1} Q(Y^{\wedge m}_{hC_m})\overset{\sim}{\longrightarrow} \Omega \widetilde{A}(\Sigma Y),
\end{equation}
where $\widetilde{A}(-):=\hofib(A(-)\to A(*))$ and $C_m$ acts on $Y^{\wedge m}$ by cyclic permutation of the factors. In this section, we argue that (\ref{Aofsuspensioneq}) can be upgraded to be $C_2$-equivariant up to homotopy. 

\begin{prop}\label{Aofsuspensionprop}
    Let $Y$ be a connected, based $C_2$-space. There is an equivalence of spectra
    $$
\boldsymbol{\theta}: \bigvee_{m\geq 1} \Sigma^{\infty+\sigma}\big((ED_m)_+\wedge_{C_m} Y^{\wedge m}\big)\overset{\sim}\longrightarrow \widetilde{\Asp}(\Sigma^{\sigma} Y;\epsilon)
    $$
    that is $C_2$-equivariant up to homotopy, and whose underlying (non-equivariant) equivalence induces (\ref{Aofsuspensioneq}). Here $\Sigma^\sigma Y:=S^\sigma\wedge Y$ and $D_m\subset \Sigma_m$ acts on $Y^{\wedge m}$ by 
    $$
g\cdot(y_1\wedge \dots\wedge y_m):= g\cdot y_{g(1)}\wedge\dots \wedge g\cdot y_{g(m)}, \quad g\in D_m, \quad y_i\in Y,
    $$
    where $Y$ is now seen as a based $D_m$-space (on which $C_m$ acts trivially). Finally $C_2=D_m/C_m$ acts on $(ED_m)_+\wedge_{C_m} Y^{\wedge m}$ by its residual diagonal action.  
\end{prop}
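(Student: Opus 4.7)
The plan is to adapt the construction of $\theta$ from \cite[\S 1]{Atheorysuspension} to keep track of the $C_2$-action, exploiting Proposition \ref{tranpositionprop} which identifies $\tau_\epsilon$ with conjugate transposition on matrices. Set $G:=\Omega\Sigma^\sigma Y$, a group-like $\mathbb{E}_1$-space which inherits a $C_2$-action from the diagonal action on $\Sigma^\sigma Y=S^\sigma\wedge Y$, and whose adjunction unit $\iota:Y\to G$ sends $y$ to the loop $s\mapsto s\wedge y$. The observation powering the argument is that the source of $\boldsymbol{\theta}$ encodes cyclic data, and the residual $C_2$-action coming from $D_m/C_m$ reverses the cyclic order while applying the $C_2$-action on $Y$---which is exactly what conjugate transposition does to a cyclic matrix.

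First I would recall from \cite[\S 1]{Atheorysuspension} the explicit formula for $\theta_m$: up to suspension, it sends a point $y_1\wedge\cdots\wedge y_m\in Y^{\wedge m}$ to the class in $GL_m(Q_+G)$ of $I+M(y_1,\ldots,y_m)$, where $M$ is the cyclic matrix with $\iota(y_i)$ in position $(i,i+1\bmod m)$ and zeros elsewhere. This construction is visibly $C_m$-equivariant for the cyclic permutation of the $y_i$'s and the conjugation action by a cyclic permutation matrix on the target, so replacing $Y^{\wedge m}_{hC_m}$ by its Borel-equivariant model $(ED_m)_+\wedge_{C_m}Y^{\wedge m}$ and $\Sigma Y$ by $\Sigma^\sigma Y$ yields a map $\boldsymbol{\theta}_m:\Sigma^{\infty+\sigma}((ED_m)_+\wedge_{C_m}Y^{\wedge m})\to\widetilde{\Asp}(\Sigma^\sigma Y;\epsilon)$ whose underlying non-equivariant map agrees with $\theta_m$; hence $\boldsymbol{\theta}:=\bigvee_m\boldsymbol{\theta}_m$ is an equivalence by \cite[Thm. 3]{Atheorysuspension} (cf. \cite[Cor. 4.15]{AvsTC}). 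For the $C_2$-equivariance, I would apply Proposition \ref{tranpositionprop}: conjugate transposition sends $I+M(y_1,\ldots,y_m)$ to $I+M(\overline{\iota(y_m)},\ldots,\overline{\iota(y_1)})$ modulo a cyclic conjugation, where $\overline{(-)}$ is inversion in $G$. It then suffices to produce a natural homotopy $\overline{\iota(y)}\simeq\iota(\sigma\cdot y)$ in $G$ compatible with the reflection in $D_m$: the loop $\iota(y):s\mapsto s\wedge y$ has loop-inverse $s\mapsto(-s)\wedge y$, which differs from $\iota(\sigma\cdot y):s\mapsto s\wedge(\sigma\cdot y)$ precisely by the antipodal action on $S^\sigma$. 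This antipodal sign is exactly what the $\sigma$-suspension on the source contributes, so the two become canonically homotopic once one smashes with $S^\sigma$.

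The main obstacle will be the bookkeeping in this last step: weaving the loop-inversion-versus-$C_2$-action homotopy through the bar/matrix constructions coherently enough to yield a homotopy of maps of spectra (rather than merely an agreement on $\pi_0$), and absorbing the global sign from (\ref{Milnorinvrelation}) that relates the anti-involution (\ref{tauepsilonantiinv}) to its delooped form $\tau_\epsilon$ via Lemma \ref{antiinvolutionlem}. Once these compatibilities are established for each $m$ and assembled naturally in $m$, the wedge map $\boldsymbol{\theta}$ is homotopy $C_2$-equivariant; combined with the non-equivariant equivalence it is the required equivalence, and the proposition follows (recalling via Proposition \ref{hC2construction} that away from $2$ the homotopy $C_2$-orbits are determined by this homotopy-equivariant data).
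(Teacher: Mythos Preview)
Your outline follows the same strategy as the paper's proof: upgrade each step of the Carlsson--Cohen--Goodwillie--Hsiang construction to the homotopy $C_2$-equivariant setting using Proposition~\ref{tranpositionprop}. However, the part you flag as ``the main obstacle'' is genuinely the heart of the argument, and you have not resolved it. The issue is that conjugate transposition is an \emph{anti}-involution on $\mathrm{Aut}_G(\oplus^m\s[G])$, so after delooping via Lemma~\ref{antiinvolutionlem} you must identify the resulting involution on $B\mathrm{Aut}_G(\oplus^m\s[G])$ with $\tau_\epsilon$, and simultaneously identify what happens on the source. The paper handles this by passing through the semidirect product $C_m\ltimes\Omega\Sigma(Y^{\times m})$: it shows that the anti-involution on this monoid deloops (via Lemma~\ref{semidirectproductlem}) to the map $B(\mathrm{inv}\ltimes r)$, and that on the target the extra conjugation by the permutation matrix $R$ disappears because $R_\#$ is an inner automorphism and hence null on classifying spaces. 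Without this semidirect-product maneuver, there is no clean way to pass from the matrix-level anti-involution to an honest involution on the suspension spectrum, and your sketch does not indicate how to do this.

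Two smaller points. First, the matrix construction lives on the Cartesian power $Y^{\times m}$, not the smash power, so one must invoke the $D_m$-equivariant stable section of $(ED_m)_+\wedge_{C_m}Y^{\times m}\to(ED_m)_+\wedge_{C_m}Y^{\wedge m}$ (as in \cite[Lem.~1.4]{Atheorysuspension}) to get down to $Y^{\wedge m}$; you skip this. Second, your computation of $\overline{\iota(y)}$ versus $\iota(\sigma\cdot y)$ is slightly confused: in this setup the ``conjugate'' in conjugate-transpose already means loop-inversion \emph{composed with} the $C_2$-action on $\Sigma^\sigma Y$ (see the remark after the proposition), so $\mathrm{inv}(\iota(y))=\iota(\sigma\cdot y)$ holds on the nose and no residual sign needs to be absorbed by the $\sigma$-suspension. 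The $\sigma$-suspension on the source instead arises from the flip on the suspension coordinate in $\Sigma(Y^{\times m})$ when $r\in D_m$ acts; this is made precise in the paper's Step~4.
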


\begin{rem}
    The $C_2$-space $\Sigma^{\sigma}Y$ induces an involution on $\Asp(\Sigma^\sigma Y)$ which commutes with the canonical involution $\tau_\epsilon$ described in the previous section, by naturality of its construction. Therefore its composite gives the involution on $\widetilde{\Asp}(\Sigma^\sigma Y;\epsilon)$ appearing in the statement of Proposition \ref{Aofsuspensionprop}. Alternatively, we can allow $X$ in the previous section to mean a $C_2$-space (eg $\Sigma^\sigma Y$), and agree that $\mathrm{inv}: G=GX\to G^{\mathrm{op}}$ there stands for inversion in the monoid $G$ followed by the $C_2$-action on $X$.

    In practice, we will apply Proposition \ref{Aofsuspensionprop} to the case when $Y=S^\sigma\wedge Z$ for some trivial $C_2$-space $Z$, as then $\Sigma^\sigma Y\simeq S^{2\sigma}\wedge Z\approx S^2\wedge Z$ because of the homotopy $C_2$-equivariant equivalence $S^{2\sigma}\approx S^2$. In such case, as $\Asp(-;\epsilon)$ is a homotopy functor, Proposition \ref{Aofsuspensionprop} provides a simple description of the homotopy $C_2$-equivariant homotopy type of $\widetilde{\Asp}(\Sigma^2Z;\epsilon)\approx \widetilde{\Asp}(\Sigma^{2\sigma}Z;\epsilon)$. This, together with Corollary \ref{corinvolutionsuptohomotopy}, can then be used to analyse the homotopy type of $\widetilde{\Asp}(\Sigma^2Z;\epsilon)_{hC_2}$ away from $2$.
\end{rem}

We will need the following observation for the proof of Proposition \ref{Aofsuspensionprop}.

\begin{lem}\label{semidirectproductlem}
    Let $X$ be a based, connected $C_m$-space. The equivalence
    $$
\iota: B((C_m\ltimes \Omega X)^{\mathrm{op}})\simeq B(C_m\ltimes \Omega X)
    $$
    of Lemma \ref{antiinvolutionlem} coincides up to equivalence with the delooping of the inversion map
    $$
\mathrm{inv}:(C_m\ltimes \Omega X)^{\mathrm{op}}\longrightarrow C_m\ltimes \Omega X, \quad (s^{i},\gamma)\mapsto (s^{-i}, s^{i}\cdot \overline{\gamma}),
    $$
    where $\overline{\gamma}$ stands for the loop $\gamma$ with the reversed orientation. 
\end{lem}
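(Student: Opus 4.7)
The plan is to reduce $\iota \simeq B(\mathrm{inv})$ to a loop-space identification via the $B \dashv \Omega$ adjunction, from which the comparison becomes transparent by direct inspection of the definition of $\iota$; the inversion formula for the semidirect product is then a short computation.

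First, I would replace $\Omega X$ with an equivalent strict topological group, e.g.\ the Kan loop group on a simplicial model for $X$ (or via Milnor's $FX$ construction), so that $G := C_m \ltimes \Omega X$ is a strict topological group. Since the construction of $\iota$ in Lemma~\ref{antiinvolutionlem} is natural under $\mathbb{E}_1$-equivalences of monoids, this reduction is permissible.

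For strict topological groups, the Segal--May Quillen equivalence between grouplike $\mathbb{E}_1$-monoids and pointed connected spaces furnishes a natural bijection
\[
\pi_0 \operatorname{Map}_*(BG^{\mathrm{op}}, BG) \;\cong\; \pi_0 \operatorname{Map}_{\mathbb{E}_1}(G^{\mathrm{op}}, G),
\]
under which $B(\mathrm{inv})$ corresponds to the strict homomorphism $\mathrm{inv}: G^{\mathrm{op}} \to G$, and $\iota$ corresponds to $\Omega \iota$. Hence it suffices to show $\Omega \iota \simeq \mathrm{inv}$ as $\mathbb{E}_1$-maps. Under the standard equivalence $G \simeq \Omega BG$, an element $g \in G$ is represented by the loop $L_g$ traversing the $1$-cell associated to the $1$-simplex $g \in NG_1$ from vertex $0$ to vertex $1$. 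By the definition of $\iota$---which applies $\Phi_1$ to reverse the parametrisation of the $1$-simplex---the image $\iota(L^{\mathrm{op}}_g)$ is $L_g$ traversed in reverse, i.e.\ the loop $L_g^{-1}$, representing $g^{-1}$ in $\Omega BG \simeq G$. Thus both $\Omega \iota$ and $\mathrm{inv}$ are the map $g \mapsto g^{-1}$ on underlying spaces. The explicit formula $(s^i,\gamma)^{-1} = (s^{-i}, s^i \cdot \overline{\gamma})$ then follows from a direct calculation: with the convention $(s^i,\gamma)(s^j,\delta) = (s^{i+j}, (s^{-j} \cdot \gamma) \cdot \delta)$ for the semidirect product, the unique right inverse of $(s^i,\gamma)$ is $(s^{-i}, s^i \cdot \overline\gamma)$.

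The main obstacle is lifting the pointwise identification $\Omega \iota = \mathrm{inv}$ to an actual $\mathbb{E}_1$-homotopy, which is not formal. My preferred resolution is to construct an explicit simplicial homotopy between $\iota$ and $B(\mathrm{inv})$ on the nerve, using the $2$-simplices $(g^{-1}, g) \in NG_2$---which witness the identity $g^{-1} g = e$ and thereby provide a canonical null-homotopy of the concatenation $L_{g^{-1}} \cdot L_g$---as local atoms; these assemble coherently by a standard prism-decomposition argument into the desired simplicial homotopy.
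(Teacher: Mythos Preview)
Your approach is correct in spirit but takes a genuinely different route from the paper, and the point you flag as an obstacle is exactly where the two diverge.

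The paper never invokes the $B\dashv\Omega$ adjunction or attempts an $\mathbb{E}_1$-lift. Instead it exploits the specific structure of $C_m\ltimes\Omega X$: using Moore loops, it writes down the standard simplicial equivalence $\beta: B_\bullet(C_m\ltimes\Omega X)\overset\sim\to E_\bullet C_m\times_{C_m}B_\bullet\Omega X$ and the explicit evaluation map $\xi: B\Omega X\to X$ (an equivalence since $X$ is connected), then checks by direct formula that the square
\[
\begin{tikzcd}[column sep=35pt]
B((C_m\ltimes\Omega X)^{\mathrm{op}})\rar["\iota"]\dar["B(\mathrm{inv})"'] & B(C_m\ltimes\Omega X)\dar["(EC_m\times_{C_m}\xi)\circ\beta"]\\
B(C_m\ltimes\Omega X)\rar["(EC_m\times_{C_m}\xi)\circ\beta"'] & EC_m\times_{C_m}X
\end{tikzcd}
\]
commutes up to homotopy. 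The only nontrivial input is the identity $\xi([(\overline{\gamma}_1,t_1),\dots,(\overline{\gamma}_q,t_q),\mathbf v])=\xi([(\gamma_q,t_q),\dots,(\gamma_1,t_1),1-\mathbf v])$, which compares loop reversal with the simplex-reversal $\Phi_q$ built into $\iota$. This turns the comparison into a one-line check with no coherence issues.

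Your strategy would prove the stronger statement $\iota\simeq B(\mathrm{inv})$ for \emph{any} topological group $G$, not just the semidirect product at hand, which is appealing. But the prism-decomposition step you defer to is precisely the content: $\iota$ is not a simplicial map (it intertwines $d_i$ with $d_{q-i}$), so the homotopy lives on geometric realisations and must be built carefully degree by degree. This can be done, but it is not quite ``standard'' in the sense of following from a named lemma; one typically routes it through a two-sided bar model $B(\ast,G,\ast)$ where both descriptions become transparent. The paper's approach trades this generality for a shorter, fully explicit argument tailored to the case where $B\Omega X\simeq X$ is available.
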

\begin{proof}
Given a topological monoid $M$ equipped with a $C_m$-action, it is well-known (see eg \cite[Section II, Theorem 1.12]{Adem2004}) that the classifying space of the semi-direct product $C_m\ltimes M$ is equivalent to $EC_m\times_{C_m}BM$. On the simplicial level, this equivalence is given by
    \begin{align*}
\beta: B_\bullet(C_m\ltimes M)&\overset{\sim}\longrightarrow E_\bullet C_m\times_{C_m}B_\bullet M,\\
\big((s^{i_1},m_1),\dots,(s^{i_q},m_q)\big)&\longmapsto\big[(e,s^{i_1},\dots, s^{i_q}),(s^{i_1}\cdot m_1,s^{i_1+i_2}\cdot m_2,\dots,s^{i_1+\dots+i_q}\cdot m_q)\big].
\end{align*}

Now, for simplicity, we may assume that $\Omega(-)$ stands for the Moore loop space, so that $C_m\ltimes\Omega X$ is strictly associative (a \textit{Moore loop} is a pair $(\gamma,t)$ where $t\geq 0$ and $\gamma: [0,t]\to Y$ is a map with $\gamma(0)=\gamma(t)=*$; multiplication of Moore loops is given by concatenation of loops and addition of its lengths). Also recall that there is an identification $\Delta^q\cong\{\mathbf{v}=(v_1,\dots, v_q): 0\leq v_1\leq\dots\leq v_q\leq 1\}$; under this identification, the self-isomorphism $\Phi_q: \Delta^q\cong \Delta^q$ in the proof of Lemma \ref{antiinvolutionlem} becomes the rule that sends a partition $\mathbf{v}=(0\leq v_1\leq\dots\leq v_q\leq 1)$ to $1-\mathbf{v}:=(0\leq 1-v_q\leq\dots\leq 1-v_1\leq 1)$. There is a $C_m$-equivariant map 
 $$
\xi: B\Omega X\longrightarrow X, \quad \big[(\gamma_1,t_1),\dots, (\gamma_q,t_q),\mathbf{v}\big]\longmapsto \gamma_q\cdot\gamma_{q-1}\cdot\dotso\cdot \gamma_1\left(\sum_{i=1}^qt_iv_i\right)
$$
that is an equivalence if $X$ is connected. This map satisfies the property that
$$
\xi\left(\big[(\overline{\gamma}_1,t_1),\dots, (\overline{\gamma}_q,t_q),\mathbf{v}\big]\right)=\xi\left(\big[(\gamma_q,t_q),\dots, (\gamma_1,t_1),1-\mathbf{v}\big]\right).
$$

With all of this in mind, one verifies that the following diagram commutes up to homotopy
$$
\begin{tikzcd}[column sep = 30pt]
    B((C_m\ltimes \Omega X)^{\mathrm{op}})\dar["B(\mathrm{inv})"]\rar["\iota", "\sim"'] & B(C_m\ltimes \Omega X)\rar["\beta", "\sim"'] &EC_m\times_{C_m}B\Omega X\dar["EC_m\times_{C_m}\xi", "\vsim"']\\
    B(C_m\ltimes \Omega X)\rar["\beta", "\sim"'] &EC_m\times_{C_m}B\Omega X\rar["EC_m\times_{C_m}\xi", "\sim"'] &EC_m\times_{C_m}X.    
\end{tikzcd}
$$
\end{proof}

\begin{proof}[Proof of Proposition \ref{Aofsuspensionprop}]
In the notation of the previous section, we let $X=\Sigma^\sigma Y$ now, so that $G=GX=\Omega^\sigma \Sigma^\sigma Y$ as a monoid with anti-involution (ie ``inv'' now means inversion in the monoid $G$ followed by the $C_2$-action on $X=\Sigma^\sigma Y$). For each $m\geq 1$, let us write $\varrho: GL_m(Q_+G)\simeq \mathrm{Aut}_G(\oplus^m\s[G])$ for $u^{-1}$ (meaning $u$ as a wrong way equivalence); it should be thought of as given by the rule that sends a matrix $h=(h_{ij})\in \map(\s,\s[G])^{m\times m}\cong\map_G(\s[G],\s[G])^{m\times m}$ to
$$
\begin{tikzcd}
\varrho(h): \bigoplus_{j=1}^m\s[G]\ar[rr,"\bigoplus_j\big(\bigoplus_ih_{ij}\big)"]&&\bigoplus_{i=1}^m\s[G].
\end{tikzcd}
$$
The map $\theta$ of (\ref{Aofsuspensioneq}) is constructed in several steps in \cite[Section 1]{Atheorysuspension}, each of which we now upgrade to the homotopy $C_2$-equivariant setting. As these homotopy $C_2$-actions will get mixed up with strict $C_m$-actions, it will be more convenient and clear, at least throughout the first few steps, to avoid speaking about ``homotopy equivariance'' and rather regard a homotopy involution as what it is, ie a map whose square happens to be homotopic to the identity.

\begin{step}
For each $m\geq 2$, consider the $D_m$-equivariant map of spaces
    $$
\widetilde{\theta}_{m,1}: Y^{\times m}\longrightarrow GL_m(Q_+G), \quad (y_1,\dots, y_m)\longmapsto \resizebox{5cm}{!}{$\begin{pmatrix}
    1 & y_1-1 & &&&\\
    &1& y_2-1 &&&\\
    &&1&\ddots&&\\
    &&&\ddots&&\\
    &&&&1&y_{m-1}-1\\
    y_m-1 &&&&&1
\end{pmatrix}$}
    $$
    where, if $D_m:=\langle s,r\mid s^m=r^2=rsrs=e\rangle$, the notation is as follows:
\begin{itemize}\setlength\itemsep{3pt}
    \item A point $y\in Y$ is identified in $G$ with the path $\eta(y):=(t\mapsto t\wedge y)\in G$, which is itself identified with a point in $\{1\}\times QG\subset QS^0\times QG\simeq Q_+G$. Then $y-1$ is the corresponding point in $\{0\}\times QG\subset Q_+G$. Here the $n$-th component of $QS^0$ has been fixed a basepoint $n\in QS^0$.

    \item The action of $D_m$ on $(y_1,\dots, y_m)\in Y^{\times m}$ is given by
    $$
s\cdot (y_1,\dots, y_m):=(y_m, y_1,\dots, y_{m-1}), \quad r\cdot(y_1,\dots, y_m):=(y^*_{m-1}, y^*_{m-2},\dots,  y_1^*, y_m^*),
    $$
    where $y\mapsto  y^*$ denotes the $C_2$-action on $Y$.

    \item Let $S, R\in GL_m(\Z)$ be the permutation matrices that send the $i$-th unit vector $e_i$ to $e_{i+1}$ and $e_{m+1-i}$ (with subindexes taken modulo $m$), respectively. Then $D_m$ acts on $A\in GL_m(Q_+ G)$ by
    $$
s\cdot A:=SAS^{-1}, \qquad r\cdot A:= RA^{\dagger}R.
    $$
    In other words, $r$ acts by transposition along the ``$x=y$''-axis together with conjugation on $G$. 
\end{itemize}
    We also define $\widetilde{\theta}_{1,1}: Y\to GL_1(Q_+G)=(Q_+G)^\times$ by sending $y\in Y$ to $y\in \{1\}\times QG\subset Q_+G$. We note that $\widetilde{\theta}_{m,1}(y_1,\dots, y_m)$ is homotopy invertible by choosing a path from each of the $y_i$'s to the basepoint $*\in Y$. Then for $m\geq 1$, define $\theta_{m,1}$ as the composite
    $$
\begin{tikzcd}
    \theta_{m,1}: Y^{\times m}\rar["\widetilde{\theta}_{m,1}"]&GL_m(Q_+G)\rar["\varrho", "\sim"'] & \mathrm{Aut}_G(\oplus^m\s[G]).
\end{tikzcd}
    $$
    By construction, $\theta_{m,1}$ is a $C_m$-map as $\widetilde{\theta}_{m,1}$ and $u$ are. Recall that $s\in C_m\subset D_m$ acts on $\mathrm{Aut}_G(\oplus^m\s[G])$ by conjugation with $S\in \mathrm{Aut}_G(\oplus^m\s[G])$, which we denote by $S_{\#}$.
    \end{step}

\begin{step}
    Recall that the free $\mathbb{E}_1$-algebra on a based, connected space $X$ is naturally equivalent to $\Omega\Sigma X$. Therefore, we can extend $\theta_{m,1}$ to a $C_m$-equivariant $\mathbb{E}_1$-map
    $$
\theta_{m,2}: \Omega\Sigma(Y^{\times m})\longrightarrow \mathrm{Aut}_G(\oplus^m\s[G]).
    $$
For any based space $X$, let us write $\sigma: \Omega X\to (\Omega X)^{\mathrm{op}}$ for inversion in $\Omega X$ (ie reversing the loop direction). Given a $C_m$-space $X$, we will denote $X^{\mathrm{op}_{C_m}}$ for $X$ with the opposite $C_m$-action (ie that in which $s$ acts by $s^{-1}$, which is a valid left action as $C_m$ is abelian). If $X$ additionally has an $\mathbb{E}_1$-structure, we will write $X^{\mathrm{op},\mathrm{op}_{C_m}}$ for $X$ with both the opposite $\mathbb{E}_1$-structure and the opposite $C_m$-action. Then the square of $\mathbb{E}_1$-maps
\begin{equation}\label{htpyDmsquare}
\begin{tikzcd}
    \Omega\Sigma(Y^{\times m})\dar["\sigma\circ r"]\rar["\theta_{m,2}"] & \mathrm{Aut}_G(\oplus^m\s[G])\dar["R_{\#}\circ\tau_\epsilon"]\\
    \Omega\Sigma(Y^{\times m})^{\mathrm{op},\mathrm{op}_{C_m}}\rar["\theta_{m,2}^{\mathrm{op},\mathrm{op}_{C_m}}"] & \mathrm{Aut}_G(\oplus^m\s[G])^{\mathrm{op},\mathrm{op}_{C_m}}
\end{tikzcd}
\end{equation}
commutes in the homotopy category of $\mathbb{E}_1$-spaces with a $C_m$-action. Here $r: \Omega\Sigma (Y^{\times m})\to \Omega\Sigma (Y^{\times m})^{\mathrm{op}_{C_m}}$ is induced by the action of $r\in D_m$ on $Y^{\times m}$ together with the flip of the suspension coordinate, and $\tau_\epsilon$ really stands for (\ref{tauepsilonantiinv}). To see this, consider the diagram
\begin{equation}\label{biggerDmsquare}
\begin{tikzcd}
Y^{\times m}\ar[dr, "r"]\ar[dd, "\sigma\circ r\circ\eta"']\ar[rr,"\widetilde{\theta}_{m,1}"] && GL_m(Q_+G)\dar["r=R_{\#}\circ \dagger"]\rar["\varrho", "\sim"'] & \mathrm{Aut}_G(\oplus^m\s[G])\ar[dd,"R_{\#}\circ\tau_{\epsilon}"]\\
&(Y^{\times m})^{\mathrm{op}_{C_m}}\rar["\widetilde{\theta}_{m,1}^{\hspace{2pt}\mathrm{op}_{C_m}}"]\ar[ld, "\eta^{\mathrm{op}_{C_m}}"]&GL_m(Q_+G)^{\mathrm{op}_{C_m}}\ar[dr, "\varrho^{\mathrm{op}_{C_m}}", "\sim"']&\\
\Omega\Sigma(Y^{\times m})^{\mathrm{op}_{C_m}}\ar[rrr, "\theta_{m,2}^{\mathrm{op}_{C_m}}"]&&& \mathrm{Aut}_G(\oplus^m\s[G])^{\mathrm{op}_{C_m}}
\end{tikzcd}
\end{equation}
of $C_m$-spaces. By definition, $\theta_{m,2}$ is the $\mathbb{E}_1$-map induced from the top horizontal composite in (\ref{biggerDmsquare}). Thus, in order to show that (\ref{htpyDmsquare}) homotopy commutes as $C_m$-equivariant $\mathbb{E}_1$-maps, it suffices to show that the outer square of (\ref{biggerDmsquare}) commutes in the homotopy category of $C_m$-spaces. But each of its subsquares/triangles commute in this category: indeed the lower subsquare does so by definition of $\theta_{m,2}$ (after applying $(-)^{\mathrm{op}_{C_m}}$), the left subtriangle and the upper subsquare too by an easy check, and the right subsquare by Proposition \ref{tranpositionprop} and the observation that $R_{\#}\circ u=u^{\mathrm{op}_{C_m}}\circ R_\#$.
\end{step}

\begin{step}
The $C_m$-equivariant $\mathbb{E}_1$-map $\theta_{m,2}$ gives rise to an $\mathbb{E}_1$-map
$$
\begin{tikzcd}
\theta_{m,3}: C_m\ltimes \Omega\Sigma(Y^{\times m})\rar["C_m\ltimes \theta_{m,2}"] &C_m\ltimes \mathrm{Aut}_G(\oplus^m \s[G])\rar["\mu"]&\mathrm{Aut}_G(\oplus^m \s[G]),
\end{tikzcd}
$$
where $\mu(s^{i},h):=S^{i}h$. Observe that $\mu$ is indeed an $\mathbb{E}_1$-map as
$$
\mu((s^{i},h)\cdot (s^j,h'))=\mu(s^{i+j},S^{-j}h S^{j}h')=S^{i}hS^{j}h'=\mu(s^{i},h)\mu(s^j,h').
$$
Now from the homotopy commutativity of (\ref{htpyDmsquare}), it immediately follows that the left subsquare in
\begin{equation}\label{ltimesdiagram}
\begin{tikzcd}[column sep = 40pt]
C_m\ltimes \Omega\Sigma(Y^{\times m})\dar["C_m\ltimes (\sigma\circ r)"]\ar[rr,bend left=12pt, "\theta_{m,3}"] \rar["C_m\ltimes\theta_{m,2}"]&C_m\ltimes \mathrm{Aut}_G(\oplus^m\s[G])\dar["C_m\ltimes (R_\#\circ\tau_{\epsilon})"]\rar["\mu"]& \mathrm{Aut}_G(\oplus^m\s[G])\dar["R_{\#}\circ\tau_\epsilon"]\\
C_m\ltimes \Omega\Sigma(Y^{\times m})^{\mathrm{op},\mathrm{op}_{C_m}}\rar["C_m\ltimes \theta_{m,2}^{\mathrm{op},\mathrm{op}_{C_m}}"]&C_m\ltimes \mathrm{Aut}_G(\oplus^m\s[G])^{\mathrm{op},\mathrm{op}_{C_m}}\rar["\mu^{\mathrm{op}}"]&\mathrm{Aut}_G(\oplus^m\s[G])^{\mathrm{op}}
\end{tikzcd}
\end{equation}
commutes in the homotopy category of $\mathbb{E}_1$-spaces. Here $\mu^{\mathrm{op}}(s^{i},h):=hS^{i}$, and since $\tau_\epsilon(S)=S^{\dagger}=S^{-1}$ and $RS=S^{-1}R$, it easily follows that the right subsquare also commutes as $\mathbb{E}_1$-maps. So the outer square of (\ref{ltimesdiagram}) commutes in the homotopy category of $\mathbb{E}_1$-spaces. 

But given an $\mathbb{E}_1$-space $X$ equipped with a $C_m$-action, there is an isomorphism of $\mathbb{E}_1$-spaces
$$
\alpha:C_m\ltimes X^{\mathrm{op},\mathrm{op}_{C_m}}\overset\cong\longrightarrow (C_m\ltimes X)^{\mathrm{op}}, \quad (s^{i},x)\longmapsto (s^{i},s^{-i}\cdot x).
$$
Under this identification, the lower horizontal composite of (\ref{ltimesdiagram}) becomes $\theta_{m,3}^{\mathrm{op}}$, and hence
\begin{equation}\label{improvedltimesdiagram}
\begin{tikzcd}
    C_m\ltimes\Omega\Sigma(Y^{\times m})\dar["\alpha\circ(C_m\ltimes (\sigma\circ r))"]\rar["\theta_{m,3}"] & \mathrm{Aut}_G(\oplus^m\s[G])\dar["R_\#\circ\tau_\epsilon"]\\
    (C_m\ltimes \Omega\Sigma(Y^{\times m}))^{\mathrm{op}}\rar["\theta_{m,3}^{\mathrm{op}}"] & \mathrm{Aut}_G(\oplus^m\s[G])^{\mathrm{op}}
\end{tikzcd}
\end{equation}
is commutative in the homotopy category of $\mathbb{E}_1$-spaces.
\end{step}

\begin{step}
We wish to deloop (\ref{improvedltimesdiagram}), viewing the vertical maps as anti-involutions of their respective domains, and appealing to Lemma \ref{antiinvolutionlem} to do so. But by Lemma \ref{semidirectproductlem}, the delooping of the anti-involution $\alpha\circ(C_m\ltimes(\sigma\circ r))$ is homotopic to the delooping of the involution $\mathrm{inv}\circ\alpha\circ(C_m\ltimes(\sigma\circ r))$, where $\mathrm{inv}$ stands for inversion in the $\mathbb{E}_1$-space $C_m\ltimes\Omega\Sigma(Y^{\times m})$. It is given explictly by $\mathrm{inv}(s^{i},\gamma):=(s^{-i},s^{i}\cdot\sigma(\gamma))$, and hence we see that 
$$
\mathrm{inv}\circ\alpha\circ(C_m\ltimes(\sigma\circ r)):(s^{i},\gamma)\longmapsto (s^{-i},r\cdot \gamma).
$$
We denote this map simply by $\mathrm{inv}\ltimes r$. From now on we treat $r$ as the action map on the $D_m$-space $\Sigma^{\sigma}(Y^{\times m})$, where $\sigma$ is seen as a $D_m$-representation on which $C_m$ acts trivially. Putting this together, the delooped version of (\ref{improvedltimesdiagram}) yields a homotopy commutative square of spaces
$$
\begin{tikzcd}
    B(C_m\ltimes \Omega\Sigma^{\sigma}(Y^{\times m}))\dar["B(\mathrm{inv}\ltimes r)"]\rar["B(\theta_{m,3})"] & B\mathrm{Aut}_G(\oplus^m\s[G])\dar["\overline{B}(R_\#\circ\tau_\epsilon)"]\\
    B(C_m\ltimes \Omega\Sigma^{\sigma}(Y^{\times m}))\rar["B(\theta_{m,3})"] & B\mathrm{Aut}_G(\oplus^m\s[G]),
\end{tikzcd}
$$
where the notation $\overline{B}(-)$ stands for delooping in the sense of Lemma \ref{antiinvolutionlem}. 

To simplify the terms in this last diagram, first observe that as $Y^{\times m}$ is connected, we have
$$
B(C_m\ltimes\Omega\Sigma^{\sigma}(Y^{\times m}))\simeq ED_m\times_{C_m}B\Omega\Sigma^{\sigma}(Y^{\times m})\simeq ED_m\times_{C_m}\Sigma^{\sigma}(Y^{\times m}).
$$
The inversion on $C_m$ coincides with the residual $C_2=D_m/C_m$-action on $C_m$ by conjugation, which explains why we chose to write $ED_m$ instead of $EC_m$. As for the right hand side, note that $R_\#$ is an inner automorphism of $\mathrm{Aut}_G(\oplus^m \s[G])$, and hence it induces a map homotopic to the identity on the classifying space level \cite[Section II, Theorem 1.9]{Adem2004}. But delooping is functorial, so $\overline{B}(R_\#\circ\tau_\epsilon)$ and $\overline{B}(\tau_\epsilon)=:\tau_{\epsilon}$ are homotopic involutions on $B\mathrm{Aut}_G(\oplus^m \s[G])$. All together, we obtain a homotopy $C_2$-equivariant map
$$
\scalebox{0.93}{$\theta_{m,4}: ED_m \times_{C_m} \Sigma^\sigma (Y^{\times m}) \xlongrightarrow{B(\theta_{m,3})^+} B\mathrm{Aut}_G(\oplus^m \s[G])^+ \subset \Z \times B\mathrm{Aut}_G(\oplus^m \s[G])^+ \to A(\Sigma^\sigma Y;\epsilon)$}
    $$
where the last map is the passage to the colimit as $m\to \infty$ (see (\ref{BweirdHeq})).
\end{step}

\begin{step}
The following diagram commutes up to homotopy:
    $$
    \begin{tikzcd}
        \theta_{m,4}:ED_m/C_m\simeq BC_m\rar["Bj"]\dar &B\mathrm{Aut}_G(\oplus^m\s)\rar\dar & A(*;\epsilon)\dar\\
        \theta_{m,4}:ED_m\times_{C_m}\Sigma^\sigma(Y^{\times m})\rar & B\mathrm{Aut}_G(\oplus^m\s[G])\rar & A(\Sigma^\sigma Y;\epsilon),
    \end{tikzcd}
    $$
    where $j: C_m\to \mathrm{Aut}_G(\oplus^m\s)$ is the inclusion of the permutation automorphisms. As $A(-;\epsilon)=\Omega^\infty\Asp(-;\epsilon)$, we can adjoin the $\Omega^\infty(-)$ to get a similar homotopy commutative diagram of spectra. Then passing to vertical cofibres and noting that $C_m$ acts trivially on the suspension coordinate of $\Sigma^\sigma(Y^{\times m})$, we get a homotopy $C_2$-equivariant map of spectra
    $$
\theta_{m,5}: \Sigma^{\infty+\sigma}\big((ED_m)_+\wedge_{C_m} Y^{\times m}\big)\longrightarrow \widetilde{\Asp}(\Sigma^\sigma Y;\epsilon).
    $$
\end{step}

\begin{step}
    Now by \cite[Lemma 1.4]{Atheorysuspension} (see also \cite[Lemma 2.4]{CohenCarlssonTCFLS}), the obvious projection $(ED_m)_+\wedge_{C_m} Y^{\times m}\to (ED_m)_+\wedge_{C_m} Y^{\wedge m}$ has a stable section $\Sigma^\infty\big((ED_m)_+\wedge_{C_m} Y^{\wedge m}\big)\to \Sigma^\infty\big((ED_m)_+\wedge_{C_m} Y^{\times m}\big)$ that is $D_m/C_m$-equivariant. This observation gives rise to a homotopy $C_2$-equivariant map of spectra
$$
\boldsymbol{\theta}_{m,6}: \Sigma^{\infty+\sigma}\big ((ED_m)_+\wedge_{C_m} Y^{\wedge m}\big )\longrightarrow \widetilde{\Asp}(\Sigma^\sigma Y;\epsilon).
    $$
\end{step}
     Finally set $\boldsymbol{\theta}$ to be 
    $$
    \begin{tikzcd}
\boldsymbol{\theta}: \bigvee_{m\geq 1} \Sigma^{\infty+\sigma}\big((ED_m)_+\wedge_{C_m} Y^{\wedge m}\big)\ar[rr,"\bigvee_{m\geqs 1}(\boldsymbol{\theta}_{m,6})"] && \bigvee_{m\geq 1}  \widetilde{\Asp}(\Sigma^\sigma Y;\epsilon)\rar[two heads] & \widetilde{\Asp}(\Sigma^\sigma Y;\epsilon).
\end{tikzcd}
    $$
    This map is homotopy $C_2$-equivariant by construction, and non-equivariantly yields (\ref{Aofsuspensioneq}) after applying $\Omega^{\infty+\sigma}(-)$. This latter map is an equivalence of infinite loop spaces by \cite[Theorem 1.6]{Atheorysuspension}, and as both the domain and codomain of $\boldsymbol{\theta}$ are $1$-connective, it follows that $\boldsymbol{\theta}$ is itself an equivalence of spectra. This concludes the proof of Proposition \ref{Aofsuspensionprop}. 
\end{proof}

\begin{cor}\label{Whofsuspension}
    Let $Y$ be a connected, based $C_2$-space. Denote by $\bWhsp(-)$ the homotopy fibre $\hofib(\Whsp(-)\to \Whsp(*))$. Then there is an equivalence of spectra
    $$
\boldsymbol{\theta}: \bigvee_{m\geq 2}\Sigma^{\infty+\sigma}\big((ED_m)_+\wedge_{C_m}Y^{\wedge m}\big)\overset{\sim}\longrightarrow \bWhsp(\Sigma^\sigma Y;\epsilon)
    $$
    that is $C_2$-equivariant up to homotopy. 
\end{cor}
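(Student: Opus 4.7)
The plan is to deduce the corollary from Proposition \ref{Aofsuspensionprop} by splitting off the $m=1$ summand and identifying it, homotopy $C_2$-equivariantly, with the tautological $\Sigma^\infty_+X$ summand of $\Asp(X;\epsilon)$. First I would observe that the canonical splitting (\ref{splittingAspectrum}) $\Asp(X;\epsilon)\simeq \Sigma^{\infty}_+X\vee \Whsp(X;\epsilon)$ is $C_2$-equivariant with respect to $\tau_\epsilon$ and the obvious involution on $\Sigma^\infty_+X$ (which, for a trivial $C_2$-space, is the identity, and in general is induced by the $C_2$-action on $X$); this is part of how Vogell defines the involution on $\Whsp(X;\epsilon)$ in Notation \ref{involutionnotn}($ii$). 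Taking $X=\Sigma^\sigma Y$ and homotopy fibres over the corresponding decomposition of $\Asp(*;\epsilon)$ would then give a homotopy $C_2$-equivariant splitting
\[
\widetilde\Asp(\Sigma^\sigma Y;\epsilon)\simeq \Sigma^{\infty+\sigma}Y\vee \bWhsp(\Sigma^\sigma Y;\epsilon),
\]
where the $C_2$-action on $\Sigma^{\infty+\sigma}Y$ combines the sign on the suspension coordinate with the given action on $Y$.

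Next I would inspect the $m=1$ summand on the source side of Proposition \ref{Aofsuspensionprop}. Since $D_1=C_2$, $C_1=e$ and $ED_1$ is contractible, the $m=1$ wedge summand is canonically homotopy $C_2$-equivalent to $\Sigma^{\infty+\sigma}Y$ with its natural $C_2$-action (the $D_1/C_1=C_2$ residual action is precisely the action inherited from $Y$). The key claim is then that the restriction of $\boldsymbol{\theta}$ to this summand coincides, up to homotopy $C_2$-equivariant equivalence, with the inclusion $\Sigma^{\infty+\sigma}Y\hookrightarrow \widetilde\Asp(\Sigma^\sigma Y;\epsilon)$ coming from the previous splitting. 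Unpacking the construction in the proof of Proposition \ref{Aofsuspensionprop}, the map $\theta_{1,1}\colon Y\to (Q_+G)^\times$ sends $y$ to $\eta(y)\in\{1\}\times QG\subset Q_+G$, and under $u\colon \mathrm{Aut}_G(\s[G])\simeq GL_1(Q_+G)$ this represents the endomorphism of the rank-one free right $\s[G]$-module given by $\eta(y)$. Looking through the definition of the splitting (\ref{splittingAspectrum}), one recognises this as precisely the unit factor of the plus construction in Waldhausen's model: the "$\{1\}$ component" records the rank, and the $QG$-component then contributes exactly the stable suspension of $G\simeq \Omega\Sigma^\sigma Y$, which after one further delooping (as in Steps 3--5 of the proof of Proposition \ref{Aofsuspensionprop}) becomes $\Sigma^{\infty+\sigma}Y$. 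All of the $C_2$-equivariance needed is built into that proof, since the involution squares showing homotopy $C_2$-equivariance of $\theta_{m,k}$ hold in particular for $m=1$.

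Once this identification is in place, both the source and the target of $\boldsymbol{\theta}$ split compatibly as wedges of an $m=1$ piece and an $m\geq 2$ piece, with the $m=1$ map being a homotopy $C_2$-equivariant equivalence onto $\Sigma^{\infty+\sigma}Y$. Consequently, the restriction of $\boldsymbol{\theta}$ to $\bigvee_{m\geq 2}\Sigma^{\infty+\sigma}((ED_m)_+\wedge_{C_m}Y^{\wedge m})$ maps homotopy $C_2$-equivariantly into $\bWhsp(\Sigma^\sigma Y;\epsilon)$, and it is an equivalence by two-out-of-three applied (non-equivariantly) to Proposition \ref{Aofsuspensionprop} together with the splitting above.

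The main obstacle is the step of identifying the $m=1$ summand with the $\Sigma^\infty_+X$ factor in a homotopy $C_2$-equivariant manner. Non-equivariantly this is essentially the content of \cite[Lem.~1.5]{Atheorysuspension}, but tracking the $C_2$-action through Vogell's construction of $\tau_\epsilon$ and through Steps 2--5 in the proof of Proposition \ref{Aofsuspensionprop} requires some care; the cleanest approach is probably to observe that both maps $\Sigma^{\infty+\sigma}Y\to \widetilde\Asp(\Sigma^\sigma Y;\epsilon)$ under comparison are (homotopy $C_2$-equivariantly) determined by their restriction to the unit, i.e.\ by the adjoint map $Y\to \Omega^{\infty+\sigma}\widetilde\Asp(\Sigma^\sigma Y;\epsilon)$ evaluating on a single point of $Y$, and there they visibly agree.
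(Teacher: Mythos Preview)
Your approach is essentially the same as the paper's: identify the $m=1$ summand of $\boldsymbol{\theta}$ with the usual inclusion $\Sigma^\infty(\Sigma^\sigma Y)\hookrightarrow\widetilde{\Asp}(\Sigma^\sigma Y;\epsilon)$ of stable homotopy into reduced $A$-theory, and then pass to the complement. The paper's argument is more streamlined, however: rather than invoking the wedge splitting of the target and arguing that $\boldsymbol{\theta}$ respects it, the paper simply observes that $\boldsymbol{\theta}_{1,6}$ is (clearly from its construction) the reduced unit map, and takes cofibres on both sides. The cofibre on the source is tautologically $\bigvee_{m\geq 2}(\ldots)$, and on the target it is $\bWhsp(\Sigma^\sigma Y;\epsilon)$ by definition; the induced map on cofibres is then an equivalence and homotopy $C_2$-equivariant because both $\boldsymbol{\theta}$ and $\boldsymbol{\theta}_{1,6}$ are.

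Your writeup has a small gap at the sentence ``Consequently, the restriction of $\boldsymbol{\theta}$ to $\bigvee_{m\geq 2}(\ldots)$ maps homotopy $C_2$-equivariantly into $\bWhsp(\Sigma^\sigma Y;\epsilon)$''. Knowing that the $m=1$ summand maps as an equivalence onto the $\Sigma^{\infty+\sigma}Y$ factor does not by itself force the $m\geq 2$ summands to have trivial projection to that factor, so $\boldsymbol{\theta}$ need not be block-diagonal with respect to the two splittings. You could repair this by composing with the retraction $\widetilde{\Asp}\to\bWhsp$ (which you implicitly have, since the splitting (\ref{splittingAspectrum}) comes from one), but the cofibre argument sidesteps the issue entirely and also avoids the extra bookkeeping you flag regarding the $C_2$-compatibility of the splitting: all of the needed equivariance is already packaged in Proposition~\ref{Aofsuspensionprop}.
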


\begin{proof}
    It is clear from the construction that the map 
    $$
\Sigma^\infty(\Sigma^\sigma Y)\simeq\Sigma^{\infty+\sigma}\big((ED_1)_+\wedge_{C_1}Y^{\wedge 1}\big)\xlongrightarrow{\boldsymbol{\theta}_{1,6}} \widetilde{\Asp}(\Sigma^\sigma Y;\epsilon)
    $$
    is the (reduced version of the) usual inclusion of the stable homotopy into $A$-theory. Thus its cofibre is $\bWhsp(\Sigma^\sigma Y;\epsilon)$, and the claim follows immediately.
\end{proof}

\begin{rem}
    As the reader may have noticed by now, the last two sections are a tiny bit technical, and one may wonder if there could be alternative approaches to deal with them. Such an approach that may come to mind is to use trace methods to analyse $\widetilde{A}(\Sigma^\sigma Y;\epsilon)$, since it coincides (non-equivariantly) with the reduced $TC$ of $\s[\Omega\Sigma Y]$ (as $Y$ is connected). In fact, recent developments have been made towards a (genuine) $C_2$-equivariant version of topological cyclic homology for ring spectra with anti-involutions, commonly known as \textit{real topological cyclic homology} (cf \cite{HøgenhavenRealTC, RealKtheoryHessMads,DottoMoiPatchkoria}). This approach has two caveats:
    \begin{itemize}
        \item A \textit{real cyclotomic trace} map does not yet exist (at the time of writing). The construction of such a map was supposed to appear in \cite{RealKtheoryHessMads}, but it never saw the light in the end. This is, nevertheless, current work in progress by Harpaz--Nikolaus--Shah \cite[Page 24]{RealCycTraceMap}.

        \item Even though much is known about the $p$-complete homotopy type of the $TC$ of spherical group rings (cf \cite{AvsTC} or \cite[Section 4.3]{NikolausScholze}), the analysis of its integral homotopy type does not seem to be present in the literature.
    \end{itemize}
    For these two reasons, we preferred to proceed as we have. 
\end{rem}

\section{The homotopy type of spaces of long knots}\label{longknotsection}

This section is devoted to Theorem \ref{LongKnotsThm}, which describes the homotopy type of $\emb_\partial(D^p,D^d)$ for $p\leq d-3$ and $d\geq 5$, localised at odd primes and up to the concordance embedding stable range $\phi_{\cemb}(d,p)$. After its proof, which will not take too much effort given the results in the preceeding sections, we will draw some conclusions on the homotopy groups of spaces of long knots. For convenience let us recall the statement of Theorem \ref{LongKnotsThm}. Recall that $\psi_m$ stands for the real $m$-dimensional permutation representation of the dihedral group $D_m$ and $\sigma$ for the sign representation, regarded as a $D_m$-representation by restricting along the determinant $D_m\xhookrightarrow{}O(2)\overset\det\to \{\pm 1\}= C_2$.

\begin{thm*}[Theorem \ref{LongKnotsThm}]
For $p\leq d-3$ and $d\geq 5$, consider the virtual $D_m$-representations
$$
    \rho_m:=(d+1)(\sigma-1)+\psi_m\otimes(d-p-3+\sigma).
$$  
    Then the homotopy fibre sequence (\ref{knotfibseq}), upon localising away from $2$ and taking $(\phi_{\cemb}(d,p)-1)$-th Postnikov sections, takes the form
$$
 \begin{tikzcd}
\prod_{m\geqs 2} \Omega^{\infty}\big(\s^{\rho_m}_{hD_m}\big)\rar&\emb_\partial(D^p, D^d)\rar&\Omega^{p}\hofib\big(G(d-p)/O(d-p)\to G/O\big).
\end{tikzcd}
$$
    The resulting sequence is split if $p\geq 2$, and splits after being looped once if $p=1$.
\end{thm*}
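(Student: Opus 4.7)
The plan is to identify each term in the homotopy fibre sequence (\ref{knotfibseq1}) separately, and then to invoke the splitting result of Section \ref{splittingknotsection}. For the base $\bemb_\partial(D^p,D^d)$, the classical Browder--Casson--Sullivan--Wall classification of block embeddings, applied to the standard embedding of $D^p$ in $D^d$ and coupled with Smale--Hirsch immersion theory, gives
\begin{equation*}
\bemb_\partial(D^p,D^d) \simeq \Omega^p \hofib\big(G(d-p)/O(d-p) \to G/O\big),
\end{equation*}
since both $D^p$ and $D^d$ are contractible and the only unfixed data is the (stabilised) classifying map of the normal bundle of the embedding.

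For the fibre $\pemb_\partial(D^p,D^d)$, I would apply Theorem \ref{EmbWWIThm} to obtain a $\phi_{\cemb}(d,p)$-connected map into $\Omega^\infty(\CEsp(D^p,D^d)_{hC_2})$. To identify the target, first use Proposition \ref{positivecodimprop} to replace $D^p$ by the codimension-zero tubular neighbourhood $P = D^p\times D^{d-p} \subset M = D^d$, so that $\overline{M-P} \simeq S^{d-p-1}$ is simply connected. Since $D^d$ is stably parallelisable, Corollary \ref{propTWWvsTepsilon} yields, away from $2$,
\begin{equation*}
\Omega^\infty\big(\CEsp(P,M)_{hC_2}\big) \simeq_{[\frac{1}{2}]} \Omega^\infty\big(\big(S^{d(\sigma-1)-2}\wedge \Whsp(M,M-P;\epsilon)\big)_{hC_2}\big).
\end{equation*}
Using $\Whsp(D^d)\simeq \Whsp(*)$ and the existence of a section of $\Whsp(M-P)\to \Whsp(M)$, the relative term simplifies as $\Whsp(M,M-P;\epsilon) \simeq \Sigma \bWhsp(S^{d-p-1};\epsilon)$ as $C_2$-spectra carrying $\tau_\epsilon$.

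The crucial step is then to apply Corollary \ref{Whofsuspension} to $\bWhsp(S^{d-p-1};\epsilon)$. The subtlety is that this corollary produces a $C_2$-action on the wedge decomposition that is the composite of $\tau_\epsilon$ with the intrinsic involution on $\Sigma^\sigma Y$; to cancel the latter I would choose $Y = S^\sigma \wedge S^{d-p-3}$, since then $\Sigma^\sigma Y = S^{2\sigma}\wedge S^{d-p-3} \approx S^{d-p-1}$ with \emph{trivial} $C_2$-action after inverting $2$, via the homotopy $C_2$-equivariant equivalence $S^{2\sigma}\approx S^2$. A direct inspection of the $D_m$-action on $Y^{\wedge m}$ via the formula in Proposition \ref{Aofsuspensionprop} then shows that $Y^{\wedge m} = S^{\psi_m\otimes\sigma + (d-p-3)\psi_m} = S^{\psi_m\otimes(d-p-3+\sigma)}$ as a $D_m$-representation sphere. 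Combining with the overall shift $S^{d(\sigma-1)-2}\wedge \Sigma \wedge \Sigma^\sigma = S^{(d+1)(\sigma-1)}$ and passing to $C_2 = D_m/C_m$ homotopy orbits yields $\CEsp(D^p,D^d)_{hC_2} \simeq_{[\frac{1}{2}]} \bigvee_{m\geq 2} \s^{\rho_m}_{hD_m}$, whose infinite loop space is precisely $\prod_{m\geq 2}\Omega^\infty(\s^{\rho_m}_{hD_m})$.

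For the splitting assertion, Corollary \ref{splittinglongknotcor} directly provides the desired product decomposition when $p\geq 2$ (after $(\phi_{\cemb}(d,p)-1)$-th Postnikov truncation and localising away from $2$), and its looped version when $p=1$. The main obstacle will be the bookkeeping in Step~3: one must keep careful track of how the intrinsic $C_2$-action on $\Sigma^\sigma Y$ in Corollary \ref{Whofsuspension} combines with $\tau_\epsilon$ and the other shifts to reproduce precisely the representation $\rho_m$, which is exactly what dictates the choice $Y = S^\sigma\wedge S^{d-p-3}$ and the use of $S^{2\sigma}\approx S^2$ at odd primes.
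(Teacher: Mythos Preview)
Your proposal is correct and follows essentially the same route as the paper: identify the base via the Browder--Casson--Sullivan--Wall description of $\bemb_\partial(D^p,D^d)$, identify the fibre via Theorem~\ref{EmbWWIThm} followed by Corollary~\ref{propTWWvsTepsilon} and Corollary~\ref{Whofsuspension} (with $Y=S^{d-p-3+\sigma}$ and the trick $S^{2\sigma}\approx S^2$), and invoke Corollary~\ref{splittinglongknotcor} for the splitting. The only cosmetic difference is that the paper first reduces the base computation to $\bemb(*,D^{d-p})$ via the graphing map before citing the surgery input, whereas you apply it directly to $D^p\subset D^d$; the representation-theoretic bookkeeping you outline matches the paper's Proposition in Section~\ref{outlineproofsection} exactly.
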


Recall from Remark \ref{thmBrem}($ii$) what we mean by localising the spaces $\emb_\partial(D^p,D^d)$ and $\bemb_\partial(D^p,D^d)$.

\subsection{Proof of Theorem \ref{LongKnotsThm}} \label{outlineproofsection}
Recall from (\ref{pseudoembdefn}) that $\emb_\partial^{(\sim)}(P,M)$ denotes $\hofib_\iota(\emb_\partial(P,M)\to \bemb_\partial(P,M))$ when $\iota$ is clear from the context. We saw in Corollary \ref{splittinglongknotcor} that the fibration sequence
\begin{equation}\label{knotfibseq}
\begin{tikzcd}
\pemb_\partial(D^p,D^d)\rar & \emb_\partial(D^p, D^d)\rar & \bemb_{\partial}(D^p,D^d),
\end{tikzcd}
\end{equation}
upon localising at odd primes and taking $(\phi_{\cemb}(d,p)-1)$-th Postnikov sections, is split for $2\leq p\leq d-3$, and splits for $p=1$ after looping once. So we need to describe the exterior terms of (\ref{knotfibseq}) after inverting $2$.

For the block embeddings, the graphing map 
\begin{equation}\label{blockgraphingmap}
\Gamma: \Omega^p\bemb(*,D^{d-p})\overset{\sim}\longrightarrow \bemb_\partial(D^p,D^{d-p}\times D^p)\cong \bemb_\partial(D^p,D^{d})
\end{equation}
of (\ref{graphmapemb}) is an equivalence by inspection. Then by \cite[Theorem 2.2.1]{GoodwillieKleinWeiss} and the example right after it, when $d-p\geq 3$ and $d\geq 5$ (see Remark \ref{p1d4caserem} below), it follows that
\begin{align}
    \bemb_\partial(D^p,D^d)&\simeq \Omega^p\hofib(O/O(d-p)\to G/G(d-p))\label{blocklongknotequiv}\simeq \Omega^p\hofib(G(d-p)/O(d-p)\to G/O),
    \end{align}
yielding the base of (\ref{knotfibseq}). 

\begin{rem}\label{p1d4caserem}
    The equivalence (\ref{blocklongknotequiv}) is only valid if $d-p\geq 3$ and $d\geq 5$. As pointed out right after \cite[Theorem 2.2.1]{GoodwillieKleinWeiss}, the second condition is not that important. For instance in the case $p=1$ and $d=4$, it follows directly from (\ref{blockgraphingmap}) and (\ref{blocklongknotequiv}) that
    $$
\Omega\bemb_\partial(D^1,D^4)\simeq \bemb_\partial(D^2,D^5)\simeq \Omega^2\hofib(G(3)/O(3)\to G/O).
    $$
    The codimension condition $d-p\geq 3$, however, is essential.
\end{rem}

For the fibre of (\ref{knotfibseq}), we know by Theorem \ref{EmbWWIThm} that for $N=\phi_{\cemb}(d,p)-1$, there is an equivalence
$$
\tau_{\leqs N}\pemb_\partial(D^p,D^d) \simeq \tau_{\leqs N}\ \Omega^\infty\left(\CEsp(D^p,D^d)_{hC_2}\right).
$$
We now use Corollaries \ref{propTWWvsTepsilon} and \ref{Whofsuspension} to describe the right hand side of the equivalence above.

\begin{prop}
    For $\rho_m$ as in Theorem \ref{LongKnotsThm}, there is an equivalence
    $$
\Omega^\infty\left(\CEsp(D^p,D^d)_{hC_2}\right)\simeq_{[\frac{1}{2}]} \prod_{m\geqs 2} \Omega^{\infty}\big(\s^{\rho_m}_{hD_m}\big).
    $$
\end{prop}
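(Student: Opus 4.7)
The plan is to chain Corollaries \ref{propTWWvsTepsilon} and \ref{Whofsuspension} together, carefully tracking suspension shifts and converting residual $C_2$-orbits into full $D_m$-orbits. As a preliminary step, Proposition \ref{positivecodimprop} lets us replace the positive-codimension embedding $D^p \subset D^d$ by the thickened codimension-zero embedding $D^p \times D^{d-p} \subset D^d$ without changing $\CEsp(D^p, D^d)$. Since $D^d$ is stably parallelisable and the handle dimension is $p \leq d - 3$, Corollary \ref{propTWWvsTepsilon} yields
\begin{equation*}
\Omega^\infty(\CEsp(D^p, D^d)_{hC_2}) \simeq_{[\frac{1}{2}]} \Omega^\infty\bigl((S^{d\cdot(\sigma-1)-2} \wedge \Whsp(D^d, \overline{D^d - \nu D^p}; \epsilon))_{hC_2}\bigr).
\end{equation*}
Using that $D^d \simeq *$ and $\overline{D^d - \nu D^p} \simeq S^{d-p-1}$, the homotopy cofibre defining $\Whsp(D^d, \overline{D^d - \nu D^p}; \epsilon)$ collapses to $\Sigma\bWhsp(S^{d-p-1}; \epsilon)$, with the equivalence being natural for $\tau_\epsilon$.

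Next, to apply Corollary \ref{Whofsuspension} I would identify $S^{d-p-1}$ as a suspension $\Sigma^\sigma Y$ by setting $Y := S^{d-p-3+\sigma}$, so that $\Sigma^\sigma Y = S^{d-p-3+2\sigma}$. The Borel equivalence $S^{2\sigma} \approx S^2$ (induced by the fact that rotation by $\pi$ in $SO(2)$ is path-connected to the identity) gives a homotopy $C_2$-equivariant equivalence $\Sigma^\sigma Y \approx S^{d-p-1}$, which by the homotopy functoriality of $\bWhsp(-; \epsilon)$ and Proposition \ref{hC2construction} promotes to a $C_2$-equivariant equivalence $\bWhsp(S^{d-p-1}; \epsilon) \simeq_{[\frac{1}{2}]} \bWhsp(\Sigma^\sigma Y; \epsilon)$. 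Corollary \ref{Whofsuspension} then decomposes the right-hand side as
\begin{equation*}
\bWhsp(\Sigma^\sigma Y; \epsilon) \simeq \bigvee_{m \geq 2} \Sigma^{\infty+\sigma}\bigl((ED_m)_+ \wedge_{C_m} Y^{\wedge m}\bigr),
\end{equation*}
homotopy $C_2$-equivariantly, with $Y^{\wedge m} = S^{\psi_m \otimes (d-p-3+\sigma)}$ as a $D_m$-spectrum.

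The remaining step is bookkeeping. Smashing the above with the shift $S^{d\cdot(\sigma-1)-1}$ absorbs the $\Sigma^\sigma$ into a total suspension by
\begin{equation*}
(d\cdot(\sigma-1) - 1) + \sigma + \psi_m \otimes (d-p-3+\sigma) = (d+1)(\sigma-1) + \psi_m \otimes (d-p-3+\sigma) = \rho_m,
\end{equation*}
so the $m$-th summand becomes $\Sigma^\infty((ED_m)_+ \wedge_{C_m} S^{\rho_m})$ as a $C_2$-spectrum with the residual $D_m/C_m$ action. Taking $C_2$-homotopy orbits then converts this into the $D_m$-homotopy orbits $\s^{\rho_m}_{hD_m}$; this uses that $ED_m \times EC_2$, being contractible and $D_m$-free, is itself a valid model for $ED_m$, so $((ED_m)_+ \wedge_{C_m} X)_{hC_2} \simeq X_{hD_m}$ for any $D_m$-spectrum $X$. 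Finally, since $\dim \rho_m = m(d-p-2)$ grows linearly in $m$ (using $d-p \geq 3$), the connectivity of $\s^{\rho_m}_{hD_m}$ tends to infinity, so $\Omega^\infty$ turns the wedge into a product, giving the claimed equivalence.

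The main obstacle is not conceptual but bookkeeping: one must correctly track two interacting group actions ($C_2$ coming from $\tau_\epsilon$ and $D_m$ coming from Corollary \ref{Whofsuspension}) while simultaneously performing virtual-representation arithmetic. The crucial identification $S^{2\sigma} \approx S^2$ is what allows a target sphere with \emph{trivial} $C_2$-action to be written as a genuine $C_2$-suspension $\Sigma^\sigma Y$, and without it the cleanliness of the formula for $\rho_m$ would be lost; luckily, everything in sight is a Borel $C_2$-spectrum and $2$ is inverted, so this Borel identification suffices.
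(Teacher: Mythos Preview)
Your proof is correct and follows essentially the same route as the paper: apply Corollary \ref{propTWWvsTepsilon} to pass to $S^{d\cdot(\sigma-1)-1}\wedge\bWhsp(S^{d-p-1};\epsilon)$, use the Borel equivalence $S^{2\sigma}\approx S^2$ to write $S^{d-p-1}\approx\Sigma^\sigma S^{d-p-3+\sigma}$, invoke Corollary \ref{Whofsuspension}, and then collect the resulting virtual representation as $\rho_m$. Your explicit remarks on converting iterated $(hC_m,hC_2)$-orbits into $hD_m$-orbits and on the wedge-to-product passage under $\Omega^\infty$ are steps the paper leaves implicit, but they are accurate and helpful.
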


\begin{proof}
First observe that there is a homotopy $C_2$-equivariant equivalence $S^2\approx S^{2\sigma}$. So by Corollary \ref{Whofsuspension}, for each $n\geq 2$ there is a homotopy $C_2$-equivariant equivalence of spectra
$$
\bWhsp(S^n;\epsilon)\approx \bWhsp(\Sigma^{\sigma}S^{n-2+\sigma};\epsilon)\approx \bigvee_{m\geq 2}\Sigma^{\infty+\sigma}\big(S^{\psi_m\otimes (n-2+\sigma)}_{hC_m}\big)
$$
Using this for $n=d-p-1\geq 2$, we obtain a chain of equivalences
\begin{align*}
    \Omega^\infty\left(\CEsp(D^p,D^d)_{hC_2}\right)&\simeq_{[\frac12]} \Omega^\infty\left((S^{d\cdot(\sigma-1)-1}\wedge\bWhsp(S^{d-p-1}))_{hC_2}\right)\\
    &\simeq_{[\frac12]}\Omega^\infty\big(\big(\bigvee_{m\geq 2}\s^{(d+1)\cdot(\sigma-1)}\wedge \s^{\psi_m\otimes(n-2+\sigma)}_{hC_m}\big)_{hC_2}\big).\\
    &=\prod_{m\geqs 2} \Omega^{\infty}\big(\s^{\rho_m}_{hD_m}\big).
\end{align*}
The first equivalence follows from Corollary \ref{propTWWvsTepsilon}, together with the observation that $\bWhsp(S^{d-p-1})\simeq \Sigma^{-1}\Whsp(D^p,S^{d-p-1};\epsilon)$ as both $\tau_\epsilon$ and $\Whsp(-)$ are homotopy invariants of $(-)$. The second equivalence is a consequence of the previous argument and Corollary \ref{corinvolutionsuptohomotopy}. This establishes the desired equivalence.
\end{proof}

All together, this concludes the proof of Theorem \ref{LongKnotsThm}. \qed

\begin{rem}[Topological version of Theorem \ref{LongKnotsThm}]\label{TopLongKnotRem} The space $\emb_{\partial}^{\mathrm{Top}}(D^p,D^d)$ of topological long knots is contractible (for all $p\leq d$)  by the Alexander trick. We could still be interested in the homotopy type of the space $\emb_{\partial_0}^{\mathrm{Top}}(D^p\times D^{d-p},D^d)$ of thickened topological long knots with $p\leq d-3$, and one can get a description of it localised away from $2$ and up to the concordance embedding stable range, similar to the one in Theorem \ref{LongKnotsThm}---let us explain how. As before, we have a homotopy fibre sequence
$$
\begin{tikzcd}[column sep = 12pt]
    \emb_{\partial_0}^{\mathrm{Top}, (\sim)}(D^p\times D^{d-p},D^d)\rar & \emb_{\partial_0}^{\mathrm{Top}}(D^p\times D^{d-p},D^d)\rar & \bemb_{\partial_0}^{\mathrm{Top}}(D^p\times D^{d-p},D^d)
\end{tikzcd}
$$
which, upon localising at odd primes and taking $(\phi_{\cemb}(d,p)-1)$-th Postnikov sections, is split for $2\leq p\leq d-3$, and splits for $p=1$ after looping once. So we should describe the side terms.

For the block embeddings, consider the space\footnote{If $q\geq 3$, this definition of $\widetilde{\rmTop}(q)$ coincides with the one given in Remark \ref{codimzerotopremark}.}
$$
B\widetilde{\mathrm{Top}}(q):=\operatorname{holim}\left(\begin{tikzcd}[sep = small]
    &B\mathrm{Top}\dar\\
    BG(q)\rar & BG
\end{tikzcd}\right)
$$
which is responsible for the classification of topological block normal bundles if $q\geq 3$ (cf \cite[Section 2]{RourkeSandI}, \cite{RourkeSandIII} and \cite[Section 11]{Wall}). As $\bemb^{\mathrm{Top}}_{\partial}(D^p,D^d)$ is contractible by the Alexander trick, it then follows that
$$
\bemb^{\mathrm{Top}}_{\partial_0}(D^p\times D^{d-p},D^d)\simeq \map_\partial(D^p,\widetilde{\mathrm{Top}}(d-p))=\Omega^{p}\widetilde{\mathrm{Top}}(d-p).
$$

As for the pseudoisotopy embeddings, the topological version of Theorem \ref{EmbWWIThm} (see Remark \ref{remembWWI}) tells us that for $N=\phi_{\cemb(d,p)}-1$, there is an equivalence
\begin{equation}\label{LKtopEq}
\tau_{\leqs N}\emb_{\partial_0}^{\mathrm{Top},(\sim)}(D^p\times D^{d-p},D^d))\simeq \tau_{\leqs N}\ \Omega^\infty\left(\CEsp^{\mathrm{Top}}(D^p\times D^{d-p},D^d)_{hC_2}\right),
\end{equation}
where $\CEsp^{\mathrm{Top}}(P,M)$ stands for the first orthogonal derivative of the topological analogue of $F(-)$ (as in Corollary \ref{propTWWvsTepsilon}, this is a $C_2$-spectrum whose infinite loop space is equivalent to that of $\Sigma^{-2}\Whsptop(M,M-P;\epsilon)$). Noting that there is fibre sequence of $C_2$-spectra $\Whsp(M;\epsilon)\to \Whsptop(M;\epsilon)\to \Sigma\Whsp(*;\epsilon)\wedge M_+$, one verifies that the infinite loop space in the right hand side of (\ref{LKtopEq}) fits in a fibre sequence away from $2$
$$
\begin{tikzcd}[column sep = 15pt]
    \prod_{m\geqs 2} \Omega^{\infty}\big(\s^{\rho_m}_{hD_m}\big)\rar & \Omega^\infty\left(\CEsp^{\mathrm{Top}}(D^p\times D^{d-p},D^d)_{hC_2}\right)\rar
    &\Omega^\infty\big((S^{d\cdot\sigma-p-2}\wedge \Whsp(*;\epsilon))_{hC_2}\big).
\end{tikzcd}
$$
In particular, it is easy to check (eg rationally) that the left hand side of (\ref{LKtopEq}) is not contractible (at least if $d$ is sufficiently large). This was claimed in Remark \ref{codimzerotopremark}.
\end{rem}

\subsection{On the homotopy groups of spaces of long knots}\label{htpyLKsection} We can get plenty of information about the homotopy groups of $\emb_\partial(D^p,D^d)$ from Theorem \ref{LongKnotsThm}. First observe that by Morlet's lemma of disjunction \cite[Theorem 3.1]{BurgLashRoth} (and Proposition \ref{positivecodimprop} to reduce to the codimension zero case), the pseudoisotopy embedding space $\emb_\partial^{(\sim)}(D^p,D^d)$ is at least $(2(d-p-2)-1)$-connected. So by (\ref{blocklongknotequiv}), it follows that
\begin{equation}\label{lowerhtpygroupslongknots}
    \pi_*(\emb_\partial(D^p,D^d))\cong \pi_{*+p}(\hofib(G(d-p)/O(d-p)\to G/O)), \quad *<2(d-p-2).
\end{equation}

\begin{rem}\label{BudneyComparisonRem}
This should be compared to work of Budney \cite[Proposition 3.9]{BudneyIntegralLongKnots}. The main result there is the computation of the first non-trivial homotopy group of $\emb_\partial(D^p,D^d)$ for $d-p\geq 3$, which lies in degree $2d-3p-3$, together with a geometric interpretation of the generators. This is
\begin{equation}\label{BudneyComputation}
\pi_{2d-3p-3}(\emb_\partial(D^p,D^d))\cong\left\{
\begin{array}{cl}
    \Z, & \text{$p=1$ or $d-p$ is odd,}  \\
    \Z/2, & \text{$p\geq 2$ and $d-p$ is even.}
\end{array}
\right.
\end{equation}
From our point of view, he shows that $\hofib(G(d-p)/O(d-p)\to G/O)$ is exactly $(2d-2p-4)$-connected, which follows by work of Haefliger (see Section 3, Equation 4.11 and Corollary 6.6 of \cite{Haefliger}), and computes the group $\pi_{2d-2p-3}(\hofib(G(d-p)/O(d-p)\to G/O))$.

Regarding the Gromoll filtration on $\pi_0(\emb_{\partial}(D^p,D^d))$ for $d-p\geq3$, two things are stated:
\begin{enumerate}[label=(\arabic*), itemsep = 3pt]
    \item The Gromoll degree of the elements of $\pi_0(\emb_{\partial}(D^p,D^d))$ is at least $2d-2p-4$.\label{Budney1}

    \item When $2d-3p-3=0$, the Gromoll degree of the elements of $\pi_0(\emb_{\partial}(D^p,D^d))$ is $p-1$.  \label{Budney2}
\end{enumerate}
Claim \ref{Budney1} follows from the fact that $\emb^{(\sim)}_\partial(D^{p-j},D^{d-j})$ is $(2d-2p-5)$-connected and that the block graphing map \eqref{blockgraphingmap} is an equivalence. Given \eqref{BudneyComputation}, we can deduce \ref{Budney2} from our work \emph{only} when $\pi_{0}(\emb_\partial(D^p,D^d))\cong \Z$ (using the simple observation that a homomorphism $A\to \Z$ from a finitely generated abelian group $A$ is surjective if and only if $A[\tfrac{1}{2}]\to \Z[\tfrac{1}{2}]$ is so). 

If we allow ourselves to localise away from $2$, though, much more can be said about the Gromoll filtration on $\pi_0(\emb_\partial(D^p,D^d))[\tfrac{1}{2}]$. For example, by Corollary \ref{CorGromollFiltration}(i), it follows that
\begin{enumerate}[label=(\arabic*)]\setcounter{enumi}{2}
    \item For $d,p \geq 0$ with $d-p \geq 3$, the elements of $\pi_0(\emb_{\partial}(D^p,D^d))[\tfrac{1}{2}]$ have Gromoll degree at least
    \[
    j:=\min\left(p-1, \lfloor\tfrac{2d-p-6}{2}\rfloor\right).
    \]
\end{enumerate}
See Corollary~\ref{CorGromollFiltration} for further consequences for the Gromoll filtration of $\pi_k(\emb_{\partial}(D^p,D^d))[\tfrac{1}{2}]$ for $k \geq 0$.

\end{rem}

We now return to the explicit computation of the homotopy groups of $\emb_{\partial}(D^p,D^d)$. Recall that $\phi_{\cemb}(d,p)\geq 2d-p-5$ by work of Goodwillie--Krannich--Kupers \cite{ConcEmbStablerange}, and so the space $\emb^{(\sim)}_\partial(D^p,D^d)$ has interesting homotopy in degrees from $2d-2p-4$ up to that range that we can understand by Theorem \ref{LongKnotsThm}. For any odd prime $\ell$ there are isomorphisms in degrees $*\leq \phi_{\cemb}(d,p)-1$
$$
\pi_*(\emb_\partial(D^p,D^d))_{(\ell)}\cong \pi_{*+p}(\hofib(G(d-p)/O(d-p)\to G/O))_{(\ell)}\oplus\bigoplus_{m\geq 2}\pi_*^s(\s^{\rho_m}_{hD_m})_{(\ell)},
$$
and if $\phi=\phi_{\cemb}(d,p)$, there is also an exact sequence of abelian groups
$$
\begin{tikzcd}[column sep=14pt, scale = 0.1]
    \bigoplus_{m\geq 2}\pi_{\phi}^s(\s^{\rho_m}_{hD_m})_{(\ell)}\rar &\pi_\phi(\emb_\partial(D^p,D^d))_{(\ell)}\rar[two heads] & \pi_{\phi+p}(\hofib(G(d-p)/O(d-p)\to G/O))_{(\ell)},
\end{tikzcd}
$$
where $A_{(\ell)}$ denotes $A\otimes \Z_{(\ell)}$, for $A$ an abelian group. It remains to understand the groups $\pi_*^s(\s^{\rho_m}_{hD_m})_{(\ell)}$, which are easier to study when $\ell$ is coprime to $m$. 

\begin{prop}\label{htpygroupsEmcoprimeprop}
    Let $m\geq 2$ and $d-p\geq 3$. For $\ell\nmid 2m$ a prime,
    \begin{itemize}
    \item if $d$ is even and $p$ is even, then
    $$
\pi_*^s(\s^{\rho_m}_{hD_m})_{(\ell)}\cong\left\{
\begin{array}{cl}
    \pi_{*-m(d-p-2)}^s\otimes \Z_{(\ell)}, & m=3,5,7,\dots\\
    0, & \text{otherwise}.
\end{array}
\right.
    $$

    \item if $d$ is odd and $p$ is odd, then
    $$
\pi_*^s(\s^{\rho_m}_{hD_m})_{(\ell)}\cong\left\{
\begin{array}{cl}
    \pi_{*-m(d-p-2)}^s\otimes \Z_{(\ell)}, & m=2,4,6,\dots\\
    0, & \text{otherwise}.
\end{array}
\right.
    $$

    \item if $d$ is even and $p$ is odd, then
    $$
\pi_*^s(\s^{\rho_m}_{hD_m})_{(\ell)}\cong\left\{
\begin{array}{cl}
    \pi_{*-m(d-p-2)}^s\otimes \Z_{(\ell)}, & m=5,9,13,\dots\\
    0, & \text{otherwise}.
\end{array}
\right.
$$ 

    \item if $d$ is odd and $p$ is even, then
    $$
\pi_*^s(\s^{\rho_m}_{hD_m})_{(\ell)}\cong\left\{
\begin{array}{cl}
    \pi_{*-m(d-p-2)}^s\otimes \Z_{(\ell)}, & m=3,7,11,\dots\\
    0, & \text{otherwise}.
\end{array}
\right.
    $$
\end{itemize}
\end{prop}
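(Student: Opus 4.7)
The plan is to reduce the computation to a simple coinvariants calculation via the collapse of the homotopy orbits spectral sequence, and then to identify the relevant $D_m$-action with the orientation character of $\rho_m$, whose explicit computation will determine the answer.

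First I would set up the homotopy orbits spectral sequence
\[
E^2_{s,t}=H_s\!\left(D_m;\pi_t^s(\s^{\rho_m})\right)\Longrightarrow \pi_{s+t}^s\!\left(\s^{\rho_m}_{hD_m}\right).
\]
Since $|D_m|=2m$ and $\ell\nmid 2m$, group homology $H_s(D_m;-)_{(\ell)}$ vanishes for $s>0$ on any module, so the spectral sequence collapses after localisation at $\ell$ to give
\[
\pi_*^s(\s^{\rho_m}_{hD_m})_{(\ell)}\cong \bigl(\pi_*^s(\s^{\rho_m})\bigr)_{D_m}\otimes \Z_{(\ell)}.
\]

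Next, underlying non-equivariantly $\s^{\rho_m}\simeq \s^{\dim\rho_m}=\s^{m(d-p-2)}$, so $\pi_t^s(\s^{\rho_m})=\pi_{t-m(d-p-2)}^s$ as an abelian group. The $D_m$-action on these groups factors through the orientation character
\[
\omega_{\rho_m}\colon D_m\xrightarrow{\ \det\ } \R^\times\xrightarrow{\ \mathrm{sgn}\ }\{\pm 1\}\cong \pi_0(\mathrm{hAut}(\s^0)),
\]
given by the sign of the determinant of the action on the virtual representation $\rho_m$. Since $\Z_{(\ell)}$ has $2$ invertible, the coinvariants of a $\Z_{(\ell)}$-module $M$ under a character $\omega\colon D_m\to \{\pm 1\}$ are either $M$ (if $\omega$ is trivial) or $0$ (if $\omega$ is non-trivial, as then some $g\in D_m$ acts as $-1$, killing $2M=M$). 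So the proof reduces to determining the cases in which $\omega_{\rho_m}$ is trivial.

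To compute $\omega_{\rho_m}$, I would use multiplicativity of the determinant together with the formulae $\det(g|V\otimes W)=\det(g|V)^{\dim W}\cdot \det(g|W)^{\dim V}$ and $\det(g|\sigma)=\sigma(g)$. Writing $\rho_m=(d+1)\sigma+\psi_m\otimes((d-p-3)+\sigma)$ modulo trivial summands yields
\[
\omega_{\rho_m}(g)=\sigma(g)^{d+1+m}\cdot \mathrm{sgn}_{\psi_m}(g)^{d-p-2},
\]
where $\mathrm{sgn}_{\psi_m}(g)$ is the sign of $g$ as a permutation of the $m$ vertices of the regular $m$-gon. Since $D_m$ is generated by the rotation $s$ of order $m$ and any reflection $r$, it suffices to evaluate $\omega_{\rho_m}$ on $s$ and on each conjugacy class of reflections (there is one such class for $m$ odd and two for $m$ even). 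One uses $\sigma(s)=1$, $\sigma(r)=-1$, $\mathrm{sgn}_{\psi_m}(s)=(-1)^{m-1}$, and that reflections in $D_m$ are products of $\lfloor m/2\rfloor$ or $\lfloor (m-1)/2\rfloor$ transpositions. A direct case analysis on the parities of $d$, $p$ and $m$ modulo $4$ then shows that $\omega_{\rho_m}$ is trivial precisely in the four cases listed in the statement.

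The computation is mostly mechanical and its only subtlety is handling the two conjugacy classes of reflections when $m$ is even: this forces $d-p-2$ to be even in those cases (so that $\mathrm{sgn}_{\psi_m}(r)^{d-p-2}=1$ regardless of the class), which is why cases (c) and (d), in which $d-p$ is odd, only contribute for $m$ odd. With $\omega_{\rho_m}$ identified and the coinvariants computed, the stated isomorphisms follow immediately. I do not anticipate significant obstacles; the only place requiring care is the bookkeeping of signs in the orientation character.
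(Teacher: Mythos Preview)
Your proposal is correct and follows essentially the same route as the paper: collapse the homotopy orbits spectral sequence using that higher $H_*(D_m;-)$ is $2m$-torsion, then compute the coinvariants via the sign of the $D_m$-action on $\pi_*^s(\s^{\rho_m})$. The paper carries this out by writing down the exponents $\epsilon_t=(m-1)(d-p-2)$ and $\epsilon_r\equiv d+1+m+\tfrac{1}{2}m(m-1)(d-p-2)\bmod 2$ for the chosen generators $t,r$ of $D_m$, which is exactly your orientation character $\omega_{\rho_m}(g)=\sigma(g)^{d+1+m}\cdot\mathrm{sgn}_{\psi_m}(g)^{d-p-2}$ evaluated at those generators; your separate treatment of the two reflection classes for $m$ even is a harmless redundancy, since $t$ and a single $r$ already generate $D_m$.
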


\begin{rem}[Rational homotopy of spaces of long knots]\label{rationallongknotsrem} The rational homology and homotopy of $\emb_\partial(D^p, D^d)$ for $d-p\geq 3$ has been extensively studied in recent years (see eg \cite{TurchinLongKnots1, AroneTurchinHomologyLK, AroneTurchinHomotopyLK}) through the lens of embedding calculus and its relation to the little disks operads and their formality, finally culminating in the work of Fresse--Turchin--Willwacher \cite{FresseTurchinWillwacher}. There they compute the rational homotopy groups of $\overline{\emb}_\partial(D^p,D^d):=\hofib_\iota(\emb_\partial(D^p,D^d)\to \mathrm{Imm}_\partial(D^p,D^d))$ as the homology of the \textit{hairy graph complex} (shifted appropriately). Observationally, our results correspond to the $0$- and $1$-loop order parts of this graph complex up to degree $\phi_{\cemb}(d,p)\geq 2d-p-5$, where higher loop orders are still not seen. More precisely, the $0$-loop part corresponds to the rational homotopy of $G/G(d-p)$, the lowest summand (ie $m=1$ when $d-p$ is even) of the $1$-loop part appears as that of $O/O(d-p)$, and the higher summands of the $1$-loop part come from the rational homotopy of the spectra $\s^{\rho_m}_{hD_m}$ for $m\geq 2$, which we just computed. It is worth noting that:
\begin{itemize}
    \item The first non-trivial rational homotopy group of $\emb_\partial(D^p,D^d)$ coming from the $2$-loop part of the hairy graph complex lies in degree $2d-p-4$ when both $d$ and $p$ are odd (cf \cite[Eq. 3]{FresseTurchinWillwacher}). Therefore, the lower bound $\phi_{\cemb}(d,p)\geq 2d-p-5$ on the concordance stable range of Goodwillie--Krannich--Kupers is quite sharp.
\vspace{2pt}
    \item The $1$-loop part of the hairy graph complex seems to be completely generated by the spectra $\s^{\rho_m}_{hD_m}$ for $m\geq 2$ in all degrees outside of the concordance embedding stable range. In other words, the computations in \cite{FresseTurchinWillwacher} give evidence for the existence of a rational left splitting of the Weiss--Williams map
    $$
\Phi^{\emb}:\emb^{(\sim)}_\partial(D^p,D^d)\longrightarrow \prod_{m\geqs 2} \Omega^{\infty}(\s^{\rho_m}_{hD_m}).
    $$
    To investigate this, one should first understand the attachment of the second orthogonal derivative $\Theta F^{(2)}$ of the orthogonal functor $F(U):=\emb^b_\partial(D^p\times U, D^d\times U)$ to its Taylor tower (\ref{OrthTower}). It would also be interesting to understand the integral picture. 
\end{itemize}
    
\end{rem}

\begin{proof}[Proof of Proposition \ref{htpygroupsEmcoprimeprop}]
When $\ell$ is coprime to $2m$, we have that
$$
\pi_*^s(\s^{\rho_m}_{hD_m})_{(\ell)}\cong H_0(D_m; \pi_*^s(\s^{\rho_m}))_{(\ell)}
$$
by the homotopy fixed point spectral sequence, because the higher group homology of $D_m$ is $2m$-torsion. Let $t$ and $r$ denote the generators of $D_m$ with $t^m=e$ and $rtr=t^{-1}$ such that
\begin{align*}
\psi_m(t):&\ \R^m\ni(a_1,\dots,a_m)\longmapsto (a_m,a_1,\dots,a_{m-1}),\\
\psi_m(r):&\ \R^m\ni(a_1,\dots,a_m)\longmapsto (a_m,a_{m-1},\dots,a_{1}).
\end{align*}
Then $t$ and $r$ act on the group $\pi^s_*(\s^{(d+1)(\sigma-1)+\psi_m\otimes(d-p-3+\sigma)})$ by $(-1)^{\epsilon_t}$ and $(-1)^{\epsilon_{r}}$, respectively, where
\begin{align*}
    \epsilon_t=(m-1)(d-p-2), \qquad \epsilon_r&=\underbrace{d+1}_{(1)}+\underbrace{\frac{1}{2}m(m-1)(d-p-3)}_{(2)}+\underbrace{m+\frac{1}{2}m(m-1)}_{(3)}\\
    &\equiv d+1+m+\frac{1}{2}m(m-1)(d-p-2)\mod 2.
\end{align*}
The terms ($1$), ($2$) and ($3$) are the contributions coming, respectively, from the summands $(d+1)(\sigma-1)$, $\psi_m\otimes(d-p-3)$ and $\psi_m\otimes\sigma$ of $\rho_m$. One then readily verifies that the groups $H_0(D_m; \pi^s_*(\s^{\rho_m}))$ are given by the fomulae in the statement.
\end{proof}

A bit more interesting are the homotopy groups $\pi^s_*(\s^{\rho_m}_{hD_m})_{(\ell)}$ when $\ell$ is odd but divides $m$. We treat the case when $\ell=m=3$, which hopefully serves as a sample computation for other cases. 

\begin{prop}\label{htpyat3prop}
    The first few homotopy groups $\pi_*^s(\s^{\rho_3}_{hD_3})\otimes \Z_{(3)}$ of the spectrum $\s^{\rho_3}_{hD_3}$, localised at $3$ and when $d-p=3$, are given in Table \ref{tablehtpygroupsd-p3}. Equally coloured groups in this table correspond to the same case depending on whether certain differentials in Figure \ref{ASSpictured-p3} vanish or not. Entries containing ``$?$'' correspond to potentially more complicated answers that do not conveniently fit in the table.
\end{prop}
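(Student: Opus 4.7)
The plan is to compute $\pi_*^s(\s^{\rho_3}_{hD_3})\otimes\Z_{(3)}$ in three stages: first reduce to a computation at $C_3$, then identify the relevant Thom spectrum, and finally run a $3$-primary Adams spectral sequence.

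First, since $|D_3/C_3|=2$ is invertible at $3$, the transfer–restriction argument yields a natural isomorphism
$$\pi_*^s(\s^{\rho_3}_{hD_3})\otimes\Z_{(3)} \;\cong\; \left(\pi_*^s(\s^{\rho_3}_{hC_3})\otimes\Z_{(3)}\right)^{C_2},$$
so it suffices to compute the $C_3$-orbit spectrum $\s^{\rho_3}_{hC_3}$ together with the residual $C_2=D_3/C_3$ action. Next, I would identify $\rho_3|_{C_3}$. Since $\sigma$ factors through $D_3\to C_2$ and is trivial on $C_3$, the summand $(d+1)(\sigma-1)$ restricts to zero. When $d-p=3$, the remaining piece becomes $\psi_3\otimes\sigma|_{C_3}=\psi_3=\1\oplus\rho$, where $\rho$ is the $2$-dimensional irreducible real representation of $C_3$ (rotation by $2\pi/3$). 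Using the general identity $\s^V_{hG}\simeq \Sigma^\infty(BG)^V$ for the Thom space of the associated bundle, this gives
$$\s^{\rho_3}_{hC_3}\;\simeq\; \Sigma^{\infty+1}(BC_3)^{L_\R},$$
where $L$ is the standard complex line bundle over $BC_3$ associated with the tautological character $C_3\to\C^\times$.

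Second, I would analyse $(BC_3)^{L_\R}$ at the prime $3$ via the $3$-primary Adams spectral sequence. Because $L$ is complex and hence orientable, the Thom isomorphism identifies $H^*((BC_3)^{L_\R};\mathbb{F}_3)$ with a suitable shift of $H^*(BC_3;\mathbb{F}_3)\cong\mathbb{F}_3[x]\otimes \Lambda(y)$, with $|x|=2$ and $|y|=1$, and the $\mathcal{A}_3$-action is determined from the Thom class together with the formulas for $P^1$ and $\beta$ on $x$ and $y$. This furnishes the $E_2$-page depicted in Figure~\ref{ASSpictured-p3}. The potential differentials in the displayed range are few, and are controlled at lowest order by the Milnor primitive $Q_1$ and the element $\alpha_1$.

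Third, I would take $C_2$-fixed points to convert the $C_3$-answer to the $D_3$-answer. The generator of $C_2=D_3/C_3$ acts on $\rho$ by complex conjugation of $L$, and additionally permutes/negates the $\sigma$-factors in $\rho_3$; a parity count analogous to the one in the proof of Proposition \ref{htpygroupsEmcoprimeprop} produces a global sign of $(-1)^{\epsilon(d,p)}$, which explains the dependence of Table~\ref{tablehtpygroupsd-p3} on the parities of $d$ and $p$. The hard step will be Step 4: several candidate Adams differentials cannot be resolved on formal grounds alone, and different outcomes lead to genuinely different answers. This ambiguity is reflected in the colour coding of Table~\ref{tablehtpygroupsd-p3}, while the entries marked ``$?$'' correspond to hidden additive extensions that do not fit neatly into a single $\mathbb{F}_3$-vector space description.
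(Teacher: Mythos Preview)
Your overall strategy—identify $\rho_3|_{C_3}$ via $\psi_3=\1\oplus\theta$, compute $H^*(\s^{\rho_3}_{hC_3};\mathbb{F}_3)$ through the Thom isomorphism, run the Adams spectral sequence, and pass to $C_2$-invariants—is exactly the paper's approach. There are, however, two genuine gaps.

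\medskip
\textbf{The main gap: resolving differentials.} You write that ``several candidate Adams differentials cannot be resolved on formal grounds alone'' and leave it at that, but the paper actually pins down almost every entry in the table; only a handful remain ambiguous (the coloured entries at $*=17,18$ and the ``?'' entries). The missing ingredient is an identification of the spectrum itself. Using the Gysin-type cofibre sequence for the sphere bundle of $\widehat\theta\otimes\sigma$ over $BC_3$ (whose total space is $S^\sigma$), one obtains, after inverting $2$,
\[
\s^{\rho_3}_{hD_3}\;\simeq_{[\frac12]}\;\Sigma^{\infty+1}BD_3\qquad\text{when $d$ is even (so $p$ odd).}
\]
The Kahn--Priddy theorem at the prime $3$ then gives a split surjection $\pi_*^s(\s^{\rho_3}_{hD_3})_{(3)}\twoheadrightarrow \pi_{*-1}^s\otimes\Z_{(3)}$ for $*>1$. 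This forces several differentials: for instance the $d_2$ from $(12,0)$ to $(11,2)$ must be nonzero, since otherwise $\pi_{11}^s(\s^{\rho_3}_{hD_3})_{(3)}\cong\Z/9$, which does not split surject onto $\pi_{10}^s\otimes\Z_{(3)}\cong\Z/3$. Similar arguments handle the differentials near $*=21,22,23,24$. Without this Kahn--Priddy input your method would leave far more of the table undetermined than it actually is.

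\medskip
\textbf{A smaller correction.} The residual $C_2$-action on $H^*(\s^{\rho_3}_{hC_3};\mathbb{F}_3)$ is \emph{not} by a global sign $(-1)^{\epsilon(d,p)}$. Inversion on $C_3$ sends $\alpha\mapsto -\alpha$ and $s\mapsto -s$, so the action is $u\alpha^i s^j\mapsto (-1)^{p+i+j}u\alpha^i s^j$. The $(+1)$-eigenspace is therefore the $\mathcal A_3$-submodule $J_p=\langle u\alpha^i s^j : i+j\equiv p\bmod 2\rangle$, and it is the Adams spectral sequence of this specific submodule that one must run (yielding two genuinely different charts according to the parity of $p$, not just a sign twist of a single chart).
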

    \begin{table}[h]
\centering
\caption{$\pi_*^s(\s^{\rho_3}_{hD_3})\otimes\mathbb{Z}_{(3)}$ for $d-p=3$ for low values of $*\geq 3$.}
\label{tablehtpygroupsd-p3}

    \scalebox{0.88}{
\begin{tabular}{|c||*{11}{c}|}\hline
$*$
&3&4&5&6&7&8&9&10&11&12&13 \\\hline\hline
$p$ even & $\Z_{(3)}$& $0$& $0$& $\Z/9$& $0$ & $0$ & $0$ & $\Z/9$ & $0$ & $0$ & $\Z/3$\\\hline
$p$ odd & $0$& $\Z/3$& $0$& $0$& $0$ & $\Z/3$ & $0$ & $0$ & $\Z/3$ & $\Z/9$ & $0$\\\hline

\multicolumn{12}{c}

\\
\hline
$*$
&14&15&16&17&18&19&20&21&22&23&24 \\\hline\hline
$p$ even & $\Z/27$& $0$& $0$& $\begin{array}{c}\color{teal}{\Z/3}\\[3pt] \color{orange}\Z/3\oplus \Z/3\end{array}$& $\begin{array}{c} \color{teal}\Z/3^4\\[2pt] \color{orange}\Z/3^5\end{array}$ & $0$ & $?$ & $?$ & $\Z/9$ & $\Z/3$ & $0$\\\hline
$p$ odd & $\Z/3$& $\Z/3$& $\Z/3$& $0$& $0$& $0$ & $\Z/3$ & $\Z/3$ & $0$ & $0$ & $\Z/9\oplus \Z/3$ \\\hline
\end{tabular}}
\end{table}

\begin{rem}
    Since the time of writing this article, more extended computations of the groups $\pi_*^s(\s^{\rho_3}_{hD_3})\otimes\mathbb{Z}_{(3)}$ for $d-p=3$ have become available in the Master's thesis of Andrés Morán Lamas \cite{MoranLamasThesis}. For instance, he computes that in the ``$p$ even'' case, the groups in degrees $*=20$ and $21$ are both $\mathbb{Z}/3$. He also provides an extended version of Table \ref{tablehtpygroupsd-p3} in \cite[Tables 3.3 \& 3.4]{MoranLamasThesis}, computing most of the groups up to degree $*\leq 39$. We are grateful to him for his enthusiasm in this particular computation.
\end{rem}

To prove Proposition \ref{htpyat3prop}, we will need to understand the cohomology of $\s^{\rho_3}_{hC_3}$ as a module over the Steenrod algebra $\mathcal A_3$, which we recall is generated by the Steenrod powers $P^k$ and the Bockstein operation $\beta$. 
\begin{lem}\label{E3moduleoverA3lem}
    The spectrum cohomology of $\s^{\rho_3}_{hC_3}$ is given by
    $$
H^*(\s^{\rho_3}_{hC_3}; \mathbb{F}_3)\cong \mathbb{F}_3\langle u\rangle\otimes_{\mathbb{F}_3}\mathbb{F}_3[\alpha,s]/(\alpha^2), \qquad |\alpha|=1, \quad |s|=2, \quad |u|=3(d-p-2),
    $$
with
    $$
P^k(u\alpha^{i}s^j)=\left(\sum_{r=0}^k \binom{d-p-2}{r} \binom{j}{k-r} \right)u\alpha^{i}s^{j+2k}, \qquad \beta(u\alpha^{i}s^j)=\left\{\begin{array}{cl}
    0, & i=0, \\
    -us^{j+1}, & i=1.
\end{array}\right.
    $$
Moreover $C_2=D_3/C_3$ acts on $H^*(\s^{\rho_3}_{hC_3};\mathbb{F}_3)$ by
$
u\alpha^{i}s^j\mapsto (-1)^{p+i+j}u\alpha^{i}s^j
$.
\end{lem}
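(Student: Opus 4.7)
The plan is to identify $H^*(\s^{\rho_3}_{hC_3}; \mathbb{F}_3)$ via Thom isomorphism and then determine the Steenrod and $C_2$-action from the structure of the representation. First I would restrict the virtual $D_3$-representation $\rho_3$ to the rotation subgroup $C_3 \subset D_3$. Since $\sigma|_{C_3}$ is trivial, the summand $(d+1)(\sigma-1)|_{C_3}$ restricts to zero, and $\psi_3|_{C_3}$ is the regular representation, which splits as $1 \oplus \rho$ with $\rho$ the two-dimensional rotation representation. Thus $\rho_3|_{C_3} = (d-p-2)(1 \oplus \rho)$ is a genuine $3(d-p-2)$-dimensional representation. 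The Thom isomorphism then identifies $H^*(\s^{\rho_3}_{hC_3};\mathbb{F}_3)$ as a free $H^*(BC_3;\mathbb{F}_3)$-module on a Thom class $u$ in degree $3(d-p-2)$, and the standard computation recovers $H^*(BC_3;\mathbb{F}_3) \cong \mathbb{F}_3\langle \alpha, s\rangle/(\alpha^2)$ with $\beta(\alpha) = s$.

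The heart of the argument is computing the action of the Steenrod powers on $u$. Up to the trivial suspension shift coming from the $1$-summands, the Thom class factors as $u = u_\rho^{d-p-2}$, where $u_\rho$ is the degree-$2$ Thom class of a single $\rho$-summand. The unstable axiom gives $P^1(u_\rho) = u_\rho^p = u_\rho^3$, and using the relation $u_\rho^2 = c_1(\rho) \cdot u_\rho = s \cdot u_\rho$ (the Euler class of a complex line bundle is its first Chern class), this simplifies to $P^1(u_\rho) = s^2 u_\rho$, with $P^k(u_\rho) = 0$ for $k \geq 2$ by instability. A direct application of the Cartan formula then yields $P^k(u) = \binom{d-p-2}{k} s^{2k} u$. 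The formula $P^k(s^j) = \binom{j}{k} s^{j+2k}$ is standard, following again from $P^1(s) = s^3$ and Cartan. One more application of Cartan to the product $u \cdot \alpha^i s^j$ produces the convolution sum of binomials in the statement. The Bockstein formula follows from $\beta(u) = 0$ (as $u$ is the mod-$3$ reduction of an integral Thom class, since $\rho$ is complex), together with $\beta(\alpha) = s$, $\beta(s) = 0$, and the graded derivation property.

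For the residual $C_2 = D_3/C_3$-action: on $BC_3$ it is induced by inversion, sending $\alpha \mapsto -\alpha$ and $s \mapsto -s$. The action on $u$ is by the orientation character, that is, by $\det(r|_{\rho_3})$ for a reflection $r \in D_3$ lifting the generator of $C_2$. A short multiplicative computation, using $\det(r|_\sigma) = -1$ and $\det(r|_{\psi_3}) = -1$ (because $\psi_3(r)$ is the permutation matrix of a transposition), combined with the tensor-product identity $\det(r|_{V \otimes W}) = \det(r|_V)^{\dim W} \det(r|_W)^{\dim V}$, gives
\[
\det(r|_{\rho_3}) = (-1)^{d+1} \cdot (-1)^{d-p-1} = (-1)^p.
\]
Hence $r \cdot u = (-1)^p u$, and the total action on $u \alpha^i s^j$ is by $(-1)^{p+i+j}$, as claimed.

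The genuinely non-formal input is the identification of $u$ as the cup power $u_\rho^{d-p-2}$ combined with the unstable relation $P^1(u_\rho) = s^2 u_\rho$; everything else is driven by Cartan and by tracking determinant characters of the summands of $\rho_3$. Signs in the Bockstein and in the $C_2$-action are purely bookkeeping.
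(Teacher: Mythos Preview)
Your proof is correct and follows essentially the same route as the paper: identify $\rho_3|_{C_3}$ as $(d-p-2)$ copies of $1\oplus\theta$, compute the Steenrod action on the Thom class via $P^1(u_\theta)=u_\theta^3=s^2u_\theta$ and Cartan, and read off the $C_2$-action from the orientation character. The only cosmetic differences are that the paper deduces $\beta(u)=0$ from $\beta^2=0$ rather than from an integral lift, and computes the sign on $u$ by tracking the orientation of each summand separately rather than invoking the tensor-product determinant identity; also watch the notational clash in ``$P^1(u_\rho)=u_\rho^p$'', where you mean the prime $3$, not the handle dimension $p$.
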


\begin{proof}
    The key observation to carry out this calculation is that the $C_3$-representation $\psi_3\mid_{C_3}$ decomposes as $1+\theta$, where $\theta$ is the $2$-dimensional representation pulled back from the standard complex $U(1)\cong SO(2)$-representation on $\C\cong\R^2$. In particular, the associated vector bundle of the representation $\psi_m\otimes (d-p-3+\sigma)\mid_{C_m}$ is orientable; write $u\in H^*(\s^{\rho_3}_{hC_3};\mathbb{F}_3)$ for the corresponding Thom class. The $\mathbb{F}_3$-cohomology of $BC_3$ is $\mathbb{F}_3[\alpha,s]/(\alpha^2)$ with $|\alpha|=1$, $|s|=2$ and
    $$
\beta(\alpha)=s, \qquad P^k(\alpha^{i}s^j)=\binom{j}{k}\alpha^{i}s^{j+2k}.
    $$
So it remains to understand the action of $\mathcal A_3$ on $u$. Clearly $\beta(u)=0$, as if $\beta(u)=nu\alpha$ for some $n\in\mathbb{F}_3$, then  $0=\beta^2(u)=n^2u\alpha^2-nus$ and hence $n=0$. For $u_{\theta}$ the Thom class of $\theta$, 
$
P^1(u_{\theta})=u_{\theta}^3=u_{\theta}c_1(\theta)^2=u_{\theta}s^2
$. It then easily follows that $P^k(u)=\binom{d-p-2}{k}us^{2k}$.

The residual $D_3/C_3$-action on the $\mathbb{F}_3$-cohomology of $BC_3$ sends $s$ to $-s$, and hence $\alpha$ to $-\alpha$. This action also switches the orientation of the vector bundle associated to $\theta$, and hence that of $\psi_3=1+\theta$. So that of $\psi_3\otimes\sigma$ does not change then, as $\psi_3$ is odd-dimensional. All in all, this means that $u$ is sent by the $C_2$-action to $(-1)^\epsilon u$ with
$
\epsilon=(d+1)+(d-p-3)\equiv p\mod 2
$, where the first term is the contribution of $\s^{(d+1)(\sigma-1)}$. This establishes the claim. 
\end{proof}

\begin{proof}[Proof of Proposition \ref{htpyat3prop}]
Consider the $\mathcal A_3$-submodules of $H^*(\s^{\rho_3}_{hC_3};\mathbb{F}_3)$ given by
$$
J_0:=\langle u\alpha^{i}s^j: i+j\equiv 0\mod 2\rangle, \qquad J_1:=\langle u\alpha^{i}s^j: i+j\equiv 1\mod 2\rangle. 
$$
Then $H^*(\s^{\rho_3}_{hC_3};\mathbb{F}_3)=J_0\oplus J_1$ as $\mathcal A_3$-modules, and $J_p$, where $p$ here is taken mod $2$, is the $(+1)$-eigenspace of the residual $C_2=D_3/C_3$-action. Therefore $H^*(\s^{\rho_3}_{hD_3};\mathbb{F}_3)=J_p$ as an $\mathcal{A}_3$-module, and the Adams spectral sequence of $J_p$ then converges to the stable homotopy of $\s^{\rho_3}_{hD_3}=(\s^{\rho_3}_{hC_3})_{hC_2}$. The software \cite{SSprogramme} computes the $E_2$-page of this spectral sequence, from which we determine some of the first homotopy groups of $\s^{\rho_3}_{hD_3}$. We illustrate the case $d-p=3$ in Figure \ref{ASSpictured-p3}.

\begin{figure}[h]
    \centering
    \includegraphics[scale = 0.91]{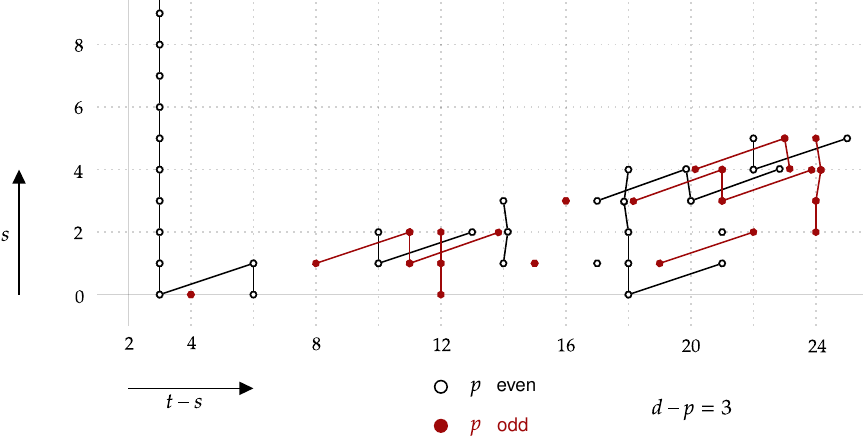}
    \caption{$E_2$-page of the Adams spectral sequence at the prime $3$ for $\s^{\rho_3}_{hD_3}$ when $d-p=3$. Here $s$ denotes the degree in the Adams filtration and $t-s$ is the total degree.}
    \label{ASSpictured-p3}
\end{figure}

Apart from standard arguments exploiting the Leibniz rule and multiplicative structures of the differentials in Figure \ref{ASSpictured-p3}, we can appeal to some more refined tricks that allow us to solve the first few possible non-zero differentials in the case when $p$ is even. Denote by $\widehat{\theta}$ the $D_3$-representation pulled back from the standard $O(2)$-representation on $\R^2$. Observe that $\widehat{\theta}\mid_{C_3}\equiv \theta$ in the notation of Lemma \ref{E3moduleoverA3lem}. Write $\underline{S}(\widehat{\theta}\otimes\sigma\mid_{C_3})$ for the unit sphere bundle of the associated vector bundle $ED_3\times_{C_3}(\widehat{\theta}\otimes\sigma)$. As it is an $S^1$-bundle over $BC_3$, $\underline{S}(\widehat\theta\otimes \sigma\mid_{C_3})$ must itself be a $K(\pi,1)$, and in fact it must be homotopy equivalent to $S^1$ because $q: \underline{S}(\widehat\theta\otimes \sigma\mid_{C_3})\to BC_3$ does not admit a section (its euler class is $s\in H^2(BC_3;\mathbb{F}_3)=\mathbb{F}[\alpha,s]/(\alpha^2)$). Moreover the homology class represented by $q$ is the dual of $\alpha$, and hence the residual $C_2=D_3/C_3$-action on $\underline{S}(\widehat\theta\otimes \sigma\mid_{C_3})\simeq S^1$ must have degree $-1$. So there is an equivalence of unbased spaces $\underline S(\widehat\theta\otimes\sigma\mid_{C_3})\simeq S^\sigma$ which is $C_2$-equivariant up to homotopy. We thus get a cofibration
$$
\begin{tikzcd}
S^\sigma_+\simeq \underline{S}(\widehat{\theta}\otimes\sigma\mid_{C_3})_+\rar["q"] &(BC_3)_+\rar[two heads] & \mathrm{Th}(\widehat\theta\otimes\sigma\mid_{C_3})\simeq S^{-\sigma}\wedge \mathrm{Th}(\psi_3\otimes\sigma\mid_{C_3})
\end{tikzcd}
$$
which is $C_2$-equivariant up to homotopy (see Notation \ref{equivariantnotn}($i$)). By equipping both $S^\sigma$ and $BC_3$ with distinguished basepoints which are fixed under the respective involutions and which match under $q$, we can get rid of the added basepoints and yield a homotopy cofibre sequence of $C_2$-spectra
$$
\begin{tikzcd}
\s^{(d+1)(\sigma-1)+2\sigma}\rar["q"] &S^{(d+1)(\sigma-1)+\sigma}\wedge\Sigma^{\infty}BC_3\rar[two heads] & \s^{\rho_3}_{hC_3}.
\end{tikzcd}
$$
Then, upon inverting $2$ and taking homotopy $C_2$-orbits in the sequence above, we obtain equivalences of spectra
\begin{equation}\label{C2cofibreE3spectrum}
    \s^{\rho_3}_{hD_3}\simeq_{[\frac{1}{2}]}\left\{\begin{array}{cl}
(\Sigma^{\infty+1} BC_3)_{hC_2}\simeq_{[\frac{1}{2}]} \Sigma^{\infty+1} BD_3, & \text{$d$ even (so $p$ odd),}\\[0.4cm]
\mathrm{hocofib}(q_{hC_2}: \s^2\to (S^{\sigma}\wedge \Sigma^{\infty}BC_3)_{hC_2}), & \text{$d$ odd (so $p$ even).}
\end{array}
\right.
\end{equation}
Note that the second equivalence in the ``$d$ even'' case really only holds after inverting $2$: by definition, there is an equivalence $(\Sigma^\infty_+BC_3)_{hC_2}\simeq \Sigma^\infty_{+}BD_3$---we need to get rid of the ``+''s. Now $\Sigma^\infty_+BC_3\simeq \s\oplus \Sigma^\infty BC_3$ is a $C_2$-equivariant equivalence, and $\s_{hC_2}= \Sigma^\infty_+BC_2\simeq \s\oplus\Sigma^\infty BC_2$; the first summand $\s$ cancels with that of $\Sigma^\infty_{+}BD_3\simeq \s\oplus \Sigma^\infty BD_3$, so a copy of $\Sigma^\infty BC_2$ remains, which is contractible only upon inverting $2$.

Now by the Kahn--Priddy theorem \cite{KahnPriddy} at the prime $3$, the transfer-like map $\Sigma^{\infty+1} BD_3\to \tau_{>1}\s^1$ is split surjective on homotopy groups localised at $3$, and hence by (\ref{C2cofibreE3spectrum}), the group $\pi_*^s(\s^{\rho_3}_{hD_3})_{(3)}$ split surjects onto $\pi^s_{*-1}\otimes \Z_{(3)}$ for $*> 1$ when $p$ is odd. We will use this fact together with knowledge of $\pi_*^s$ to determine the differentials of the red spectral sequence in Figure \ref{ASSpictured-p3}. For convenience, let us reillustrate a different portion of it in Figure \ref{ASSred}.

\begin{figure}[h]
    \centering
    \includegraphics[scale = 0.91]{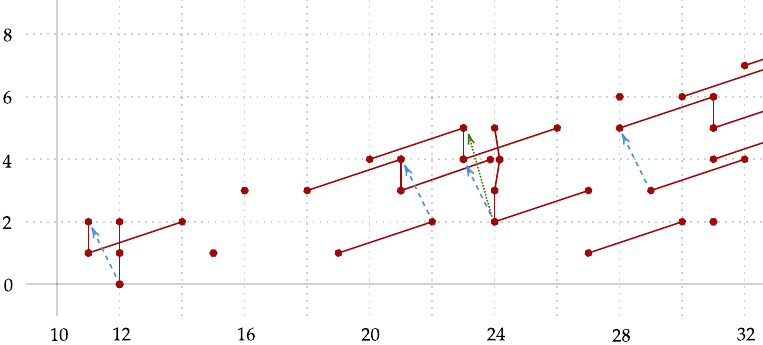}
    \caption{$E_2$-page of Adams spectral sequence at the prime $3$ for $\s^{\rho_3}_{hD_3}$ when $d-p=3$ and $p$ is odd. Some of the $d_2$ (dashed blue) and $d_3$ (dotted green) differentials that will be analysed are depicted.}
    \label{ASSred}
\end{figure}

The first possible non-zero differential goes from $(12,0)$ to $(11,2)$. Depending on its (non-)vanishing, we must have that either
$$
(a)\ \ \pi_*^s(\s^{\rho_3}_{hD_3})_{(3)}\cong \left\{\begin{array}{cc}
   \Z/9,  & *=11, \\
    \Z/27, & *=12,
\end{array}\right.
\quad \text{or}\qquad (b)\ \ \pi_*^s(\s^{\rho_3}_{hD_3})_{(3)}\cong \left\{\begin{array}{cc}
   \Z/3,  & *=11, \\
    \Z/9, & *=12.
\end{array}\right.
$$
Since $\pi_{10}^s\otimes \Z_{(3)}\cong \Z/3$, we must rule out possibility $(a)$ as $\Z/9$ does not split surject onto $\Z/3$. In other words, the $d_2$ differential in Figure \ref{ASSpictured-p3} from $(12,0)$ to $(11,2)$ is non-zero and $(b)$ holds.

The $d_2$ differential from $(22,2)$ to $(21, 4)$ must be non-zero, and hence so will that from $(19,1)$ to $(18,3)$ by the multiplicative structure. Indeed if such differential was zero, it would follow that $\pi_{21}^s(\s^{\rho_3}_{hD_3})_{(3)}\cong \Z/9$, which does not split surject onto $\pi_{20}^s\otimes \Z_{(3)}\cong \Z/3$.  

The $d_2$ differential from $(24, 2)$ to $(23,4)$ must also be non-zero (and hence that from $(24,3)$ to $(23,5)$) by a similar reason; indeed if it were trivial, then $\pi_{23}^s(\s^{\rho_3}_{hC_3})_{(3)}$ would be isomorphic to $\Z/3\oplus \Z/81$ or $\Z/3\oplus \Z/27$ (depending on whether the $d_3$ differential from $(24,2)$ to $(24, 5)$ vanishes or not), either of which do not split surject onto $\pi_{23}^s\otimes \Z_{(3)}\cong\Z/3\oplus \Z/9$. 

The arguments we just made above, together with standard ones exploiting the Leibniz rule and the multiplicative structure of the differentials in the spectral sequence in Figure \ref{ASSpictured-p3}, establish the homotopy groups appearing in Table \ref{tablehtpygroupsd-p3}.
\end{proof}

One can keep up using the Kahn--Priddy theorem and go quite far up determining all possible non-zero differentials when $p$ is odd; we leave it as a fun exercise to the eager reader. One would also hope that (\ref{C2cofibreE3spectrum}) could be used in the case when $p$ is even, but this approach has somehow been inconclusive for us (at least for the first non-zero differentials that cannot be ruled out by elementary means).

\appendix

\section{The first orthogonal derivative}\label{orthcalcappendix}
Throughout, let $F: \mathcal{J}_0\to \mathsf{Top}_*$ be an orthogonal functor. In this section, we present an explicit model for the structure maps (\ref{orthstructuremap}) of the $O(1)$-spectrum $\Theta F^{(1)}$, and compare our convention for its $O(1)$-action with that of \cite[Proposition 3.1]{WeissOrthCalc}.

\subsection{An explicit model of the (pre-)spectrum \texorpdfstring{$\Theta F^{(1)}$}{ThetaF1}}\label{ExplicitStructureMapSection}

For $V$, we now describe an explicit model for the structure map (\ref{orthstructuremap})
$$
s_V: S^1\wedge F^{(1)}(V)\longrightarrow F^{(1)}(V\oplus \R),
$$
where $F^{(1)}(V):=\hofib(F(V)\to F(V\oplus \underline{\R}))$; here $\underline{\R}$ simply stands for a copy of $\R$ that we underline to distinguish it from the one appearing in the codomain of $s_V$. Our model for the homotopy fibre of a map $X\to Y$ of pointed spaces is the standard one, ie, the subspace $\{(x,\gamma)\in X\times Y^{[0,1]}:\gamma(0)=x, \ \gamma(1)=*\}$.

The evident commutative diagram in $\mathcal{J}_0$
\[
\begin{tikzcd}[row sep = 40pt, column sep = 80pt]
    V\dar["\shortstack{\(v\)\\\rotatebox{-90}{\(\mapsto\)}\\\((v,0)\)}"']\rar["{v\mapsto(v,0)}"] &V\oplus \R\dar["\shortstack{\((v,t)\)\\\rotatebox{-90}{\(\mapsto\)}\\\((v,t,0)\)}"]\\
    V\oplus \underline{\R}\rar["{(v,\underline{t})\mapsto(v,0,\underline{t})}"'] & V\oplus\R\oplus\underline{\R}
\end{tikzcd}
\]
induces a commutative diagram of based spaces
$$
\begin{tikzcd}
    F^{(1)}(V)\dar["i"]\rar[dashed]\ar[dr, "\alpha"] & F^{(1)}(V\oplus \R)\dar\\
    F(V)\dar["\beta"]\rar["h"] & F(V\oplus\R)\dar["\beta'"]\\
    F(V\oplus \underline{\R})\rar["h'"] & F(V\oplus\R\oplus \underline{\R}).
\end{tikzcd}
$$
Our task is to define a loop of dashed arrows (based at the constant map)---we will try to do so by providing a loop of null-homotopies $\beta'\alpha\sim *$ (we will fail, but just slightly). 

Note that, since $\beta i$ is null-homotopic via $\widetilde{H}^{(t)}(x,\gamma)=\gamma(t)$, we get a null-homotopy $H_0:= h'\widetilde{H}:\beta'\alpha \sim *$. For each $\theta \in [-\tfrac{\pi}{2},+\tfrac{\pi}{2}]$, let $\theta_*$ denote the automorphism of $F(V\oplus\R\oplus\underline{\R})$ induced by the rotation of the plane $0\oplus \R\oplus\underline{\R}$ with angle $\theta$. Then note that by functoriality of $F$, we have that $\theta_* \beta'h=\beta'h$. Thus, the maps $H_\theta:=\theta_* H_0: F^{(1)}(V)\times I\to F(V\oplus\R\oplus\underline{\R})$ provide a path of null-homotopies $\beta'\alpha\sim *$.

As foreshadowed, the null-homotopies $H_{\tfrac{-\pi}{2}}:=(\tfrac{-\pi}{2})_*h'\widetilde{H}$ and $H_{\tfrac{\pi}{2}}:=(\tfrac{\pi}{2})_*h'\widetilde{H}$ are distinct (as one can check). The crucial point then is that both are of the form $\beta' G$ for some null-homotopy $G: \alpha\sim *$: indeed, if $\phi:  F(V\oplus\underline{\R})\cong F(V\oplus \R)$ denotes the map induced by identifying $\underline{\R}$ with $\R$, then
$$
(\tfrac{-\pi}{2})_*h'=\beta'(-1_\R)_*\phi, \qquad (\tfrac{\pi}{2})_*h'=\beta'\phi,
$$
where $(-1_\R)_*$ is the automorphism of $F(V\oplus \R)$ induced by $(v,t)\mapsto (v,-t)$. By noting that $\phi\beta=h$ and that $(-1_\R)_* h=h$, one easily verifies that $G_{\tfrac{-\pi}{2}}:=(-1_\R)\phi \widetilde{H}$ and $G_{\tfrac{\pi}{2}}:=\phi \widetilde{H}$ indeed provide the desired null-homotopies $\alpha\sim *$.

Finally, we note that the null-homotopies $G_{\tfrac{\pm\pi}{2}}$ give rise to canonical null-homotopies of the maps $\sigma_{\tfrac{\pm\pi}{2}}: F^{(1)}(V)\to F^{(1)}(V\oplus \R)$ induced by $H_{\tfrac{\pm\pi}{2}}$. All together, we obtain a loop of maps $F^{(1)}(V)\to F^{(1)}(V\oplus \R)$ that is adjoint to the structure map $\sigma_V$ of (\ref{orthstructuremap}).

\subsection{The $O(1)$-action}\label{ThetaO1appendix}
We have defined the first derivative spectrum $\Theta F^{(1)}$ of $F$ to be the (naïve) $O(1)$-spectrum with $n$-th $O(1)$-space $\Theta F_n^{(1)}= F^{(1)}(\R^n)$ and with structure maps as defined just above. Now
$$
F^{(1)}(V)=\hofib(F(V)\longrightarrow F(V\oplus \underline{\R}))
$$
is an $O(1)$-space by declaring $-1 \in O(1)$ to act on $V$ and $V\oplus \underline{\R}$ by $-1$ on \textit{all} coordinates. A straightforward verification shows that, under this convention, the map
$$
s_V: S^1\wedge F^{(1)}(V)\longrightarrow F^{(1)}(V\oplus \R)
$$
is indeed $O(1)$-equivariant, where $O(1)$ acts trivially on $S^1$. The key point in this verification is that if $R$ is a $(2\times 2)$-matrix (eg a rotation of the plane), then $\big(\begin{smallmatrix}
  -1 & 0\\
  0 & -1
\end{smallmatrix}\big)R$ and $R\big(\begin{smallmatrix}
  -1 & 0\\
  0 & 1
\end{smallmatrix}\big)$ agree on the subspace $\R\oplus 0\subset \R\oplus\underline{\R}$ even though maybe not equal.

In \cite[Proposition 3.1]{WeissOrthCalc}, $O(1)$ instead acts on $F^{(1)}(V)=\hofib(F(V)\to F(V\oplus \R))$ by declaring the action of $-1\in O(1)$ on $V$ to be trivial and by $-1$ on the $\R$-summand of $V\oplus \R$. If we write $\underline{F}^{(1)}(V)$ for this $O(1)$-space, then the maps 
\begin{equation}\label{underlineFstabmap}
s_V: S^{\sigma}\wedge \underline{F}^{(1)}(V)\longrightarrow \underline{F}^{(1)}(V\oplus \R) \quad\text{and}\quad \underline{F}^{(1)}(V)\to F(V)
\end{equation}
are $O(1)$-equivariant, where $\sigma$ stands for the ($1$-dimensional) sign $O(1)$-representation and $S^\sigma$ for its associated representation sphere. The corresponding (sequential) spectrum, call it $\underline{\Theta}F^{(1)}$, is \textit{not} a naïve $O(1)$-spectrum in the usual sense anymore, as $O(1)$ acts non-trivially on the suspension coordinates. To solve this issue, Weiss introduces in \cite[p. 17]{WeissOrthCalc} the $O(1)$-spectrum $\Theta^\#F^{(1)}$ with $n$-th $O(1)$-space
\begin{equation}\label{Thetasharpdefn}
\Theta^\#F^{(1)}_n:=\Omega^{\infty\cdot \sigma}(S^n\wedge \underline\Theta F^{(1)})=\underset{k}{\hocolim}\ \Omega^{k\cdot\sigma}(S^n\wedge \underline{F}^{(1)}(\R^k)),
\end{equation}
where $\Omega^{k\cdot\sigma}(-):=\mathrm{Map}_*(S^{k\cdot\sigma},-)$ and $O(1)$ acts by conjugation on this mapping space. 

In order to relate the $O(1)$-spectra $\Theta F^{(1)}$ and $\Theta^\#F^{(1)}$,  we observe that $F$ can be naturally upgraded to a functor $\underline{F}: \mathcal J_0^{O(1)}\to \mathsf{Top}_*^{O(1)}$ enriched over $\Top_*$, where $\mathcal J_0^{O(1)}:=\mathrm{Fun}(O(1),\mathcal J_0)$ is regarded as the pointed topological category of inner product finite-dimensional $O(1)$-representations. We likewise define for $V\in \mathcal J_0^{O(1)}$ the $O(1)$-space $\underline{F}^{(1)}(V):=\hofib(\underline{F}(V)\to \underline{F}(V\oplus\sigma))$. Now tensoring such an $O(1)$-representation with the sign representation $\sigma$ gives a self-isomorphism of $\mathcal J_0^{O(1)}$ denoted by $-\cdot\sigma$. One could stabilise $\underline{F}^{(1)}(-)$ with respect to $\R$ as in (\ref{underlineFstabmap}), or with the sign representation $\sigma$, giving rise to maps 
$$
s_{a,b}: S^{a\cdot\sigma+b}\wedge \underline{F}^{(1)}(V)\longrightarrow \underline{F}^{(1)}(V\oplus \R^{a,b}), \quad a,b\geq 0,\quad V\in \mathcal J_0^{O(1)},
$$
where $S^{a\cdot\sigma+b}:=S^{a\cdot\sigma}\wedge S^b$ and $\R^{a,b}:=\R^{a}\oplus b\cdot\sigma$. We then obtain a zig-zag of maps of $O(1)$-spectra
\begin{equation}\label{Thetaheartzigzag}
\begin{tikzcd}[row sep = 20pt, column sep = -45pt]
\Theta^{\#}F^{(1)}:=\underset{a\geqs0}{\mathrm{hocolim}}\ \s^{-a\cdot\sigma} \wedge \underline{F}^{(1)}(\R^{a})\ar[hook, rd,"b=0", "\sim"'] & &\ar[hook', ld,"\sim","a=0"'] \underset{b\geqs 0}{\mathrm{hocolim}}\ \s^{-b} \wedge \underline{F}^{(1)}(b\cdot \sigma)=:\Theta F^{(1)}\\
&\underset{a,b\geqs0}{\mathrm{hocolim}}\ \s^{-a\cdot\sigma-b} \wedge \underline{F}^{(1)}(\R^{a,b}),&
\end{tikzcd}
\end{equation}
where the maps in the colimit of the middle spectrum are induced by $s_{1,0}$ and $s_{0,1}$. Non-equivariantly, both of the maps in the zig-zag are equivalences by Fubini's theorem. This establishes the desired natural $O(1)$-equivariant equivalence\footnote{In the sense of Borel (see Notation \ref{equivariantnotn}($i$)).} $\Theta F^{(1)}\simeq \Theta^\#F^{(1)}$.

\begin{conv}
    In the body of the paper, $F^{(1)}(V)$ stands for $\underline{F}^{(1)}(V\cdot\sigma):=\hofib(\underline{F}(V\cdot\sigma)\to \underline{F}((V\oplus\R)\cdot\sigma))$ in the notation of this section, unless we explictly say otherwise. This way (\ref{orthstructuremap}) is $O(1)$-equivariant.
\end{conv}

\section{Bounded geometry}\label{boundedappendix}
Throughout, $M^d$ denotes a smooth compact $d$-manifold (possibly with boundary).

\subsection{Models for bounded diffeomorphisms and embeddings}\label{boundeddifftopmodelsection} Let $N$ be a (possibly non-compact) smooth manifold and fix some smooth embedding $\iota: M\xhookrightarrow{}N$. For $V\in \mathcal J_0$, we will write $\emb_\partial(M\times V,N\times V)$ for the space of smooth embeddings of $M$ into $N$ that agree with $\iota\times \Id_V$ on some neighbourhood of the boundary $\partial M\times V$, endowed with the Whitney weak $C^{\infty}$-topology. Following Definition \ref{topboundeddefn}, the \textit{space of bounded embeddings} of $M\times V$ into $N\times V$ relative to $\partial M\times V$ is the subspace of $[0,+\infty)\times \emb_\partial(M\times V,N\times V)$ given by
$$
\emb_\partial^b(M\times V,N\times V):=\left\{
(t,\varphi)\in [0,+\infty)\times \emb_\partial(M\times V,N\times V): \text{$\varphi$ is $t$-bounded}
\right\}.
$$
Define similarly its simplicial version $\emb_\partial^b(M\times V,N\times V)_\bullet$ as in Definition \ref{simplicialboundeddefn}. In this section we prove

\begin{proposition}
\label{myAmazingTheorem}
There is a zig-zag of weak equivalences of semi-simplicial group-like monoids
    $$
    \begin{tikzcd}
       \diff^b_\partial(M\times V)_\bullet&\lar["\sim"']\cdot \rar["\sim"]&\Sing_\bullet(\diff_\partial^b(M\times V)).
    \end{tikzcd}
    $$
    Similarly, there is a zig-zag of weak equivalences of semi-simplicial sets
    $$
    \begin{tikzcd}
       \emb^b_\partial(M\times V, N\times V)_\bullet&\lar["\sim"']\cdot \rar["\sim"]&\Sing_\bullet(\emb_\partial^b(M\times V, N\times V)).
    \end{tikzcd}
    $$
\end{proposition}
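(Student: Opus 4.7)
The plan is to interpose, between the two simplicial sets in the statement, a third model consisting of \emph{smooth} families satisfying a collaring condition, and to exhibit each comparison as a weak equivalence by separating two independent issues: the passage from continuous to smooth parametrisations (an application of smooth approximation in the Whitney topology), and the addition of the $\epsilon$-collaring condition (a standard simplex-wise deformation argument). I carry out the argument for $\emb^b_\partial(M\times V,N\times V)$; the case of $\diff^b_\partial(M\times V)$ is identical, with the added observation that all comparisons respect the group-like topological monoid structure since composition and the identity element are preserved throughout.

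First I would introduce two auxiliary semi-simplicial (in fact simplicial) sets. Let $S^{\sm}_\bullet$ have as $p$-simplices the smooth maps $\varphi:\Delta^p\times M\times V\to \Delta^p\times N\times V$ over $\Delta^p$ such that $\varphi_t$ is an embedding agreeing with $\iota\times\mathrm{Id}_V$ near $\partial M\times V$ for every $t\in\Delta^p$, and such that $\{\|\mathrm{pr}_V(\varphi_t(m,v))-v\|:(m,v,t)\in M\times V\times \Delta^p\}$ is a bounded subset of $\R$; equivalently, the adjoint $\Delta^p\to \emb^b_\partial(M\times V,N\times V)$ lands in some $\{t\}\times\emb_\partial(M\times V,N\times V)$ for $t$ large enough. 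Let $S^{\sm,\mathsf{col}}_\bullet\subset S^{\sm}_\bullet$ be the simplicial subset of those $\varphi$ satisfying the $\epsilon$-collaring condition for some $0<\epsilon<1/2$. By unravelling definitions, $S^{\sm,\mathsf{col}}_\bullet$ is naturally isomorphic to $\emb^b_\partial(M\times V,N\times V)_\bullet$; so the proposition reduces to exhibiting the two horizontal weak equivalences in the zig-zag
\[
\begin{tikzcd}
S^{\sm,\mathsf{col}}_\bullet\rar[hook,"\alpha"] & S^{\sm}_\bullet\rar["\beta"] & \Sing_\bullet(\emb^b_\partial(M\times V,N\times V)).
\end{tikzcd}
\]

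For $\alpha$, I would argue that for every $q\geq 0$ and every compatible diagram consisting of a simplex $\sigma\in S^{\sm}_\bullet$ and a lift of $\partial\sigma$ to $S^{\sm,\mathsf{col}}_\bullet$, one can modify $\sigma$ by a smooth isotopy (rel $\partial$) to a simplex in $S^{\sm,\mathsf{col}}_\bullet$. This is produced by the standard trick: pick a smooth retraction of a neighbourhood of $\partial\Delta^q$ onto $\partial\Delta^q$ compatible with the chosen parametrisations of the collars $B_\epsilon(\partial\sigma)\cap \sigma\cong \partial\sigma\times[0,\epsilon)$ of each face, and use it together with a partition of unity subordinate to the collars of faces of all dimensions to deform $\sigma$ stratum-by-stratum (inductively on the dimension of the face) into something that agrees with its restriction to each face on a collared neighbourhood. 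Boundedness is preserved because the deformation happens in an arbitrarily small neighbourhood of $\partial\Delta^q$ and uses no new ambient motion in $V$. This shows that the relative homotopy groups of $\alpha$ vanish, so $\alpha$ is a weak equivalence.

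For $\beta$, the task is a smooth approximation result: any continuous map $\Delta^p\to\emb^b_\partial(M\times V,N\times V)$, equivalently a family $\{(t,\varphi_s)\}_{s\in\Delta^p}$ with a uniform bound and continuous in the Whitney $C^\infty$-topology, can be approximated, rel a neighbourhood of $\partial\Delta^p$ (relative to any prescribed smooth lift of the boundary), by a smooth family in $S^{\sm}_\bullet$. The statement is classical for compact source (Whitney--Mather) but requires some care because $M\times V$ is non-compact; however, the boundedness of the family guarantees that each $\varphi_s$ differs from $\iota\times\Id_V$ only in the $V$-direction up to a fixed amount, so after choosing a suitable proper exhaustion of $M\times V$ by compacta and smoothing chart-by-chart via convolution against mollifiers supported in small balls, the resulting smoothing can be arranged to remain within a fixed bounded distance (at the cost of slightly increasing $t$). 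A standard convexity argument then produces the required isotopy rel boundary. This shows the relative homotopy groups of $\beta$ vanish and hence that $\beta$ is a weak equivalence.

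The main obstacle is the smoothing argument for $\beta$: making the bounded condition compatible with the non-compactness of $M\times V$ and with the Whitney $C^\infty$-topology in a uniform way across the simplex $\Delta^p$ is the technical heart. Everything else (the collaring deformation for $\alpha$, the identification $S^{\sm,\mathsf{col}}_\bullet\cong \emb^b_\partial(M\times V,N\times V)_\bullet$, and the verification that all constructions are equivariant for composition in the diffeomorphism case) is essentially formal once the smoothing step is in hand.
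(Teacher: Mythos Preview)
Your overall decomposition into ``collaring'' and ``smoothing'' matches the paper's, and your handling of $\alpha$ (the collaring step) is essentially the same as the paper's argument for its map \circled{3}. However, there is a genuine gap in the definition of $\beta$, and the paper's zig-zag has one more node precisely to address it.

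Recall that the topological model $\emb^b_\partial(M\times V,N\times V)$ is, by definition, a subspace of $[0,\infty)\times\emb_\partial(M\times V,N\times V)$: a point records an embedding \emph{together with} an explicit bounding constant $t$. A $p$-simplex of $\Sing_\bullet(\emb^b_\partial)$ is therefore a continuous map $s\mapsto(t_s,\varphi_s)$. Your $S^{\sm}_\bullet$, however, consists of smooth families $\varphi$ which are uniformly bounded but carry no choice of bound. Your remark that ``the adjoint lands in some $\{t\}\times\emb_\partial$ for $t$ large enough'' asserts only the \emph{existence} of such a $t$; it does not make a choice, and any particular choice (e.g.\ the pointwise supremum, or a fixed large constant) fails to be simultaneously continuous in $s$ and compatible with face and degeneracy maps. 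So $\beta$ is not a simplicial map as written. The paper fixes this by inserting an extra object $\Sing^{\mathsf{col},b}_\bullet(\emb_\partial)$ (collared continuous families in the plain embedding space which happen to be uniformly bounded) and a map \circled{2} \emph{in the other direction} from $\Sing^{\mathsf{col}}_\bullet(\emb^b_\partial)$ that simply forgets the bounding function; showing \circled{2} is a weak equivalence is then the easy exercise of extending a continuous function $r:\partial\Delta^j\to[0,\infty)$ across $\Delta^j$.

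On the smoothing step itself, your proposal (mollifiers on a proper exhaustion) and the paper's argument differ in execution. The paper's map \circled{1} performs the smoothing by tiling $V\cong\R^n$ with unit cubes, partitioning them into $3^n$ colour classes of pairwise disjoint cubes, and applying the standard compact smoothing lemma on each class in turn; this gives a completely explicit bound $C+3^n\epsilon$ on the result. Your mollifier approach should also work but you would need to say more about why the smoothed family remains an embedding fibrewise over $\Delta^p$ and why the bound increases only by a controlled amount uniformly over the non-compact $M\times V$; the paper's cube argument sidesteps both issues by working compactly at each stage.
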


We will only deal with the first part of the statement, as the proof for the embedding case is completely analogous. Let us first introduce some notation. Given a topological space $X$, let $\Sing^{\mathsf{col}}_\bullet(X)$ be the sub-simplicial set of $\Sing_\bullet(X)$ consisting of those singular simplices that satisfy the $\epsilon$-collaring condition of Section \ref{boundedsection} for some $0<\epsilon<1/2$. Denote by $\Sing^{\mathsf{col}, b}_\bullet(\diff_\partial(M\times V))$ the sub-simplicial group of $\Sing_\bullet^{\mathsf{col}}(\diff_\partial(M\times V))$ consisting of those $j$-simplices which are adjoint to a bounded map $\Delta^j\times M\times V\to M\times V$. Then, there is a zig-zag of maps of simplicial group-like monoids 
\begin{equation}\label{zigzagequation}
\begin{tikzcd}[scale cd = 0.99, column sep = 20pt]
    \diff^b_\partial(M\times V)_\bullet\rar[hook, "\circled{1}"] & \Sing^{\mathsf{col}, b}_\bullet(\diff_\partial(M\times V)) &\lar["\circled{2}"']\Sing^{\mathsf{col}}_\bullet(\diff^b_\partial(M\times V))\rar[hook, "\circled{3}"] 
    &\Sing_\bullet(\diff^b_\partial(M\times V)),
\end{tikzcd}
\end{equation}
where the map $\circled{2}$ forgets the explicit bounding constant of a simplex. We will show that all the maps in (\ref{zigzagequation}) are weak equivalences. We start with $\circled{3}$.

\begin{lem}
    The inclusion $i:\Sing^{\mathsf{col}}_\bullet(X)\xhookrightarrow{}\Sing_\bullet(X)$ is a weak equivalence for every topological space $X$.
\end{lem}
\begin{proof}
We show that the relative homotopy groups $\pi_j(\Sing_\bullet(X),\Sing^{\mathsf{col}}_\bullet(X))$ vanish for all $j\geq 0$. Indeed, a homotopy class $x\in \pi_j(\Sing_\bullet(X),\Sing^{\mathsf{col}}_\bullet(X))$ corresponds, by the Yoneda Lemma, to a singular $j$-simplex $g: \Delta^j\to X$ which satisfies the $\epsilon$-collaring condition for all faces $\sigma\subset \partial\Delta^j$ and some $\epsilon>0$. Now fix some identification $\Delta^j\cong\Delta^j\cup_{\partial\Delta^j}(\partial\Delta^j\times[0,\epsilon])$, and consider the singular $j$-simplex
$$
\overline g=g\cup(g\mid_{\partial_{\Delta^j}}\circ\operatorname{proj}_{\partial\Delta^j}):\Delta^j\cong\Delta^j\cup_{\partial\Delta^j}(\partial\Delta^j\times[0,\epsilon])\longrightarrow X.
$$
By construction $\overline g$ now satisfies the $\delta$-collaring conditions for all faces $\sigma\subset\Delta^j$ and some $0<\delta\leq\epsilon$, so the corresponding relative homotopy class $\overline{x}$ is trivial. But clearly $g$ and $\overline{g}$ are homotopic relative to the boundary by shrinking the added collar, and hence $x=\overline{x}=0$ in $\pi_j(\Sing_\bullet(X),\Sing^{\mathsf{col}}_\bullet(X))$, as claimed.
\end{proof}
\begin{rem}
    The inclusion $i: \Sing_\bullet^{\mathsf{col}}(X)\xhookrightarrow{}\Sing_\bullet(X)$ is in fact a simplicial homotopy equivalence; a homotopy inverse is constructed by induction on the skeleta of $\Delta^j$. We will not need this though.
\end{rem}

\begin{lem}
    The map $\circled{$2$}:\Sing^{\mathsf{col}}_\bullet(\diff^b_\partial(M\times V))\to \Sing^{\mathsf{col}, b}_\bullet(\diff_\partial(M\times V))$ of (\ref{zigzagequation}) is a weak equivalence.
\end{lem}
\begin{proof}
    We again show that the relative homotopy groups $\pi_j(\circled{2})$ vanish for all $j\geq 0$. Such a homotopy $x$ class can be represented, for some $\epsilon>0$, by an $\epsilon$-collared singular $j$-simplex $g:\Delta^j\to \diff_\partial(M\times V)$, adjoint to a map $g^\vee: \Delta^j\times M\times V\to M\times V$ bounded by some $K\geq 0$, together with a continuous $\epsilon$-collared map $r:\partial\Delta^j\to [0,\infty)$ such that $g(s)$ is $r(s)$-bounded for all $s\in \partial\Delta^j$. To show that $x$ is trivial, we need to extend $r$ to a continuous $\delta$-collared map $R: \Delta^j\to [0,\infty)$, for some $0<\delta\leq \epsilon$, such that $g(s)$ is $R(s)$-bounded for all $s\in \Delta^j$. Fix some identification $\Delta^j\cong (\partial\Delta^j\times[0,\epsilon])\cup_{\partial\Delta^j\times\{\epsilon\}}\Delta^j$; then $R\mid_{\partial\Delta^j\times[0,\epsilon/2]}\equiv r\circ\operatorname{proj}_{\partial\Delta^j}$ whilst $R\mid_{\partial\Delta^j\times[\epsilon/2,\epsilon]}$ is a linear interpolation along $[\epsilon/2,\epsilon]$ between $r$ and the constant map $c_{K}:\partial\Delta^j\times\{\epsilon\}\to [0,\infty)$ with value $K\geq 0$. Finally set $R$ to be constant of value $K$ in the inner $\Delta^j\subset (\partial\Delta^j\times[0,\epsilon])\cup_{\partial\Delta^j\times\{\epsilon\}}\Delta^j$. Then $R$ is as required, and hence the relative homotopy class $x\in \pi_j(\circled{2})$ is trivial.
\end{proof}

\begin{rem}\label{TOPsimplicialrem}
    For $CAT=\mathrm{Top}$, the map $\circled{1}$ of (\ref{zigzagequation}) is an equality and thus, at this point, Proposition \ref{myAmazingTheorem} is established in the topological case. 
\end{rem}

\begin{proof}[Proof of Proposition \ref{myAmazingTheorem}]
It remains to show that $\circled{1}$ is a weak equivalence, ie, that the relative homotopy groups $\pi_k(\circled{1})$ vanish for all $k\geq 0$. This is clear for $k=0$ by definition. Such a homotopy class in $\pi_k(\circled{1})$ is represented by a bounded homeomorphism
$$
\overline{g}=(\mathrm{proj}_{\Delta^k},g): \Delta^k\times M\times V\longrightarrow \Delta^k\times M\times V
$$
which is collared in the simplex direction and such that $g\mid_{\partial \Delta^k\times M\times V}$ is smooth. Therefore $g$ is smooth on (a neighbourhood of) $\partial \Delta^k\times M\times V$. We need to smooth $g$ outside of such neighbourhood in the $\Delta^k$-direction and preserving boundedness. For $r\in \Delta^k$, we will write $g_r\in \diff_\partial(M\times V)$ for $g\mid_{\{r\}\times M\times V}$.

Standard smoothing techniques \cite[Section 4]{Munkres}  (see also \cite[Proposition 6.4.2]{Kupers} or \cite[Proposition 1]{LurieSimplicialDiffGroups}) can be used to prove the following: given nested compact subsets $L\subset K\subset \Delta^k\times M\times V$ with $L\subset \mathrm{int}\ K$ and any arbitrarily small $\epsilon>0$, there exists a homotopy $H:I\times \Delta^k\times M\times V\to M\times V$ from $g$ to some map $g': \Delta^k\times M\times V\to M\times V$ satisfying that:
\begin{itemize}
    \item[$(i)$] $H$ remains fixed on $\Delta^k\times M\times V-\mathrm{int}\ K$. In particular $g$ and $g'$ agree there.

    \item[$(ii)$] $g'$ is smooth on $L$. Moreover if $g$ was already smooth on some (open neighbourhood of a) closed subset $\partial \Delta^k\times M\times V\subset F\subset \Delta^k\times M\times V$, the homotopy $H$ remains fixed on $F$. 

    \item[$(iii)$] For each $t\in I$ and $r\in \Delta^k$, the map $H^t_r=H\mid_{\{t\}\times\{r\}\times M\times V}: M\times V\to M\times V$ is smooth.

    \item[$(iv)$] $H^t$ remains arbitrarily close to $g$ for all $t\in I$. Consequently, if $g$ is bounded by some $C\geq 0$, then for every $(r,t)\in I\times \Delta^k$ the map $H_r^t: M\times V\to M\times V$ is bounded by $C+\epsilon$, and is a diffeomorphism (as diffeomorphisms of compact manifolds are open in the space of smooth self-maps).
\end{itemize}
With this in mind, we construct a homotopy in $\Sing^{\mathsf{col}, b}_\bullet(\diff_\partial(M\times V))$ from the $k$-simplex $\overline{g}=(\mathrm{proj}_{\Delta^k},g)$ to some $\overline{h}\in \diff_\partial^b(M\times V)_k$ (relative to $\partial\Delta^k\times M\times V$), as follows: without loss of generality assume $V=\R^n$. Also for $v\in \R^n$ and $\delta>0$, let $C_\delta(v)\subset  \R^n$ denote the cube of side length $2\delta$ and centered at $v$ (ie $C_\delta(v):=v+[-\delta,\delta]^{n}$). Fix an $\epsilon>0$ (eg $\epsilon=1$). Then for each $v\in 3\Z^n\subset \R^n$, choose a homotopy as above starting from $g$ with $(K,L)=(C_1(v), C_{2/3}(v))$, and perform all of these at the same time\footnote{This can be done as, by condition $(i)$, the supports of such homotopies are disjoint by construction.} to obtain some $g': \Delta^k\times M\times V\to M\times V$. Now apply the same process to $g'$ on $(K,L)=(C_1(v), C_{2/3}(v))$ for each $v=(v_1,\dots, v_n)\in \Z^n$ with $v_1\equiv 1\mod 3$ and $v_i\equiv 0\mod 3$ for $2\leq i\leq n$, keeping in mind that, by condition $(ii)$ above, the homotopies keep fixed the parts that have been smoothed in the previous step. Continue this process in a similar fashion. After $3^n$ steps, we will obtain a smooth $(C+3^n\cdot\epsilon)$-bounded map $h: \Delta^k\times M\times V\to M\times V$ such that $\overline h:=(\mathrm{proj}_{\Delta^k},h)$ represents the required $k$-simplex of $\diff_\partial^b(M\times V)_\bullet$. This means that the relative homotopy class $[\overline{g}]\in\pi_k(\circled{1})$ is trivial, as was to be shown. 
\end{proof}

\subsection{A moduli space model for classifying spaces of bounded diffeomorphism groups}\label{bdiffbmodelsection}

Fix an embedding $\iota: M\xhookrightarrow{}\R^m\subset \R^\infty$. Recall that the classifying space $B\diff_\partial(M)$ of the diffeomorphism group of $M$ admits a model as the moduli space of all $d$-manifolds $N^d\subset \R^\infty$ with $\partial N=\partial M$ which are diffeomorphic to $M$ relative to the boundary. In this section we give an analogous description of the classifying space $B\diff^b_\partial(M\times V)$, for any real finite-dimensional inner product vector space $V\in \mathcal J_0$.

\begin{prop}\label{Bdiffbmodelprop}
    Set $\emb_\partial^b(M\times V, \R^\infty\times V):=\operatorname{colim}_{n}\ \emb_\partial^b(M\times V, \R^n\times V)$, and let $\diff^b_\partial(M\times V)$ act on it  by precomposition. Then there is an equivalence
    $$
B\diff_\partial^b(M\times V)\simeq \emb^b_\partial(M\times V,\R^\infty\times V)/\diff^b_\partial(M\times V).
    $$
\end{prop}
In other words, $B\diff^b_\partial(M\times V)$ is (equivalent to) the moduli space of all submanifolds in $\R^\infty\times V$ with boundary $\partial M\times V$ which are diffeomorphic to $M\times V$ boundedly in $V$ and relative to $\partial M\times V$.

\begin{proof}
    By Proposition \ref{myAmazingTheorem}, we have that $B\diff_\partial^b(M\times V)\simeq B|\diff_\partial^b(M\times V)_\bullet|\simeq |B\diff_\partial^b(M\times V)_\bullet|$ and 
    $$
\frac{\emb^b_\partial(M\times V,\R^\infty\times V)}{\diff^b_\partial(M\times V)}\simeq \frac{|\emb^b_\partial(M\times V,\R^\infty\times V)_\bullet|}{|\diff^b_\partial(M\times V)_\bullet|}\simeq \left|\frac{\emb^b_\partial(M\times V,\R^\infty\times V)_\bullet}{\diff^b_\partial(M\times V)_\bullet}\right|.
    $$
    As the simplicial action of $\diff^b_\partial(M\times V)_\bullet$ on $\emb^b_\partial(M\times V,\R^\infty\times V)_\bullet$ is visibly free, we only need to show that $\emb^b_\partial(M\times V,\R^\infty\times V)_\bullet$ is weakly contractible by \cite[Corollary 2.6]{GoerssJardine}. To that end, let 
    $$
    \varphi=(\mathrm{proj}_{\Delta^k}, \varphi_n,\varphi_V): \Delta^k\times M\times V\xhookrightarrow{}\Delta^k\times \R^n\times V, \quad n\geq m,$$
    represent some homotopy class in $\pi_k(\emb^b_\partial(M\times V,\R^\infty\times V)_\bullet)$ for some $k\geq 0$. We will show that $[\varphi]=[\Id_{\Delta^k}\times \iota\times \Id_V]$ by constructing a simplicial map $H: \Delta^1_\bullet \to \emb^b_\partial(M\times V,\R^\infty\times V)_\bullet$ such that, under the Yoneda isomorphism, $\partial_0H=\varphi$ and $\partial_1H=\Id_{\Delta^k}\times \iota\times \Id_V$. The map $H$ will be given by (a modification of) the usual straight-line homotopy between $\varphi$ and $\Id_{\Delta^k}\times \iota\times\Id_V$.

    Let us fix some notation. Pick some open collar $c: [0,1)\times \partial M\xhookrightarrow{} M$ of the boundary of $M$. We can arrange the embedding $\iota: M\xhookrightarrow{}\R^m$ to be such that
    \begin{itemize}
        \item[($i$)] $\iota\equiv (\Id_{[0,1)]}\times i)\circ c^{-1}\mid_{c([0,1)\times \partial M)}$ for some embedding $i: \partial M\xhookrightarrow{}\R^{m-1}$, and

        \item[($ii$)] $\iota(M\setminus c((0,1]\times \partial M))\subset [1,+\infty)\times \R^{m-1}$.
    \end{itemize}
    From now on we will supress $\iota$ and $c$ from the notation, ie, we canonically identify $M$ (resp. $[0,1)\times \partial M$) with its image under $\iota$ (resp. $c$). Choose some increasing smooth function $\alpha: [0,1]\to[0,1]$ for which there exists some $0<\delta$ with $\alpha\mid_{[0,\delta]}\equiv 0$, $\alpha\mid_{[1-\delta,1]}\equiv 1$ and $0<\alpha(t)<1$ for $\delta<t<1-\delta$ (this $\delta$ is required for the collaring condition right before Definition \ref{simplicialboundeddefn}). Now by the collaring condition,
    \begin{equation}\label{sentenceequation}
        \text{there exists some $0<\epsilon<1$ such that $\varphi\equiv \Id_{\Delta^k}\times\iota\times \Id_V$ on $\Delta^k\times[0,\epsilon)\times\partial M\times V$.}
    \end{equation}
Finally, fix some smooth function $\rho: M\to [0,1]$ such that
    $$
\rho\mid_{[0,\epsilon/2]\times \partial M}\equiv 0\quad \text{and}\quad \rho\mid_{M\setminus[0,\epsilon)\times\partial M}\equiv 1.
    $$
Then for $t\in [0,1]$, consider the map 
\begin{align}
H_t: \Delta^k\times M\times V&\longrightarrow \Delta^k\times \R^n\times \R^m\times \R^{|V|}\times V\subset \Delta^k\times \R^\infty\times V,\nonumber\\
\label{Htmapcoordinates}(r,x,v)&\longmapsto \begin{pmatrix}
    r\\
    \alpha(t)\cdot x+(1-\alpha(t))\cdot\varphi_n(r,x,v)\\
    \rho(x)\alpha(t)(1-\alpha(t))\cdot x\\
    \rho(x)\alpha(t)(1-\alpha(t))\cdot v\\
    \alpha(t)\cdot v+(1-\alpha(t))\cdot \varphi_V(t,x,v)
\end{pmatrix}
\end{align}
Here $x\in M\subset \R^m\subset \R^n$ and $\R^{|V|}$ is a Euclidean space of the same dimension as $V$, treated as a copy of $V$.
\begin{cl}
    Let $C\geq 0$ be the bound of $\varphi$ on the $V$-coordinate. Then the map $H_t$ is a $C$-bounded embedding for $t\in [0,1]$. Moreover, $H_t$ agrees with $\Id_{\Delta^k}\times\iota\times\Id_V$ on $\Delta^k\times [0,\epsilon/2]\times \partial M\times V$.
\end{cl}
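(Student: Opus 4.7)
The plan is to verify, in order: (1) the $C$-boundedness of $H_t$ in the $V$-direction, (2) agreement of $H_t$ with $\Id_{\Delta^k}\times\iota\times\Id_V$ on $\Delta^k\times[0,\epsilon/2]\times\partial M\times V$, and (3) that $H_t$ is an embedding for every $t\in[0,1]$. Steps (1) and (2) are essentially formal. For (1), the $V$-component of $H_t(r,x,v)$ is $\alpha(t)v+(1-\alpha(t))\varphi_V(r,x,v)$, which differs from $v$ by $(1-\alpha(t))(\varphi_V(r,x,v)-v)$, whose norm is at most $(1-\alpha(t))C\leq C$. For (2), on the specified region $\rho(x)\equiv 0$, killing the third and fourth entries of $H_t$, and (\ref{sentenceequation}) forces $\varphi_n(r,x,v)=x$, $\varphi_V(r,x,v)=v$, making the second and fifth entries equal to $x$ and $v$; hence $H_t(r,x,v)=(r,x,0,0,v)$, which matches $\Id_{\Delta^k}\times\iota\times\Id_V$ after the canonical identification.

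The substantive work lies in step (3). Since $H_t$ preserves the $\Delta^k$-coordinate, the claim reduces to showing that the fiberwise map $h_{r,t}\colon M\times V\to\R^n\times\R^m\times\R^{|V|}\times V$ is an injective immersion with closed image for every $r\in\Delta^k$. Writing $s_t:=\alpha(t)(1-\alpha(t))$, I would prove injectivity by case analysis on the values of $\rho$ at two alleged preimages $(x,v)$, $(x',v')$ of a common point. When both $\rho(x),\rho(x')<1$, equation (\ref{sentenceequation}) makes $\varphi$ standard at both points, so the second and fifth entries of $h_{r,t}$ collapse to the identity and directly yield $(x,v)=(x',v')$. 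When both $\rho(x)=\rho(x')=1$ and $s_t>0$, the third and fourth entries give $(x,v)=(x',v')$; the case $s_t=0$ forces $\alpha(t)\in\{0,1\}$, so $h_{r,t}$ reduces to $\varphi(r,-)$ or $\iota\times\Id_V$, both injective.

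The delicate subcase, and the main obstacle, is the mixed regime $\rho(x)<1$, $\rho(x')=1$, $s_t>0$: the third entry then yields $\iota(x')=\rho(x)\iota(x)$ in $\R^m$. To rule this out I would impose a mild additional normalisation on the fixed embedding $\iota$ (preserving conditions $(i)$ and $(ii)$) so that $i(\partial M)\subset S^{m-2}\subset\R^{m-1}$ and $\iota(M\setminus(0,1]\times\partial M)\subset[1,\infty)\times\R^{m-1}$. Then for $x=(x_1,i(y))\in[0,\epsilon)\times i(\partial M)$, the first coordinate of $\rho(x)\iota(x)$ is $\rho(x)x_1<\epsilon<1$, ruling out the non-collar piece of $\iota(M)$; and $\|\rho(x)i(y)\|=\rho(x)<1$ forces $\rho(x)i(y)\notin S^{m-2}$, ruling out the collar piece. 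Hence $\rho(x)\iota(x)\notin\iota(M)$, contradicting $x'\in M$. The immersion property follows from the same case analysis applied to tangent vectors, using that $d\rho(x)=0$ wherever $\rho(x)=1$ since $\rho$ attains its maximum there; and closedness of the image of $h_{r,t}$ is automatic from the $C$-boundedness and the compactness of $M$, since the preimage of any compact set is bounded in $V$ and hence relatively compact in $M\times V$.
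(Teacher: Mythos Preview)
Your argument is correct and follows essentially the same strategy as the paper: verify boundedness, verify the boundary condition, and prove injectivity of each $h_{r,t}$ by a case analysis. Two remarks on where your write-up diverges from the paper's.

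First, the additional normalisation $i(\partial M)\subset S^{m-2}$ in the mixed case is unnecessary. The paper splits cases according to whether $x,x'$ lie in $[0,\epsilon]\times\partial M$ rather than according to the value of $\rho$; in the mixed case $x\in[0,\epsilon]\times\partial M$, $x'\notin[0,\epsilon]\times\partial M$, one has $\rho(x')=1$ and the third coordinate gives $\rho(x)\,x=x'$ in $\R^m$. Comparing \emph{first} $\R$-coordinates alone already yields a contradiction: by conditions $(i)$ and $(ii)$ the first coordinate of $x'$ satisfies $x'_1>\epsilon$, while $\rho(x)x_1\leq x_1\leq\epsilon$. So the first-coordinate argument suffices and no sphere condition on $i$ is needed. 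Your case split by $\rho$ works too (with $\rho$ chosen so that $\rho<1$ on $[0,\epsilon)\times\partial M$), and the same first-coordinate reasoning applies.

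Second, you go further than the paper by explicitly checking the immersion condition (via $d\rho(x)=0$ at maxima of $\rho$) and properness. The paper's proof records only injectivity; your additions are correct and make the verification that $H_t$ is a genuine embedding complete.
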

\begin{proof}[Proof of Claim]
    Indeed $H_t$ is bounded by $C\geq0$ (in the $V$-coordinate) as
    \begin{align*}
\norm{(\alpha(t)\cdot v+(1-\alpha(t))\cdot \varphi_V(t,x,v))-v}_V&=(1-\alpha(t))\cdot\norm{\varphi_V(t,x,v)-v}_V\\
&\leq(1-\alpha(t))\cdot C\leq C. 
    \end{align*}
To see that $H_t$ is an embedding, suppose that $H_t(r,x,v)=H_t(r',x',v')$. Clearly then $r=r'$ by the first coordinate in (\ref{Htmapcoordinates}). Note that $H_t=\varphi$ if $t\leq \delta$ and $H_t=\Id_{\Delta^k}\times\iota\times \Id_{V}$ if $t\geq 1-\delta$. As both are embeddings, we may assume that $\delta<t<1-\delta$ so that $\alpha(t)(1-\alpha(t))\neq 0$. To show that $x=x'$ we consider three cases:
\begin{itemize}
    \item If $x,x'\in[0,\epsilon]\times \partial M$, then by (\ref{sentenceequation}), the equation on the second coordinate of (\ref{Htmapcoordinates}) yields $x=x'$.

    \item If $x,x'\notin [0,\epsilon]\times \partial M$, then $\rho(x)=\rho(x')=1$ and thus the third coordinate equation yields $x=x'$.

    \item  If $x\in [0,\epsilon]\times \partial M$ but $x'\notin [0,\epsilon]\times \partial M$, then the third coordinate equation becomes $\rho(x)\cdot x=x'\in \R^n$. On the first coordinate of $[0,+\infty)\times \R^{n-1}\subset \R^n$, this implies, by items ($i$) and ($ii$) above, that $\rho(x)>1$ which is a contradiction. 
\end{itemize}
    In all cases $x=x'$. Then the equation on the fourth coordinate of (\ref{Htmapcoordinates}) implies that $v=v'$, as required. 

    Finally, the last part of the claim again follows from (\ref{sentenceequation}) and the nature of $\rho$. 
\end{proof}
The family of $C$-bounded embeddings $\{H_t\}_{t\in [0,1]}$ gives rise to the required simplicial map $H$. This finishes the proof of the proposition.
\end{proof}

\section{The \texorpdfstring{$h$}{h}-cobordism stabilisation map}\label{AppendixB}
The (lower) stabilisation map $\Sigma: H(M)\to H(M\times I)$ of \cite[p. 298]{VogellInvolution} is depicted in Figure \ref{lowerstabmap} below.

\begin{figure}[h]
    \centering
    \includegraphics[scale=0.65]{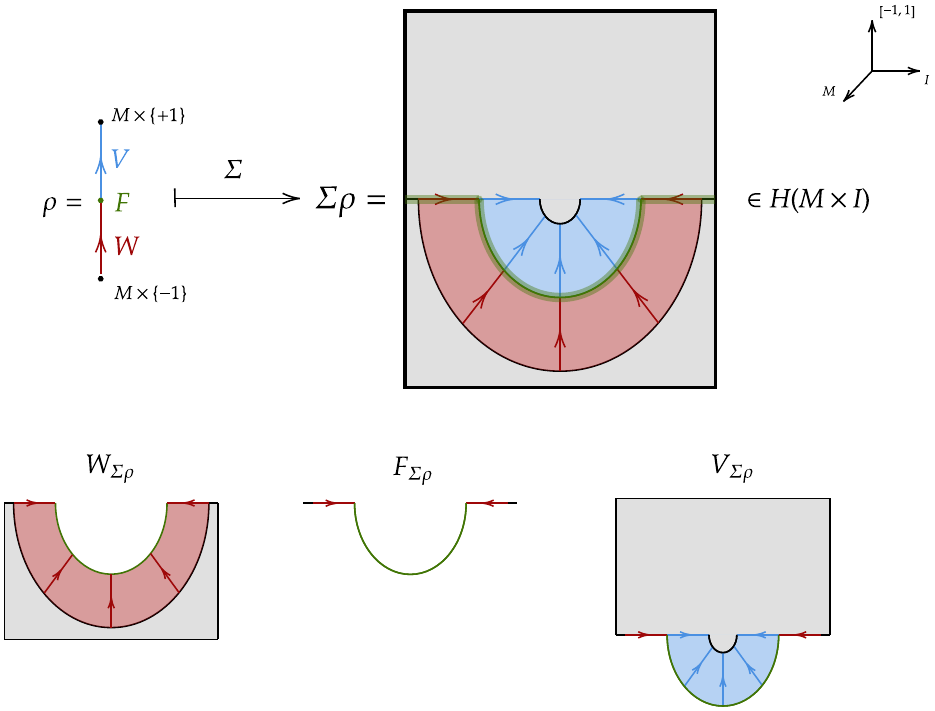}
    \caption{The $h$-cobordism (lower) stabilisation map $\Sigma_\ell=\Sigma: H(M)\to H(M\times I)$, sending $\rho=(W,F,V)$ to $\Sigma\rho:=(W_{\Sigma\rho}, F_{\Sigma\rho}, V_{\Sigma\rho})$. A grey shaded region of shape $S$ represents a manifold of the form $M\times S$.}
    \label{lowerstabmap}
\end{figure}

Recall that the $h$-cobordism space $H(M\times I)$ is an $\mathbb{E}_1$-space under stacking in the $I$-direction, denoted $+_I$. In this section we argue that $\Sigma$ anticommutes with the $h$-cobordism involution $\iota_H$ in the following sense:

\begin{lem}[Lemma \ref{suspensionequivariancelem}($a$)]\label{anticommutativitySigmaprop}
    The map $\iota_H\Sigma+_I\Sigma\iota_H: H(M)\to H(M\times I)$ is null-homotopic, ie it is homotopic to the constant map at the trivial partition $*\in H(M\times I)$.
\end{lem}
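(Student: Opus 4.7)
My plan is to prove the null-homotopy by exhibiting an explicit geometric isotopy of partitions. First, one unpacks the composite $\iota_H\Sigma\rho +_I \Sigma\iota_H\rho$ as a partition of $M\times[0,2]\times[-1,1]$ whose $F$-surface projects to an S-shape in the $(I,[-1,1])$-plane: a $\cap$-curve on the first half $I\in[0,1]$ (the reflection across $M\times\{0\}$ of the $\cup$-shape $W\cup_F W$ from $\Sigma\rho$) followed by a $\cup$-curve on the second half $I\in[1,2]$ (the $\cup$-shape $V^*\cup_{F^*} V^*$ from $\Sigma\iota_H\rho$), joined smoothly at the standard slice $M\times\{0\}$ over $I=1$.

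The key algebraic input is that the composite partition has trivial Whitehead torsion, hence is an $s$-cobordism. By Corollary \ref{milnorinvcor}, $\iota_H$ acts on $\pi_0 H(N) \cong \Wh(\pi_1 N)$ by $\kappa\mapsto (-1)^{\dim N - 1}\overline{\kappa}$, so the torsions of $\iota_H\Sigma\rho$ and $\Sigma\iota_H\rho$ in $\pi_0 H(M\times I) \cong \Wh(\pi_1 M)$ are $(-1)^d\overline{\tau(\rho)}$ and $(-1)^{d-1}\overline{\tau(\rho)}$ respectively---the opposite signs arising because $\iota_H$ is applied in dimension $d+1$ in the first term and dimension $d$ in the second, and $\Sigma$ preserves torsion. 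These cancel in $\Wh(\pi_1 M)$, and since $\dim(M\times[0,2]) \geq 6$ the $s$-cobordism theorem implies the composite $W$-region is diffeomorphic to a product.

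The null-homotopy is then built in three sub-steps: (i) construct a canonical trivialisation of the composite $s$-cobordism using bicollars of $F$ in $W$ and $V$ to cancel the bent $W\times I$ at the top of the first half against the bent $V^*\times I$ at the bottom of the second half at the junction $I=1$; (ii) use this product structure to re-embed the composite partition as a diffeotopy of the trivial partition $* \in H(M\times[0,2])_\bullet$ inside $M\times[0,2]\times[-1,1]$; (iii) contract this diffeotopy linearly back to the identity in the $(I,[-1,1])$-plane, producing the desired 1-parameter family of partitions connecting the composite to $*$.

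The main obstacle is step (i): producing the trivialisation of the composite $s$-cobordism in a manner sufficiently natural to define a simplicial null-homotopy $\Delta^1\times H(M)_\bullet \to H(M\times I)_\bullet$. While the abstract $s$-cobordism theorem gives existence, the cancellation must be explicit and functorial in $\rho$, likely achieved by mimicking the geometric Eilenberg swindle (\ref{ES}) to realise the identity $W + W^{-1}\simeq *$ at the level of bent pieces. The argument is analogous to Hatcher \cite[Appendix I, Lem.]{HatcherSSeq} and Burghelea--Lashof \cite[Cor. A7]{BurgLash} for concordance spaces, and one would adapt their cancellation arguments to the partition setting using the bicollared data that already appears in $H_{\mathsf{col}}(M)$.
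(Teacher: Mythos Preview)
Your outline correctly identifies the geometric picture and, crucially, the central obstacle: a Whitehead-torsion computation together with the $s$-cobordism theorem only shows that $\iota_H\Sigma+_I\Sigma\iota_H$ is trivial on $\pi_0$, and promoting this to an actual null-homotopy requires a trivialisation of the composite $s$-cobordism that is coherent in $\rho$. But you do not resolve this obstacle---the suggestions (bicollars, mimicking the Eilenberg swindle, adapting \cite{HatcherSSeq} or \cite{BurgLash}) remain gestures. The difficulty is real: although the relevant $h$-cobordism $V\cup_M W$ is trivial, the space of product structures on it is a torsor over $\diff_\partial(F\times[-1,1])$, and there is no canonical point of this torsor available from $\rho$ and its bicollar alone. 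Your approach also invokes the $s$-cobordism theorem and hence imposes a dimension restriction $d\geq 5$ that the statement does not carry.

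The paper resolves this by going in the opposite direction: rather than seeking a canonical trivialisation, it \emph{adds the trivialisation as extra data}. One passes to an enlarged simplicial set $\overline{H}_{\mathsf{col}}(M)_\bullet$ whose simplices are pairs $(\rho,\phi)$ with $\phi:V\cup_M W\cong F\times[-1,1]$ a chosen product structure. The forgetful map $p:\overline{H}_{\mathsf{col}}(M)_\bullet\to H_{\mathsf{col}}(M)_\bullet$ is a Kan fibration with fibre $\diff_\partial(M\times[-1,1])_\bullet$, and there is a retraction $s$ onto the fibre so that $(p,s)$ is an equivalence $\overline{H}_{\mathsf{col}}(M)_\bullet\simeq H_{\mathsf{col}}(M)_\bullet\times\diff_\partial(M\times[-1,1])_\bullet$. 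On the enlarged space the lifted map $f_1=(\iota_H\Sigma+_I\Sigma\iota_H)\circ p$ is made explicitly null-homotopic by a sequence of geometric moves on partitions, the key step of which uses $\phi$ to slide the inner $V\cup_M W$ block across as a genuine product. A short factorisation argument through the splitting $(p,s)$ then shows that null-homotopy of $f_1$ forces null-homotopy of $\iota_H\Sigma+_I\Sigma\iota_H$ itself. This sidesteps canonicity entirely, requires no torsion computation, and works in all dimensions.
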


\begin{proof}
We describe the null-homotopy in the topological setting; the smooth case is very similar, but one has to be slightly careful with issues regarding corners (which can be overcome by working with the collared version $H_{\mathsf{col}}(M\times I)$ of the $h$-cobordism space). It will be convenient to work with yet another (upgraded) version of the $h$-cobordism space: let $\overline{H}_{\mathsf{col}}(M)_\bullet$ denote the simplicial set in which a $q$-simplex consists of a pair $(\rho,\phi)$ with $\rho:=(W,F,V)\in H_{\mathsf{col}}(M)_q$ and a diffeomorphism $\phi: V\cup_{M\times \Delta^q} W\cong F\times [-1,1]\times \Delta^q$ over $\Delta^q$ which fixes pointwise (a neighbourhood of) $\partial(F\times [-1,1])\times \Delta^q$. There is a Kan fibration
$$
\begin{tikzcd}
\diff_\partial(M\times[-1,1])_\bullet\rar["j"] & \overline{H}_{\mathsf{col}}(M)_\bullet\rar["p"] & H_{\mathsf{col}}(M)_\bullet,
\end{tikzcd}
$$
where $p(\rho,\phi):=\rho$. The inclusion $j$ admits a (left) section up to homotopy $s: \overline{H}_{\mathsf{col}}(M)_\bullet\to \diff_\partial(M\times[-1,1])_\bullet$ given roughly by applying $\phi^{-1}$ on the collar of $F$ in $M\times[-1,1]=W\cup_{F}V$ and then canonically identifying $W\cup_{F}V\cup_M W\cup_F V$ with $M\times[-1,1]$. This yields an equivalence $$(\hspace{1pt}p,s): \overline{H}_{\mathsf{col}}(M)_\bullet\overset{\sim}\longrightarrow H_{\mathsf{col}}(M)_\bullet\times \diff_\partial(M\times[-1,1])_\bullet.$$ 
But now the following diagram commutes up to homotopy:
$$
\begin{tikzcd}[row sep = 14pt]
    \overline{H}_{\mathsf{col}}(M)_\bullet \dar["\vsim"]\ar[rrrrd, "f_1"]&&&&\\
    H_{\mathsf{col}}(M)_\bullet\times \diff_\partial(M\times[-1,1])_\bullet\ar[rrrr, "f_2"]\dar[shift right = 4pt, two heads, "\mathrm{pr}_{1}"'] &&&&H(M\times I)_\bullet\\
    H_{\mathsf{col}}(M)_\bullet\dar["u", "\vsim"']\uar[shift right = 4pt, hook, "i"'] \ar[rrrru, "f_3"]&&&&\\
    H(M)_\bullet\ar[rrrruu, "f_4=\iota_H\Sigma+_I\Sigma\iota_H"'] &&&&
\end{tikzcd}
$$
where $u$ is the map that forgets the collaring data, and all the horizontal maps (strictly) factor through the bottom horizontal map $f_4:=\iota_H\Sigma+_I\Sigma\iota_H$. Therefore, in order to show that $f_4$ is null-homotopic, it suffices to show that $f_1$ is so: indeed this would imply that $f_2$ is null-homotopic. But $f_2=f_3\circ \mathrm{pr}_1$, so $f_3\simeq f_3\circ \mathrm{pr}_1\circ i\simeq *$ too. This in turn would imply that $f_4$ is null-homotopic, as we aim to prove. 

We therefore need to describe a null-homotopy of $f_1$. We will just describe a path (or rather, a $1$-simplex) between $f_4(\rho,\phi)=(\iota_H\Sigma+_I\Sigma\iota_H)(\rho)$, for a fixed $0$-simplex $(\rho=(W,F,V),\phi)\in \overline{H}_{\mathsf{col}}(M)_0$, and the trivial partition $*\in H(M\times I)_0$---an exactly analogous argument yields an actual simplicial null-homotopy. This path is depicted in Figures \ref{nullhomotopy1}, \ref{nullhomotopy2}, \ref{nullhomotopy3} and \ref{nullhomotopy4}. The green shaded regions in each picture represent the $F$-part of a partion, ie, the intersection of the two $h$-cobordisms making up the partition, which is a $(d+1)$-manifold embedded in $M\times I\times[-1,1]$. The partition $\rho=(W,F,V)\in H(M)_0$ is as depicted in Figure \ref{lowerstabmap}.

 Firstly, the path between the partition $P_0=\iota_H\Sigma(\rho)+_I\Sigma\iota_H(\rho)$ and $P_1$ is obtained from rescaling (and slightly shifting inwards). But as $W\cup_F V$ is canonically\footnote{\emph{Canonically} in the sense that it does not depend in any other choice than the one of $\rho$.} identified with $M\times[-1,1]$ as part of the data of $\rho$, the outer bent regions of the form $(W\cup_F V)\times I$ added to $P_1$ in order to obtain $P_2$ are canonically identified with $M\times [-1,1]\times I$, and therefore $P_1=P_2$ in $H(M\times I)$. See Figure \ref{nullhomotopy1}.
 
    \begin{figure}[h]
        \centering
        \includegraphics[scale = 0.75]{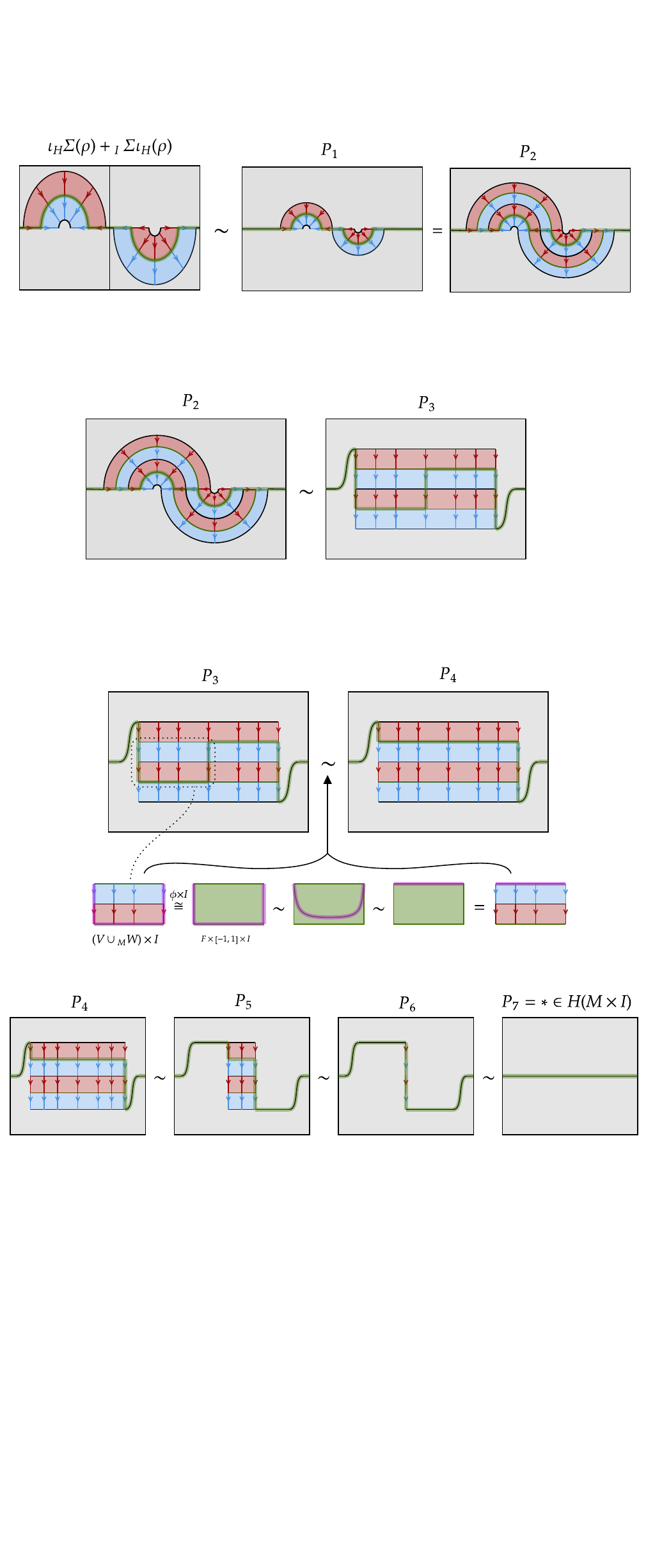}
        \caption{Path in $H(M\times I)$ between $\iota_H\Sigma(\rho)+\Sigma\iota_H(\rho)$ and $P_2$.}
        \label{nullhomotopy1}
    \end{figure}

Unbending and straightening the region of the form $(W\cup_F V\cup_M W\cup_F V)\times I\equiv M\times [-1,1]\times I$ in $P_2$, we get the path to the partition $P_3$ of Figure \ref{nullhomotopy2} (this step is not strictly necessary, but convenient for depiction).

     \begin{figure}[h]
        \centering
        \includegraphics[scale = 0.75]{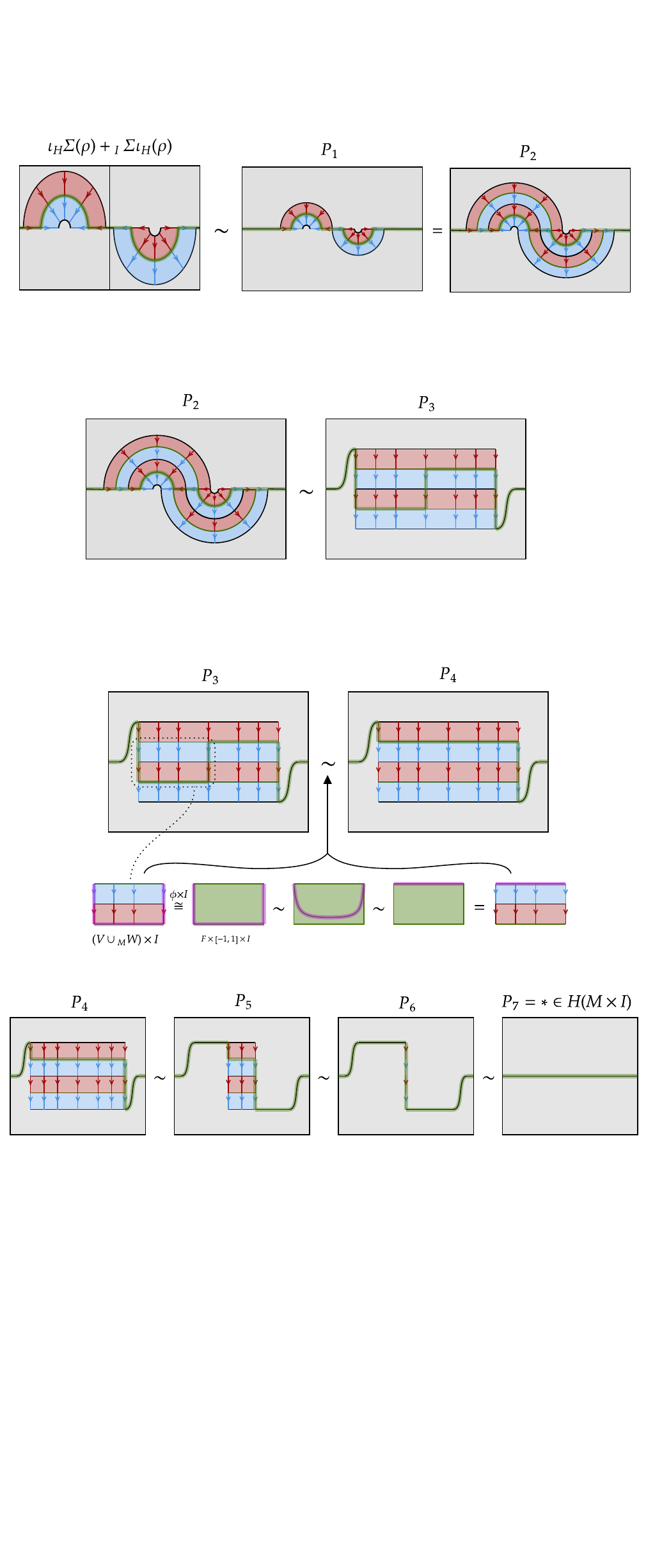}
        \caption{Path in $H(M\times I)$ between $P_2$ and $P_3$.}
        \label{nullhomotopy2}
    \end{figure}

    We now use the diffeomorphism $\phi: V\cup_M\cup W\cong F\times[-1,1]$ (rel. $\partial(F\times[-1,1])$) to carry out the path depicted in the lower part of Figure \ref{nullhomotopy3} locally in the circled sub-rectangle of $P_3$. This yields the path of Figure \ref{nullhomotopy3} between $P_3$ and $P_4$.

     \begin{figure}[h]
        \centering
        \includegraphics[scale = 0.75]{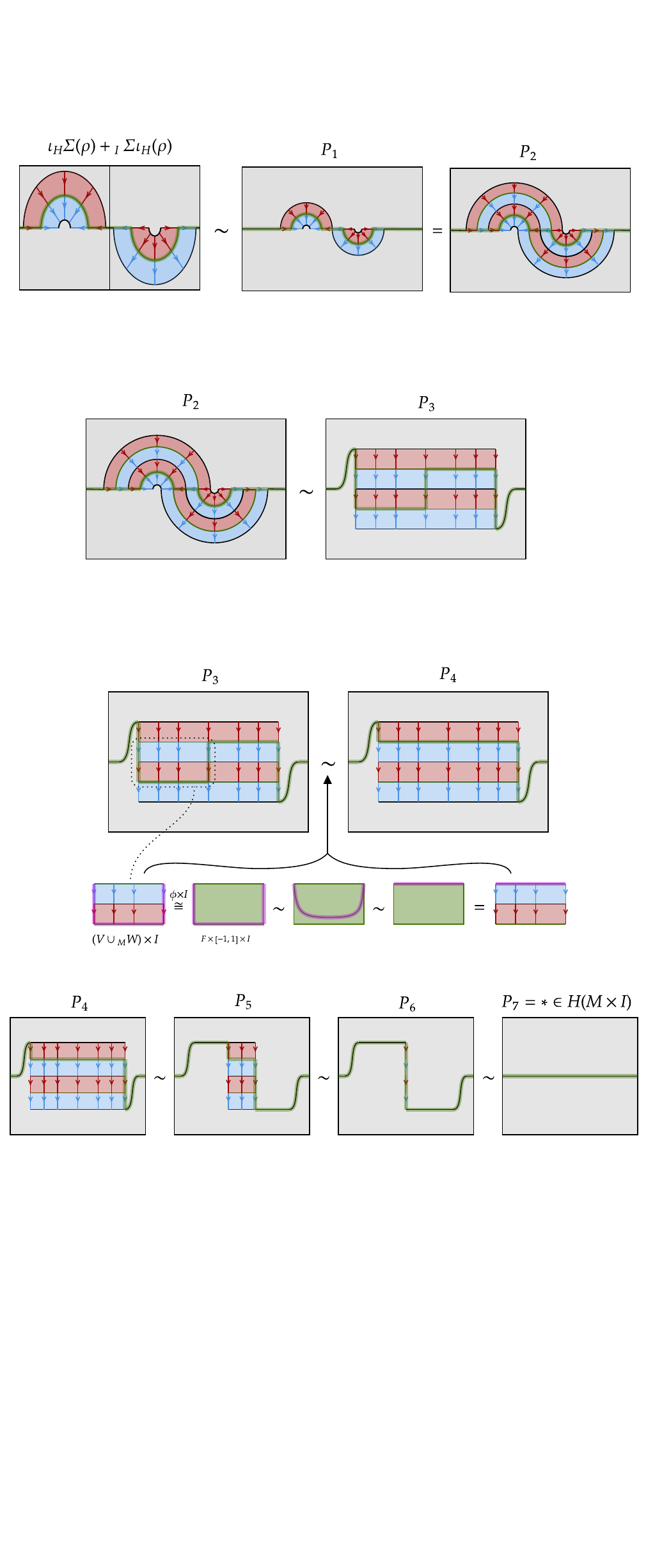}
        \caption{Path in $H(M\times I)$ between $P_3$ and $P_4$. The green shaded region in the lower part of the figure represents $F\times [-1,1]\times I$. The purple shaded region there represents the $F$-part of the partition (which used to be green, but is purple momentarily).}
        \label{nullhomotopy3}
    \end{figure}

    Retracting the region of the form $(W\cup_{F}V\cup_M W\cup_F V)\times I$ in $P_4$ to its midpoint $(W\cup_{F}V\cup_M W\cup_F V)\times \{1/2\}$ yields the path of Figure \ref{nullhomotopy4} between $P_4$ and $P_6$ (passing through $P_5$). But now as $W\cup_{F}V\cup_M W\cup_F V\equiv M\times [-1,3]$, the $F$-part of the partition $P_6$ is of the form $M\times \gamma$, for the $1$-dimensional submanifold $\gamma\subset [-1,1]\times I$ depicted in $P_6$. By straightening $\gamma$ to the submanifold $\{0\}\times I\subset[-1,1]\times I$, we finally get the path of Figure \ref{nullhomotopy4} between $P_6$ and the trivial partition $P_7=*\in H(M\times I)$.

     \begin{figure}[h]
        \centering
        \includegraphics[scale = 0.75]{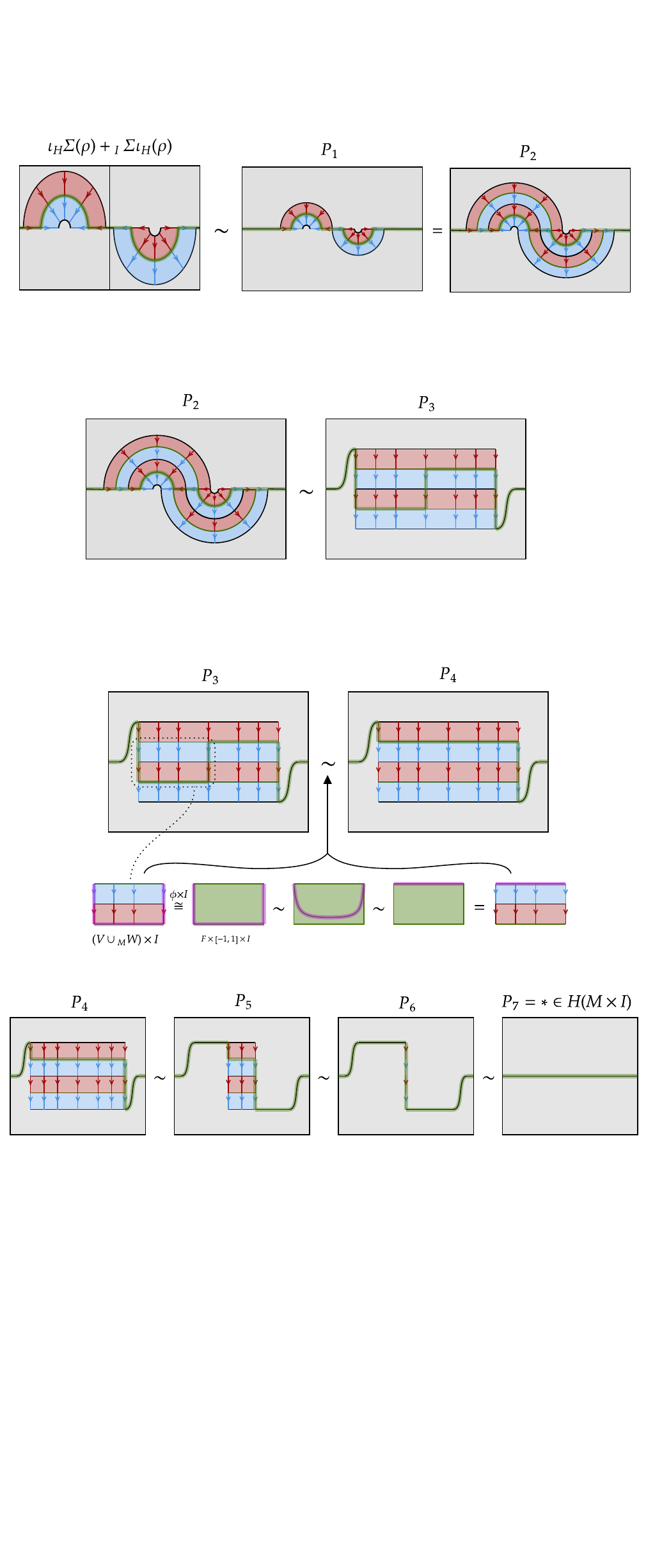}
        \caption{Path in $H(M\times I)$ between $P_4$ and $P_7$.}
        \label{nullhomotopy4}
    \end{figure}
    
    This process depends continuously on $(\rho,\phi)\in \overline{H}_{\mathsf{col}}(M)$ (ie can be set up as a homotopy of simplicial maps $\overline{H}_{\mathsf{col}}(M)_\bullet\to H(M\times I)_\bullet$), and therefore gives the required null-homotopy.
\end{proof}

\begin{rem}
    This result should be compared with \cite[Appendix I, Lemma]{HatcherSSeq} (or \cite[Corollary A7]{BurgLash}), where it is shown that the concordance stabilisation map $\Sigma: C(M)\to C(M\times I)$ anti-commutes (in the same sense of Lemma \ref{suspensionequivariancelem}) with the concordance involution $\iota_C$ of Warning \ref{vogellwarning}.
\end{rem}

\bibliography{bibliographymcg.bib} 
\bibliographystyle{amsalpha}

\end{document}